\newtheorem*{thm*}{Theorem}
\newtheorem{thm}{Theorem}
\newtheorem{propn}[thm]{Proposition}
\newtheorem{lem}[thm]{Lemma}
\newtheorem{cor}[thm]{Corollary}
\theoremstyle{definition}
\newtheorem{defn}[thm]{Definition}
\newtheorem{eg}[thm]{Example}
\newtheorem{q}[thm]{Question}
\theoremstyle{remark}
\newtheorem{rmk}[thm]{Remark}
\newtheorem*{rmk*}{Remark}
\numberwithin{thm}{section}
\let\del\partial 
\let\onto\twoheadrightarrow
\let\into\hookrightarrow
\newcommand{\simto}{\overset{\sim}{\to}}
\renewcommand{\P}{\mathbf{P}}
\newcommand{\A}{\mathbf{A}}
\newcommand{\R}{\mathbf{R}}
\newcommand{\C}{\mathbf{C}}
\newcommand{\N}{\mathbf{N}}
\newcommand{\F}{\mathbf{F}}
\newcommand{\Z}{\mathbf{Z}}
\newcommand{\Q}{\mathbf{Q}}
\renewcommand{\d}{\mathrm{d}}
\newcommand{\Num}{\mathcal{N}}
\newcommand{\prim}{\mathrm{prim}}
\newcommand{\id}{\mathrm{id}}
\DeclareMathOperator{\Nm}{Nm}
\DeclareMathOperator{\rad}{rad}
\DeclareMathOperator{\Cl}{Cl}
\DeclareMathOperator{\Ann}{Ann}
\DeclareMathOperator{\Aut}{Aut}
\DeclareMathOperator{\Stab}{Stab}
\DeclareMathOperator{\lcm}{lcm}
\DeclareMathOperator{\pr}{pr}
\DeclareMathOperator{\im}{im}
\DeclareMathOperator{\Res}{Res}
\DeclareMathOperator{\vol}{vol}
\DeclareMathOperator{\area}{area}
\DeclareMathOperator{\Lip}{Lip}
\DeclareMathOperator{\ord}{ord}
\DeclareMathOperator{\Span}{span}
\DeclareMathOperator{\PGL}{PGL}
\DeclareMathOperator{\rk}{rk}
\DeclareMathOperator{\Fix}{Fix}
\DeclareMathOperator{\Irr}{Irr}
\DeclareMathOperator{\dom}{dom}
\let\Bar\overline \let\Tilde\widetilde 
\let\bdy\partial
\let\grad\nabla
\let\mf\mathfrak 
\let\le\leqslant \let\ge\geqslant 
\let\acts\curvearrowright 
\let\Hat\widehat
\title{Heights and morphisms in number fields}
\author{Matt Olechnowicz}
\date{\today} 
\address{Department of Mathematics \& Statistics, Concordia University}
\email{matt.olechnowicz@concordia.ca}
\begin{document}
	
\begin{abstract}
We give a formula with explicit error term 
for the number of $K$-rational points $P$ satisfying $H(f(P)) \le X$ as $X \to \infty$, where $f$ is a nonconstant morphism between projective spaces 
defined over a number field $K$ and $H$ is the absolute multiplicative Weil height. 
This yields formulae for the counting functions of $f(\P^m(K))$ with respect to the Weil height as well as of $\P^m(K)$ with respect to the Call--Silverman canonical height.
\end{abstract}

\maketitle

\section{Introduction}

\subsection*{Motivation}
Let $K$ be a number field of degree $n$ and ring of integers $\mathcal{O}_K$.
The \emph{height} of a nonzero point $x$ in $K^{m+1}$ is the quantity
\[
H(x) = \Big(\frac{1}{\Nm {\langle x \rangle}} \prod_{\sigma : K\into \C} |\sigma(x)| \Big)^{1/n}
\]
where $|\cdot|$ is the max norm, 
$\sigma(x) := (\sigma(x_0), \ldots, \sigma(x_m))$, 
and 
$\langle x \rangle := x_0 \mathcal{O}_K + \ldots + x_m \mathcal{O}_K$ is the fractional ideal generated by $x$.
If $\lambda$ is a nonzero scalar,
then
\[\Nm \lambda \mathcal{O}_K = \prod_{\sigma : K \into \C} |\sigma(\lambda)|;\]
thus, the height descends to $\P^m(K) := (K^{m+1} - 0^{m+1}) / K^\times$,
the projective $m$-space over $K$.
For $m = 1$ and $K = \Q$,
these definitions simplify to 
\begin{equation} \label{intro:HP1Q}
H(a : b) = \frac{|(a, b)|}{\gcd(a, b)}
\end{equation}
assuming $a, b \in \Z$.

In \cite[Theorem 1]{Northcott}, Northcott proved that for any real number $X$, the set 
\begin{equation} \label{intro:set}
	\{P \in \P^m(K) : H(P) \le X\}
\end{equation}
is finite (though he used the 1-norm at the infinite places).
Lang \cite[p.~58]{Lang} inquired about an estimate for the cardinality $\Num_{H, \P^m(K)}(X)$ of \eqref{intro:set} when $m = 1$ and $X \to \infty$, 
noting that the case $n = 1$ is a classical result in elementary number theory \cite[18.5]{HW}:
\[
\Num_{H, \P^1(\Q)}(X)
= \frac{12}{\pi^2} X^2 + O(X \log X).
\]
Soon after Lang's book appeared, Schanuel \cite[Theorem 1]{Schanuel63} announced a formula involving all the classical invariants of $K$.

\begin{thm*}[Schanuel]
Suppose $nm > 1$.
Then as $X \to \infty$, 
\[
\Num_{H,\P^m(K)}(X) 
= 
\frac{hR}{w} \frac{(m+1)^r}{\zeta_K(m+1)} \Big(\frac{2^{r_1} (2\pi)^{r_2}}{\sqrt {d_K}} \Big)^{m+1} 
X^{n(m+1)} + O(X^{n(m+1)-1})
\]
where 
$h$ is the class number,
$R$ is the regulator, 
$w$ is the number of roots of unity, 
$r_1$ (resp.~$r_2$) is the number of real (resp.~complex) embeddings, 
$r = r_1 + r_2 - 1$ is the rank of the unit group, 
$d_K$ is the absolute value of the discriminant,
and $\zeta_K$ is the Dedekind zeta function.
\end{thm*}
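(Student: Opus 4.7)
The plan is the classical Schanuel approach: partition $\P^m(K)$ by ideal class, remove primitivity via M\"obius inversion, and reduce to lattice-point counting in a fundamental domain for the unit action. Fix integral representatives $\mathfrak{a}_1,\ldots,\mathfrak{a}_h$ of the $h$ ideal classes. Every $P\in\P^m(K)$ with $[\langle x\rangle]=[\mathfrak{a}_i]$ admits a homogeneous representative $x\in\mathfrak{a}_i^{m+1}$ satisfying $\langle x\rangle=\mathfrak{a}_i$, unique up to the diagonal action of $\mathcal{O}_K^\times$, and $H(P)\le X$ iff $\prod_{\sigma:K\into\C}|\sigma(x)|\le X^n\Nm\mathfrak{a}_i$. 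M\"obius inversion on integral ideals removes the primitivity condition $\langle x\rangle=\mathfrak{a}_i$, leaving an alternating sum indexed by integral ideals $\mathfrak{d}$ of the number of nonzero $x\in(\mathfrak{a}_i\mathfrak{d})^{m+1}$ modulo $\mathcal{O}_K^\times$, subject only to $\prod_\sigma|\sigma(x)|\le X^n\Nm\mathfrak{a}_i$.

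For each such $\mathfrak{d}$, embed $K\into K_\R:=K\otimes_\Q\R\cong\R^{r_1}\oplus\C^{r_2}$; then $(\mathfrak{a}_i\mathfrak{d})^{m+1}$ is a full lattice in $K_\R^{m+1}\cong\R^{n(m+1)}$ of covolume $(\sqrt{d_K}\Nm(\mathfrak{a}_i\mathfrak{d}))^{m+1}$ with respect to the Tate (self-dual) normalization of Haar measure. The unit group $\mathcal{O}_K^\times$ acts diagonally: the torsion $\mu_K$ of order $w$ by scalar roots of unity, and the free part of rank $r$ by translations (in log coordinates) by a rank-$r$ lattice. Choose a fundamental domain $\mathcal{F}\subset K_\R^{m+1}$ for this combined action in the form of a cone over a norm-$1$ slice, and set $\mathcal{F}_Y:=\mathcal{F}\cap\{y:\prod_\sigma|\sigma(y)|\le Y\}$ with $Y:=X^n\Nm\mathfrak{a}_i$. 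Homogeneity gives $\vol(\mathcal{F}_Y)=Y^{m+1}\vol(\mathcal{F}_1)$, and a direct Jacobian computation along the norm-$1$ slice yields
\[
\vol(\mathcal{F}_1)=\frac{(m+1)^r R}{w}\bigl(2^{r_1}(2\pi)^{r_2}\bigr)^{m+1},
\]
where $(m+1)^r R$ is the volume of the log-fundamental parallelotope of the $(m+1)$-fold diagonal unit action and $2^{r_1}(2\pi)^{r_2}$ is the product of Tate unit-ball volumes at the archimedean places.

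A Lipschitz-boundary lattice-point theorem of Davenport--Masser--Vaaler type now gives
\[
\#\bigl((\mathfrak{a}_i\mathfrak{d})^{m+1}\cap\mathcal{F}_Y\bigr)=\frac{\vol(\mathcal{F}_Y)}{(\sqrt{d_K}\Nm(\mathfrak{a}_i\mathfrak{d}))^{m+1}}+O\!\left(\frac{X^{n(m+1)-1}}{\Nm\mathfrak{d}^{m+1-1/n}}\right),
\]
using that $\mathfrak{a}_i\mathfrak{d}$ has shortest vector $\gtrsim\Nm(\mathfrak{a}_i\mathfrak{d})^{1/n}$ and $\mathcal{F}_Y$ has linear diameter $\sim Y^{1/n}$. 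Summing over $\mathfrak{d}$, the main-term factor $\sum_\mathfrak{d}\mu(\mathfrak{d})\Nm\mathfrak{d}^{-(m+1)}=\zeta_K(m+1)^{-1}$ emerges, and the error sum $\sum_\mathfrak{d}\Nm\mathfrak{d}^{-(m+1-1/n)}$ converges precisely when $m+1-1/n>1$, i.e.\ when $nm>1$, producing total error $O(X^{n(m+1)-1})$. Summing over $i=1,\ldots,h$ contributes the factor $h$ and collects all remaining constants into the stated main term. The principal obstacle is the geometric analysis of $\mathcal{F}$: one must exhibit $\bdy\mathcal{F}_Y$ as the image of $O(1)$ Lipschitz maps from $[0,1]^{n(m+1)-1}$ with Lipschitz constants $O(Y^{1/n})$, uniformly in $\mathfrak{d}$, so that the boundary-error bound above truly holds and the M\"obius tail truncates cleanly.
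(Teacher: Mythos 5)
Your proposal is correct and takes essentially the same route as the paper: Schanuel's theorem is recovered here as the $d=1$ case of Theorem \ref{thm:main}, whose proof in Section \ref{sec:final} proceeds by exactly your steps---fibering over ideal classes, M\"obius inversion, Minkowski embedding, a (well-rounded) fundamental domain for the unit action with the factor $w$ removed at the orbit-counting stage, a Lipschitz-boundary lattice-point count, and summation of the Dirichlet series, with $nm>1$ entering through convergence of $\sum_{\mathfrak d}\Nm\mathfrak d^{-(m+1-1/n)}$. The one ingredient you defer, the uniform Lipschitz parametrization of $\bdy\mathcal F_Y$, is precisely what the paper supplies via Masser--Vaaler and Widmer in Sections \ref{sec:aux}--\ref{sec:geometry}.
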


The proof, published a decade and a half later \cite{Schanuel}, proceeds by 
fibering over ideal classes, 
replacing $K^\times$ by $\mathcal{O}_K^\times$ (and later by $\mu(K)$), 
performing a M\"obius inversion,
embedding everything in Minkowski space, 
counting lattice points in a homogeneously expanding domain,
and summing the resulting Dirichlet series.

Schanuel's theorem is the first in a long line of results counting rational points 
on increasingly general varieties 
by increasingly general heights.
We cannot do justice to this immense body of literature here,
but the general philosophy (due to Batyrev, Manin, Tschinkel, Peyre, \ldots\!)~is that if $V$ is a smooth projective variety with a Zariski-dense set of $K$-rational points 
and if $\mathcal{L}$ is a metrized ample line bundle on $V$,
then for any sufficiently small Zariski-open subset $U$ of $V$,
\[\Num_{H_{\mathcal L}, U(K)}(X) \sim c X^\alpha (\log X)^\beta\]
for some constants $\alpha$, $\beta$, $c$ with definite geometric interpretations (in some cases conjectural).
The lecture notes by Chambert-Loir \cite{ACL} offer an excellent introduction.

In counting problems where the objects in question lie in algebraic families, one commonly wants to know the number of points in the image of a morphism.
For instance, 
Harron and Snowden \cite{HS} estimated the number of elliptic curves with prescribed torsion over $\Q$
using Kubert's parametrizations.
Analogously, in \cite{Olechnowicz} the author
used the classification theorems of Poonen and Canci--Vishkautsan to give formulae for the number of quadratic rational maps admitting certain preperiodic point portraits over $\Q$; and recently Siu gave many more similar formulae \cite{Siu}.
Thus it would be handy to have a general tool for such scenarios.

\begin{q} \label{q:img}
Let $f : \P^m \to \P^M$ be a nonconstant morphism
defined over a number field $K$.
What is the behaviour of 
$\Num_{H, f(\P^m(K))}(X)$
as $X \to \infty$?
\end{q}

\noindent 
We answer Question \ref{q:img} with an explicit main term and a power-saving error term: see Theorem \ref{thm:img}.

In arithmetic dynamics, 
one studies the iterates of an endomorphism $f : \P^m \to \P^m$ of degree $d \ge 2$ 
by way of the so-called \emph{canonical height}:
\[\Hat{H}_f(P) := \lim_{i \to \infty} H(f^i(P))^{1/d^i}\]
---though usually in the logarithmic form $\hat{h}_f := \log \Hat{H}_f$.
The canonical height, 
which was introduced by Call and Silverman \cite[Theorem 1.1]{CS}, 
is useful for two reasons:
it interacts well with $f$, 
and it differs from the usual height by a bounded scale factor.
More precisely, 
\[
\Hat{H}_f \circ f = (\Hat{H}_f)^d
\quad\text{and}\quad 
\Hat{H}_f \asymp H.
\]
In particular, $\Hat{H}_f(P) = 1$ if and only if the sequence $P, f(P), f^2(P), \ldots$ is eventually periodic, 
and the set of such points is therefore finite.

The value distribution of $\Hat{H}_f$ is of considerable interest.
To cherry-pick one example, it is believed that the canonical height satisfies an analogue of Lehmer's conjecture, namely 
that if $\Hat{H}_f(P) \ne 1$ then $\Hat{H}_f(P) \ge C^{1 /{ \deg P}}$ for some constant $C$ depending only on $f$.
We refer the reader to the survey \cite[Sections 15 and 16]{CurrentTrends} for the state of the art.
Recent work on arithmetic dynamics of random polynomials
\cite[Lemma 2.11]{LBM} 
required upper bounds on $\Num_{\Hat{H}_f, \P^1(\Q)}(X)$
where $f(z) = a_d z^2 + a_{d-2} z^{d-2} + \ldots + a_1 z + 1 \in \Z[z]$ ($d \ge 3$).
One is thus led to wonder about an analogue of Schanuel's theorem for the canonical height.

\begin{q} \label{q:ds}
Let $f$ be an endomorphism of $\P^m$ of degree $d \ge 2$ defined over a number field $K$ of degree $n \ge 1$.
What is the behaviour of 
$\Num_{\Hat{H}_f, \P^m(K)}(X)$
as $X \to \infty$?
\end{q}

\noindent
We answer Question \ref{q:ds} with an explicit main term only: see Theorem \ref{thm:ds}.

As a first approximation,
both questions 
may be answered using Schanuel's original theorem and standard facts about morphisms.
Immediately from $\Hat{H}_f \asymp H$
we obtain
\[\Num_{\Hat{H}_f, \P^m(K)}(X) \asymp X^{n(m+1)}.\]
Similarly,
given that $H \circ f \asymp H^d$ and that $f$ is generically $\gamma$-to-1 over its image for some positive integer $\gamma$,
\[
\Num_{H, f(\P^m(K))}(X) \asymp \frac{1}{\gamma} X^{n(m+1)/d}.
\]

Pursuing a more refined estimate 
leads one to confront (in \emph{both} cases) the function
\[
\Num_{f^*\!H, \P^m(K)}(X) 
= \#\{P \in \P^m(K) : H(f(P)) \le X\},
\]
which counts points by pullback height.
It can be deduced from Franke \textit{et al.}~\cite{FMT} 
that the main term is 
\begin{equation} \label{eq:FMT}
\Num_{f^*\!H, \P^m(K)}(X) \sim c X^{n(m+1)/d}
\end{equation}
for some positive constant $c$, which was later expressed by Peyre \cite{Peyre} in terms of a global Tamagawa measure.
However, 
as suggested by Masser and Vaaler \cite[p.~430]{MV}, 
it seems unlikely that those general methods could lead to a power-saving error term \textit{\`a la} Schanuel.
Establishing such an error term,
with explicit dependence on $f$, 
is the focus of the present article.\footnote{
While this paper was in its final stages of preparation, 
Siu posted a preprint containing a variant of our main result, namely
that if $f$ is a degree-$d$ endomorphism of $\P^m$ 
and $\mathcal L = (\mathcal{O}(1), \lVert \, {\cdot} \, \rVert)$ is a metrized line bundle on $\P^m$, 
all defined over $K$,
then 
\begin{equation} \label{siu}
	\Num_{H_{f^*\!\mathcal L}, \P^m(K)}(X) 
	= \frac{\tau_{\P^m}(f^*\!\mathcal L)}{m+1} X^{n(m+1)/d} + O(X^{(n(m+1)-1)/d} \log X)
\end{equation}
where $\tau_{\P^m}$ is the global Tamagawa measure
and the $O$ term depends on $f$, $\mathcal{L}$, and $K$
\cite[Theorem 3.29]{Siu}.
Siu called \eqref{siu} a ``known result''
and indicated 
that ``to get an explicit power-saving error term, one can modify Masser--Vaaler's work''.
Having already carried out these modifications ourselves, we wish to record the details, which elucidate the dependence on $f$.	
}

\subsection*{Notation}

Before we state our main result (Theorem \ref{thm:main}),
we set down some notation.

For each positive integer $m$, 
we collect the classical invariants of $K$ into the constant
\begin{equation} \label{cKm}
c_K(m) 
:= 
\frac{hR (m+1)^r}{w \zeta_K(m+1)} 
\Big(\!\frac{2^{r_2}}{\sqrt{d_K}} 
\Big)^{m+1}.
\end{equation}
Beware this differs from Schanuel's constant by a factor of $(2^{r_1} \pi^{r_2})^{m+1}$.

Two nontrivial absolute values on $K$ are called \emph{equivalent} if they define the same topology; 
the equivalence classes are called \emph{places}.
By Ostrowski's theorem, 
every place $v$ of $K$
is represented either by
\begin{align} 
|x|_{\mf p} &:= \Nm \mf p^{-{\ord_{\mf p}(x \mathcal{O}_K)} / n_{\mf p}} \label{abs_prime}
\intertext{where $\mf p$ is a prime ideal of local degree $n_{\mf p} = e_{\mf p} f_{\mf p}$; or else by}
|x|_\sigma &:= \sqrt{\Re \sigma(x)^2 + \Im \sigma(x)^2} \label{abs_sigma}
\end{align}
where $\sigma : K \into \C$ is a field embedding,
with local degree $n_\sigma = 1$ or $2$ according to whether $\sigma(K) \subseteq \R$ or not (i.e.,~whether $\sigma$ is \emph{real} or \emph{complex}).
We say $v$ is \emph{nonarchimedean} ($v \nmid \infty$) in case \eqref{abs_prime} and \emph{archimedean} ($v \mid \infty$) in case \eqref{abs_sigma}.

Recall that every morphism $f : \P^m \to \P^M$ 
of degree $d$
defined over a field $K$
is given by $M+1$ homogeneous forms 
$F_0, \ldots, F_M$ of degree $d$ in $m+1$ variables with coefficients in $K$ and no common zero in $\Bar K$.
Any two such \emph{lifts} of $f$ differ only by a nonzero scalar multiple.
Given any absolute value $|\cdot|$ on $K$, 
we have
\begin{equation} \label{intro:asymp}
|F(z)| \asymp |z|^d
\end{equation}
for all $z \in K^{m+1}$ (where the absolute value of a tuple denotes the maximum of the absolute values of its entries).
The upper bound in \eqref{intro:asymp}
is immediate from the triangle inequality (and holds whenever $f$ is a rational map), 
while the lower bound requires the Nullstellensatz (and holds only when $f$ is a morphism).

Writing 
\[
F_j(X_0, \ldots, X_m) = \sum F_{j,\alpha} X_0^{\alpha_0} \ldots X_m^{\alpha_m} 
\qquad (j = 0, \ldots, M)
\]
and setting $|F| := \max |F_{j,\alpha}|$,
it follows from \eqref{intro:asymp} that 
there exist real numbers $C_v$,
one for each place $v$ of $K$,
such that 
\begin{equation} \label{intro_constants_1}
\frac{|z|_v |F|_v^{1/d}}
{|F(z)|_v^{1/d}} \le C_v \text{ for all nonzero $z$ in $K_v^{m+1}$}.
\end{equation}
Now, it is well known (and follows from Definition \ref{def:good_red} and Corollary \ref{good_reduction_epsilon} below) 
that
if $v$ is nonarchimedean and $f$ has good reduction at $v$,
then
\begin{equation} \label{intro:good_red_Fz} 
|F(z)|_v = |F|_v |z|_v^d
\end{equation} 
for all $z \in K_v^{m+1}$.
Thus we may choose
\begin{equation} \label{intro_constants_2}
C_v = 1 \text{ for all but finitely many } v.
\end{equation}
Given any constants $C_v$ satisfying \eqref{intro_constants_1} and \eqref{intro_constants_2}, 
put 
\begin{equation} \label{error_constants}
C_f^0 := \prod_{v \nmid \infty} C_v^{n_v}
\quad\text{and}\quad 
C_f^\infty := \max_{v \mid \infty} C_v
\end{equation}
and note that the infinite product converges by \eqref{intro_constants_2}.

Finally, 
we adopt the Vinogradov-style notation 
$A \ll_{i, \ldots, j} B$ 
to simply mean $A \le CB$ for some positive constant $C$ depending only on the quantities $i, \ldots, j$.
Note there is no implied asymptotic range of validity.

\subsection*{Main result}

\begin{thm} \label{thm:main}
Let $f : \P^m \to \P^M$ be a morphism of degree $d \ge 1$ 
defined over a number field $K$ of degree $n$.
Let $F$ be a lift of $f$ and let $H(f) = H(F_{j,\alpha})$
be the height of $f$ viewed as a point of $\P^{(M+1)\binom{m+d}{d}-1}$ by listing its coefficients.
For each standard absolute value $| \cdot |_v$ on $K$,
let $K_v$ be the completion,
$O_v^{m+1} := \{z \in K_v^{m+1} : |z|_v \le 1\}$ the unit polydisc, 
$n_v := [K_v : \Q_v]$ the local degree,
and $\mu_v$ the Haar measure with $\mu_v(O_v^{m+1}) = 1$.
Let 
\[
D_{f,v} := \{z \in K_v^{m+1} : |F(z)|_v \le |F|_v\}
\]
be the \emph{local fundamental domain} for $f$ at $v$.
Define
\[
c_{K,v}(f) := \mu_v(D_{f,v})
\]
if $v$ is archimedean, 
and 
\[c_{K,v}(f) := \int_{O_v^{m+1}} \bigg(\frac{|z|_v |F|_v^{1/d}}{|F(z)|_v^{1/d}} \bigg)^{n_v(m+1)} \, d\mu_v(z)\]
if $v$ is nonarchimedean.
Put
\[
c_{K,\infty}(f) := \prod_{v \mid \infty} c_{K,v}(f) 
\quad\text{and}\quad 
c_{K,0}(f) := \prod_{v \nmid \infty} c_{K,v}(f)\]
and set
\[c_K(f) := 
c_K(m) \frac{c_{K,\infty}(f) c_{K,0}(f)}{H(f)^{n(m+1)/d}}.
\]
Then
\[
	\Num_{f^*\! H, \P^m(K)}(X) 
	= c_K(f) X^{n(m+1)/d} + E(X)
\]
where the error term $E(X)$ depends on $m$, $K$, and $f$.
More precisely,
there exist
a positive integer $N_f$
and a real number $L_f$
such that for each $v \mid \infty$,
the set 
\[\bdy D_{f,v} = \{z \in K_v^{m+1} : |F(z)|_v = |F|_v\}\]
is covered by the images of at most $N_f$ maps $[0, 1]^{n_v(m+1) - 1} \to K_v^{m+1}$ with Lipschitz constant at most $L_f$; 
and, given any such pair $(N_f, L_f)$ we have
\begin{align*}
|E(X)| &\ll_{m,K}
 N_f^n c_{K,0}(f) (C_f^0)^{d/n} \Big( \frac{L_f + C_f^\infty}{H(f)^{1/d}} X^{1/d}\Big)^{n(m+1)-1} (1 + {\log^+} X^{1/d})
\\
&\hspace{20em} + \big(N_f^n (C_f^0)^{d(m+1)} + c_K(f)\big) X^{n/d}
\end{align*}
for all $X \ge 0$.
The ${\log^+} X^{1/d}$ and $C_f^\infty$ disappear (i.e.,~may be replaced by 0) if $nm > 1$ and $r = 0$, respectively.
\end{thm}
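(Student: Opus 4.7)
The plan is to follow the Schanuel--Masser--Vaaler framework \cite{Schanuel, MV} with the Weil height replaced by the pullback $H\circ f$. The key observation is that the functions $\psi_v(z) := |F(z)|_v^{1/d}$, one for each place $v$, are degree-one-homogeneous in $z$ and satisfy
\[
H(F(z))^{n/d}\ =\ \prod_v \psi_v(z)^{n_v},
\]
so the problem is formally a Schanuel count with the max norm $|\,\cdot\,|_v$ replaced everywhere by $\psi_v$. Correspondingly, the archimedean ``unit balls'' become (positive real scalings of) the local fundamental domains $D_{f,v}$, which explains the appearance of the factors $\mu_v(D_{f,v})$ in the main term.

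First I would decompose $\P^m(K)$ by ideal class of $\langle z\rangle$ and apply M\"obius inversion on the poset of integral ideals to dispense with primitivity, obtaining
\[
\Num_{f^*\!H,\P^m(K)}(X)\ =\ \frac{1}{w}\sum_{[\mathfrak{a}]}\sum_{\mathfrak{b}\subseteq\mathcal{O}_K}\mu(\mathfrak{b})\, N(\mathfrak{a}\mathfrak{b},X),
\]
where $N(\mathfrak{c},X) := \#\{z\in\mathfrak{c}^{m+1}\setminus 0 : H(F(z))\le X\}/\mathcal{O}_K^\times$. Each $N(\mathfrak{c},X)$ becomes a lattice-point count in $\prod_{v\mid\infty}K_v^{m+1}$ after quotienting by the diagonal $\mathcal{O}_K^\times$-action via the usual Dirichlet-logarithm fundamental domain. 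The resulting region decomposes as an archimedean constraint $\prod_{v\mid\infty}\psi_v(z)^{n_v}\le Y$ (with $Y$ a function of $X$, $\mathfrak{c}$, and $H(F)$) together with a nonarchimedean constraint involving the defects $\epsilon_v(z) := \psi_v(z)/(|F|_v^{1/d}|z|_v)$. By \eqref{intro:good_red_Fz}, $\epsilon_v\equiv 1$ at every place of good reduction; at the finitely many other nonarchimedean places, $\epsilon_v\le C_v$ by \eqref{intro_constants_1}. This is what makes both the Euler product $c_{K,0}(f)=\prod_{v\nmid\infty}c_{K,v}(f)$ converge and the constant $C_f^0$ enter the error bound.

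Applying a Davenport--Masser lattice-counting lemma to the archimedean region, whose boundary admits a Lipschitz cover assembled from the hypothesized $(N_f, L_f)$-covers of each $\partial D_{f,v}$, gives a main term equal to the region's volume divided by the lattice covolume. This volume unpacks as $X^{n(m+1)/d} H(f)^{-n(m+1)/d}\, c_{K,\infty}(f)\,(\Nm\mathfrak{c})^{m+1}(2^{-r_2}\sqrt{d_K})^{-(m+1)}$ times the regulator factor $R(m+1)^r$. Summing $\sum_{[\mathfrak{a}]}\sum_\mathfrak{b}\mu(\mathfrak{b})/\Nm(\mathfrak{a}\mathfrak{b})^{m+1} = h/\zeta_K(m+1)$ and incorporating the nonarchimedean local densities $c_{K,0}(f)$ assembles the main term as $c_K(f)X^{n(m+1)/d}$. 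The error term $E(X)$ emerges in three pieces: the Lipschitz-boundary contribution (scaling like $X^{(n(m+1)-1)/d}$, multiplied by $N_f^n(L_f+C_f^\infty)^{n(m+1)-1}$); a ``small-balls'' contribution controlling lattice points near the origin (the $N_f^n(C_f^0)^{d(m+1)}X^{n/d}$ piece); and a routine $c_K(f) X^{n/d}$ discrepancy from the $\mathcal{O}_K^\times$-quotient. The two simplifications stated (${\log^+}$ removed when $nm>1$; $C_f^\infty$ removed when $r=0$) reflect standard Schanuel-type degeneracies: the logarithm arises only from the one-dimensional lattice geometry at the borderline, and $C_f^\infty$ is only required to bound $\mathcal{O}_K^\times$-translates when the unit rank is positive.

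The main obstacle is the careful bookkeeping at the finitely many bad-reduction nonarchimedean places, where $\epsilon_v(z)$ is genuinely $z$-dependent: this prevents clean Tamagawa-style factorisation of the nonarchimedean constraints. One must interchange the M\"obius summation with the lattice-point count and verify that the resulting local integrals $c_{K,v}(f)$ both assemble into a convergent Euler product (using that $c_{K,v}(f) = 1$ at good-reduction places) and yield an error contribution bounded linearly in $C_f^0$. A secondary subtlety is tracking the explicit dependence of the Lipschitz constant $L_f$ and the cover count $N_f$ on $f$, since these quantities enter the error term raised to powers as high as $n(m+1)-1$; existence of such a pair is immediate from the fact that $\partial D_{f,v}$ is a compact real-algebraic hypersurface, but making the bound in $E(X)$ quantitative in these parameters is what distinguishes Theorem~\ref{thm:main} from what can be extracted from \eqref{eq:FMT} or from \eqref{siu}.
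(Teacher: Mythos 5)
Your high-level strategy (Schanuel's fibering over ideal classes, M\"obius inversion, passage to a unit-lattice fundamental domain, and a Lipschitz lattice-point count on a homogeneously expanding archimedean region) is indeed the skeleton of the paper's proof, and your identification of $c_{K,\infty}(f)$ with the volumes $\mu_v(D_{f,v})$ and of $h R (m+1)^r/\zeta_K(m+1)$ with the class/unit/M\"obius bookkeeping is on target. But there is a genuine gap at exactly the point you flag as ``the main obstacle'' and then leave unresolved: after fixing the ideal class and performing M\"obius inversion, the condition $H(F(z))\le X$ is \emph{not} of the form ``$z$ lies in a lattice coset intersected with a fixed archimedean region,'' because $H_\infty(F(z))\le X^n\Nm\langle F(z)\rangle$ and $\Nm\langle F(z)\rangle=\Nm\big(\langle z\rangle^d\langle F\rangle\,\ell_f(z)\big)$ depends on the excess divisor $\ell_f(z)$, which varies with $z$. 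Your $Y$ therefore cannot be ``a function of $X$, $\mathfrak{c}$, and $H(F)$'' alone, and saying one must ``interchange the M\"obius summation with the lattice-point count and verify that the local integrals assemble'' names the problem without supplying a mechanism: as stated, there is no region to which a Davenport--Masser/Widmer counting lemma can be applied.

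What actually closes this gap in the paper is the structure theory of $\ell_f$ developed in Sections \ref{sec:morphisms}--\ref{sec:excess}: $\ell_f$ is bounded (it divides $\Res f$, via pseudoinverses and Lemma \ref{lem:eps_trio}) and \emph{periodic}, i.e.\ it factors through the reduction map $K^{m+1}-0^{m+1}\to\P^m(\mathcal{O}_K/\mf R)$ with $\mf R=\prod_v\mf p_v^{\lVert\varepsilon_{f,v}\rVert}$ (Proposition \ref{eps_per}, Corollary \ref{ell_descends}). One then fibres the count over the finitely many excess divisors $\mf l$ (so that $Y_{i,\mf l}=X^n\Nm\mf a_i^d\langle F\rangle\mf l$ is constant on each fibre), chooses class representatives coprime to $\mf R$, and uses the Generalized Chinese Remainder Theorem (Lemma \ref{union_of_cosets}) to write each set $\{x:\langle x\rangle\subseteq\mf a_i\mf b,\ \ell_f(x)=\mf l\}$ as a disjoint union of \emph{cosets} of the lattice $\Psi\big((\mf a_i\mf b\cap\mf R)^{(m+1)}\big)$; only then does Widmer's counting theorem for shifted lattices (Corollary \ref{counting_principle}) apply, and the density formula (Proposition \ref{delta_product_formula} together with Corollaries \ref{nonarch_not_vol} and \ref{nonarch_vol}) is what identifies $\sum_{\mf l}\Nm\mf l^{(m+1)/d}\delta_f(\mf l)$ with the Euler product $c_{K,0}(f)$ of your local integrals. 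Two smaller inaccuracies: the $\log^+$ factor comes from the truncated Dirichlet series $\sum_{\Nm\mf b\le X^{n/d}}1/\Nm(\mf b\mf R)^{\sigma}$ at $\sigma=m+1-1/n=1$ (Lemma \ref{lem:dirichlet_series}), not from ``one-dimensional lattice geometry,'' and the two $X^{n/d}$ error pieces arise respectively from the $O(N)$ term of the coset count summed over $\mf b,\mf l,Q,\bar u$ (giving $N_f^n\Nm\mf R^{m+1}\le N_f^n(C_f^0)^{d(m+1)}$) and from the tail of the zeta-series in the main term, not from a ``small-balls'' effect or the unit quotient.
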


\subsection*{Remarks}
The quantities appearing in the main term and error term are new and interesting invariants of a morphism $f$. 
These merit further study, especially in the dynamical context ($m = M$).
Several remarks are in order.

\begin{rmk}
There are no nonconstant morphisms $\P^m \to \P^M$ when $m > M$.
\end{rmk}

\begin{rmk} \label{rmk:good_reduction_cKvf}
If $f$ has good reduction at $v$, then $c_{K,v}(f) = 1$
by \eqref{intro:good_red_Fz}.
\end{rmk}

\begin{rmk}
If $x^d = (x_0^d : \ldots : x_m^d)$ is the powering map,
then 
\[
c_{K,v}(x^d) = \begin{cases}
2^{m+1} & 
v \text{ is real}, \\
\pi^{m+1} & 
v \text{ is complex}, \\
1 & 
v \text{ is nonarchimedean}.
\end{cases}
\]
Taking $d = 1$ recovers Schanuel's theorem.
\end{rmk}

\begin{rmk} \label{rmk:aut}
If $f$ is an automorphism of $\P^m$ 
then \emph{a fortiori}
\[
c_K(f) = c_K(\id)
\]
because $f$ induces a bijection 
\[\{P \in \P^m(K) : H(f(P)) \le X\} \overset{\sim}\longrightarrow \{Q \in \P^m(K) : H(Q) \le X\}.\]
\end{rmk}

\begin{rmk}
Our expression for $c_K(f)$ is not a special case of Widmer's formula \cite[Theorem 3.1]{Widmer_prim} because the maps $z \mapsto |F(z)|_v^{1/d}$ need not form an adelic Lipschitz system---they may fail axiom (iv), the ultrametric inequality. 
For instance, if $F(x, y) = (2x^2 + xy, xy + 2y^2)$
then 
\[F(1, 0) = (2, 0) \qquad F(0, 1) = (0, 2) \qquad F(1, 1) = (3, 3)\]
so $\big|F\big((1, 0) + (0, 1)\big)\big|_v^{1/2} = 1$
whereas $\big|F(1, 0)\big|_v^{1/2} = \big|F(0, 1)\big|_v^{1/2} < 1$ whenever $v \mid 2$.
\end{rmk}

\begin{rmk}
For $v \nmid \infty$, 
the integral
evaluates to
\[
c_{K,v}(f) = \sum_{i\ge 0} \Nm \mf p_v^{(m+1)i/d} \delta_{f,v}(i)
\]
where $\delta_{f,v}(i)$ may be interpreted as the probability that $v(F(z)) = dv(z) + v(F) + i$ 
(cf.~Lemma \ref{many_faces_of_delta}
and 
Corollary \ref{nonarch_not_vol}(i)).
In particular, the nonarchimedean local factors are not necessarily the ``obvious'' (i.e.,~analogous) $v$-adic volumes.
In fact,
\begin{equation} \label{nonarch_not_vol_strict}
c_{K,v}(f) > \mu_v (D_{f,v})
\end{equation}
unless $\delta_{f,v}(i) = 0$ for all $i$ not divisible by $d$
(cf.~Corollary \ref{nonarch_not_vol}(iii)).
This is because, loosely speaking, 
the volume of a 
$v$-adic disc $B(a, r) = \{z \in K_v : |z - a| \le r\}$ is not necessarily proportional to its radius $r$ unless $r$ is \emph{rational} (i.e.,~belongs to the value group $|K^\times_v|$).
For a concrete instance of \eqref{nonarch_not_vol_strict}, see Example \ref{eg:nonarch_not_vol}.
\end{rmk}

\begin{rmk}
When chosen optimally, 
the error constants \eqref{error_constants} 
have the following interpretations:
\[
C_f^\infty = \max_{v \mid \infty} \sup_{z \in D_{f,v}} |z|_v
\]
and
\[
C_f^0 = \sqrt[d]{\Nm \underset{z \ne 0}{\lcm} \ \ell_f(z)}
\]
where $\ell_f$ is the excess divisor, defined in \eqref{intro:ell} just below
(cf.~Propositions \ref{propn:kappa} and Proposition \ref{ell_properties}(iv)).
In particular,
\[
(C_f^0)^d
\le \Nm \Res f
\lessapprox 
\big(d^m H(f)\big)^{n(md)^m}
\]
(cf.~Lemmas \ref{lem:eps_trio} and \ref{Nm_res_bound})---but \emph{this} upper bound is very likely far too crude.
\end{rmk}

\subsection*{Sketch}

The proof of Theorem \ref{thm:main} follows the strategy outlined in our earlier work \cite[Remark 8.6]{Olechnowicz} for the case $n = m = M = 1$,
which we paraphrase here.
With $f : \P^1 \to \P^1$ a degree-$d$ morphism defined over $\Q$ and $F$ an integral homogeneous lift of $f$, we have
\[
\Num_{f^*\!H, \P^1(\Q)}(X) 
= 
\frac{1}{2} 
\# \{(a, b) \in \Z^2_\prim : H(F(a, b)) \le X\}
\]
where $\Z^2_\prim = \{(a, b) \in \Z^2 : \gcd(a, b) = 1\}$ is the set of lattice points ``visible'' from the origin,
and the factor of a half 
accounts for the symmetry $(a : b) = (-a : -b)$ 
coming from roots of unity in $\Q$.
By \eqref{intro:HP1Q}, 
\[H(F(a, b)) = \frac{|F(a, b)|}{\gcd F(a, b)}.\]
The idea is simple: 
although both $F$ and $(a, b)$ may be in lowest terms, 
$F(a, b)$ need not; 
there may be a nontrivial ``excess'' divisor $\ell := \gcd F(a, b)$.
For instance, if the reduction of $f$ modulo some prime $p$ has an $\F_p$-rational point of indeterminacy $Q$, then $\gcd F(a, b)$ will be divisible by $p$ for every primitive $(a, b)$ lying over $Q$.

Summing over all possible excess divisors,
we get
\begin{equation} \label{intro:heuristic}
\Num_{f^*\!H, \P^1(\Q)}(X)
= 
\frac{1}{2} 
\sum_\ell
\{(a, b) \in \Z^2_\prim : \gcd F(a, b) = \ell \text{ and } |F(a, b)| \le \ell X\}.
\end{equation}
Now, heuristically, $\gcd(a, b) = 1$ with probability $6/\pi^2$;
and examples suggest that
\[
\delta_f(\ell) := \Pr {[\gcd F(a, b) = \ell \mid \gcd(a, b) = 1]}
\]
exists for each excess divisor $\ell$.
Then each summand in \eqref{intro:heuristic}
is approximately 
\begin{equation} \label{intro:heuristic_summands}
\frac{6}{\pi^2} \cdot \delta_f(\ell) \cdot \#\{(a, b) \in \Z^2 : |F(a, b)| \le \ell X\}.
\end{equation}
The latter term counts the number of lattice points in the region $F^{-1}([-\ell X, \ell X]^2)$, 
which by geometry of numbers is approximately the region's area.
Since $F^{-1}$ is homogeneous of degree $1/d$, 
and since ``area'' is homogeneous of degree 2, 
we have 
\begin{equation} \label{intro:heuristic_area}
\Z^2 \cap F^{-1}([-\ell X, \ell X]^2)
\sim c_{\Q,\infty}(f) \cdot (\ell X)^{2/d}
\end{equation}
where $c_{\Q,\infty}(f) = \area F^{-1}([-1, 1]^2)$.
Combining \eqref{intro:heuristic}, 
\eqref{intro:heuristic_summands}, 
and \eqref{intro:heuristic_area}
yields
\[
\Num_{f^*\!H,\P^1(\Q)}(X) = 
\frac{1}{2} 
\cdot \frac{6}{\pi^2} 
\cdot c_{\Q,\infty}(f)
\cdot \sum_\ell \ell^{2/d} \delta_f(\ell)
\cdot X^{2/d}
\]
as $X \to \infty$.
From here it's just a matter of factoring the sum over $\ell$ using the Chinese Remainder Theorem,
which identifies it as $c_{\Q,0}(f)$.
This explains the main term in Theorem \ref{thm:main}.

To turn this heuristic sketch over $\P^1(\Q)$ into a rigorous proof over $\P^m(K)$, 
we closely follow Schanuel's original argument,
with strategic modifications to accommodate the presence of $f$.
Although this could be seen as little more than a tedious exercise, 
we believe that our exposition clarifies Schanuel's original intent (cf.~\cite[p.~430]{MV}).

The main obstacle is the \emph{a priori} irregularity of the nonarchimedean contribution. 
For an arbitrary morphism $f$ with lift $F$,
it is hard to predict the behaviour of the excess divisor 
\begin{equation} \label{intro:ell}
\ell_f(x) := \frac{ \langle F(x) \rangle }{\langle x \rangle^d \langle F \rangle }
\end{equation}
---which is now a fractional ideal---as $x$ varies over $K^{m+1}$.
But it turns out that $\ell_f$ is always one of just finitely many integral ideals $\mf l$, each arising with a well-defined nonzero probability $\delta_f(\mf l)$.
To prove this, 
we give a new definition of the resultant ideal $\Res f$,
one that is uniform in $M \ge m \ge 1$ and compatible with the Sylvester--Macaulay resultant when $M = m$.
We show that 
$\ell_f$ divides $\Res f$
and, moreover, that $\ell_f$ is periodic,
in the sense that it factors through the canonical reduction map 
\begin{equation} \label{intro:reduction_map}
K^{m+1} - 0^{m+1} \to \P^m(\mathcal{O}_K / \Res f).
\end{equation}
These regularity properties---boundedness and periodicity---imply that $\ell_f$ is determined by its values on a finite set.

Fibering over all excess divisors and using the Generalized Chinese Remainder Theorem 
lets us reduce the problem to 
counting points in \emph{cosets} of a lattice.
The subsequent appeal to geometry of numbers is streamlined by the wonderful technical results of Masser--Vaaler and Widmer, including the volume formula \cite[Lemma 4]{MV},
the counting principle \cite[Theorem 5.4]{Widmer_prim}, 
and the Lipschitz estimate \cite[Lemma 7.1]{Widmer_prim}.
Although we ended up not quoting anything from it,
we also benefitted greatly from consulting Krumm's article \cite{Krumm} on computational matters surrounding Schanuel's theorem.

\subsection*{Plan}

The paper is laid out as follows.

Section \ref{sec:applications} 
presents two general consequences of Theorem \ref{thm:main}, 
answering Questions \ref{q:img} and \ref{q:ds}. 
These applications use Hilbert's irreducibility theorem and dynamical Green's functions, respectively.
The latter is accompanied by some intriguing examples, showing that the asymptotic constant for the canonical height differs from Schanuel's constant and is invariant under $K$-conjugacy but not $\Bar K$-conjugacy.

Section \ref{sec:morphisms} 
reviews classical elimination theory as it pertains to zero-loci of morphisms. 
We introduce the notion of a pseudoinverse, define the resultant, and give an algebraic characterization of good reduction.

Section \ref{sec:primitivity}
begins with a short discussion on group actions (reviving Schanuel's notion of a relative fundamental domain).
We define primitivity and generalize Jordan's totient function to any commutative unital ring, and use these to describe projective $m$-space over any finite quotient of a Dedekind domain.
This section culminates in the definition of the reduction map \eqref{intro:reduction_map}.

Section \ref{sec:excess}
establishes a nonarchimedean continuity estimate
and thereby derives the main properties of the excess divisor function,
including boundedness and periodicity.
Here we also introduce the global density $\delta_f(\mf l)$ 
and prove that it decomposes as a product of local densities $\delta_{f,v}(v(\mf l))$.

The next two Sections focus on various nonarchimedean terms that will arise in the proof of Theorem \ref{thm:main}.
Section \ref{sec:nonarch} evaluates $c_{K,v}(f)$ and compares it to the Haar measure of $D_{f,v}$.
Section \ref{sec:nt} expresses a certain $\mathcal{O}_K^\times$-invariant set as a union of cosets of a lattice, and evaluates two pertinent Dirichlet series.

Section \ref{sec:aux} recalls classical results on lattices and number fields, then adapts Schanuel's framework to our setting. We introduce the homogeneously-expanding domain $D_{F,K}(T)$, show that it is bounded, and calculate its volume.

Section \ref{sec:geometry} 
prepares all the ingredients for the application of Masser--Vaaler and Widmer's geometry of numbers results.
We prove that $D_{F,K}(T)$ can be chosen to depend minimally on $K$, and we bound the Lipschitz class of its boundary.

Section \ref{sec:final}, the last one, is devoted to the proof of Theorem \ref{thm:main}.

In the words of Macaulay \cite[p.~vi]{Macaulay_book}, ``The subject is full of pitfalls.''
To help the reader avoid stumbling, 
we have included many remarks and examples along the way.
We hope the length of the article is justified by 
its careful attention these details.

\subsection*{Acknowledgements}

I wish to thank 
Patrick Ingram for suggesting Lemma \ref{X_f} and Example \ref{eg:chebyshev};
Carlo Pagano for pointing me toward Proposition \ref{delta_product_formula};
Andrew Granville for his advice;
and David McKinnon for his encouragement.
I am indebted to 
Jonathan Love for teaching me about lattices 
and proving Lemma \ref{lem:jon} and Proposition \ref{shapely_domains_exist};
to Jason Bell and John Doyle for their assistance with Lemma \ref{jasonjohn};
and to Xander Faber for answering my questions about projective space over arbitrary rings (cf.~Remark \ref{rmk:proj}).

\section{Applications} \label{sec:applications} 

To motivate Theorem \ref{thm:main}, we present two general applications.

\subsection{Counting points in the image of a morphism}

\begin{thm} \label{thm:img}
	Let $f : \P^m \to \P^M$ be a morphism of degree $d \ge 1$ defined over a number field $K$.
Let $\gamma$ be the number of $\varphi$ in $\PGL_{m+1}(K)$ such that $f \circ \varphi = f$.
	Then
	\[\Num_{H, f(\P^m(K))}(X) = \frac{c_K(f)}{\gamma} X^{n(m+1)/d} + 
	O
\begin{dcases}
X^{1/d} \log X & m = n = 1 \\
X^{(2n-1)/d} & m = 1 < n \\
X^{n(m+1/2)/d} \log X & m > 1, \, n \le 2 \\
X^{(n(m+1) - 1)/d} & m > 1, \, n > 2
\end{dcases}
	\]
where the implicit constants depend on $f$ and $K$.
\end{thm}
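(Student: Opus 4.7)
The plan is to relate $\Num_{H, f(\P^m(K))}(X)$ to the pullback count $\Num_{f^*\!H, \P^m(K)}(X)$ by fibering over the image, and then invoke Theorem~\ref{thm:main} together with a quantitative Hilbert irreducibility bound. Set $V := f(\P^m) \subseteq \P^M$. Finiteness of $\gamma$ forces $\dim V = m$ (otherwise the centralizer of $f$ in $\PGL_{m+1}(K)$ is positive-dimensional), so $f \colon \P^m \to V$ is generically finite, and the group $\Aut_K(f) := \{\varphi \in \PGL_{m+1}(K) : f \circ \varphi = f\}$ has order exactly $\gamma$. Let $R \subsetneq \P^m$ be the union of the fixed loci of the nontrivial elements of $\Aut_K(f)$, and put $B := f(R) \subsetneq V$.

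Decomposing the pullback count by fiber yields
\[
	\Num_{f^*\!H, \P^m(K)}(X) = \sum_{\substack{Q \in f(\P^m(K)) \\ H(Q) \le X}} \#f^{-1}(Q)(K).
\]
For $Q \in V(K) \setminus B(K)$, the action of $\Aut_K(f)$ on $f^{-1}(Q)(K)$ is free, so $\#f^{-1}(Q)(K)$ is a nonzero multiple of $\gamma$. Setting
\[
	Z := \{Q \in V(K) : \#f^{-1}(Q)(K) \notin \{0, \gamma\}\},
\]
the classical Hilbert irreducibility theorem, applied to the Galois closure of $K(\P^m)/K(V)$, shows that $Z$ is a thin subset of $V(K)$: outside $B(K)$ (itself the set of $K$-points of a proper closed subvariety), the condition $\#f^{-1}(Q)(K) > \gamma$ amounts to extra fixed points of the residual Galois action on the $\Aut_K(f)$-orbits of the generic fiber. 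Bounding fiber sizes by the geometric cover degree $[K(\P^m):K(V)]$ gives
\[
	\gamma \cdot \Num_{H, f(\P^m(K))}(X) = \Num_{f^*\!H, \P^m(K)}(X) + O_f\!\big(\Num_{H,Z}(X)\big),
\]
and Theorem~\ref{thm:main} then delivers
\[
	\Num_{H, f(\P^m(K))}(X) = \frac{c_K(f)}{\gamma} X^{n(m+1)/d} + O_f\!\big(E(X) + \Num_{H,Z}(X)\big),
\]
where $E(X)$ is the error term of Theorem~\ref{thm:main}.

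The last input is a quantitative estimate on $\Num_{H,Z}(X)$. The quantitative Hilbert irreducibility theorem (due to S.~D.~Cohen over $\Q$ and extended to number fields by Serre, Schinzel, and others) bounds thin subsets of $\P^N(K)$ by $O(X^{n(N+1/2)} \log X)$ in height $X$. Adapting this to thin subsets of the $m$-dimensional $V \subseteq \P^M$ and converting through the height scaling $H \circ f \asymp H^d$ yields $\Num_{H,Z}(X) \ll_f X^{n(m+1/2)/d} \log X$ when $m > 1$; for $m = 1$ the thin-set count is sparser still---$V$ is then a rational curve whose thin subsets arise from covers of degree $\ge 2$---and is absorbed into $E(X)$. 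Comparing $E(X) \ll X^{(n(m+1)-1)/d}$ (modulo log losses in the small regimes governed by the exceptional clauses of Theorem~\ref{thm:main}) with $\Num_{H,Z}(X)$ in each of the four cases $(m,n)=(1,1)$, $m = 1 < n$, $m > 1$ with $n \le 2$, and $m > 1$ with $n > 2$ yields the stated error terms. The main obstacle is the Hilbert irreducibility input: verifying the thinness of $Z$ through the Galois structure of the generic fiber of $f : \P^m \to V$, and then extracting the sharpest known thin-set bounds for varieties of dimension $m$ embedded in $\P^M$---a delicate matter since the exponent depends on both $m$ and $n$.
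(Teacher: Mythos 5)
Your skeleton matches the paper's proof: compare $\gamma\,\Num_{H,f(\P^m(K))}(X)$ with $\Num_{f^*\!H,\P^m(K)}(X)$, show the discrepancy is supported on a Hilbert-thin exceptional set, bound that set quantitatively, and feed in Theorem \ref{thm:main}; your final case analysis is also the paper's. But there is a genuine gap at the quantitative step. You place the exceptional set $Z=\{Q\in V(K):\#f^{-1}(Q)(K)\notin\{0,\gamma\}\}$ on the target $V=f(\P^m)$ and then invoke ``Cohen--Serre bounds adapted to thin subsets of the $m$-dimensional $V\subseteq\P^M$'' with the ambient height. No such off-the-shelf theorem exists: the quantitative bounds you cite are for thin subsets of projective space with its standard height, and counting points of $V$ by the ambient height is precisely what Theorem \ref{thm:main} is needed for in the first place. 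The only natural route to your claimed bound $\Num_{H,Z}(X)\ll X^{n(m+1/2)/d}\log X$ is to pull back: every $Q\in Z$ has a rational preimage $P$ with $H(P)\le CX^{1/d}$, these $P$ constitute $\Omega=\{P\in\P^m(K):\#f^{-1}(f(P))(K)\ne\gamma\}$, and Serre's bounds on $\P^m(K)$ apply to $\Omega$ --- \emph{provided} $\Omega$ is thin.

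That is the missing idea: thinness of $\Omega$ upstairs does not follow formally from thinness of $Z$ downstairs, because preimages of thin sets under finite morphisms need not be thin (for $f(z)=z^2$ over $\Q$, the set of squares is thin in the target, yet its preimage is all of $\P^1(\Q)$). Your Hilbert-irreducibility argument, as formulated, lives on the base $V$, so it does not deliver what the counting step needs. The paper does the corresponding work upstairs: $\Omega$ is covered by the fixed loci $\Fix(\varphi)$ for $\id\ne\varphi\in\Gamma_f$ (thin of type 1) together with the projection of those components of the fibre square $X_f=\P^m\times_f\P^m$ that are not graphs of elements of $\Gamma_f(K)$ (thin of type 2); the key input is that every rational section of $X_f\to\P^m$ is the graph of a mapping symmetry (Lemma \ref{X_f}, resting on Lemma \ref{jasonjohn}), so non-graph components map to $\P^m$ with degree at least $2$. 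A Galois-closure version of your argument run at points $P\in\P^m(K)$ rather than $Q\in V(K)$ would serve the same purpose, but some such upstairs statement must actually be proved. Two smaller slips: finiteness of the fibres is automatic for a nonconstant morphism from $\P^m$ (it neither needs nor follows from finiteness of $\gamma$), and fibres need not have at most $[K(\P^m):K(V)]$ points when $V$ is non-normal --- a uniform bound such as $d^m$ via B\'ezout is what one should use.
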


The proof of Theorem \ref{thm:img} requires a few preliminaries.

\begin{lem} \label{jasonjohn}
Let $\varphi : \P^k \dashrightarrow \P^m$ be a rational map
and 
let $f : \P^m \to \P^l$ be a nonconstant morphism.
Then $\varphi$ and $f \circ \varphi$ have the same domain.
\end{lem}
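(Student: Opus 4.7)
The strategy is to compare the base loci of $\varphi$ and $f \circ \varphi$ via primitive homogeneous representations. Fix a primitive lift $\varphi = (\varphi_0 : \ldots : \varphi_m)$, that is, homogeneous polynomials $\varphi_0, \ldots, \varphi_m \in K[x_0, \ldots, x_k]$ of common degree with no nontrivial common factor; then $\dom(\varphi) = \P^k \setminus V(\varphi_0, \ldots, \varphi_m)$. Fix also a lift $F = (F_0, \ldots, F_l)$ of $f$, so the $F_j$ are homogeneous forms of degree $d \ge 1$ with no common zero in $\Bar K^{m+1}$ other than the origin. The composition is represented by $(F_0(\varphi), \ldots, F_l(\varphi))$, and the inclusion $\dom(\varphi) \subseteq \dom(f \circ \varphi)$ is automatic, since $f$ is defined everywhere on $\P^m$.

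For the reverse inclusion, the main task is to show that the natural representation $(F_j(\varphi))_j$ is \emph{itself primitive}, for then the base locus of $f \circ \varphi$ coincides with $V(F_j(\varphi))_j$. I would first verify the set-theoretic equality $V(F_0(\varphi), \ldots, F_l(\varphi)) = V(\varphi_0, \ldots, \varphi_m)$ over $\Bar K$ by a pointwise check. The inclusion $\supseteq$ is immediate from $F_j(0) = 0$ (each $F_j$ being homogeneous of positive degree), while $\subseteq$ follows from the morphism hypothesis on $f$: if every $F_j$ vanishes at $(\varphi_0(x), \ldots, \varphi_m(x))$, then this tuple must be the origin, since $f$ has no base points.

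Primitivity of $(F_j(\varphi))_j$ then follows from a codimension argument. Suppose some irreducible $g \in \Bar K[x_0, \ldots, x_k]$ divided each $F_j(\varphi)$; then $V(g) \subseteq V(F_j(\varphi))_j = V(\varphi_i)_i$. The condition $\gcd(\varphi_0, \ldots, \varphi_m) = 1$ (which is preserved under the extension $K \subseteq \Bar K$) forces $V(\varphi_i)_i$ to have codimension at least two in $\P^k$, since by the Nullstellensatz any codimension-one component of it would be cut out by an irreducible polynomial $h$ dividing every $\varphi_i$. But $V(g)$ is an irreducible hypersurface of codimension exactly one, giving the desired contradiction, whence $(F_j(\varphi))_j$ is primitive and $\dom(f \circ \varphi) = \P^k \setminus V(F_j(\varphi))_j = \dom(\varphi)$. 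The codimension step is the only delicate point; it ultimately rests on $\P^k$ being a smooth variety, so that divisors are of pure codimension one, together with the UFD property of the polynomial ring.
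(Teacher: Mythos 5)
Your proof is correct, and it shares the overall skeleton of the paper's argument: both pass to a primitive (gcd-one) homogeneous representation of $\varphi$, invoke the standard fact that the domain of a rational map is the complement of the common zero locus of such a representation, and compare $V(\Phi)$ with $V(F\circ\Phi)$, with the inclusion $V(F\circ\Phi)\subseteq V(\Phi)$ coming from $f$ being a morphism and the reverse from $\deg f\ge 1$. Where you genuinely diverge is in how primitivity of the composite representation $(F_j(\varphi))_j$ is established. The paper gets it directly from the Nullstellensatz identity $X_i^e=\sum_j G_{ij}F_j$: substituting $X\mapsto\Phi$ shows any common divisor of the $F_j(\Phi)$ divides each $\Phi_i^e$, hence each $\Phi_i$, contradicting $\gcd(\Phi)=1$. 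You instead first prove the set-theoretic equality $V(F_j(\varphi))=V(\varphi_i)$ and then rule out a common irreducible factor $g$ by a dimension count: $\gcd(\varphi_i)=1$ forces $V(\varphi_i)$ to have codimension at least two (any codimension-one component would be cut out by a common irreducible factor, via the UFD property and the Nullstellensatz), while $V(g)$ is a nonempty hypersurface of codimension one (for $k\ge 1$; the case $k=0$ is trivial). Your route is geometrically transparent about \emph{why} primitivity holds—the common zero locus is simply too small to contain a divisor—whereas the paper's substitution argument is purely algebraic, slightly shorter, and dovetails with the pseudoinverse machinery it develops later (Proposition \ref{sylv_null}); both ultimately rest on the Nullstellensatz, yours additionally using Krull's principal ideal theorem and the preservation of the gcd under the extension $K\subseteq\Bar K$, which you correctly note and which is unproblematic in characteristic zero. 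One cosmetic remark: the "smoothness of $\P^k$" you invoke at the end is not really the operative fact; what you use is that the homogeneous coordinate ring is a UFD, so codimension-one irreducible subvarieties are irreducible hypersurfaces.
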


\begin{proof}[Proof]
Let $F$ and $\Phi$ be homogeneous lifts of $f$ and $\varphi$, respectively.
Assume $\gcd(\Phi) = 1$.
Since $f$ is a morphism, the Nullstellensatz implies there exist polynomials $G_{ij}$ and a positive integer $e$ such that 
\begin{equation} \label{jason_eq1}
	X_i^e = \sum_{j=0}^l G_{ij}(X) F_j(X)
\end{equation} 
for all $0 \le i \le m$.
Replacing $X$ by $\Phi(X)$ in \eqref{jason_eq1} shows that 
\[
	\Phi_i(X)^e = \sum_{j=0}^l G_{ij}(\Phi(X)) F_j(\Phi(X))
\]
whence $\gcd(F \circ \Phi) = 1$ as well.
Thus by \cite[Example A.1.2.6(d), p.~20]{HindrySilverman}, 
the domains of $\varphi$ and $f \circ \varphi$ are precisely
\begin{equation} \label{hs_1}
\dom {(\varphi)} = \P^m \setminus V(\Phi)
\quad \text{ and } \quad 
\dom {(f \circ \varphi)} = \P^m \setminus V(F \circ \Phi).
\end{equation}
But $V(\Phi) \subseteq V(F \circ \Phi)$ because $f$ is nonconstant, 
and $V(F \circ \Phi) \subseteq V(\Phi)$ because $f$ is a morphism.
Thus they are equal, so by \eqref{hs_1} the domains of $\varphi$ and $f \circ \varphi$ coincide.
\end{proof}

Phrased another way,
Lemma \ref{jasonjohn} says that $f \circ \varphi$ and $\varphi$ have the same indeterminacy locus. 
In particular, if $f \circ \varphi$ is a morphism, then $\varphi$ must be a morphism.

\begin{rmk}
In general, if $\psi$, $\varphi$ are rational maps whose composition $\psi \circ \varphi$ is defined, 
then their degrees and indeterminacy loci are related by
\[
\deg \psi \circ \varphi \le \deg \psi \cdot \deg \varphi
\quad\text{and}\quad 
\mathcal{I}_{\psi \, \circ \, \varphi} \subseteq 
\mathcal{I}_\varphi \cup \varphi^{-1}(\mathcal{I}_\psi).
\]
Equality need not hold in either case.
For instance, if 
\begin{align*}
	\varphi : \P^2 &\dashrightarrow \P^2 \\
	\varphi(x : y : z) &= (xy : yz : xz) 
\end{align*}
then $\deg \varphi = 2$
and $\mathcal{I}_\varphi = \{(1 : 0 : 0), (0 : 1 : 0), (0 : 0 : 1)\}$;
yet
\begin{align*}
\varphi(\varphi(x : y : z)) 
&= ((xy)(yz) : (yz)(xz) : (xy)(xz)) \\
&= (x y^2 z : xyz^2 : x^2 yz) \\
&= (y : z : x)
\end{align*}
so that $\deg \varphi \circ \varphi = 1$ and $\mathcal{I}_{\varphi \, \circ \, \varphi} = \varnothing$.
\end{rmk}

\begin{defn}
	Let $f : \P^m \to \P^M$ be a morphism defined over an algebraically closed field.
	A \emph{mapping symmetry} of $f$ is a rational map $\varphi : \P^m \dashrightarrow \P^m$ such that $f \circ \varphi = f$.
	The collection of all mapping symmetries of $f$ will be denoted $\Gamma_f$.
\end{defn}

\begin{propn} \label{symmetry_group}
Suppose $f$ is nonconstant.
Then:
\begin{enumerate}[(i)]
	\item $\Gamma_f$ is a finite subgroup of $\Aut(\P^m) \cong \PGL_{m+1}$ of order at most $(\deg f)^M$.
	\item The critical locus of $f$ is fixed (as a variety) by every element of $\Gamma_f$.
\end{enumerate}
\end{propn}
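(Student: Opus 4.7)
I will handle (i) and (ii) separately. For (i), the first step is to locate $\Gamma_f$ inside $\PGL_{m+1}$. Given $\varphi \in \Gamma_f$, Lemma \ref{jasonjohn} (with $k = m$ and $l = M$) says that $\varphi$ and $f \circ \varphi = f$ share a domain, which is all of $\P^m$; so $\varphi$ is a morphism. The multiplicativity of degree under composition, $\deg(f \circ \varphi) = \deg(f) \cdot \deg(\varphi)$, combined with $\deg(f) \neq 0$, forces $\deg(\varphi) = 1$, placing $\varphi$ in $\Aut(\P^m) \cong \PGL_{m+1}$. Closure of $\Gamma_f$ under composition and inverses is immediate.

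The next task is finiteness together with the cardinality bound. The key observation is that $\Gamma_f$ permutes fibers of $f$: if $\varphi \in \Gamma_f$ and $P \in \P^m$, then $f(\varphi(P)) = f(P)$. Pick a point $P$ outside the critical locus of $f$, so that $f^{-1}(f(P))$ is finite; writing $f(P) = (q_0 : \ldots : q_M)$ with (say) $q_0 \neq 0$, the fiber is cut out in $\P^m$ by the $M$ hypersurfaces $q_0 F_j - q_j F_0 = 0$ of degree $d := \deg(f)$, and taking any $m$ of them Bezout's theorem yields $|f^{-1}(f(P))| \le d^m \le d^M$. Viewing $\Gamma_f$ as a Zariski-closed subvariety of $\PGL_{m+1}$ (cut out by the conditions $F_j \circ \varphi = \lambda(\varphi) F_j$ for a common scalar $\lambda$), any positive-dimensional component would produce a positive-dimensional orbit $\Gamma_f \cdot P \subseteq f^{-1}(f(P))$, contradicting the finiteness just established. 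Hence $\Gamma_f$ is $0$-dimensional, and therefore finite. Now that $\Gamma_f$ is finite, each nontrivial $\varphi \in \Gamma_f$ has fixed locus a proper subvariety of $\P^m$; choosing $P$ outside their finite union as well makes $\Stab_{\Gamma_f}(P)$ trivial, whence $|\Gamma_f| = |\Gamma_f \cdot P| \le d^M$.

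Part (ii) is essentially the chain rule. The critical locus $\mathcal{C}_f$ is defined as the vanishing locus of all $m \times m$ minors of the Jacobian of a lift of $f$, i.e., the points where $df$ has rank less than $m$. Since $\varphi \in \Gamma_f \subseteq \Aut(\P^m)$, its differential $d\varphi_P$ is an isomorphism at every $P$. Differentiating the identity $f \circ \varphi = f$ gives $df_P = df_{\varphi(P)} \circ d\varphi_P$, so $\mathrm{rank}(df_P) = \mathrm{rank}(df_{\varphi(P)})$. Therefore $P \in \mathcal{C}_f$ if and only if $\varphi(P) \in \mathcal{C}_f$, yielding $\varphi(\mathcal{C}_f) = \mathcal{C}_f$.

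The main obstacle is the bound in (i): one must cleanly separate the qualitative claim of finiteness from the quantitative bound $|\Gamma_f| \le d^M$. Finiteness requires an algebraic-group or dimension-theoretic input, since a priori an automorphism preserving $f$ could sit in a positive-dimensional family; only once finiteness is secured does the \emph{generic stabilizer is trivial} principle reduce $|\Gamma_f|$ to a single orbit size, which the Bezout fiber estimate then controls.
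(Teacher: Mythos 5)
Your proposal is correct and is essentially the paper's own proof: Lemma \ref{jasonjohn} plus multiplicativity of degree place $\Gamma_f$ inside $\Aut(\P^m)\cong\PGL_{m+1}$, the observation that an orbit sits inside a fibre together with B\'ezout gives finiteness and the bound (your Zariski-closure/identity-component detour and the trivial-stabilizer choice of $P$ just expand the paper's one-line ``$\varphi(\xi)\in f^{-1}(f(\xi))$ for $\xi$ generic''), and part (ii) is the same chain-rule argument. Two small imprecisions are worth noting but harmless: non-criticality of $P$ only shows $P$ is isolated in its fibre (in fact every fibre of a nonconstant morphism $\P^m\to\P^M$ is finite, since the preimage of a hyperplane avoiding $f(P)$ is an ample divisor that would meet any positive-dimensional component), and classical B\'ezout applied to ``any $m$'' of the hypersurfaces $q_0F_j-q_jF_0$ is not justified because their intersection may be positive-dimensional --- but refined B\'ezout (the paper's citation, Fulton, Example 12.3.1) applied to all $M$ of them already gives the stated bound $(\deg f)^M$.
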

\begin{rmk*}
We shall not need part (ii) in the present article.
\end{rmk*}
\begin{proof} \hfill 
\begin{enumerate}[(i)]
\item 
Suppose $f \circ \varphi = f$.
By Lemma \ref{jasonjohn}, 
$\varphi$ is a morphism. 
Comparing degrees yields $\deg f \cdot \deg \varphi = \deg f$.
But $\deg f \ne 0$, so $\deg \varphi = 1$.
Since every linear endomorphism of $\P^m$ is invertible,
it follows that $\Gamma_f \leq \Aut(\P^m)$. 
For finiteness,
simply note that $\varphi(\xi) \in f^{-1}(f(\xi))$ 
where $\xi$ is generic. 
The cardinality estimate 
follows from B\'ezout's theorem
\cite[Example 12.3.1, p.~223]{Fulton}.
\item Let $P \in \P^m$ be a critical point of $f$, 
meaning $\rk f'(P) < m$ where $f'(P)$ is the differential of $f$ at $P$.
If $f = f \circ \varphi$ 
then 
\[f'(P) = (f \circ \varphi)'(P) = f'(\varphi(P)) \circ \varphi'(P)\]
by the chain rule.
Part (i) implies $\varphi'(P)$ is invertible, 
so $\rk {(f \circ \varphi)}'(P) = \rk f'(P) < m$.
Thus $P$ is a critical point of $f \circ \varphi$.
\qedhere 
\end{enumerate}
\end{proof}

The geometric significance of $\Gamma_f$ is encapsulated by the following result.

\begin{lem} \label{X_f}
	Let 
	\[X_f := \P^m \times_f \P^m = \{(P, Q) \in \P^m \times \P^m : f(P) = f(Q)\}\]
	be the fibre square of $f$,
	and let 
	\begin{align*}
		\pi : X_f &\to \P^m
		\\
		(P, Q) &\mapsto P
	\end{align*}
	be the first projection.
	Then: 
	\begin{enumerate}[(i)] 
		\item $\pi^{-1}(P) \cong f^{-1}(f(P))$ for all $P$ in $\P^m$.
		\item Every rational section of $\pi$ has the form $\id \times \varphi$ for some $\varphi \in \Gamma_f$ (and conversely).
		\item Let $\Irr(X_f)$ denote the set of irreducible components of $X_f$.
		If $f$ is nonconstant, then the assignment
		\[
		\Gamma_f \to \operatorname{Irr}(X_f), \quad \varphi \mapsto (\id \times \varphi)(\P^m) \ \text{(= the graph of $\varphi$)}
		\]
		is well-defined and injective.
	\end{enumerate}
\end{lem}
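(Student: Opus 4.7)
The three parts are essentially formal; the plan is to dispatch them in order.

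\textbf{Part (i)} is immediate from the definition of the fibre square: the fibre $\pi^{-1}(P) = \{P\} \times f^{-1}(f(P))$ is carried isomorphically onto $f^{-1}(f(P))$ by the second projection.

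For \textbf{part (ii)}, I would write a rational section $s : \P^m \dashrightarrow X_f$ of $\pi$ as a pair $s = (\alpha, \beta)$ of rational self-maps of $\P^m$. The relation $\pi \circ s = \id$ forces $\alpha = \id$, and the constraint that the image of $s$ lie in $X_f$ translates to $f \circ \beta = f$ wherever both sides are defined, i.e., $\beta \in \Gamma_f$ by definition. Conversely, for any $\varphi \in \Gamma_f$, the map $\id \times \varphi$ is tautologically a rational section of $\pi$.

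For \textbf{part (iii)}, the real content is to show that the graph $\Gamma_\varphi := (\id \times \varphi)(\P^m)$ is a full irreducible component of $X_f$, not merely an irreducible closed subvariety. By Proposition \ref{symmetry_group}(i), $\varphi$ is an automorphism of $\P^m$, so $\Gamma_\varphi$ is the isomorphic image of $\P^m$ and hence irreducible of dimension $m$. On the other hand, since $f$ is a nonconstant morphism between projective spaces, $f^*\mathcal{O}(1) = \mathcal{O}(d)$ is ample on $\P^m$, so by the standard ampleness criterion $f$ is a finite morphism; being stable under base change, the projection $\pi$ is also finite, whence $X_f$ is purely of dimension $m$. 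Any $m$-dimensional irreducible closed subvariety of a purely $m$-dimensional variety is automatically an irreducible component, so $\Gamma_\varphi \in \Irr(X_f)$. Injectivity of $\varphi \mapsto \Gamma_\varphi$ is then immediate, since a morphism is recovered set-theoretically from its graph: if $\Gamma_\varphi = \Gamma_\psi$, then evaluating at each $P \in \P^m$ gives $(P, \varphi(P)) = (P, \psi(P))$, hence $\varphi = \psi$.

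The only potentially delicate point is the equidimensionality of $X_f$, which I resolve via the finiteness of $f$. If one wished to avoid invoking the ampleness criterion, one could argue directly that a positive-dimensional fibre of $f$ would force $f$ to factor through a subvariety of $\P^M$ of dimension less than $m$, contradicting the fact that $f^*\mathcal{O}(1)$ has maximal Iitaka dimension on $\P^m$.
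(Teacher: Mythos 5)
Parts (i) and (ii) are fine, and they are the same triviality the paper records. The issue is in part (iii): the assertion that finiteness of $\pi$ makes $X_f$ \emph{purely} of dimension $m$ is false in general. Finiteness of $\pi$ only bounds the dimension of each irreducible component from above by $m$; it does not rule out components of smaller dimension. Concretely, take $m=1$, $M=2$ and let $f:\P^1\to\P^2$ be the normalization of a nodal cubic, e.g.\ $f(s:t)=\big(s(t^2-s^2):t(t^2-s^2):s^3\big)$, whose image is $\{y^2z=x^2(x+z)\}$; the three cubics have no common zero, so $f$ is a (finite) nonconstant morphism, yet $f(1:1)=f(1:-1)=(0:0:1)$ while $f$ is injective elsewhere, so $X_f$ is the diagonal together with the two isolated points $\big((1:1),(1:-1)\big)$ and $\big((1:-1),(1:1)\big)$ --- not equidimensional. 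Hence the sentence ``whence $X_f$ is purely of dimension $m$'' is a step that fails, and the subsequent appeal to pure dimensionality is unsupported. (Your closing aside is also shaky: a single positive-dimensional fibre would not force the image of $f$ to have dimension less than $m$; the correct quick reason fibres are finite is that a positive-dimensional fibre would have to meet each hypersurface $F_j=0$, contradicting that the $F_j$ have no common zero --- equivalently, $f^*\mathcal{O}(1)$ is ample and hence nontrivial on any curve contained in a fibre.)

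The repair is immediate, and it is exactly the paper's argument: the graph of $\varphi$ is closed and irreducible of dimension $m$, hence contained in some component $Z\in\Irr(X_f)$; since the fibres of $\pi$ are finite by part (i) (or since $\pi$ is finite, as you argue), the fibre-dimension theorem gives $\dim Z\le m$ for that particular component, so the graph equals $Z$ and is itself a component. Injectivity then goes through as you say. In short: replace the global pure-dimensionality claim by the upper bound $\dim Z\le m$ for the component containing the graph, which your own finiteness argument already delivers.
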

\begin{proof}
Parts (i) and (ii) are trivial.
Part (iii) follows from Lemma \ref{jasonjohn} and standard facts about irreducible components.
To wit, since each mapping symmetry $\varphi$ is continuous and closed,
the image of $\id \times \varphi$ is a closed irreducible subset of $X_f$, 
so it is contained in some irreducible component $Z$ of $X_f$.
Clearly $\dim {(\id \times \varphi)(\P^m)} = m$,
whereas by part (i) and the fibre--dimension theorem, $\dim Z \le m$.
It follows that the dimensions are equal,
whence $(\id \times \varphi)(\P^m) = Z$.
Injectivity is obvious: functions are determined by their graphs.
\end{proof}

\begin{proof}[Proof of Theorem \ref{thm:img}]
By Proposition \ref{symmetry_group}(i), 
$\gamma = \#\Gamma_f(K)$ is finite and positive.
Since $f$ is nonconstant,
$f^{-1}(f(P))$ is finite for every $P \in \P^m$
(and nonempty because it contains $P$).
Counting fibrewise, 
we thus have 
\[
\Num_{H,f(\P^m(K))}(X)
=
\sum_{\substack{Q \in f(\P^m(K)) \\ H(Q) \le X}} 1
= 
\sum_{\substack{P \in \P^m(K) \\ H(f(P)) \le X}} 
\frac{1}{\# f^{-1}(f(P))(K)}
\]
so that 
\[
\Num_{H,f(\P^m(K))}(X)
-
\frac{\Num_{f^* \! H, \P^m(K)}(X)}{\gamma}
=
\sum_{\substack{P \in \P^m(K) \\ H(f(P)) \le X}} 
\Big(
\frac{1}{\# f^{-1}(f(P))(K)} - \frac{1}{\gamma}
\Big).
\]
The sum is supported on 
\[
\Omega := \{P \in \P^m(K) : \#f^{-1}(f(P))(K) \ne \gamma\},
\]
and each summand---being a difference of reciprocal integers---lies in $(-1, 1)$.
Moreover,
since $f$ is a morphism, 
there exists a constant $C > 0$ 
such that if $H(f(P)) \le X$ then $H(P) \le CX^{1/d}$.
Hence
\begin{equation} \label{N_minus_gN}
\left| \Num_{H,f(\P^m(K))}(X)
-
\frac{\Num_{f^* \! H, \P^m(K)}(X)}{\gamma}
\right| 
\le \Num_{H, \Omega}(CX^{1/d}).
\end{equation}

Consider the natural action of $\Gamma_f$ on $\P^m$.
Clearly,
the orbit under $\Gamma_f$ of any given point $P$
is contained in the fibre $f^{-1}(f(P))$.
In particular,
\begin{equation} \label{orbit_vs_fibre}
\# \Gamma_f(K)P \le \#f^{-1}(f(P))(K).
\end{equation}
Moreover, by the orbit--stabilizer theorem, 
\begin{equation} \label{orbit_vs_group}
\# \Gamma_f(K)P \ne \gamma 
\iff
P \in \bigcup_{\varphi \in \Gamma_f \setminus \{\id\}} \Fix(\varphi).
\end{equation}
Note that by Proposition \ref{symmetry_group}(i),
the set on the right 
is a finite union of proper (linear) subvarieties of $\P^m$, 
and is therefore a thin set (of type 1).

Suppose \eqref{orbit_vs_fibre} is a strict inequality.
Then there exists a point $Q \in \P^m(K)$ 
such that 
$f(P) = f(Q)$ 
yet $\varphi(P) \ne Q$ for any $\varphi$ in $\Gamma_f(K)$.
It follows that $(P, Q)$ is a $K$-point 
of the variety $X'$ given by the union of all the irreducible components of $X_f$ which are \emph{not} 
the graph of 
some $\varphi \in \Gamma_f(K)$ (cf.~Lemma \ref{X_f}(iii)).
In particular, 
\begin{equation} \label{orbit_vs_fibre_implies}
	\#\Gamma_f(K)P < \#f^{-1}(f(P))(K) \implies P \in \pi(X'(K)).
\end{equation}
By Lemma \ref{X_f}(i) and (ii), 
the set on the right is thin (of type 2).

Combining \eqref{orbit_vs_group} and \eqref{orbit_vs_fibre_implies} 
shows that $\Omega$ itself is thin.
By \cite[9.7, 13.1]{Serre}, 
the counting function of $\Omega$ 
satisfies 
\begin{equation} \label{N_thin}
	\Num_{H, \Omega}(X) 
	=
	O 
	\begin{dcases}
		X^n & m = 1 \\
		X^{n(m+1/2)} \log X & m > 1
	\end{dcases}
\end{equation}
where the implicit constants depend on $f$ (via $\Omega$) and $K$.
Inserting \eqref{N_thin} into \eqref{N_minus_gN}
and using Theorem \ref{thm:main} 
yields 
\[
\Num_{H, f(\P^m(K))}(X)
=
\frac{c_K(f)}{\gamma} X^{n(m+1)/d}
+ 
O\big(X^{(nm+n-1)/d} \underbrace{\cdot \log X}_{\text{if }nm = 1}\big)
+
O\big(X^{nm/d}\underbrace{\cdot X^{n/2d} \log X}_{\text{if }m>1}\big).
\]
The rest is untangling cases.
\end{proof}

An explicit error term in Theorem \ref{thm:img} 
could be obtained using an effective form of Hilbert's ireducibility theorem 
(as in, e.g.,~\cite[Theorem 1.1]{PS} for $m=1$).

\subsection{A dynamical Schanuel theorem}

\begin{thm} \label{thm:ds}
	Let $f$ be an endomorphism of $\P^M$ of degree $d \ge 2$ defined over a number field $K$ 
	and let 
	\[\Hat{H}_f(P) = \lim_{i \to \infty} H(f^i(P))^{1/d^i} = \exp \hat{h}_f(P) \qquad (P \in \P^M(\Bar{K}))\]
	be the exponential canonical height attached to $f$.
	Then for every morphism $g : \P^m \to \P^M$ of degree $e \ge 1$ defined over $K$, 
	\begin{enumerate}[(i)]
\item the constants $c_K(f^i \circ g)$ tend to a finite positive limit $\Hat{c}_{K,f}(g)$ as $i \to \infty$, and 
\item as $X \to \infty$, \[\Num_{g^*\!\Hat{H}_f\!, \P^m(K)} \sim \Hat{c}_{K,f}(g) X^{n(m+1)/e}.\]
\end{enumerate}
\end{thm}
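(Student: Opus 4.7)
The plan is to apply Theorem \ref{thm:main} to each iterate morphism $h_i := f^i \circ g$, which has degree $e d^i$, and pass to the limit $i \to \infty$ using the defining relation $\Hat{H}_f = \lim_i (H \circ f^i)^{1/d^i}$. Writing $N_i(Y) := \#\{P \in \P^m(K) : H(f^i(g(P))) \le Y\}$ and substituting $Y = X^{d^i}$ in Theorem \ref{thm:main}, one obtains
\[
N_i(X^{d^i}) = c_K(h_i) X^{n(m+1)/e} + o\bigl(X^{n(m+1)/e}\bigr) \qquad (X \to \infty),
\]
the error estimate following from the power-saving clause of the main theorem (for each fixed $i$). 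The standard Call--Silverman uniform bound $|\hat{h}_f - d^{-i}\log H \circ f^i| \le C/d^i$ implies $\Hat{H}_f(Q)^{d^i}/H(f^i(Q)) \in [e^{-C},e^{C}]$ uniformly in $Q$ and $i$. Setting $s_i := e^{C/d^i} \to 1$ produces the sandwich
\[
N_i\bigl((X/s_i)^{d^i}\bigr) \,\le\, \Num_{g^*\Hat{H}_f,\,\P^m(K)}(X) \,\le\, N_i\bigl((s_i X)^{d^i}\bigr),
\]
so for every fixed $i$ the ratio $\Num_{g^*\Hat{H}_f,\,\P^m(K)}(X)/X^{n(m+1)/e}$ has $\liminf \ge c_K(h_i) s_i^{-n(m+1)/e}$ and $\limsup \le c_K(h_i) s_i^{n(m+1)/e}$. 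Thus part (ii) reduces to part (i): convergence $c_K(h_i) \to \Hat{c}_{K,f}(g)$ immediately furnishes the asymptotic.

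To attack (i), I recast $c_K(h)$ placewise using the product formula and homogeneity of the chosen lift:
\[
c_K(h)/c_K(m) = \prod_v I_v(h),
\]
where $I_v(h) = \mu_v\{z \in K_v^{m+1} : |H(z)|_v \le 1\}$ at archimedean $v$ and $I_v(h) = \int_{O_v^{m+1}} (|z|_v / |H(z)|_v^{1/d_h})^{n_v(m+1)}\,d\mu_v$ at nonarchimedean $v$ (obtained by absorbing $H(h)^{n(m+1)/d_h}$ into the local factors via the change of variables $z \mapsto |H|_v^{-1/d_h} z$). For $h_i = f^i \circ g$ with lift $H_i = F^i \circ G$, the local data is controlled by $|F^i(G(z))|_v^{1/(e d^i)}$. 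By the telescoping argument underlying Call--Silverman, this converges locally uniformly on $K_v^{m+1} \setminus \{0\}$ to $\exp(\Hat{G}_{f,v}(G(z))/e)$, where $\Hat{G}_{f,v}$ is the $v$-adic dynamical Green's function of $f$; bounded convergence then yields a pointwise limit $\Hat{I}_v(g) \in (0,\infty)$ at each place.

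To convert pointwise convergence of the local factors into convergence of their infinite product, I invoke good reduction: at every nonarchimedean $v$ at which both $f$ and $g$ reduce well---cofinitely many---every $h_i$ reduces well, and choosing suitably normalized integral lifts gives $I_v(h_i) = 1$ for all $i$. The archimedean product is finite and the nonarchimedean product is effectively finite; exchange of limit with product is thus legitimate, and setting $\Hat{c}_{K,f}(g) := c_K(m)\prod_v \Hat{I}_v(g) \in (0,\infty)$ establishes (i). The sandwich of the first paragraph then delivers (ii) with asymptotic constant $\Hat{c}_{K,f}(g)$.

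The chief obstacle is the archimedean convergence of $I_v(h_i)$: rigorously establishing $\mu_v\{|F^i(G(z))|_v \le 1\} \to \mu_v\{\Hat{G}_{f,v}(G(z)) \le 0\}$ requires controlling potential mass on the boundary of the limit set. This can be handled either by showing that boundary has Lebesgue measure zero---a standard consequence of continuity and strict (pluri)subharmonicity of $\Hat{G}_{f,v}$ off the escape locus---or by a direct $\varepsilon$-sandwich $\{\Hat{G}_{f,v} \circ G \le -\varepsilon\} \subseteq \{|F^i(G)|_v \le 1\} \subseteq \{\Hat{G}_{f,v} \circ G \le \varepsilon\}$ valid for all sufficiently large $i$, followed by $\varepsilon \to 0^+$ and continuity of the cumulative distribution $\varepsilon \mapsto \mu_v\{\Hat{G}_{f,v} \circ G \le \varepsilon\}$. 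The nonarchimedean analogue is more elementary owing to the compact-open topology of the integrands.
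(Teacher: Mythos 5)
Your proposal is correct in substance, but for part (i) it takes a genuinely different and considerably heavier route than the paper, and in fact you already had everything you needed at the end of your first paragraph. From your sandwich $N_i\bigl((X/s_i)^{d^i}\bigr) \le \Num_{g^*\!\Hat{H}_f\!,\P^m(K)}(X) \le N_i\bigl((s_iX)^{d^i}\bigr)$ with $s_i \to 1$ you obtained, for every fixed $i$, that $\liminf_X \Num/X^{n(m+1)/e} \ge c_K(f^i\circ g)\,s_i^{-n(m+1)/e}$ and $\limsup_X \Num/X^{n(m+1)/e} \le c_K(f^i\circ g)\,s_i^{n(m+1)/e}$; letting $i \to \infty$ in these two families of inequalities gives $\limsup_i c_K(f^i\circ g) \le \liminf_X \le \limsup_X \le \liminf_i c_K(f^i\circ g)$, which simultaneously forces the limit in (i) to exist and identifies it with the limit of the normalized counting function, i.e.\ (ii). This is exactly the paper's proof: no Green's functions, good reduction, or convergence of local measures is needed for the theorem itself, so your claim that ``(ii) reduces to (i)'' undersells your own inequalities. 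Your second and third paragraphs---placewise convergence of $c_{K,v}(f^i\circ g)$ via dynamical Green's functions, triviality of the factors at places of good reduction, and the archimedean measure convergence---are essentially the content of the paper's Proposition~\ref{green_local_factors}, proved there \emph{after} the theorem; what that extra work buys is an explicit formula for $\Hat{c}_{K,f}(g)$, not the statement at hand. The one genuinely delicate point you flag, continuity of $\varepsilon \mapsto \mu_v\{\mathcal{G}_{F,v}\circ G \le \varepsilon\}$ at an archimedean place, is real but is dispatched more cheaply than by plurisubharmonicity: by homogeneity $\mathcal{G}_{F,v}(G(\lambda z)) = \mathcal{G}_{F,v}(G(z)) + e\log|\lambda|_v$, one has $\mu_v\{\mathcal{G}_{F,v}\circ G \le \varepsilon\} = e^{n_v(m+1)\varepsilon/e}\,\mu_v\{\mathcal{G}_{F,v}\circ G \le 0\}$, so the distribution function is continuous and your $\varepsilon$-sandwich closes that argument.
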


\begin{proof}
The canonical height enjoys (and is characterized by) the properties:
	\begin{enumerate}[(a)]
		\item there exists $C > 0$ such that $C^{-1} H(Q) \le \Hat{H}_f(Q) \le C H(Q)$ for all $Q$ in $\P^M(K)$;
		\item $\Hat{H}_f(f(Q)) = \Hat{H}_f(Q)^d$ for all $Q$ in $\P^M(K)$.
	\end{enumerate}
Let $P \in \P^m(K)$ and $X \ge 1$.
If $\Hat{H}_f(g(P)) \le X$,
then for any fixed $i \ge 0$,
\[H(f^i(g(P))) \le C\Hat{H}_f(f^i(g(P))) = C\Hat{H}_f(g(P))^{d^i} \le CX^{d^i}\]
by properties (a) and (b).
Thus 
\[\Num_{g^*\!\Hat{H}_f\!, \P^m(K)}(X)
	\le \Num_{(f^i \circ g)^*\! H, \P^m(K)}(CX^{d^i}).
\]
Now Theorem \ref{thm:main} implies the r.h.s.~is asymptotic to $c_K(f^i \circ g) (CX^{d^i})^{n(m+1)/ed^i}$ as $X \to \infty$.
It follows that
\begin{equation} \label{eq:cHat_upper}
\limsup_{X \to \infty} 
\frac{\Num_{g^*\!\Hat{H}_f\!, \P^m(K)}(X)}{X^{n(m+1)/e}}
\le c_K(f^i \circ g) C^{n(m+1)/ed^i}
\end{equation}
for each fixed $i$.
But since the l.h.s.~of \eqref{eq:cHat_upper} is independent of $i$, taking $i \to \infty$ yields
\begin{equation} \label{eq:limsup_liminf}
	\limsup_{X \to \infty} 
	\frac{\Num_{g^*\!\Hat{H}_f\!, \P^m(K)}(X)}{X^{n(m+1)/e}}
	\le
	\liminf_{i \to \infty} c_K(f^i \circ g) \underbrace{C^{n(m+1)/ed^i}}_{\to 1}
	= 
	\liminf_{i \to \infty} c_K(f^i \circ g)
\end{equation}
because $C > 0$ and $d > 1$.
Similar reasoning shows
\[
\Num_{(f^i \circ g)^*\! H, \P^m(K)}(C^{-1} X^{d^i}) \le \Num_{g^*\!\Hat{H}_f\!, \P^m(K)}(X)
\]
whence
\begin{equation} \label{eq:cHat_lower}
c_K(f^i \circ g) C^{-n(m+1)/ed^i} \le \liminf_{X \to \infty} 
\frac{\Num_{g^*\!\Hat{H}_f\!, \P^m(K)}(X)}{X^{n(m+1)/e}}
\end{equation}
for each fixed $i$,
so that
\begin{equation} \label{eq:liminf_limsup}
\limsup_{i \to \infty} c_K(f^i \circ g)
\le
\liminf_{X \to \infty} 
\frac{\Num_{g^*\!\Hat{H}_f\!, \P^m(K)}(X)}{X^{n(m+1)/e}}.
\end{equation}
Stringing together the inequalities 
\eqref{eq:liminf_limsup} $\le$ \eqref{eq:limsup_liminf}
reveals, in one fell swoop, that the limit
\[
\Hat{c}_{K,f}(g) := 
\lim_{i \to \infty} c_K(f^i \circ g) 
\]
exists, and thereupon that
\begin{equation} \label{eq:cHat_is}
\lim_{X \to \infty} \frac{\Num_{g^*\!\Hat{H}_f\!, \P^m(K)}(X)}{X^{n(m+1)/e}}
=
\Hat{c}_{K,f}(g).
\end{equation}
Finally, \eqref{eq:cHat_lower} and \eqref{eq:cHat_upper} imply via \eqref{eq:cHat_is} that $0 < \Hat{c}_{K,f}(g) < \infty$.
\end{proof}

\begin{rmk}
While Theorem \ref{thm:ds}(ii) could be called a ``dynamical Schanuel'' result, 
a true analog of Schanuel's theorem for the canonical height would include the power-saving error term, 
namely: 
\[\Num_{\Hat{H}_f, \P^m(K)}(X) \overset{?}{=} \Hat{c}_{K,f} X^{n(m+1)} + O(X^{n(m+1)-1} \log X)\]
where the $\log X$ disappears for $nm > 1$.
To obtain an error term in Theorem \ref{thm:ds}(ii) from Theorem \ref{thm:main}, 
one would need to understand 
the rate of convergence of $c_K(f^i \circ g)$ 
as well as the behaviour of $\Res f^i \circ g$
and the geometry of the archimedean fundamental domains $D_{f^i \circ g}$ as $i \to \infty$.
\end{rmk}

\begin{cor} \label{cor:ds}
	Let $g : \P^m \to \P^M$ be a morphism of degree $e \ge 1$ defined over $K$.
Then:
\begin{enumerate}[(i)]
	\item $\Hat{c}_{K,f^i}(g) = \Hat{c}_{K,f}(g)$ for all positive integers $i$;
	\item $\Hat{c}_{K,f^\varphi}(g) = \Hat{c}_{K,f}(g^{\varphi^{-1}})$ for all automorphisms $\varphi$ defined over $K$.
\end{enumerate}
\end{cor}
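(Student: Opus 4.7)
The plan is to reduce both parts to the asymptotic identity in Theorem~\ref{thm:ds}(ii), which characterizes $\Hat{c}_{K,f}(g)$ as the leading coefficient of $\Num_{g^*\Hat{H}_f, \P^m(K)}$. Neither part needs fresh input from Theorem~\ref{thm:main} beyond what Theorem~\ref{thm:ds} already digests.

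For (i), I would simply unwind the definition:
\[
\Hat{c}_{K,f^i}(g) = \lim_{j \to \infty} c_K\bigl((f^i)^j \circ g\bigr) = \lim_{j \to \infty} c_K(f^{ij} \circ g).
\]
This is the subsequence of $\{c_K(f^k \circ g)\}_{k \ge 1}$ indexed by positive multiples of $i$. Since Theorem~\ref{thm:ds}(i) asserts the full sequence converges to $\Hat{c}_{K,f}(g)$, every subsequence converges to the same limit, yielding (i).

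For (ii), the key input is functoriality of the canonical height under $K$-conjugation. Adopting the convention $f^\varphi = \varphi^{-1} \circ f \circ \varphi$, one has $(f^\varphi)^i = \varphi^{-1} \circ f^i \circ \varphi$, so
\[
H\bigl((f^\varphi)^i(P)\bigr)^{1/d^i} = H\bigl(\varphi^{-1}(f^i(\varphi(P)))\bigr)^{1/d^i}.
\]
Since $\varphi^{-1}$ is an automorphism defined over $K$, the standard bound $H \circ \varphi^{-1} \asymp H$ combined with $1/d^i \to 0$ yields $\Hat{H}_{f^\varphi}(P) = \Hat{H}_f(\varphi(P))$. Pulling back by $g$ then gives $g^*\Hat{H}_{f^\varphi} = (\varphi \circ g)^*\Hat{H}_f$ as functions on $\P^m(K)$, so the two corresponding counting functions are literally equal. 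Since $\varphi$ is an automorphism, $\deg(\varphi \circ g) = e$; applying Theorem~\ref{thm:ds}(ii) to both sides and dividing by $X^{n(m+1)/e}$ produces
\[
\Hat{c}_{K,f^\varphi}(g) = \Hat{c}_{K,f}(\varphi \circ g),
\]
which is (ii) under the interpretation $g^{\varphi^{-1}} = \varphi \circ g$ (consistent with $f^\varphi = \varphi^{-1} f \varphi$ acting on both sides of $f$: since $g$ has no ``input'' of type $\P^M$ to conjugate, only the output-side composition survives).

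The main obstacle---if one may call it that---is purely notational: pinning down what $g^{\varphi^{-1}}$ should mean when the source and target of $g$ differ. The analytic content is the identity $\Hat{H}_{f^\varphi} = \Hat{H}_f \circ \varphi$, a direct consequence of the defining limit plus the comparison $H \circ \varphi^{-1} \asymp H$; Theorem~\ref{thm:ds} then upgrades this functoriality into the desired statement about asymptotic constants.
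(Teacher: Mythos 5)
Your proposal is correct and follows essentially the same route as the paper: part (i) is the identical subsequential-limit observation from Theorem \ref{thm:ds}(i), and part (ii) rests on the same key identity $\Hat{H}_{f^\varphi} = \Hat{H}_f \circ \varphi$ followed by an appeal to Theorem \ref{thm:ds}(ii). The only wrinkle is the meaning of $g^{\varphi^{-1}}$: the paper takes it to be the conjugate $\varphi \circ g \circ \varphi^{-1}$ (inserting $\varphi^{-1}\circ\varphi$ and passing to the bijection $P \mapsto \varphi(P)$ of $\P^m(K)$, as in Remark \ref{rmk:aut}), whereas you prove $\Hat{c}_{K,f^\varphi}(g) = \Hat{c}_{K,f}(\varphi \circ g)$; this is harmless, since precomposing with the $K$-automorphism $\varphi^{-1}$ permutes $\P^m(K)$ and so leaves the counting function, hence the constant, unchanged, which is exactly the extra bijection step the paper carries out explicitly.
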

\begin{proof}
By Theorem \ref{thm:ds}(i),
\[\Hat{c}_{K,f^i}(g) = \lim_{j \to \infty} c_K(f^{ij} \circ g) = \Hat{c}_{K,f}(g)\]
is a subsequential limit, which proves (i).
Next, $\Hat{H}_{f^\varphi} = \Hat{H}_f \circ \varphi$ (since $H \circ \varphi^{-1} \asymp H$), 
so 
\[g^*\Hat{H}_{f^\varphi} 
= \Hat{H}_{f^\varphi} \circ g 
= \Hat{H}_f \circ \varphi \circ g 
= \Hat{H}_f \circ \varphi \circ g \circ \varphi^{-1} \circ \varphi
= \Hat{H}_f \circ g^{\varphi^{-1}} \circ \varphi\]
whence
\[
\{P : \Hat{H}_{f^\varphi}(g(P)) \le X\}
= 
\{P : \Hat{H}_f(g^{\varphi^{-1}}(\varphi(P))) \le X\}.
\]
The latter set is in one-to-one correspondence with 
$\{Q : \Hat{H}_f(g^{\varphi^{-1}}(Q)) \le X\}$ 
via $\varphi$
(cf.~Remark \ref{rmk:aut}),
so taking cardinalities shows
$\Num_{g^*\!\Hat{H}_{f^\varphi\!},\P^m(K)}(X) 
= 
\Num_{(g^{\varphi^{-1}})^*\!\Hat{H}_f\!,\P^m(K)}(X)$.
Applying Theorem \ref{thm:ds}(ii) proves (ii).
\end{proof}

Suppose $f$ has degree $d$ 
and let $F$ be a lift of $f$.
We can give a formula for $\Hat{c}_{K,f}$ in terms of 
the \emph{Green's functions} of $F$, 
which are defined at each place $v$ of $K$ by 
\begin{align*}
	\mathcal{G}_{F,v} &: K_v^{M+1} - 0^{M+1} \to \R \\
	x &\mapsto \lim_{i \to \infty} \frac{\log {|F^i(x)|_v}} {d^i}.
\end{align*}
That this limit converges 
is a consequence of the estimate $\log |F(x)|_v = d \log |x|_v + O(1)$ and a routine ``telescoping'' argument.
It is known that the convergence is uniform (see, e.g.,~\cite[proof of Proposition 5.58(e)]{SilvermanADS})
and that $\mathcal{G}_{F,v}$ is H\"older continuous (see, e.g.,~\cite[Theorem 3.1]{Gauthier}).

\begin{propn} \label{green_properties}
The Green's function has the following properties.
\begin{enumerate}[(i)]
	\item 
	(characterization):
	$\mathcal{G}_{F,v}$ is the unique function satisfying both
	\[\mathcal{G}_{F,v}(F(x)) = d \cdot \mathcal{G}_{F,v}(x)\]
	and 
	\[\mathcal{G}_{F,v}(x) - \log |x|_v \text{ is bounded}.\]
	\item 
	(homogeneity):
	\[\mathcal{G}_{\lambda F,v}(\eta x) 
	= \mathcal{G}_{F,v}(x)
	+ \log |\eta|_v + \frac{\log |\lambda|_v}{d - 1}\]
	for all nonzero scalars $\lambda, \eta$.
	\item 
	(exact formula): 
	if $f$ has good reduction at $v$, 
	then 
	\[\mathcal{G}_{F,v}(x) = \log |x|_v + \frac{\log |F|_v}{d - 1}.\]
	\item 
	(canonical height decomposition):
	if $x \in \A^{M+1}$ is a lift of $P \in \P^M$ then
	\[
	\hat{h}_f(P) = \sum_v \frac{n_v}{n} \mathcal{G}_{F,v}(x).
	\]
\end{enumerate}
\end{propn}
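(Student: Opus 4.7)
The plan is to prove the four properties in order, unwinding the definition of $\mathcal{G}_{F,v}$ as a limit in each case. For part (i), reindexing gives $\mathcal{G}_{F,v}(F(x)) = \lim_i d^{-i}\log|F^{i+1}(x)|_v = d\,\mathcal{G}_{F,v}(x)$ immediately. For boundedness of $\mathcal{G}_{F,v}(x) - \log|x|_v$, I would use the estimate $\log|F(z)|_v = d\log|z|_v + O(1)$ coming from \eqref{intro:asymp} (which uses the hypothesis that $f$ is a morphism) together with the telescoping identity
\[\frac{\log|F^i(x)|_v}{d^i} - \log|x|_v = \sum_{j=0}^{i-1} \frac{\log|F^{j+1}(x)|_v - d\log|F^j(x)|_v}{d^{j+1}},\]
whose summands are $O(d^{-j-1})$ uniformly in $x$. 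For uniqueness, if $G$ is another such function then $\Delta := G - \mathcal{G}_{F,v}$ is bounded and scales by $d$ under $F$, so $\Delta(x) = d^{-i}\Delta(F^i(x)) \to 0$ as $i \to \infty$.

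For part (ii), a short induction using the homogeneity of $F$ gives $(\lambda F)^i(\eta x) = \lambda^{(d^i - 1)/(d - 1)}\, \eta^{d^i} F^i(x)$; taking $|\cdot|_v$, logging, dividing by $d^i$, and letting $i \to \infty$ yields the claimed identity. For part (iii), the good reduction formula \eqref{intro:good_red_Fz} reads $|F(z)|_v = |F|_v |z|_v^d$, which iterates to $|F^i(x)|_v = |F|_v^{(d^i - 1)/(d - 1)} |x|_v^{d^i}$; the exact formula drops out after dividing by $d^i$ and passing to the limit.

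For part (iv), I would start from the identity $h(Q) = \tfrac{1}{n}\sum_v n_v \log|y|_v$, valid for any lift $y$ by the product formula, apply it to $Q = f^i(P)$ with lift $F^i(x)$, divide by $d^i$, and let $i \to \infty$. The main obstacle, and the only delicate point, is swapping the limit with the sum. I expect to justify this by observing that at all but finitely many nonarchimedean $v$ one simultaneously has $|x|_v = 1$, $|F|_v = 1$, and good reduction at $v$, so part (iii) forces the corresponding summand to vanish identically in $i$; the remaining sum has only finitely many nonzero terms, where the pointwise convergence coming from the uniform convergence of the defining limit is more than enough.
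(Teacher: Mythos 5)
Your proposal is correct and takes essentially the same route as the paper: the paper gives no argument of its own but cites Silverman's \emph{Arithmetic of Dynamical Systems}, Section 5.9 (and Ingram) with the remark that the same proofs work for general $M$, and those proofs are precisely your telescoping estimate for existence and boundedness, the scaling/uniqueness argument, the iteration identities for (ii) and (iii), and the finitely-many-bad-places justification for exchanging the limit with the sum in (iv). The only implicit hypothesis to keep in view is $d \ge 2$ (needed for the geometric series, uniqueness, and the $d-1$ denominators), which holds since $f$ is a degree-$d\ge 2$ endomorphism in that section.
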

\begin{proof}
For $M = 1$ these are proved in \cite[Section 5.9]{SilvermanADS}; 
the same proofs work in general (e.g.,~\cite[Lemma 6]{Ingram_macaulayres}).
\end{proof}

We now give the promised formula for $\Hat{c}_{K,f}$.

\begin{propn} \label{green_local_factors}
Let $g : \P^m \to \P^M$ be a morphism of degree $e \ge 1$ defined over $K$ 
and let $G$ be a lift of $g$.
Then $\Hat{c}_{K,f}(g) =$
\begin{align*}
c_K(m)
\prod_{v \mid \infty}
\mu_v {\{z \in K_v^{m+1} : \mathcal{G}_{F,v}(G(z)) \le 0\}} 
\prod_{v \nmid \infty} 
\int_{O_v^{m+1}} 
\bigg(
\frac{|z|_v}{\exp \mathcal{G}_{F,v}(G(z))/e}
\bigg)^{n_v(m+1)}
\d \mu_v(z)
\end{align*}
where 
$c_K(m)$ is as in \eqref{cKm}.
In fact, 
if 
\[
|\Hat{F} \circ G|_v := \lim_{i \to \infty} |F^i \circ G|_v^{1/d^i}
\]
then the lift-independent 
local factors converge as well:
\[
\lim_{i \to \infty} c_{K,v}(f^i \circ g)
=
\Hat{c}_{K,v,f}(g)
:=
\begin{dcases}
	\mu_v \{z \in K_v^{m+1} : \exp \mathcal{G}_{F,v}(G(z)) \le  |\Hat{F} \circ G|_v\} & v \mid \infty \\
	\int_{O_v^{m+1}} 
\bigg( 
\frac{|z|_v \, |\Hat{F} \circ G|_v^{1/e}}
{\exp \mathcal{G}_{F,v}(G(z))/e}
\bigg)^{n_v(m+1)}
	\d \mu_v(z) & v \nmid \infty 
\end{dcases}.
\]
\end{propn}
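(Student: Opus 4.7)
The plan is to start from the equality $\Hat{c}_{K,f}(g) = \lim_{i \to \infty} c_K(f^i \circ g)$ granted by Theorem \ref{thm:ds}(i) and compute the limit place-by-place. Writing $d_i := d^i e$ for the degree of $f^i \circ g$, Theorem \ref{thm:main} together with the product formula $H(f^i \circ g)^n = \prod_v |F^i \circ G|_v^{n_v}$ yields
\[
c_K(f^i \circ g) = c_K(m) \prod_v \frac{c_{K,v}(f^i \circ g)}{|F^i \circ G|_v^{n_v(m+1)/d_i}}.
\]
I would show that each factor of this product converges to the corresponding factor of $\Hat{c}_{K,f}(g)/c_K(m)$ in the stated formula.

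For archimedean $v$, the degree-$d_i$ homogeneity of $F^i \circ G$ permits the change of variables $z = |F^i \circ G|_v^{1/d_i} w$, converting the $v$-factor into $\mu_v\{w : |F^i(G(w))|_v \le 1\}$, equivalently $\mu_v\{w : d_i^{-1} \log |F^i(G(w))|_v \le 0\}$. Uniform convergence of $d^{-i} \log|F^i|_v$ to $\mathcal{G}_{F,v}$ on compact subsets of $K_v^{M+1}\setminus\{0\}$, combined with a common bounded region for these sets (obtained by iterating the morphism estimates $|F(x)|_v \ge c|x|_v^d$ and $|G(y)|_v \ge c'|y|_v^e$), yields a.e.~convergence of the indicator functions to $\mathbf{1}\{\mathcal{G}_{F,v}(G(w)) \le 0\}$; the boundary has measure zero because $\mathcal{G}_{F,v}(G(\lambda w)) = \mathcal{G}_{F,v}(G(w)) + e\log|\lambda|_v$ forces each ray from the origin to meet it in at most one point. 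Bounded convergence completes this case. For nonarchimedean $v$, absorbing $|F^i \circ G|_v^{1/d_i}$ into the integrand gives $\int_{O_v^{m+1}} (|z|_v/|F^i(G(z))|_v^{1/d_i})^{n_v(m+1)} \d\mu_v(z)$, whose integrand converges pointwise (by definition of the Green's function) to $(|z|_v/\exp(\mathcal{G}_{F,v}(G(z))/e))^{n_v(m+1)}$, with a uniform-in-$i$ majorant coming from the same iterated morphism bound; dominated convergence finishes.

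To swap the limit with the infinite product over $v$, I would exploit good reduction: for $v$ outside the finite set $S$ of bad primes of $F$ and $G$, $f^i \circ g$ has good reduction at $v$, so $c_{K,v}(f^i \circ g) = 1$ (Remark \ref{rmk:good_reduction_cKvf}) and iterating \eqref{intro:good_red_Fz} gives $|F^i \circ G|_v = |F|_v^{(d^i-1)/(d-1)} |G|_v^{d^i}$. Since $|F|_v = |G|_v = 1$ for all but finitely many $v$, only finitely many factors differ from $1$, and on this finite set the convergence established above is all we need. This yields the first displayed formula. The main obstacle is exactly this tail control, and it is resolved by the good-reduction simplification together with the product formula.

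Finally, the second claim follows because the factor $|F^i \circ G|_v^{n_v(m+1)/d_i}$ converges to $|\Hat{F} \circ G|_v^{n_v(m+1)/e}$ on its own: multiplying the $v$-th limit computed above by this convergent quantity, and undoing the archimedean change of variables via the homogeneity $\mathcal{G}_{F,v}(G(\eta z)) = \mathcal{G}_{F,v}(G(z)) + e \log|\eta|_v$ from Proposition \ref{green_properties}(ii), identifies $\lim_i c_{K,v}(f^i \circ g)$ with the stated expression for $\Hat{c}_{K,v,f}(g)$.
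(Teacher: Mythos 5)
Your proposal is correct and follows essentially the same route as the paper: you redistribute $H(f^i\circ g)^{n(m+1)/d^ie}$ over the places via the product formula, exactly reproducing the paper's identity $c_K(f^i\circ g)=c_K(m)\prod_{v\mid\infty}\mu_v\{|F^i(G(z))|_v\le 1\}\prod_{v\nmid\infty}\int(\,\cdot\,)$, and then pass to the limit using the Green's function. The additional details you supply (bounded/dominated convergence, the null-boundary argument via homogeneity of $\mathcal{G}_{F,v}\circ G$, and the good-reduction argument making the infinite products effectively finite) are precisely the content behind the paper's terse appeal to uniform convergence, not a different method.
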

\begin{proof}
Redistributing the factors of 
\[
H(f^i \circ g)^{n(m+1)/d^i e}
= \prod_v |F^i \circ G|_v^{n_v(m+1)/d^i e}
\]
yields $c_K(f^i \circ g) =$
\[
c_K(m) 
\prod_{v \mid \infty} 
\mu_v {\{z \in K_v^{m+1} : |F^i(G(z))|_v \le 1\}}
\prod_{v \nmid \infty}
\int_{O_v^{m+1}} 
\bigg(
\frac{|z|_v}{|F^i(G(z))|_v^{1/d^i e}}
 \bigg)^{n_v(m+1)}
\d \mu_v(z).
\]
By definition of the Green's function,
\[
\lim_{i \to \infty} |F^i(G(z))|_v^{1/d^i e}
=
\exp \mathcal{G}_{F,v}(G(z))/e
\]
so the Proposition follows from uniform convergence.
\end{proof}

\begin{eg} \label{eg:powermap}
	For $d \ge 2$ let 
	\[f(x_0 : \ldots : x_M) = (x_0^d : \ldots : x_M^d)\]
	be the $d$\textsuperscript{th} power map.
	Then
for all number fields $K$ 
and all nonconstant morphisms $g$ defined over $K$,
\[\Hat{c}_{K,f}(g) = c_K(g).\]
Moreover, for all places $v$ of $K$,
\[\Hat{c}_{K,v,f}(g) = c_{K,v}(g).\]
In fact, if $v$ is nonarchimedean, 
then $c_{K,v}(f^i \circ g) = c_{K,v}(g)$ for all $i \ge 0$.
\end{eg}
\begin{proof}
	It is well-known that $\Hat{H}_f = H$.
Thus by Theorem \ref{thm:ds}(ii) and Theorem \ref{thm:main},
	\[
	\Hat{c}_{K,f}(g)
	\sim 
	\frac{\Num_{g^*\!\Hat{H}_f, \P^m(K)}(X)}{X^{n(m+1)/\deg g}}
	= 
	\frac{\Num_{g^*\!H,\P^m(K)}(X)}{X^{n(m+1)/\deg g}}
	\sim 
	c_K(g).
	\]
	Letting $X \to \infty$ proves the first claim.
Now if $F = (X_0^d, \ldots, X_M^d)$ 
and if $G$ is any lift of $g$
then for all $i \ge 0$,
\begin{equation} \label{eq:FGx}
|F^i(G(z))|_v 
= \max_j |G_j(z)^{d^i}|_v
= \max_j |G_j(z)|_v^{d^i}
= |G(z)|_v^{d^i}
\end{equation}
for all $z \in K_v^{m+1}$
and all places $v$ of $K$.
If $v \nmid \infty$,
then 
\[
|F^i \circ G|_v
= \max_j |G_j^{d^i}|_v
= \max_j |G_j|_v^{d^i}
= |G|_v^{d^i}
\]
by the Gauss lemma,
so the integral formula implies $c_{K,v}(f^i \circ g) = c_{K,v}(g)$,
whence \emph{a fortiori} $\Hat{c}_{K,v,f}(g) = c_{K,v}(g)$.
Meanwhile, if $v \mid \infty$,
then by \eqref{eq:FGx} 
and homogeneity,
\[
D_{f^i \circ g, v}
= \frac{|F^i \circ G|_v^{1/d^i e}}{|G|_v^{1/e}} D_{g,v}.
\]
Since $|F^i \circ G|_v^{1/d^i} \to |G|_v$ as $i \to \infty$,
taking volumes 
shows $\Hat{c}_{K,v,f}(g) = c_{K,v}(g)$.
\end{proof} 

Our next Example shows that $\Hat{c}_{K,f}(g) \ne c_K(g)$ in general.

\begin{eg} \label{eg:chebyshev}
	Let $t_d$ denote 
	the dynamicist's Tchebyshev polynomial of degree $d$,
	defined by
\begin{equation} \label{eq:Cheb_def} 
t_d(z + 1/z) = z^d + 1/z^d
\end{equation}
	and regarded as an endomorphism of $\P^1$
	(see, e.g.,~\cite[Section 6.2]{SilvermanADS}).
If $d \ge 2$ then:
\[\Hat{c}_{\Q,p,t_d}(\id) = 1\]
for all primes $p$; 
\[\Hat{c}_{\Q,\infty,t_d}(\id) = \frac{24 + 8\sqrt{5}}{3}\]
and
\[
\Hat{c}_{\Q,t_d}(\id) = \frac{16}{\pi^2} 
\qquad \Big({\ne c_\Q(\id) = \frac{12}{\pi^2}} \Big).
\]
\end{eg}
\begin{proof}
Choose the lift $T_d(X, Y) := (t_d(X/Y) Y^d, Y^d)$ of $t_d$.
Then $T_d$ inherits the composition law from $t_d$,
so that
\begin{equation} \label{eq:Cheb_law}
	T_d \circ T_e = T_{de}
\end{equation}
for all $d, e \ge 0$.\footnote{This is not the only lift of $t_d$ that satisfies \eqref{eq:Cheb_law}. In general, if $T_d(X, Y) = \lambda_d(t_d(X/Y) Y^d, Y^d)$ for some nonzero scalars $\lambda_d$ then $T_d \circ T_e = T_{de}$ if and only if $\lambda_{de} = \lambda_d^{\phantom{d}} \lambda_e^d$ for all $d, e \ge 0$, which is equivalent to $\lambda_d = \lambda_2^{d-1}$ for all $d \ge 0$.}

Let $p$ be a finite prime.
Since $t_d$ is monic with integer coefficients, 
$t_d$ has good reduction at $p$ and $|T_d|_p = 1$.
Thus by Proposition \ref{green_properties}(iii), 
$\mathcal{G}_{T_d,p}(x, y) = \log |x, y|_p$.
It follows from \eqref{eq:Cheb_law} 
that $|T_d^i|_p = |T_{d^i}|_p = 1$ for all $i \ge 0$, 
whence $|\Hat{T}_d|_p = 1$.
So $\Hat{c}_{\Q,p,t_d}(\id) = 1$ as well.

The situation is less trivial at the infinite place.
First, we show that 
\begin{equation} \label{eq:Cheb_green}
\mathcal{G}_{T_d,\infty}(x, y) 
= \log \begin{dcases*}
|y| & if $|x| \le 2|y|$ \\
\frac{|x| + \sqrt{x^2 - 4y^2}}{2} & if $|x| \ge 2|y|$
\end{dcases*}
\end{equation} 
when $x$ and $y$ are real.
To that end, we homogenize \eqref{eq:Cheb_def}:
define 
\[F_d(X, Y) = (X^d, Y^d) \quad\text{and}\quad P(X, Y) = (X^2 + Y^2, XY)\] 
so that $T_d \circ P = P \circ F_d$. 
This and the identity $\log |P(z, w)| = 2\log|z, w| + O(1)$
yield
\begin{equation} \label{eq:G_T_P}
\mathcal{G}_{T_d,\infty}(P(a, b)) 
= \lim_{i \to \infty} \frac{2 \log |F^i(a, b)| + O(1)}{d^i} 
= \lim_{i \to \infty} \frac{2 \log |a^{d^i}, b^{d^i}|}{d^i}
= \log |a^2, b^2|.
\end{equation}
Now if $P(a, b) = (x, y)$ 
then $a^2$ and $b^2$ 
are roots of the polynomial
\[\lambda^2 - (a^2 + b^2) \lambda + a^2 b^2 = \lambda^2 - x\lambda + y^2;\]
conversely, either $P(a, b) = (x, y)$ or else $P(-a, b) = (x, y)$.
By the quadratic formula,
\[
|a^2, b^2| = 
\bigg|\frac{x + \sqrt{x^2 - 4y^2}}{2}, \frac{x - \sqrt{x^2 - 4y^2}}{2}\bigg|
\]
(cf.~\cite[Examples 1.2(1)]{DG}).
If $|x| \le 2|y|$ then $a^2$ and $b^2$ are complex conjugates,
so 
\begin{equation} \label{eq:Cheb_green_piece1}
|a^2, b^2| = \bigg|\frac{x \pm i\sqrt{4y^2 - x^2}}{2}\bigg|
= \frac{1}{2} \sqrt{x^2 + (4y^2 - x^2)}
= |y|.
\end{equation}
If $|x| \ge 2|y|$ then the discriminant is non-negative, 
and the inequalities 
\[0 \le \sqrt{x^2 - 4y^2} \le \sqrt{x^2} = |x|\]
imply 
\begin{equation} \label{eq:Cheb_green_piece2}
|a^2, b^2| = \frac{|x| + \sqrt{x^2 - 4y^2}}{2}.
\end{equation}
Putting together 
\eqref{eq:G_T_P},
\eqref{eq:Cheb_green_piece1}, and \eqref{eq:Cheb_green_piece2} 
proves \eqref{eq:Cheb_green}.

Next we compute the area of the normalized limiting fundamental domain 
\[
\Hat{D}_{T_d,\infty} := \{(x, y) \in \R^2 : \exp \mathcal{G}_{T_d,\infty}(x, y) \le 1\}.
\]
Let $(x, y) \in \R^2$.
If $|x| \le 2|y|$ 
then $\exp \mathcal{G}_{T_d,\infty}(x, y) \le 1$ if and only if $|y| \le 1$.
If $|x| \ge 2|y|$ 
then the following are equivalent: 
\begin{align*}
\exp \mathcal{G}_{T_d,\infty}(x, y) &\le 1 \\
|x| + \sqrt{x^2 - 4y^2} &\le 2 \\
x^2 - 4y^2 &\le (2 - |x|)^2 = 4 - 4|x| + x^2 \\
|x| &\le 1 + y^2
\end{align*}
Thus, the region $\Hat{D}_{T_d,\infty}$
is bounded by the lines $y = \pm 1$ 
and the parabolae $x = \pm(1 + y^2)$.
Its area is therefore 
\begin{equation} \label{eq:area_hatD_T}
\area \Hat{D}_{T_d, \infty} 
= \int_{-1}^1 \int_{-1-y^2}^{1+y^2} \mathrm{d}x \, \mathrm{d}y
= 4 \int_0^1 1 + y^2 \, \mathrm{d}y
= \frac{16}{3}.
\end{equation}
See Table \ref{table:D_Chebyshev} for a picture.

By Proposition \ref{green_local_factors}, 
$\Hat{c}_{\Q,t_d}(\id)$ is the product of \eqref{eq:area_hatD_T} and 
\begin{equation} \label{ok3}
c_\Q(2) = \frac{1}{2\zeta(2)} = \frac{3}{\pi^2}
\end{equation}
whereas $\Hat{c}_{\Q,\infty,t_d}(\id)$ 
is the product of 
\eqref{eq:area_hatD_T}
and $|\Hat{T}_d|^2$, by homogeneity.
The calculation of $|\Hat{T}_d|$ is arduous and we only give a sketch.
From the explicit formula
\cite[Proposition 6.6(e)]{SilvermanADS}
\[
t_d(z) 
=
\sum_{0 \le k \le d/2} 
(-1)^k \frac{d}{d-k} \binom{d-k}{k} z^{d-2k} \qquad (d \ge 1)
\]
and Stirling's approximation 
\[
\log n! = n \log n - n + \tfrac{1}{2} \log n + O(1)
\]
we have 
\[
\log |T_d| 
= 
\max_{0 < k < d/2} \theta\big(\tfrac{k}{d}\big)d + O(\log d) 
\]
where 
\[\theta(\lambda) 
= (1 - \lambda) \log (1 - \lambda) 
- \lambda \log \lambda 
- (1 - 2\lambda) \log (1 - 2\lambda).
\]
Now $\theta$ is concave on $(0, \tfrac{1}{2})$ 
with a unique maximum at $\lambda_0 = \tfrac{1}{2}\big(1-\tfrac{1}{\sqrt 5}\big) \approx 0.276\ldots$;
this is because 
\[\theta'(\lambda) = \log \frac{(1-2\lambda)^2}{\lambda(1-\lambda)}
= 0 \iff 5\lambda^2 - 5\lambda + 1 = 0\]
(n.b.~$(t \log t)' = 1 + \log t$).
It follows that 
\[\lim_{d \to \infty} |T_d|^{1/d} = \exp \theta(\lambda_0) = \frac{1 + \sqrt{5}}{2},\]
at least according to \texttt{WolframAlpha}.
\end{proof}

\begin{table}[h]
	\centering
	\begin{tabular}{ccc}
		\includegraphics[width=7cm,page=3]{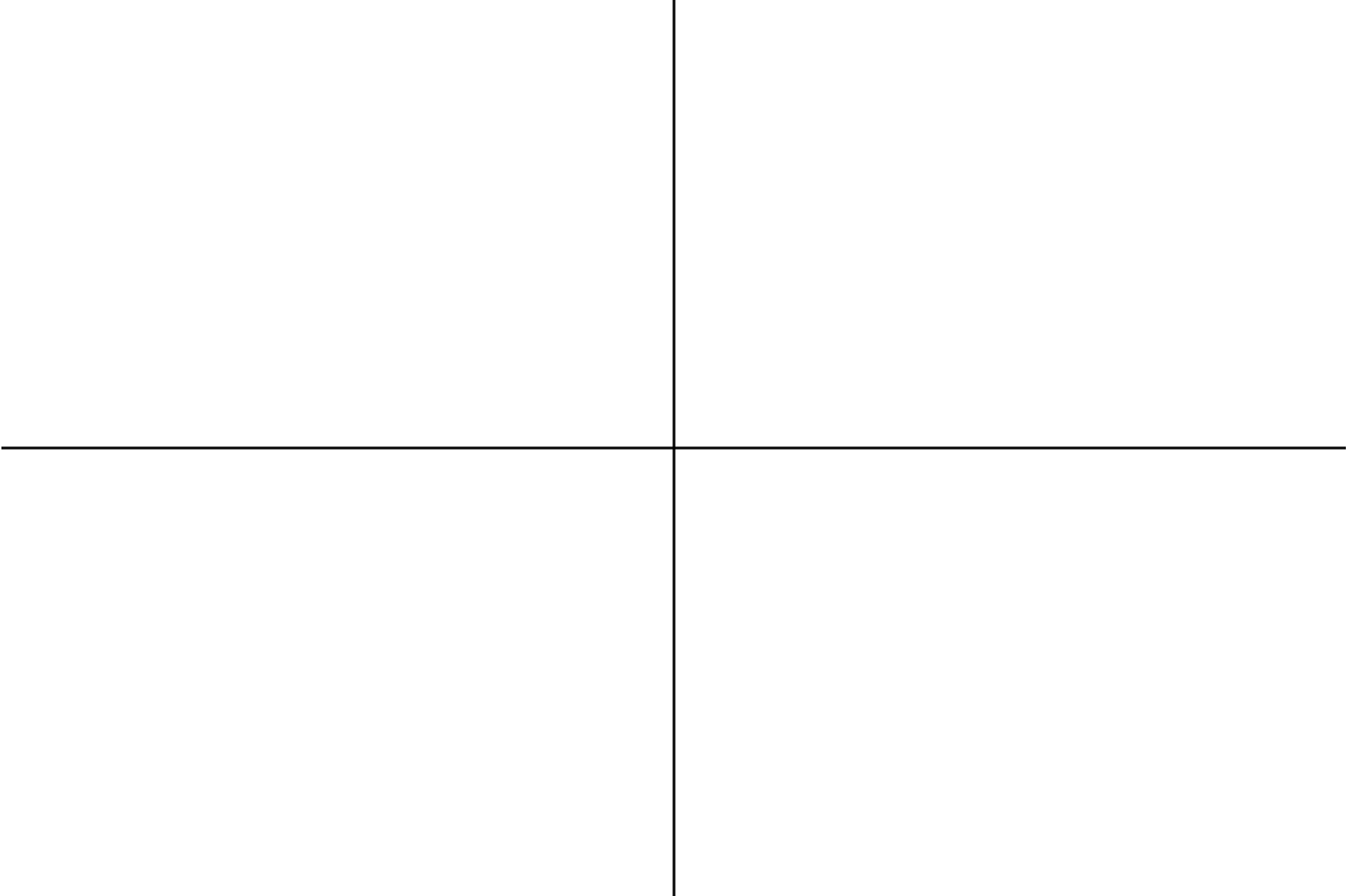} & & \includegraphics[width=7cm,page=3]{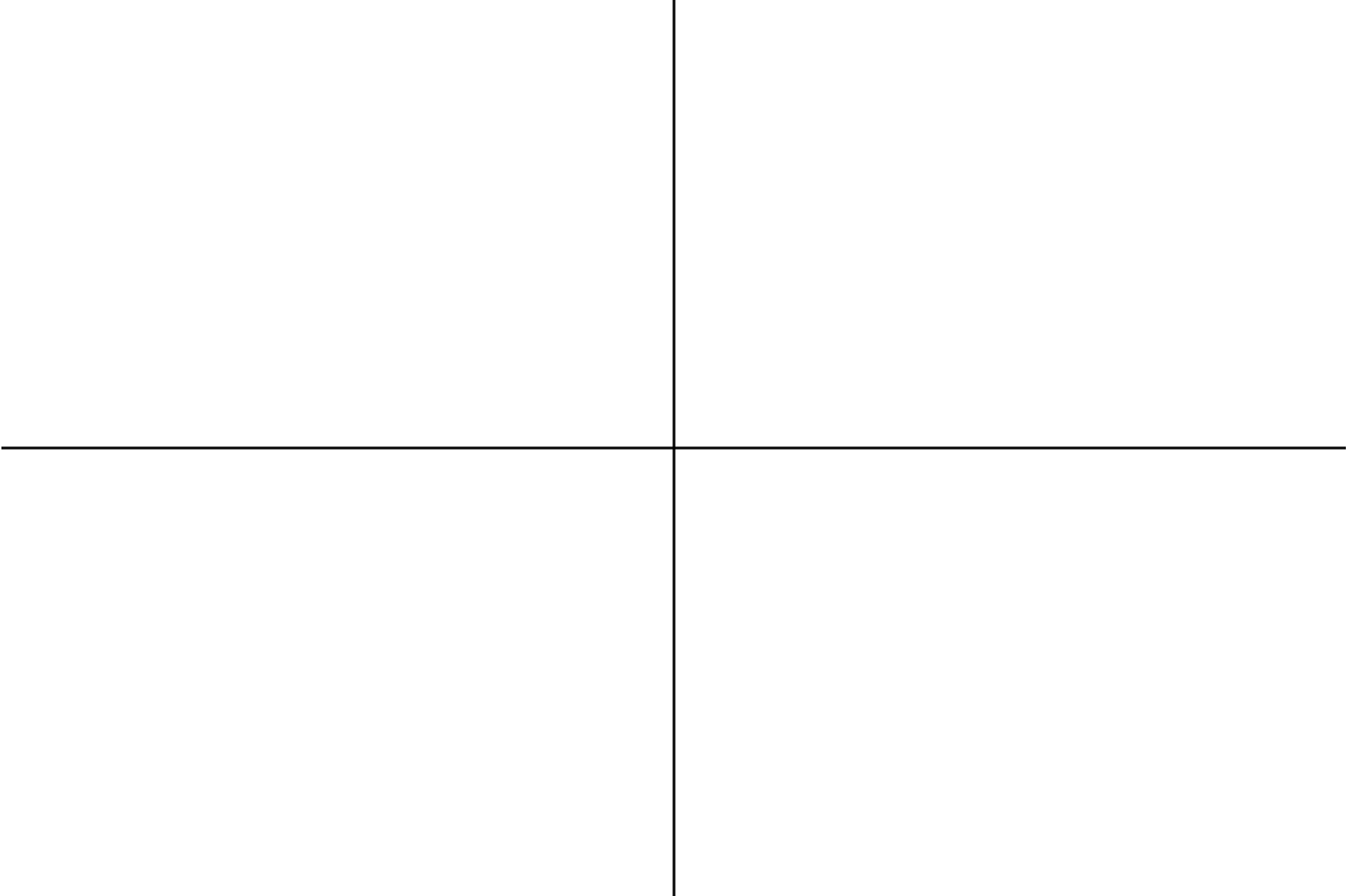} \\ 
		\includegraphics[width=7cm,page=3]{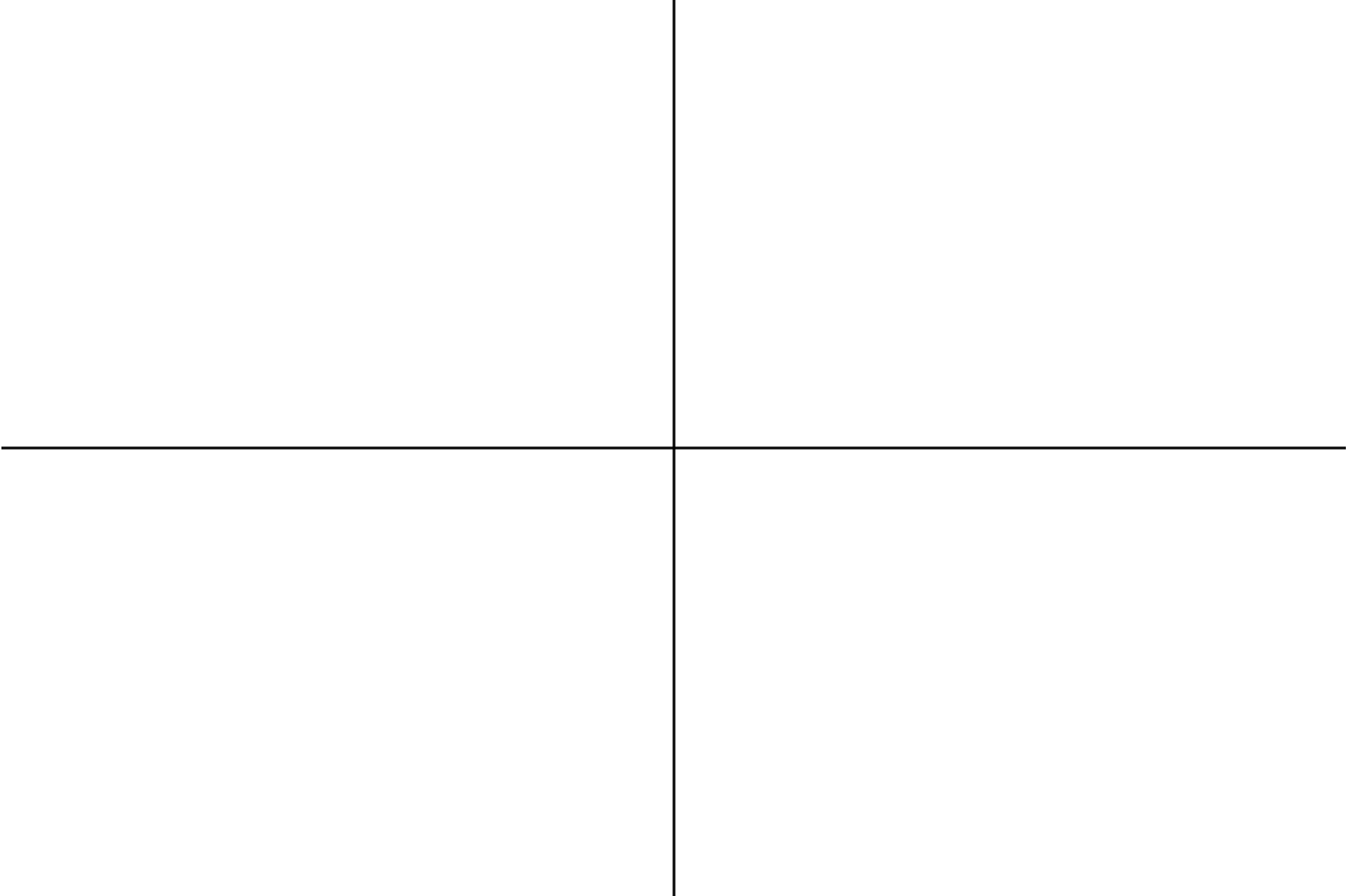} & & \includegraphics[width=7cm,page=3]{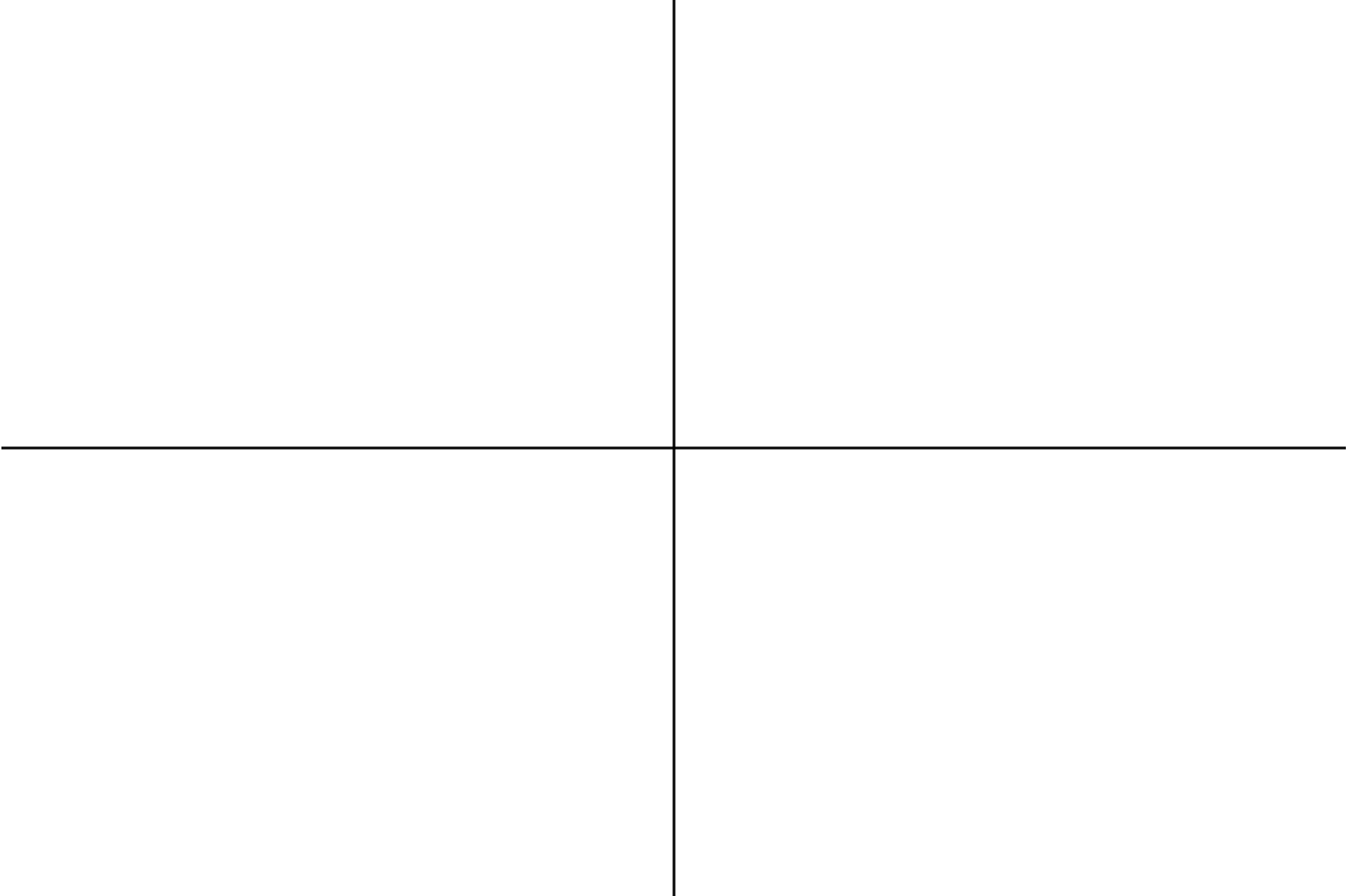} \\ 
		\multicolumn{3}{c}{\includegraphics[width=7cm,page=3]{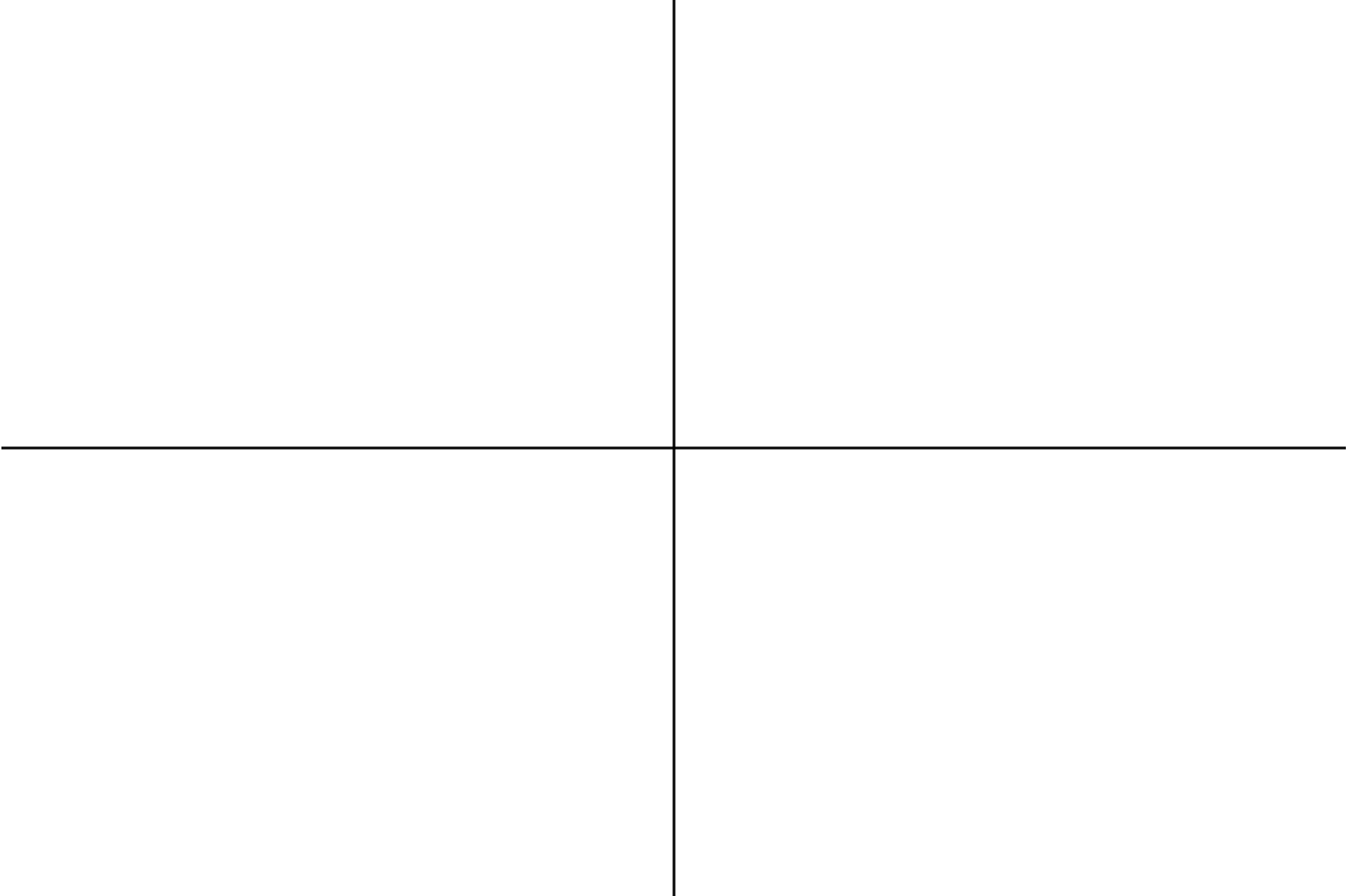}}
	\end{tabular}
	\caption{Normalized fundamental domains for $T_2^i$ ($i = 1, 2, 3, 4$) and their limit.}
	\label{table:D_Chebyshev}
\end{table}

Our final Example shows that 
Corollary \ref{cor:ds}(ii) is false if $\varphi$ is \emph{not} defined over $K$.

\begin{eg}
Let 
\[s(z) = \frac{z^2 - 1}{2z}.\]
Then $s$ is $\Q(i)$-conjugate to $z^2$, 
but 
\[
\Hat{c}_{\Q,s}(\id) = \frac{4}{\pi} 
\qquad \Big({\ne \Hat{c}_{\Q,z^2}(\id) = \frac{12}{\pi^2}} \Big).
\]
\end{eg}

\begin{proof}
If
\[\varphi(z) = i\frac{1+z}{1-z}\]
then $s(\varphi(z)) = \varphi(s(z))$.
Choose the lift 
$S(X, Y) = (X^2 - Y^2, 2XY)$.
By uniqueness of the Green's function (Proposition \ref{green_properties}(i)),
\[
\mathcal{G}_{S,\infty}(x,y) = \log \sqrt{x^2 + y^2}
\]
for all $(x, y) \in \R^2 - (0, 0)$.
Indeed, 
\[\log \sqrt{x^2 + y^2} = \log |x, y| + O(1)\]
because the 2-norm and $\infty$-norm are equivalent; 
and 
\[
\log \sqrt{(x^2 - y^2) + (2xy)^2} 
= \log \sqrt{(x^2 + y^2)^2}
= 2 \log \sqrt{x^2 + y^2}.
\]
Thus the limiting normalized fundamental domain is the unit disk, and so
\begin{equation} \label{ok1}
	\area \Hat{D}_{S,\infty} = \pi.
\end{equation}
See Table \ref{table:D_Octagon} for some pictures.
Meanwhile, it's easy to prove that $s$ has good reduction at every odd prime.
As for $v = 2$, 
a straightforward calculation gives
\[
v(S(x, y)) 
= 2v(x, y) 
+ [v(x) = v(y)]
\]
where $[ \ {\cdot} \ ]$ is the Iverson bracket;
and with a bit more work one can show that
\[
\mathcal{G}_{S,2}(x, y) = \log |x, y|_2 - \frac{1}{2}\big[|x|_2 = |y|_2\big].
\]
Thus by Proposition \ref{green_local_factors},
\begin{equation} \label{ok2}
	\Hat{c}_{\Q,2,s}(\id) = \frac{4}{3}.
\end{equation}
Multiplying \eqref{ok3}, \eqref{ok1}, and \eqref{ok2} proves the claim.
\end{proof}

\begin{table}[h]
	\centering
	\begin{tabular}{ccc}
		\includegraphics[width=6cm,page=2]{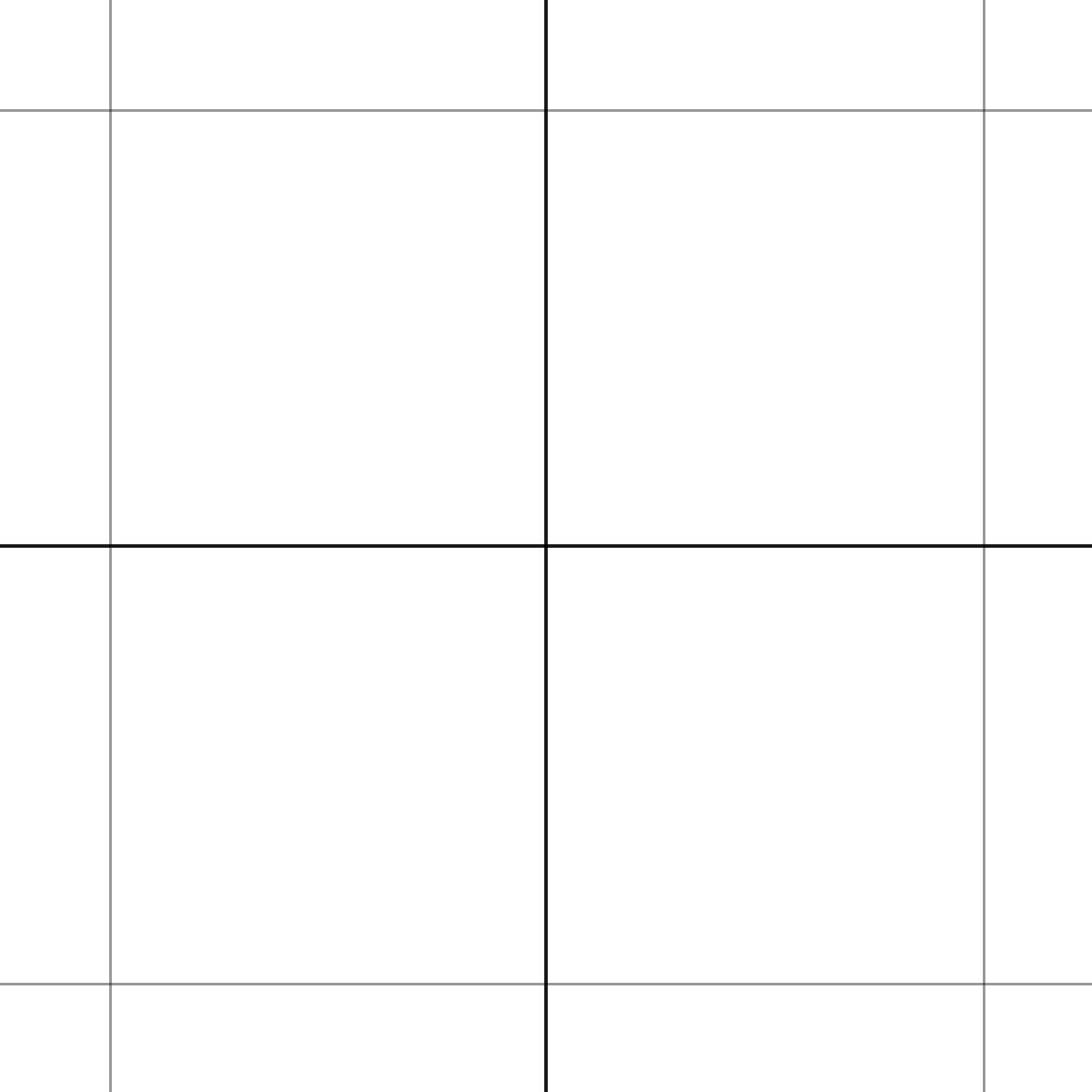} & & \includegraphics[width=6cm,page=2]{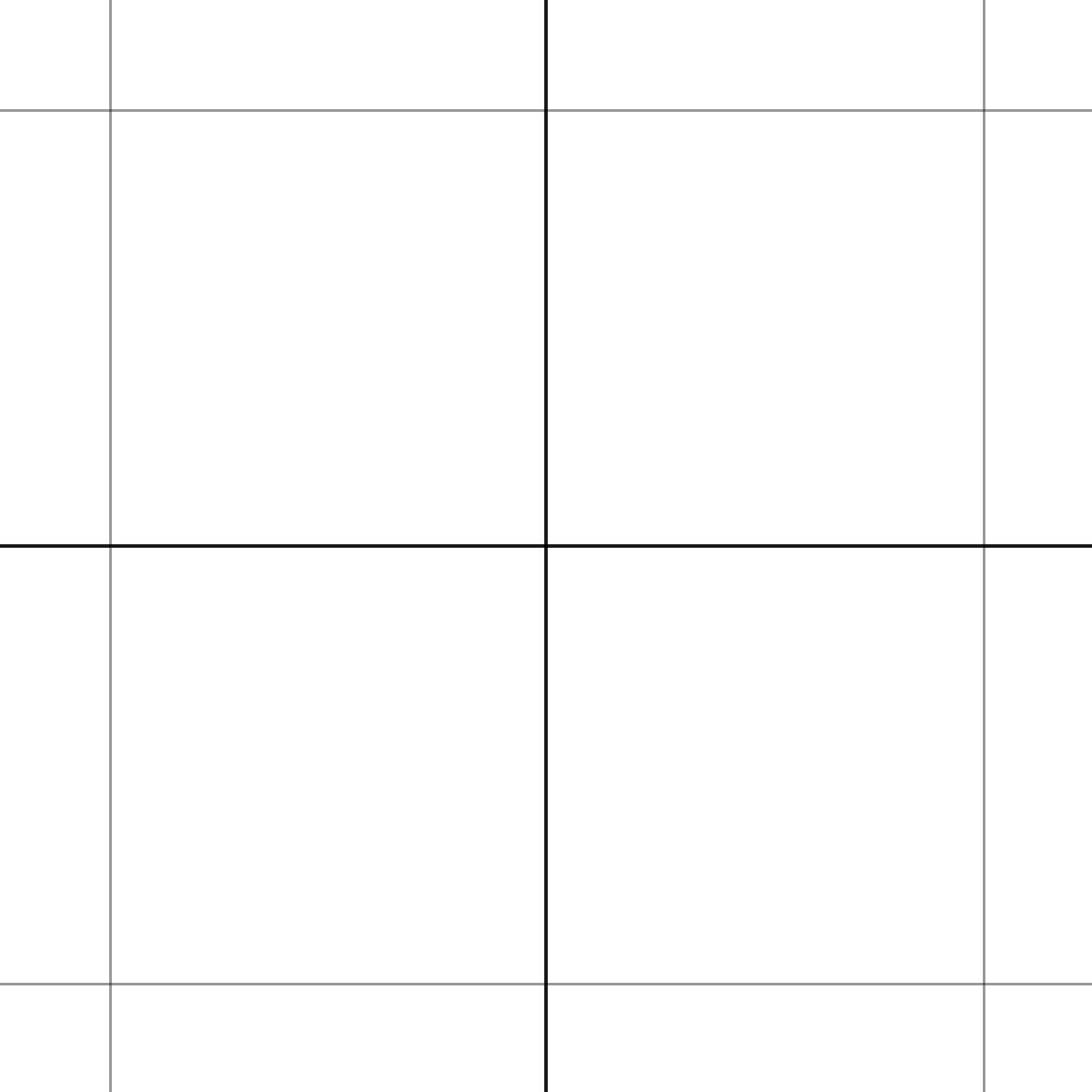} \\[2ex]
		\includegraphics[width=6cm,page=2]{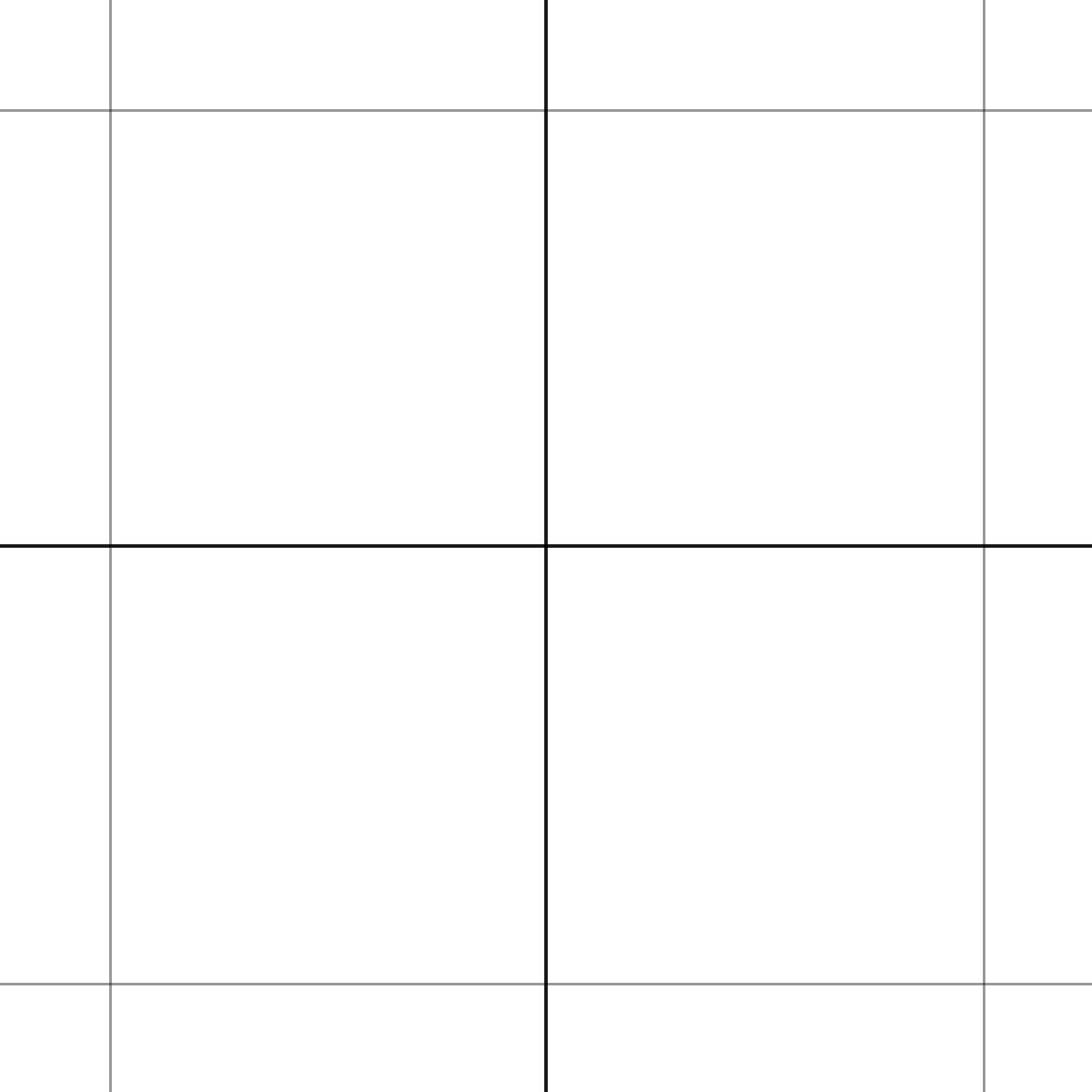} & & \includegraphics[width=6cm,page=2]{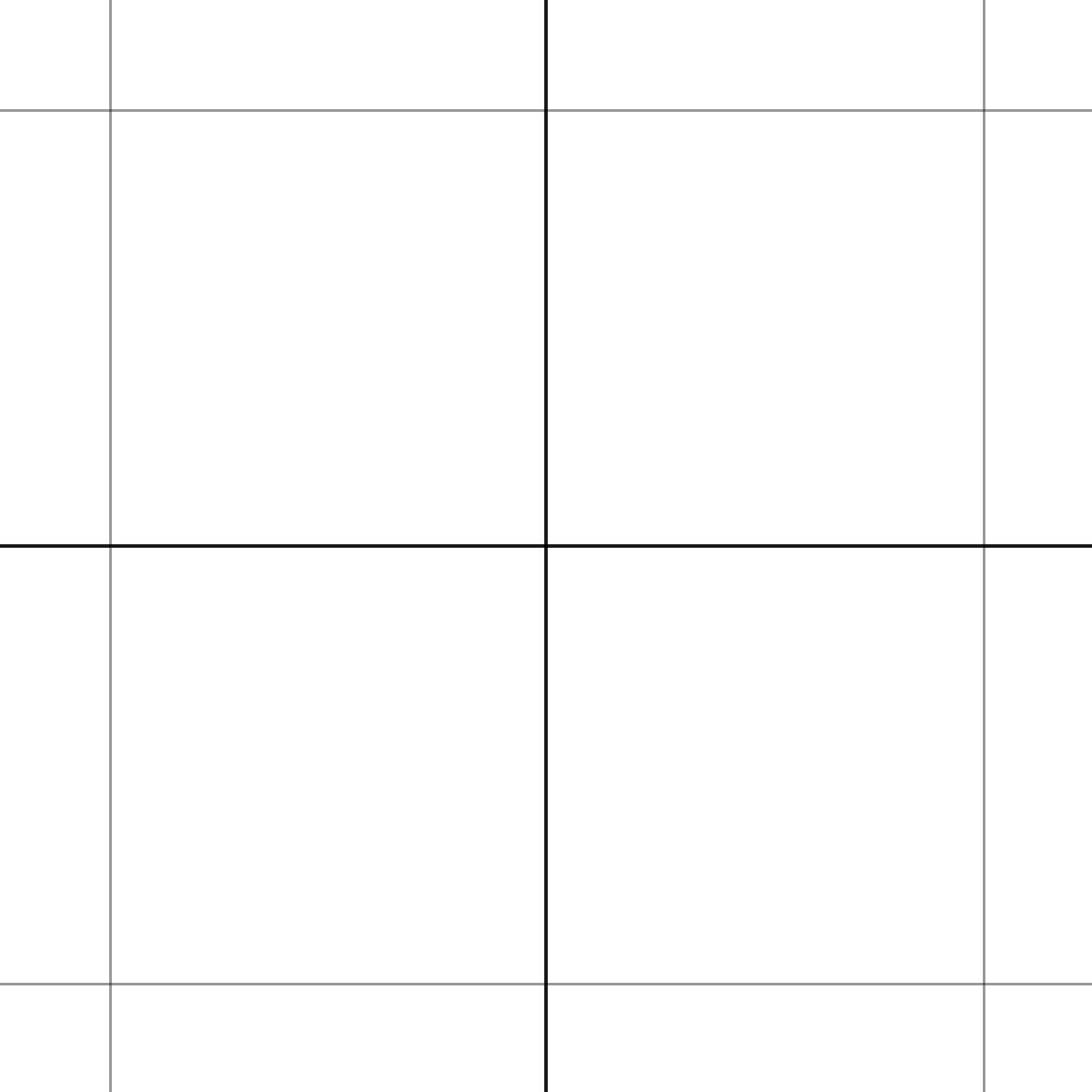}
	\end{tabular}
	\caption{Normalized fundamental domains for $S^i$ ($i = 1, 2, 3, 4$). 
	The perimeters $8 \int_0^{\pi/4} \sec(u)^{1+1/2^i} \d u$ tend to $8 \log {(1 + \sqrt{2})} > 2\pi$ as $i \to \infty$.
	}
	\label{table:D_Octagon}
\end{table}

In a subsequent article, we will focus on the practical computation of the constant $c_K(f)$ for any morphism $f$,
with an eye to 
evaluating $\Hat{c}_{K,f}$ explicitly.

\section{Morphisms} \label{sec:morphisms}

Let $K$ be a field
and let $K[X] = K[X_0, \ldots, X_m]$ be the polynomial ring in $m+1$ variables over $K$.
For each integer $d$ put 
\[
K[X]_d := \{0\} \cup \{\Phi \in K[X] : \Phi \text{ is homogeneous of degree } d\}.
\]
In this section our principal objects of study will be elements $F = (F_0, \ldots, F_M)$ of $K[X]_d^{M+1}$ for $d \ge 0$.
Our goal is an algebraic characterization of good reduction.
While much of this theory is classical,
we could not locate an adequate treatment of the case $M > m$.

\subsection{Multiindices} \label{sec:multiindices}
Let $\N = \{0, 1, 2, \ldots\}$ 
denote the set of non-negative integers.
Elements of $\N^{m+1}$ are called \emph{multiindices}.
Two multiindices $\alpha$ and $\beta$ 
may be compared via
\[\alpha \le \beta \iff \alpha_i \le \beta_i \text{ for all }i;\]
this is the product order on $\N^{m+1}$ inherited from the usual order on $\N$.
The sum $\alpha + \beta$ is defined componentwise,
as is the difference $\alpha - \beta$ provided $\alpha \ge \beta$.
The \emph{degree} of a multiindex $\alpha$ 
is the sum of its entries:
\[|\alpha| = \alpha_0 + \ldots + \alpha_m.\]
Clearly $|\alpha + \beta| = |\alpha| + |\beta|$,
and if $\alpha \ge \beta$ then $|\alpha| \ge |\beta|$ and $|\alpha - \beta| = |\alpha| - |\beta|$.
The \emph{factorial} and the \emph{binomial coefficients}
are extended to multiindices via
\[\alpha! = \alpha_0! \ldots \alpha_m! 
\quad\text{and}\quad 
\binom{\alpha}{\beta} 
= \binom{\alpha_0}{\beta_0} \ldots \binom{\alpha_m}{\beta_m}
= \begin{dcases*}
	\frac{\alpha!}{\beta!(\alpha-\beta)!} & if $\beta \le \alpha$, \\
	0 & otherwise.
\end{dcases*}
\]
To every multiindex $\alpha$ 
we associate the monomial
\[X^\alpha := X_0^{\alpha_0} \ldots X_m^{\alpha_m}\]
so that
\[
X^\alpha X^\beta = X^{\alpha+\beta} 
\quad\text{and}\quad X^\alpha \mid X^\beta \iff \alpha \le \beta
\quad\text{and}\quad
\deg X^\alpha = |\alpha|.
\]

\subsection{Pseudoinverses}

\begin{defn}
	Let $F \in K[X]_d^{M+1}$ for some $d \ge 0$.
	A \emph{pseudoinverse} for $F$ 
	is an $(m+1)$-by-$(M+1)$ matrix of homogeneous forms $G_{ij} \in K[X]_{e-d}$ such that
	\begin{equation} \label{pseudoinverse}
		\sum_{j=0}^M G_{ij}(X) F_j(X) = X_i^e \qquad (i = 0, \ldots, m)
	\end{equation}
	for some integer $e$. 
	The \emph{degree} of the pseudoinverse $G$ is the integer $e - d$.
\end{defn}

\begin{rmk}
	The terminology is inspired by considering maps $F$ of the form
	\[F_j(X) = \sum_{k=0}^m F_{jk} X_k^d \qquad (j = 0, \ldots, M)\]
	called \emph{minimally critical} in \cite{Ingram_mincrit}.
	Such a map defines a morphism just when the matrix $F_{jk}$ has rank equal to the number $m+1$ of its columns.
	If $G_{ij}$ are constant polynomials,
	then for all $i$ we have 
	\[
	\sum_{j=0}^M G_{ij} F_j 
	= \sum_{j=0}^M G_{ij} \sum_{k=0}^m F_{jk} X_k^d
	= \sum_{k=0}^m \Big( \sum_{j=0}^M G_{ij} F_{jk} \Big) X_k^d 
	= \sum_{k=0}^m (GF)_{ik} X_k^d,
	\]
	so that $G$ is a pseudoinverse for $F$ if and only if $G$ is a left-inverse for $F$ (as matrices).
	Though left-inverses are not unique in general,
	when $K$ is a subfield of $\C$ and the columns of $F$ are linearly independent, then $F$ has a canonical left-inverse given in terms of the conjugate transpose, namely $G = (F^* F)^{-1} F^*$. This, in turn, coincides with the Moore--Penrose pseudoinverse $F^+$ of $F$---whence the name.
\end{rmk}

\begin{eg}
	A degree-0 pseudoinverse of $F(X, Y) = (X^2 + 2Y^2, 3X^2, 5X^2)$ is 
	\[G = F^+ = \begin{bmatrix} 35 & 2 \\ 2 & 4\end{bmatrix}^{-1} \begin{bmatrix} 1 & 3 & 5 \\ 2 & 0 & 0\end{bmatrix} = \frac{1}{68} \begin{bmatrix} 0 & 6 & 10 \\ 34 & -3 & -5 \end{bmatrix}.\]
\end{eg}

\begin{defn}
	The \emph{Sylvester map} is the $K$-linear transformation 
	$\mathcal{S} : K[X]^{M+1} \to K[X]$ 
	defined by 
	\[\mathcal{S}(A_0, \ldots, A_M) := \sum_{j=0}^M A_j F_j.\]
\end{defn}

\begin{lem}[Sylvester's Image Theorem]
	Let $V_D = \prod_{j=0}^M K[X]_{D-d}$.
	Then \[\mathcal{S}(V_D) = \langle F_0, \ldots, F_M \rangle \cap K[X]_D\]
	for all integers $D$.
\end{lem}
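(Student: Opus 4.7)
The plan is to prove the two stated sets are equal by a double inclusion, exploiting only the fact that $K[X] = \bigoplus_e K[X]_e$ is a graded ring in which each $F_j$ is homogeneous of degree $d$.

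The inclusion $\mathcal{S}(V_D) \subseteq \langle F_0, \ldots, F_M \rangle \cap K[X]_D$ is the easy half. Given $(A_0, \ldots, A_M) \in V_D$, each $A_j$ lies in $K[X]_{D-d}$ (with the convention $K[X]_e = \{0\}$ for $e < 0$ built into the statement), so each product $A_j F_j$ is either zero or homogeneous of degree $D$. Their sum therefore lies in $K[X]_D$ and is by construction an element of the ideal $\langle F_0, \ldots, F_M \rangle$.

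For the reverse inclusion, suppose $\Phi \in \langle F_0, \ldots, F_M \rangle \cap K[X]_D$. Choose any representation $\Phi = \sum_{j=0}^M B_j F_j$ with $B_j \in K[X]$, and decompose each $B_j$ into its homogeneous components, $B_j = \sum_e B_{j,e}$ with $B_{j,e} \in K[X]_e$. Substituting and regrouping by total degree gives
\[
\Phi = \sum_{e} \Big(\sum_{j=0}^{M} B_{j,e} F_j\Big),
\]
where the parenthesized sum is either zero or homogeneous of degree $e + d$. Since $\Phi$ is itself homogeneous of degree $D$, uniqueness of the homogeneous decomposition of $\Phi$ forces every component with $e + d \neq D$ to vanish, leaving
\[
\Phi = \sum_{j=0}^{M} B_{j, D-d} F_j = \mathcal{S}\big(B_{0, D-d}, \ldots, B_{M, D-d}\big) \in \mathcal{S}(V_D),
\]
as required. (If $D < d$ the equality forces $\Phi = 0$, which is consistent with $K[X]_{D-d} = \{0\}$.)

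I do not anticipate any real obstacle: this is the standard ``a homogeneous element of a homogeneous ideal is a homogeneous combination of the generators'' argument, and its entire content is the gradedness of $K[X]$ together with the homogeneity of the $F_j$. The only point to watch is the bookkeeping convention that $K[X]_e = \{0\}$ for $e < 0$, so that the statement of the lemma remains correct for $D < d$.
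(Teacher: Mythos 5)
Your proof is correct and follows essentially the same route as the paper: both arguments extract the degree-$(D-d)$ homogeneous components of the coefficients in an arbitrary representation $\sum_j A_j F_j$ and use the grading of $K[X]$ to show the remaining components contribute nothing in degree $D$. The only cosmetic difference is that you invoke uniqueness of the homogeneous decomposition of $\Phi$ directly, while the paper subtracts $\mathcal{S}(B)$ and argues the difference is simultaneously homogeneous of degree $D$ and without degree-$D$ part, hence zero.
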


\begin{proof}
	The containment $\subseteq$ is obvious.
	For the reverse, 
	suppose $\sum_j A_j F_j \in K[X]_D$. 
	Let $B_j$ be the homogeneous part of $A_j$ in degree $D-d$.
	Then $(B_0, \ldots, B_M) \in V_D$, 
	and $\sum_j A_j F_j - \mathcal{S}(B) = \sum_j (A_j - B_j) F_j$ is homogeneous of degree $D$, 
	hence equal to the sum of its homogeneous parts in degree $D$, hence equal to 0.
	Thus $\sum_j A_j F_j = \mathcal{S}(B)$.
\end{proof}

We say the Sylvester map is \emph{surjective in degree $D$} to mean $\mathcal{S}(V_D) = K[X]_D$.
Trivially, the Sylvester map is surjective in every negative degree,
as well as in degree 0 just when $d = 0$ and $F_j \ne 0$ for some $j$.

\begin{propn} \label{sylv_null}
	Let $F_0, \ldots, F_M \in K[X]_d$ for some $d \ge 0$.
	The following are equivalent.
	\begin{enumerate}[(i)]
		\item $F_0, \ldots, F_M$ have no nontrivial common zeroes over $\Bar{K}$.
		\item There exists an integer $e \ge 0$ such that $X_0^e, \ldots, X_m^e \in \langle F_0, \ldots, F_M \rangle$.
		\item $F$ admits a pseudoinverse of degree $e - d$ defined over $K$.
		\item There exists an integer $D \ge 0$ such that $\langle X_0, \ldots, X_m \rangle^D \subseteq \langle F_0, \ldots, F_M \rangle$.
		\item The Sylvester map is surjective in some non-negative degree $D$.
	\end{enumerate}
\end{propn}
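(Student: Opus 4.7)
The plan is to establish a cycle of implications centered on (ii), namely
\[
\text{(i)} \Rightarrow \text{(ii)} \Leftrightarrow \text{(iii)} \Rightarrow \text{(iv)} \Rightarrow \text{(v)} \Rightarrow \text{(ii)},
\]
supplemented by the trivial converse (ii) $\Rightarrow$ (i). The only substantive step---and the main obstacle---will be (i) $\Rightarrow$ (ii); every other link is either the definition of a pseudoinverse or routine bookkeeping with homogeneous decompositions. The conceptual content of the proposition is really the projective Nullstellensatz, packaged four different ways.

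For (ii) $\Rightarrow$ (i), I would write $X_i^e = \sum_j A_{ij}(X) F_j(X)$ and evaluate at any putative common zero $x \in \Bar{K}^{m+1}$ of the $F_j$, concluding $x_i^e = 0$ for every $i$ and hence $x = 0$. The reverse direction (i) $\Rightarrow$ (ii) is the projective Nullstellensatz: the hypothesis says $V(F_0, \ldots, F_M) \subseteq \{0\}$ in $\A^{m+1}_{\Bar{K}}$, so $\sqrt{\langle F_0, \ldots, F_M\rangle} \supseteq \langle X_0, \ldots, X_m\rangle$. This produces exponents $e_i$ with $X_i^{e_i} \in \langle F_0, \ldots, F_M\rangle$; taking $e := \max_i e_i$ and multiplying by $X_i^{e-e_i}$ yields a uniform exponent. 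Descent from $\Bar{K}$ to $K$ is automatic: membership in a $K$-linear span is equivalent over any field extension, since the relevant linear system has its matrix and right-hand side defined over $K$.

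The equivalence (ii) $\Leftrightarrow$ (iii) is a matter of extracting homogeneous parts, exactly as in the proof of Sylvester's Image Theorem above. Given (ii) with $X_i^e = \sum_j A_{ij} F_j$, replace each $A_{ij}$ by its degree-$(e-d)$ homogeneous component $G_{ij}$; because $X_i^e$ is homogeneous of degree $e$ and each $F_j$ is homogeneous of degree $d$, the modified relation persists and exhibits $G$ as a pseudoinverse. The reverse direction is the definition.

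The remaining implications (iii) $\Rightarrow$ (iv) $\Rightarrow$ (v) $\Rightarrow$ (ii) are purely combinatorial. For (iii) $\Rightarrow$ (iv) I would set $D := (m+1)(e-1) + 1$: any multiindex $\alpha$ with $|\alpha| \ge D$ must, by pigeonhole, satisfy $\alpha_i \ge e$ for some $i$, so $X^\alpha$ is divisible by $X_i^e$ and therefore lies in $\langle F_0, \ldots, F_M\rangle$; such monomials span $\langle X_0, \ldots, X_m\rangle^D$. For (iv) $\Rightarrow$ (v), the monomials of degree $D$ span $K[X]_D$, so (iv) gives $K[X]_D \subseteq \langle F_0, \ldots, F_M\rangle \cap K[X]_D = \mathcal{S}(V_D)$ by Sylvester's Image Theorem, forcing $\mathcal{S}(V_D) = K[X]_D$. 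Finally (v) $\Rightarrow$ (ii) follows by applying surjectivity in degree $D$ to each monomial $X_i^D$, yielding (ii) with $e = D$.
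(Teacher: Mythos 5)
Your proposal is correct and follows essentially the same path as the paper's proof: the Nullstellensatz for (i)$\Rightarrow$(ii), extraction of homogeneous components (Sylvester's Image Theorem) for (ii)$\Leftrightarrow$(iii), a pigeonhole choice of $D$ for the passage to (iv), and the span-of-monomials argument linking (iv) and (v). The only differences are cosmetic: you arrange the implications in a slightly different cycle and replace the paper's appeal to the semirational Nullstellensatz by the ordinary one over $\Bar{K}$ plus a standard linear-algebra descent to $K$, which is equally valid.
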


\begin{figure}[h]
	\begin{tikzpicture}[scale=.8, imp/.style={line width=.5pt,double equal sign distance,-implies}, iff/.style={line width=.5pt,double equal sign distance,implies-implies}]
		\node (A) at (0, 1) {(i)};
		\node (B) at (0, -1) {(ii)};
		\node (C) at ({-sqrt(3)}, 0) {(iii)};
		\node (D) at ({+sqrt(3)}, 0) {(iv)};
		\node (E) at ({2+sqrt(3)}, 0) {(v)};
		\draw [imp] (A) -- (B);
		\draw [imp] (B) -- (C);
		\draw [imp] (C) -- (A);
		\draw [imp] (A) -- (D);
		\draw [imp] (D) -- (B);
		\draw [iff] (D) -- (E);
	\end{tikzpicture}
	\caption{Logical flow of the proof of Proposition \ref{sylv_null}.}
\end{figure}
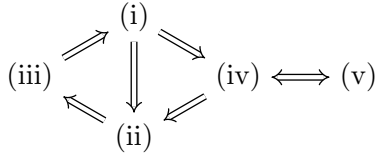

\begin{proof}
	Suppose (i) holds. 
	Then $V(F)(\Bar K) \subseteq \{0^{m+1}\}$,
	so by the semirational Nullstellensatz \cite[Theorem 11.9]{Clark} we have
	\[\rad {\langle F_0, \ldots, F_M \rangle} = I(V(F)(\Bar K)) \supseteq I(\{0^{m+1}\}) = \langle X_0, \ldots, X_m \rangle.\]
	Thus there exist integers $e_i$
	such that $X_i^{e_i} \in \langle F_0, \ldots, F_M \rangle$ for all $i$. 
	Letting $e := \max_i e_i$ proves (ii), 
	while letting $D := 1 + \sum_i (e_i - 1)$ proves (iv).\footnote{To wit, if $|\alpha| = D$ then $X^\alpha$ must be divisible by some $X_i^{e_i}$, lest $|\alpha| = \sum_i \alpha_i \le \sum_i (e_i - 1) < D$.}
	Of course, (iv) implies (ii) as well, but with a worse exponent.
	Clearly (iii) implies (i): if $F_j(x_0, \ldots, x_m) = 0$ for all $j$ for some $x_i$ in $\Bar{K}$ then by \eqref{pseudoinverse} $x_i^e = 0$ for all $i$, so $x_i = 0$ for all $i$.
	The remaining implications follow from Sylvester's Image Theorem.
	Given (ii), $\mathcal{S}(V_e)$ contains each $X_i^e$,
	so there exist $G_i \in V_e$ such that $\mathcal{S}(G_i) = X_i^e$ for all $i$,
	constituting a pseudoinverse $G$ for $F$, giving (iii).
	Finally, writing $\langle F \rangle$ for $\langle F_0, \ldots, F_M \rangle$, 
	we see directly that (iv) and (v) are equivalent:
	\begin{align*}
		\langle X_0, \ldots, X_m \rangle^D \subseteq \langle F \rangle 
		&\iff X^\alpha \in \langle F \rangle \text{ for all } |\alpha| = D \\
		&\iff \Span_K \{X^\alpha : |\alpha| = D\} \subseteq \langle F \rangle \\ 
		&\iff K[X]_D \subseteq \langle F \rangle \\ 
		&\iff \langle F \rangle \cap K[X]_D = K[X]_D\\ 
		&\iff \mathcal{S}(V_D) = K[X]_D. \qedhere 
	\end{align*}
\end{proof}

\subsection{Valuations}
In this section and the next,
\begin{itemize} 
	\item $K$ is a field with discrete valuation $v$
	\item $O = \{z \in K : v(z) \ge 0\}$ is the ring of $v$-integers
	\item $\mf m = \{z \in K : v(z) > 0\}$ is the maximal ideal of $O$ 
	\item $\pi \in O$ is a uniformizer, meaning $v(\pi)$ generates $v(K^\times)$
	\item $\Tilde{k} = O/\mf m$ is the residue field.
\end{itemize} 

As is standard, we extend $v$ 
to arbitrary lists $\Phi$ of polynomials with coefficients in $K$ by setting $v(\Phi)$ to be the minimum valuation among all coefficients in all members of $\Phi$. 
(Since constants are polynomials, this applies in particular to all lists of elements of $K$.)
This extension inherits the following properties from $v$:
\begin{enumerate}
	\item multiplicativity: $v(\Phi\Psi) = v(\Phi) + v(\Psi)$ for all polynomials $\Phi$ and tuples $\Psi$ (Gauss's lemma);
	\item ultrametricity: $v(\Phi + \Psi) \ge v(\Phi, \Psi) \ (= \min\{v(\Phi), v(\Psi)\})$ for all tuples $\Phi$ and $\Psi$ of the same length, with equality if $v(\Phi) \ne v(\Psi)$.
\end{enumerate}
Finally, we say $\Phi$ is \emph{$v$-integral} if $v(\Phi) \ge 0$ and \emph{$v$-normalized} if $v(\Phi) = 0$.

\begin{defn} \label{def:good_red}
	Let $f : \P^m \to \P^M$ be a morphism defined over $K$ 
	and let $F = (F_0, \ldots, F_M)$ be a $v$-normalized lift of $f$.
	The \emph{reduction of $f$ (mod $v$)} is the rational map $\Tilde{f} : \P^m \dashrightarrow \P^M$ defined over $\Tilde k$ by the homogeneous forms $\Tilde{F_0}, \ldots, \Tilde{F_M}$.
	We say $f$ has \emph{good reduction (mod $v$)} just when $\Tilde f$ is a morphism.
\end{defn}

\begin{propn} \label{good_red}
	Let $f : \P^m \to \P^M$ be a morphism of degree $d$ defined over $K$, and let 
	$F \in K[X]_d^{M+1}$ be a $v$-normalized lift of $f$.
	Then $f$ has good reduction (mod $v$) if and only if $F$ admits a $v$-integral pseudoinverse.\footnote{Every $v$-integral pseudoinverse of $F$ is automatically $v$-normalized. 
		Indeed, if \eqref{pseudoinverse} holds 
		then 
		\[0 = v(X_i^e) = v\big(\!\sum_j G_{ij} F_j\big) \ge \min_j v(G_{ij} F_j) \ge \min_j v(G_{ij}) + v(F_j) \ge v(G_i) + v(F)\] 
		for all $i$,
		so that if $v(F) \ge 0$ and $v(G) \ge 0$ then in fact $v(F) = v(G_i) = 0$ for all $i$.
		(This only requires the ``easy direction'' of Gauss's lemma.)
	}
\end{propn}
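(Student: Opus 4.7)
The plan is to translate both directions of the equivalence into statements about the Sylvester map via Proposition \ref{sylv_null}, and then apply Nakayama's lemma to transfer a pseudoinverse of $\Tilde F$ over $\Tilde k$ back to a $v$-integral pseudoinverse of $F$ over $O$.

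For the easy direction ($\Leftarrow$), suppose $F$ admits a $v$-integral pseudoinverse $G$, so $\sum_{j} G_{ij}(X) F_j(X) = X_i^e$ for all $i$. By the footnote, $G$ is automatically $v$-normalized, so coefficientwise reduction produces well-defined forms $\Tilde G_{ij} \in \Tilde k[X]_{e-d}$. Reducing the pseudoinverse equation modulo $v$ yields $\sum_j \Tilde G_{ij}(X) \Tilde F_j(X) = X_i^e$, exhibiting $\Tilde G$ as a pseudoinverse of $\Tilde F$ over $\Tilde k$. By the implication (iii)$\,\Rightarrow\,$(i) of Proposition \ref{sylv_null}, $\Tilde F$ has no nontrivial common zero over $\Bar{\Tilde k}$, so $\Tilde f$ is a morphism and $f$ has good reduction.

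For the converse ($\Rightarrow$), assume $f$ has good reduction. By Proposition \ref{sylv_null}, applied to $\Tilde F$ over $\Tilde k$, the Sylvester map $\Tilde{\mathcal{S}} \colon \Tilde V_D \to \Tilde k[X]_D$ is surjective for some $D \ge d$. Since $F$ is $v$-integral, the Sylvester map for $F$ restricts to an $O$-linear map $\mathcal{S}_O \colon V_D^O \to O[X]_D$, where $V_D^O := \prod_{j=0}^{M} O[X]_{D-d}$; moreover $\mathcal{S}_O \otimes_O \Tilde k$ coincides with $\Tilde{\mathcal{S}}$, because $O[X]_e$ is a free $O$-module with basis the monomials of degree $e$. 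Surjectivity of $\Tilde{\mathcal{S}}$ thus translates to $\mathcal{S}_O(V_D^O) + \mf m \cdot O[X]_D = O[X]_D$. Since $O[X]_D$ is a finitely generated $O$-module and $(O, \mf m)$ is local (being a DVR), Nakayama's lemma applied to the quotient $O[X]_D / \mathcal{S}_O(V_D^O)$ forces $\mathcal{S}_O(V_D^O) = O[X]_D$. In particular each $X_i^D$ lies in the image, yielding forms $G_{ij} \in O[X]_{D-d}$ with $\sum_j G_{ij}(X) F_j(X) = X_i^D$---that is, a $v$-integral pseudoinverse of $F$ of degree $D - d$.

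The main obstacle is the lifting step in the converse direction. One must take care that $V_D^O$ and $O[X]_D$ be finitely generated over $O$, which is why we work in a single degree $D$ rather than summing over all degrees, and that the local-ring hypothesis of Nakayama's lemma be supplied by $O$ being a DVR. Everything else is bookkeeping with the dictionary of Proposition \ref{sylv_null}.
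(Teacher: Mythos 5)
Your proposal is correct. The easy direction coincides with the paper's; the nontrivial direction ($\Rightarrow$) takes a genuinely different route. The paper stays at the level of the Sylvester--Macaulay matrix: surjectivity of $\Tilde{\mathcal{S}}$ in degree $D$ gives a maximal minor $\Tilde\Delta \ne 0$, so the corresponding minor $\Delta$ of $[\mathcal S]$ is a $v$-unit, and Cramer's rule (with the Leibniz formula and $v(F)=0$) produces explicit $G_i \in V_D$ with $\mathcal S(G_i) = X_i^D$ and $v(G_{ij}) \ge -v(\Delta) = 0$. You instead pass to the integral model $\mathcal S_O \colon V_D^O \to O[X]_D$, note that its reduction is $\Tilde{\mathcal S}$ (legitimate, since these are free $O$-modules on monomial bases and the matrix entries are coefficients of the $v$-normalized $F$), translate surjectivity of $\Tilde{\mathcal S}$ into $\mathcal S_O(V_D^O) + \mf m\, O[X]_D = O[X]_D$, and kill the finitely generated quotient by Nakayama over the DVR $O$. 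Both arguments are sound, and your lifting step is watertight. The trade-off: Nakayama is cleaner and avoids any determinant bookkeeping, but it is purely existential; the paper's Cramer's-rule construction is reused in Proposition \ref{vRes}(ii) to produce a pseudoinverse with the quantitative bound $v(G) \ge (r-1)v(F) - v(\Delta)$, which feeds into the estimate $\lVert \varepsilon_f \rVert_\infty \le v(\Res f)$ of Lemma \ref{lem:eps_trio}. For the proposition as stated, your argument fully suffices; for the downstream valuation bounds, the explicit minor-based construction is what does the extra work.
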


\begin{proof}
	If $G$ is a $v$-integral pseudoinverse for $F$, 
	then $G_{ij} \in O[X]$ because $v(G) \ge 0$, so we may reduce $G_{ij}$ mod $\mf m$.
	Doing so yields, by \eqref{pseudoinverse}, 
	a pseudoinverse $\Tilde{G}$ of $\Tilde{F}$.
	By Proposition \ref{sylv_null}, $\Tilde{f}$ is a morphism, so by definition, $f$ has good reduction (mod $v$).
	
	The other direction is nontrivial, because a pseudoinverse $\Tilde{G}$ of $\Tilde{F}$ need not lift to a pseudoinverse of $F$; we merely have $\sum_j G_{ij} F_j - X_i^e \in \mf m[X]$ for all $i$.
	Instead, we must use the Sylvester map.
	Supposing $\Tilde{f}$ is a morphism, 
	Proposition \ref{sylv_null} implies that
	the Sylvester map $\Tilde{\mathcal{S}}$ associated to $\Tilde{F}$ is surjective in some non-negative degree $D$.
	That means that the matrix $[\Tilde{\mathcal{S}}]$ with respect to the standard bases $(X^\beta e_j : |\beta| = D-d, \, 0 \le j \le M)$ of $\Tilde{V_D}$ and $(X^\gamma : |\gamma| = D)$ of $\Tilde{k}[X]_D$ has a nonzero maximal minor $\Tilde{\Delta}$.
	Since the reduction map $O \to \Tilde{k}$ is a ring homomorphism, 
	the corresponding submatrix of $[\mathcal{S}]$---the matrix of $\mathcal{S}$ w.r.t.~the standard bases of $V_D$ and $K[X]_D$---has $v$-unit determinant, and is thus invertible.
	By Cramer's rule and the Leibniz formula, plus the hypothesis that $v(F) = 0$,
	there exist $G_i \in V_D$ such that $\mathcal{S}(G_i) = X_i^D$ 
	and $v(G_{ij}) \ge 0 - v(\Delta) = 0$ 
	for all $i, j$.
	Thus, $G$ is a $v$-integral pseudoinverse for $F$.
\end{proof}

\subsection{Macaulay's theory}
The results in this subsection are not required for the proof of Theorem \ref{thm:main}; we simply wish to give some context.

It follows from the equivalence of (iv) and (v) in Proposition \ref{sylv_null} that the set of non-negative degrees in which $\mathcal{S}$ is surjective is upwards closed.
Although the least such degree may vary with $F$, 
Macaulay proved that if $\mathcal{S}$ is surjective in some non-negative degree, then $\mathcal{S}$ is already surjective in degree $D_0 := (m+1)(d-1) + 1$ (\textit{Macaulay's bound}).

\begin{defn}
	The \emph{Sylvester--Macaulay matrix} is the matrix $[\mathcal{S}]$ of the Sylvester map in degree $D_0$
	with respect to the standard bases $(X^\beta e_j : |\beta| = D_0 - d, \, 0 \le j \le M)$ of $V_{D_0}$ and $(X^\gamma : |\gamma| = D_0)$ of $K[X]_{D_0}$.
\end{defn}

Using the formula
\begin{equation} \label{stars_bars}
	\dim K[X]_D = \binom{m+D}{m},
\end{equation}
valid for all integers $D$, 
we get that 
\begin{equation} \label{dimVD}
	\dim V_D = (M+1) \binom{m+D-d}{m}.
\end{equation}
Plugging in $D = D_0$, we have $m + D_0 = (m+1)d$ and $m + D_0 - d = md$,
so the Sylvester--Macaulay matrix has 
\[r := \binom{(m+1)d}{m} \text{ rows} \quad\text{and}\quad c := (M+1)\binom{md}{d} \text{ columns.}\]
Necessarily, $r \le c$.
Seeing as $\mathcal{S}(X^\beta e_j)
= \sum_{|\gamma| = D_0} F_{j,\gamma-\beta} X^\gamma$, the nonzero entries in each column of $[\mathcal{S}]$ are precisely the coefficients of $F_j$ for some $j$.

\begin{q}
	Is Macaulay's bound optimal? 
	A necessary condition for $\mathcal{S}(V_D) = K[X]_D$ to hold is that
	\begin{equation} \label{necessary_D}
		\binom{m+D}{m} \le (M + 1) \binom{m+D-d}{m}.
	\end{equation}
	For instance, if $M = m = 1$ then \eqref{necessary_D} is equivalent to $D \ge 2d-1 = D_0$.
	However, if $M = m = 2$ then the least $D$ satisfying \eqref{necessary_D} is asymptotically $\frac{3+\sqrt{3}}{2}d - \frac{3}{2}$ (and equal to 6 when $d = 3$), whereas Macaulay's bound $D_0 = 3d-2$ is larger.
\end{q}

Here are some examples of Sylvester--Macaulay matrices where the bases have been ordered lexicographically.
Note that since this monomial order respects multiplication\footnote{i.e.,~$X^\alpha \preccurlyeq X^\beta$ implies $X^\alpha X^\gamma \preccurlyeq X^\beta X^\gamma$ for all $\alpha$, $\beta$, $\gamma$.}
the nonzero entries in each column always appear in the same order.

\begin{eg}[$d = 1$]
	There are $m+1$ rows and $M+1$ columns:
	\[
	[\mathcal{S}] =
	\begin{bmatrix}
		&        & F_{j,(1, 0, \ldots, 0)} &        & \\
		& \cdots & \vdots  & \cdots & \\
		&        & F_{j,(0, \ldots, 0, 1)} &        &
	\end{bmatrix}
	\]
	i.e.,~the transpose of $F$.
\end{eg}

\begin{eg}[$m = 1$] 
	There are $2d$ rows and $(M+1)d$ columns:
	\[
	[\mathcal{S}] =
	\begin{bmatrix}
		& 
		\cdots\!
		& 
		\begin{array}{ccc}
			F_{j,(d,0)}        &        & \\
			\multirow[c]{1.6}{*}{\vdots} & \ddots & \\
			&        & F_{j,(d,0)} \\
			F_{j,(0,d)}        &        & \multirow[c]{2.4}{*}{\vdots} \\
			& \ddots & \\
			&        & F_{j,(0,d)}
		\end{array} 
		& 
		\!\cdots 
		& 
	\end{bmatrix}
	\]
\end{eg}

\begin{eg}[$m = d = 2$]
	There are 15 rows and $6(M+1)$ columns:
	\[
	[\mathcal{S}] =
	\begin{bmatrix}
		& & F_{j,(2,0,0)} &               &               &               &               &               & & \\
		& & F_{j,(1,1,0)} & F_{j,(2,0,0)} &               &               &               &               & & \\
		& & F_{j,(1,0,1)} &               & F_{j,(2,0,0)} &               &               &               & & \\
		& & F_{j,(0,2,0)} & F_{j,(1,1,0)} &               & F_{j,(2,0,0)} &               &               & & \\
		& & F_{j,(0,1,1)} & F_{j,(1,0,1)} & F_{j,(1,1,0)} &               & F_{j,(2,0,0)} &               & & \\
		& & F_{j,(0,0,2)} &               & F_{j,(1,0,1)} &               &               & F_{j,(2,0,0)} & & \\
		& &               & F_{j,(0,2,0)} &               & F_{j,(1,1,0)} &               &               & & \\
		& \cdots &        & F_{j,(0,1,1)} & F_{j,(0,2,0)} & F_{j,(1,0,1)} & F_{j,(1,1,0)} &               & \cdots & \\ 
		& &               & F_{j,(0,0,2)} & F_{j,(0,1,1)} &               & F_{j,(1,0,1)} & F_{j,(1,1,0)} & & \\
		& &               &               & F_{j,(0,0,2)} &               &               & F_{j,(1,0,1)} & & \\ 
		& &               &               &               & F_{j,(0,2,0)} &               &               & & \\
		& &               &               &               & F_{j,(0,1,1)} & F_{j,(0,2,0)} &               & & \\
		& &               &               &               & F_{j,(0,0,2)} & F_{j,(0,1,1)} & F_{j,(0,2,0)} & & \\
		& &               &               &               &               & F_{j,(0,0,2)} & F_{j,(0,1,1)} & & \\
		& &               &               &               &               &               & F_{j,(0,0,2)} & & 
	\end{bmatrix}
	\]
\end{eg}

In \cite[p.~3--4]{Macaulay_book}, 
Macaulay defined the resultant 
of $n$ homogeneous polynomials in $n$ variables 
to be ``the \textsc{h.c.f.}~of the determinants of the above array'', that is, the g.c.d.~of the maximal minors of the Sylvester--Macaulay matrix (transposed).
Taking our cue, we make the following definition.

\begin{defn}
	Let $f : \P^m \to \P^M$ be a morphism of degree $d$ defined over $K$.
	Then the \emph{valuation of the resultant} of $f$ 
	is the integer
	\[v(\Res f) := \min_\Delta v(\Delta) - \binom{(m+1)d}{m} v(F)\]
	where $\Delta$ ranges over all maximal minors of the Sylvester--Macaulay matrix associated to any homogeneous lift $F$ of $f$.
\end{defn}

This quantity is lift-independent, 
because replacing $F$ by $\lambda F$ scales each minor $\Delta$ by $\lambda^r$ where $r = \binom{(m+1)d}{m}$ is the number of rows; 
but 
\[v(\lambda^r \Delta) - r v(\lambda F) = v(\Delta) - rv(F).\]

Although the notation ``$v(\Res f)$'' suggests the existence of a global object ``$\Res f$'', we only define the latter when $K$ is the field of fractions of a Dedekind domain (cf.~Section \ref{sec:resideal}).

\begin{propn} \label{vRes}
	Let $f$ be as above.
	Then:
	\begin{enumerate}[(i)]
		\item $v(\Res f) \ge 0$;
		\item Any lift $F$ of $f$ admits a pseudoinverse $G$ with $v(F) + v(G) \ge -v(\Res f)$; and
		\item $f$ has good reduction (mod $v$) if and only if $v(\Res f) = 0$.
	\end{enumerate} 
\end{propn}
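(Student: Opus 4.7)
My plan is to prove the three parts in order, with each building on the previous. Throughout, write $r = \binom{(m+1)d}{m}$ for the number of rows of the Sylvester--Macaulay matrix $[\mathcal{S}]$, and recall that every nonzero entry of $[\mathcal{S}]$ is a coefficient of some $F_j$.

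For (i), I would observe that each entry of $[\mathcal{S}]$ has valuation $\ge v(F)$, and that a maximal minor $\Delta$ is a signed sum of products of $r$ such entries. By multiplicativity and ultrametricity of $v$, therefore $v(\Delta) \ge r v(F)$, and taking the minimum over $\Delta$ yields $v(\Res f) \ge 0$ at once.

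For (ii), I would use Cramer's rule together with the fact (Macaulay's bound plus Proposition \ref{sylv_null}) that since $f$ is a morphism, $\mathcal{S}$ is surjective in degree $D_0$, so $[\mathcal{S}]$ has a nonzero maximal minor. Choose $\Delta$ nonzero and of minimal valuation (so $v(\Delta) = v(\Res f) + r v(F)$), and let $M$ be the corresponding invertible $r \times r$ submatrix, indexing a set of columns of $[\mathcal{S}]$. For each $i = 0, \ldots, m$, solve $M \xi_i = \mathbf{e}_{(0,\ldots,0,D_0,0,\ldots,0)}$, where the right-hand side picks out the coefficient of $X_i^{D_0}$. Cramer's rule expresses each entry of $\xi_i$ as $\pm \Delta^{-1}$ times an $(r-1) \times (r-1)$ minor of $M$, which by the same argument as in (i) has valuation $\ge (r-1) v(F)$. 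Padding $\xi_i$ with zeros in the remaining column positions produces coefficients $G_{ij,\beta}$ of forms $G_{ij} \in K[X]_{D_0 - d}$ satisfying $\mathcal{S}(G_i) = X_i^{D_0}$, i.e., a pseudoinverse of degree $D_0 - d$. The valuation bound reads
\[
v(G) \;\ge\; (r-1) v(F) - v(\Delta),
\]
whence $v(F) + v(G) \ge r v(F) - v(\Delta) = - v(\Res f)$.

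For (iii), both directions fall out cheaply. For the forward direction, take a $v$-normalized lift $F$, so $v(F) = 0$; good reduction means $\Tilde f$ is a morphism, so by Proposition \ref{sylv_null} $\Tilde{\mathcal{S}}$ is surjective in degree $D_0$, i.e., $[\Tilde{\mathcal{S}}]$ has a nonzero maximal minor $\Tilde \Delta$. Lifting, the corresponding maximal minor $\Delta$ of $[\mathcal{S}]$ has $v(\Delta) = 0$, so $v(\Res f) \le 0$; together with (i) this gives equality. For the backward direction, again take a $v$-normalized lift $F$ and apply (ii): there exists a pseudoinverse $G$ with $v(G) \ge -v(F) - v(\Res f) = 0$, i.e., $G$ is $v$-integral, and Proposition \ref{good_red} then yields good reduction.

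The main obstacle is the bookkeeping in (ii), since one must carefully identify the solution produced by Cramer's rule with a genuine pseudoinverse and track the valuation through the $(r-1) \times (r-1)$ cofactors; once the ultrametric estimate on subminors is in hand, everything assembles cleanly, and the lift-independence of $v(\Res f)$ noted right after its definition ensures the choice of $F$ in (iii) is harmless.
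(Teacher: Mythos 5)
Your proposal is correct and follows essentially the same route as the paper: part (i) via the Leibniz/ultrametric estimate on the entries of $[\mathcal{S}]$, part (ii) via Cramer's rule applied to the invertible submatrix attached to a minimal-valuation nonzero maximal minor (which the paper handles by reference to the proof of Proposition \ref{good_red}, and you spell out, correctly tracking the $(r-1)\times(r-1)$ cofactors to get $v(G) \ge (r-1)v(F) - v(\Delta)$), and part (iii) exactly as in the paper, using Macaulay's bound for the reduction in the forward direction and part (ii) plus Proposition \ref{good_red} in the backward direction.
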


\begin{proof}
	Let $F$ be a lift of $f$.
	Fix a maximal minor $\Delta$ of $[\mathcal{S}]$ such that $v(\Res f) = v(\Delta) - rv(F)$.
	\begin{enumerate}[(i)]
		\item By the Leibniz formula, 
		$\Delta$ is a polynomial of degree $r$ with integer coefficients in the entries of $[\mathcal{S}]$,
		each of which is either $0$ or a coefficient $F_{j,\alpha}$ of $F$.
		It follows that $v(\Delta) \ge rv(F)$.
		\item It suffices to construct a pseudoinverse $G$ of $F$ using the submatrix of $[\mathcal{S}]$ corresponding to $\Delta$, as in the proof of Proposition \ref{good_red}; this yields $v(G) \ge (r-1)v(F) - v(\Delta)$.
		\item We may assume $v(F) = 0$.
		From part (ii), if $v(\Res f) = 0$ then $F$ has a $v$-integral pseudoinverse, so by Proposition \ref{good_red}, $f$ has good reduction.
		Conversely, if $f$ has good reduction, 
		then $[\Tilde{\mathcal{S}}] = \Tilde{[\mathcal{S}]}$ has a nonzero maximal minor $\Tilde{\Delta'}$, 
		whence $v(\Res f) \le v(\Delta') = 0$.
		By part (i), $v(\Res f) = 0$. \qedhere 
	\end{enumerate}
\end{proof}

\begin{rmk}
	Beware that when $m = M > 1$, our definition of $v(\Res f)$ need not coincide with the valuation of the homogeneous resultant: 
	we (merely) have 
	\begin{equation} \label{myRes}
		v(\Res f) \ge v(\Res_{d,\ldots,d}(F_0, \ldots, F_m)) - (m+1)d^m v(F).
	\end{equation}
	To see this, recall that $\Res_{d,\ldots,d}$ is an irreducible homogeneous polynomial
	of degree $(m+1)d^m$ with integer coefficients
	that vanishes at $F$ if and only if the $F_j$'s have a common zero \cite[Theorems 2.3 and 3.1]{Cox}.
	By adapting \cite[Proposition 4.7]{Cox}, 
	one can show that when the $F_{j,\alpha}$ are generic, 
	then there exist homogeneous polynomials $E_\Delta \in \Z[F_{j,\alpha}]$
	(called \emph{extraneous factors})
	such that 
	\[\Delta = {\Res_{d,\ldots,d}} \cdot E_\Delta\]
	for each maximal minor $\Delta$ of the Sylvester--Macaulay matrix.
	As elements of the universal coefficient ring, the extraneous factors are in fact coprime, but this is irrelevant: specializing to a particular choice of $F$ may introduce common divisors,\footnote{or, in the terminology of Section \ref{sec:excess}, ``excess divisors''} so we only obtain the lower bound \eqref{myRes}.
	For example,
	if
	\begin{align*}
		F_0 &= X^2 - XY - Y^2 + XZ + YZ - Z^2 \\
		F_1 &= X^2 + XY + Y^2 + XZ - YZ + Z^2 \\
		F_2 &= X^2 + XY - Y^2 + XZ - YZ + Z^2
	\end{align*}
	then (according to \texttt{Sage} and \texttt{Macaulay2}) $v_2(\Res f) = 9$ while $v_2(\Res_{2,2,2}(F))= 8$.
	It would be interesting to bound the gap in \eqref{myRes}.
	
	Of course, \eqref{myRes} is an equality when $m = M = 1$ (because there is only one minor); equality also holds when either side vanishes.
	Were we to define $v(\Res f)$ using the right-hand side of \eqref{myRes}, 
	then Proposition \ref{vRes} would remain true: parts (i) and (iii) are immediate, and part (ii) can be gleaned from \cite{Macaulay_article} according to \cite[Lemma 4]{Ingram_macaulayres}.
	We chose our definition of $v(\Res f)$ for its uniformity in $M$, its theoretical simplicity, and its direct applicability to finding pseudoinverses.
\end{rmk}

\subsection{The resultant ideal} \label{sec:resideal}

As in \cite[p.~101]{MortonSilverman}
we may package the various valuations of the resultant of $f$ into a single object.

\begin{defn} 
	Let $f : \P^m \to \P^M$ be a morphism defined over 
	the field of fractions $K$ of a Dedekind domain $R$.
	The \emph{resultant ideal} of $f$ is 
	\[
	\Res f := \prod_{v \nmid \infty} \mf p_v^{v(\Res f)}.
	\]
\end{defn}

The following estimate, generalizing \cite[Lemma 5.4]{Olechnowicz}, bounds the norm of the resultant ideal in terms of the height of $f$.

\begin{lem} \label{Nm_res_bound}
	Let $f : \P^m \to \P^M$ be a morphism
	of degree $d \ge 1$
	defined over a number field $K$ of degree $n$.
	Let $F$ be a lift of $f$
	and let $N_j = \#\{\alpha : F_{j,\alpha} \ne 0\}$.
	Put 
	\[
	r := \binom{(m+1)d}{m}, 
	\quad 
	b := \binom{md}{m}, 
	\quad \text{and} \quad 
	s := \left\lceil r/b \right\rceil
	\]
	and let $N$ be the product of the $s$ largest $N_j$'s.
	Then 
	\[\Nm \Res f 
	\le 
	N^{nb/2} H(f)^{nr}
	\le
	\Big(
	\sqrt{\textstyle{\binom{m+d}{m}}} H(f) 
	\Big)^{n(m+1)\binom{md}{m}}.
	\]
\end{lem}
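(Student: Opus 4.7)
The plan is to reduce the claim to a Hadamard-type bound on a single maximal minor via the product formula. From the discussion in Section \ref{sec:resideal}, the resultant ideal factors as
\[
\Res f \;=\; \frac{\gcd_\Delta (\Delta)}{\mathcal{F}^r},
\]
where $\mathcal{F} := \langle F_{j,\alpha} : \text{all } j, \alpha\rangle$ is the coefficient ideal and $\Delta$ ranges over maximal minors of the Sylvester--Macaulay matrix. Since $f$ is a morphism, Proposition \ref{sylv_null} combined with Macaulay's bound guarantees some nonzero such $\Delta$. Fix any such $\Delta$. Then $\Nm \Res f \le \Nm(\Delta \mathcal{O}_K)/\Nm \mathcal{F}^r$. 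The product formula gives $\Nm(\Delta\mathcal{O}_K) = \prod_\sigma |\sigma(\Delta)|$, while the height definition rearranges to $\Nm \mathcal{F} = \prod_\sigma |\sigma(F)|/H(f)^n$. Substituting yields
\[
\Nm \Res f \;\le\; H(f)^{nr} \prod_{\sigma : K \into \C} \frac{|\sigma(\Delta)|}{|\sigma(F)|^r}.
\]

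Next I would control each archimedean ratio by Hadamard's inequality. Each column of the Sylvester--Macaulay matrix comes from the ``block'' of $b = \binom{md}{m}$ columns corresponding to one index $j$ and records the coefficients of $X^\beta F_j$ for some multiindex $\beta$; consequently it has at most $N_j$ nonzero entries, each of magnitude at most $|\sigma(F)|$. If the $r \times r$ submatrix defining $\Delta$ draws $k_j$ columns from block $j$---where $0 \le k_j \le b$ and $\sum_j k_j = r$---then Hadamard applied column-wise gives
\[
|\sigma(\Delta)| \;\le\; \prod_{j=0}^{M} \bigl(\sqrt{N_j}\,|\sigma(F)|\bigr)^{k_j} \;=\; |\sigma(F)|^r \prod_j N_j^{k_j/2}.
\]
A rearrangement argument under the constraints $k_j \le b$ and $\sum k_j = r$ shows that $\prod_j N_j^{k_j}$ is maximized by loading the cap $b$ onto the blocks with largest $N_j$ in turn; thus $\prod_j N_j^{k_j} \le N^b$, where $N$ is the product of the $s = \lceil r/b \rceil$ largest $N_j$'s. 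Taking the $n$-fold product over embeddings $\sigma$ gives the first inequality $\Nm \Res f \le N^{nb/2} H(f)^{nr}$.

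For the coarser second inequality, observe that $N_j \le \binom{m+d}{m}$ (the number of degree-$d$ monomials in $m+1$ variables), so $N \le \binom{m+d}{m}^s$. Moreover $r \le (m+1)b$, and hence $s \le m+1$: this is the combinatorial fact that any degree-$D_0$ monomial $X^\alpha$ with $|\alpha| = (m+1)d - m$ must have some $\alpha_i \ge d$, making the map $K[X]_{D_0-d}^{m+1} \to K[X]_{D_0}$, $(Q_i) \mapsto \sum_i X_i^d Q_i$, surjective. Using $H(f) \ge 1$ to inflate $H(f)^{nr}$ to $H(f)^{n(m+1)b}$ then yields
\[
\Nm \Res f \;\le\; \binom{m+d}{m}^{n(m+1)b/2} H(f)^{n(m+1)b} \;=\; \Bigl(\sqrt{\textstyle\binom{m+d}{m}}\,H(f)\Bigr)^{n(m+1)\binom{md}{m}}.
\]

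There is no serious conceptual obstacle; the only point requiring care is the column-structure bookkeeping feeding into Hadamard, and the observation that taking $N$ to be the product of the \emph{largest} (rather than smallest) $N_j$'s is forced: the bound $\prod_j N_j^{k_j} \le N^b$ must hold for \emph{any} admissible $(k_j)$, because we have no a priori control over which choice of maximal minor is nonzero, and thus we must insist on a universal ceiling rather than minimize over $\Delta$.
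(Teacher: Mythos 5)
Your proposal is correct and follows essentially the same route as the paper's proof: factor $\Res f$ via its definition as $\langle F\rangle^{-r}$ times the gcd of the maximal minors, pass to a single nonzero minor, apply the product formula and column-wise Hadamard with the $\sqrt{N_j}\,|F|_\sigma$ column bound, and maximize $\prod_j N_j^{k_j}$ under $k_j\le b$, $\sum_j k_j = r$ by loading the cap onto the largest $N_j$'s (including the same caveat that one cannot minimize over minors since it is unknown which are nonzero). Your explicit justification of $s\le m+1$ via surjectivity of $(Q_i)\mapsto\sum_i X_i^d Q_i$ in degree $D_0$ is a nice addition, as the paper only asserts this estimate.
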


\begin{proof}
	Recall that the Sylvester--Macaulay matrix $[\mathcal{S}_F]$ of $F$ has $r$ rows and $(M+1)b$ columns.
	Let $\Delta$ range over all nonzero minors of 
	$[\mathcal{S}_F]$.
	By definition, 
	\begin{equation} \label{mybound_1}
		\Res f
		= \prod_{v \nmid \infty} \mf p_v^{\min_\Delta \! v(\Delta) - rv(F)}
		= \langle F \rangle^{-r} \prod_{v \nmid \infty} \mf p_v^{\min_\Delta \! v(\Delta)}.
	\end{equation}
	By multiplicativity of the norm,
	\begin{equation} \label{mybound_2} 
		\Nm \prod_{v \nmid \infty} \mf p_v^{\min_\Delta \! v(\Delta)}
		\le 
		\min_\Delta \Nm \prod_{v \nmid \infty} \mf p_v^{v(\Delta)} 
		= 
		\min_\Delta \Nm \Delta \mathcal{O}_K.
	\end{equation}
	Since $\Delta \ne 0$, 
	the product formula implies 
	\begin{equation} \label{mybound_3}
		\Nm \Delta \mathcal{O}_K
		= |N_{K/\Q}(\Delta)|
		= \prod_{\sigma : K \into \C} |\sigma(\Delta)|.
	\end{equation}
	Recall that the columns of $[\mathcal{S}_F]$ are indexed by pairs $(\beta, j)$ 
	where $\beta$ is a multiindex of degree $m(d-1)$ 
	and $j$ is an integer between $0$ and $M$.
	For any set $I$ of indices, 
	write $[\mathcal{S}_F]_I$ for the submatrix of $[\mathcal{S}_F]$ obtained by deleting the columns \emph{not} indexed by $I$.
	Also put
	\[
	\nu_j(I) := \#\{\beta : (\beta, j) \in I\}
	\]
	and note that 
	\begin{equation} \label{sum_nu}
		0 \le \nu_0(I), \ldots, \nu_M(I) \le b 
		\quad\text{and}\quad 
		\nu_0(I) + \ldots + \nu_M(I) = \# I.
	\end{equation}
	
	Let $I$ be the set of columns of the submatrix of $[\mathcal{S}_F]$ whose determinant is $\Delta$.
	By naturality of the determinant
	and Hadamard's inequality,
	\begin{equation} \label{mybound_4}
		|\sigma(\Delta)|
		=
		|\sigma(\det [\mathcal{S}_F]_I)|
		=
		|{\det}([\mathcal{S}_{\sigma(F)}]_I)|
		\le 
		\prod_{(\beta, j) \in I}
		\lVert [\mathcal{S}_{\sigma(F)}]_{\{(\beta, j)\}} \rVert_2.
	\end{equation}
	By hypothesis, the 2-norm of the $(\beta, j)$-column
	satisfies
	\begin{equation} \label{mybound_5}
		\lVert [\mathcal{S}_{\sigma(F)}]_{\{(\beta, j)\}} \rVert_2
		= 
		\sqrt{
			\sum_{\substack{|\alpha|=d \\ F_{j,\alpha} \ne 0}}
			|\sigma(F_{j,\alpha})|^2
		}
		\le
		\sqrt{N_j} \, |F|_\sigma.
	\end{equation}
	Combining \eqref{mybound_4} and \eqref{mybound_5}  
	and using \eqref{sum_nu} yields
	\begin{equation} \label{mybound_6}
		|\sigma(\Delta)| 
		\le 
		\prod_{(\beta, j) \in I}
		\big( \sqrt{N_j} \, |F|_\sigma \big)
		= |F|_\sigma^r \prod_{j=0}^M \sqrt{N_j}^{\nu_j(I)}.
	\end{equation}
	To obtain an upper bound independent of $\Delta$,
	it suffices to maximize 
	\begin{equation} \label{objective}
		\log 
		\prod_{j=0}^M \sqrt{N_j}^{\nu_j}
		=
		\frac{1}{2} \sum_{j=0}^M \nu_j \log N_j
	\end{equation}
	where the $\nu_j$ are integers between 0 and $b$ 
	whose sum is $r$.
	(Beware that we cannot freely \emph{minimize} the objective function \eqref{objective}, because we do not know whether the corresponding minor would be nonzero.)
	Relabelling if necessary, 
	we may assume $N_0 \ge N_1 \ge \ldots \ge N_M$. 
	Then clearly the maximum of \eqref{objective} 
	is attained when the initial $\nu_j$'s are as large as possible,
	say 
	\[
	\nu_0 = \ldots = \nu_{k-1} = b \ge \nu_k > 0 = \nu_{k+1} = \ldots = \nu_M
	\]
	for some $k$.
	With these choices,
	\[
	r = \nu_0 + \ldots + \nu_M
	= kb + \nu_k \in (kb, (k+1)b]
	\]
	so $k+1 = \lceil r/b \rceil = s$;
	and therefore
	\begin{equation} \label{mybound_7}
		\frac{1}{2} \sum_{j=0}^M \nu_j \log N_j
		\le
		\frac{b}{2} \sum_{j=0}^k \log N_j
		= \frac{b}{2} \log N.
	\end{equation}
	Inserting \eqref{mybound_7} into \eqref{mybound_6}
	gives
	\[|\sigma(\Delta)| \le N^{b/2} |F|_\sigma^r;\]
	putting this into \eqref{mybound_3}
	gives
	\[\Nm \Delta \mathcal{O}_K \le N^{nb/2} \prod_{\sigma : K \into \C} |F|_\sigma^r;\]
	and so it follows 
	from \eqref{mybound_1}
	and \eqref{mybound_2}
	that 
	\[
	\Nm \Res f 
	\le 
	\frac{N^{nb/2}}{\Nm {\langle F \rangle}^r} \prod_{\sigma : K \into \C} |F|_\sigma^r 
	= N^{nb/2} H(f)^{nr}
	\]
	as desired.
	The subsequent inequality
	follows from the uniform estimates 
	\[
	N_j \le \binom{m+d}{m}, 
	\quad s \le m+1, \quad\text{and}\quad r \le sb. 
	\qedhere 
	\]
\end{proof}

\section{Primitivity} \label{sec:primitivity}

We now set out to define a hands-on notion of projective space over arbitrary rings,
with the ultimate goal of establishing the existence 
of a well-behaved reduction map 
\[
\pi : K^{m+1} - 0^{m+1} \to \P^m(R/I)
\] 
for any nonzero ideal $I$ of a Dedekind domain $R$ with field of fractions $K$.
In order to be as explicit as possible 
(and to facilitate computer implementation in future work)
we build up the theory from first principles.
Many of these results will have direct application to the proof of Theorem \ref{thm:main}.

\subsection{Fundamental domains}

Let $G$ be a group acting on a set $X$.
Then $G$ acts on any $G$-invariant subset $S$ of $X$; the \emph{orbit-space} is the quotient $S/G := \{Gx : x \in S\}$ by the induced action. (We shall not use the notation $S/G$ unless $S$ is $G$-invariant.)

Let $D \subseteq X$ be a subset 
and let $H \leq G$ be a subgroup.
Following Schanuel,
we say $D$ is \emph{fundamental (mod $H$) for $G$ acting on $X$} if 
\begin{enumerate}
	\item $HD = D$
	\item $GD = X$
	\item $gD \cap D = \varnothing$ for $g \not \in H$
\end{enumerate}
When $H$ is trivial, we just say $D$ is \emph{fundamental}.
In this case, Condition (1) holds vacuously.

The significance of these ``generalized'' fundamental domains is that they turn ``big groups acting on big sets'' into ``small groups acting on small sets''. 
The following Proposition makes this precise.

\begin{propn} \label{enlargement}
	Let $D$ be fundamental (mod $H$) for $G$ acting on $X$. 
	Then the natural ``enlargement'' map
	\begin{align*}
		D/H &\to X/G \\
		Hx &\mapsto Gx
	\end{align*}
	is a bijection of orbit-spaces.
	In particular, $|X/G| = |D/H|$.
\end{propn}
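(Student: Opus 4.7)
The plan is to exhibit the enlargement map as a bijection by checking three things in turn: well-definedness, surjectivity, and injectivity. Each of the three axioms in the definition of ``fundamental (mod $H$)'' will do exactly one of these jobs, and the proof should be little more than unwinding what each axiom says.

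First I would check the map $Hx \mapsto Gx$ is well-defined on $D/H$. For the orbit space $D/H$ to even make sense, we need $H$ to preserve $D$, which is precisely axiom (1): $HD = D$. Then if $Hx = Hy$ for $x,y \in D$, there is $h \in H$ with $y = hx$; since $H \leq G$, we get $Gx = Gy$.

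Next, surjectivity uses axiom (2). Given any $Gx \in X/G$ with $x \in X$, axiom (2) says $X = GD$, so we may write $x = gd$ for some $g \in G$ and $d \in D$. Hence $Gx = Gd$ is the image of $Hd \in D/H$.

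The step that does the real work is injectivity, and it is where axiom (3) enters in an essential way. Suppose $x,y \in D$ satisfy $Gx = Gy$, so $y = gx$ for some $g \in G$. Then $y \in D \cap gD$, so $D \cap gD \neq \varnothing$, which by the contrapositive of axiom (3) forces $g \in H$; hence $Hx = Hy$. The cardinality assertion $|X/G| = |D/H|$ is then immediate from the existence of the bijection. The only conceivable obstacle is checking that axiom (3) is being applied in the correct direction, but the axiom's statement ``$gD \cap D = \varnothing$ for $g \notin H$'' is exactly the contrapositive needed here, so no subtlety arises.
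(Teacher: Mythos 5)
Your proof is correct and follows essentially the same route as the paper's: axiom (1) makes $D/H$ an orbit-space and the map well-defined, axiom (2) gives surjectivity, and axiom (3) gives injectivity. You have simply spelled out the details that the paper leaves implicit.
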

\begin{proof}
	Property (1) says $D$ is $H$-invariant, so $D/H = \{Hx : x \in D\}$ is an orbit-space.
	The map in question is well-defined because $H$ is a nonempty subset of $G$.
	Property (2) is equivalent to surjectivity,
	and property (3) implies injectivity.
\end{proof}

For the reader's convenience, we quote a portion of \cite[Lemma 1]{Schanuel},
which produces fundamental domains under pullback.

\begin{lem} \label{schanuel_lemma_1}
	Let $H \leq G \acts X, Y$ and let $\varphi : X \to Y$ be $G$-equivariant. 
	If $D$ is fundamental (mod $H$) for $G \acts Y$, 
	then $\varphi^{-1}(D)$ is fundamental (mod $H$) for $G \acts X$.
\end{lem}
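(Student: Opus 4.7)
The plan is to directly verify the three defining properties of a fundamental domain (mod $H$) for the set $\varphi^{-1}(D)$, using the $G$-equivariance $\varphi(gx) = g\varphi(x)$ as the only tool. Nothing deep is needed; the lemma is essentially a formal consequence of the definitions, and the ``hardest'' step is simply keeping track of which inclusion uses which property of $D$.

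For property (1), that $H\varphi^{-1}(D) = \varphi^{-1}(D)$: if $h \in H$ and $\varphi(x) \in D$, then $\varphi(hx) = h\varphi(x) \in HD = D$ by property (1) for $D$, giving $hx \in \varphi^{-1}(D)$; the reverse inclusion is automatic since $1 \in H$. For property (2), that $G\varphi^{-1}(D) = X$: given $x \in X$, property (2) for $D$ yields $g \in G$ and $y \in D$ with $\varphi(x) = gy$; then $\varphi(g^{-1}x) = g^{-1}\varphi(x) = y \in D$, so $g^{-1}x \in \varphi^{-1}(D)$ and hence $x \in G\varphi^{-1}(D)$.

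The third property requires the contrapositive viewpoint. Suppose $g \in G$ and $g\varphi^{-1}(D) \cap \varphi^{-1}(D)$ is nonempty; pick $x$ in the intersection and write $x = gy$ with $y \in \varphi^{-1}(D)$. Then both $\varphi(x)$ and $\varphi(y)$ lie in $D$, while equivariance gives $\varphi(x) = g\varphi(y) \in gD$. Hence $gD \cap D \neq \varnothing$, and property (3) for $D$ forces $g \in H$. This is exactly property (3) for $\varphi^{-1}(D)$, completing the proof.

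The main point I want to emphasize is that no continuity, measurability, or any other structure on $\varphi$ is being invoked---only set-theoretic equivariance---so this lemma applies uniformly to all the $G$-equivariant situations (polynomial lifts, reduction maps, coordinate projections, etc.) that will appear later when passing between $K^{m+1}$, $\P^m(K)$, and various quotients. I expect no obstacles.
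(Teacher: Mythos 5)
Your proof is correct and is exactly the routine verification the paper has in mind: its own proof of Lemma \ref{schanuel_lemma_1} is just ``Immediate from the definitions,'' and your three-step check (using equivariance once per property, with the contrapositive for property (3)) is the standard way to make that explicit. Nothing further is needed.
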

\begin{proof}[Proof (Schanuel).]
	Immediate from the definitions.
\end{proof}

\begin{rmk}
	The hypothesis that $\varphi$ be equivariant is essential.
\end{rmk}

We also adopt a construction used by Schanuel, namely that of group actions induced via homomorphisms.
Let $G$ be a group and let $A$ be a semigroup, 
and let $\alpha : G \to A$ be a semigroup homomorphism such that $\alpha(1)$ is a left identity of $A$.
In this context, two groups naturally act on $A$:
\begin{itemize}
	\item $G$ acts on $A$ by $g \cdot a = \alpha(g)a$ (the \emph{induced action});
	\item $\alpha(G)$ acts on $A$ by left multiplication (the \emph{regular action}).
\end{itemize}
The next Lemma relates fundamental domains for these two actions on $A$, and fills a gap in Schanuel's argument.

\begin{lem} \label{schanuel_missing_lemma}
	If $D$ is fundamental (mod~$H$) for $\alpha(G)$ acting on $A$,
	then $D$ is fundamental (mod~$\alpha^{-1}(H)$) for $G$ acting on $A$.
\end{lem}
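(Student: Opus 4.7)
The plan is a direct verification of the three defining properties of a (generalized) fundamental domain, pulled back along $\alpha$. The hypothesis that $\alpha(1)$ is a left identity of $A$, together with the fact that $H$ is a subgroup of $\alpha(G)$, will play the role usually played by the identity element.

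First I would record the preliminary observation that $\alpha(1) \in H$. Indeed, $\alpha(1)$ is an idempotent of $A$ (it is a left identity, so in particular $\alpha(1) \alpha(1) = \alpha(1)$), and it lies in $\alpha(G)$; but the only idempotent in the group $\alpha(G) \leq A$ is its identity element, which must therefore belong to the subgroup $H$. In particular $1 \in \alpha^{-1}(H)$, and for every $d \in D$, $1 \cdot d = \alpha(1) d = d$.

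Next I would check the three axioms in turn. For invariance under $\alpha^{-1}(H)$: if $g \in \alpha^{-1}(H)$ and $d \in D$, then $g \cdot d = \alpha(g) d \in HD = D$ by property (1) for $\alpha(G) \acts A$; conversely, every $d \in D$ equals $1 \cdot d$ with $1 \in \alpha^{-1}(H)$, giving $\alpha^{-1}(H) \cdot D = D$. For the covering property: $G \cdot D = \{\alpha(g) d : g \in G,\, d \in D\} = \alpha(G) D = A$ by property (2) for $\alpha(G) \acts A$. For disjointness: if $g \in G$ and $g \cdot D \cap D \neq \varnothing$, then $\alpha(g) D \cap D \neq \varnothing$, so property (3) for $\alpha(G) \acts A$ forces $\alpha(g) \in H$, i.e.\ $g \in \alpha^{-1}(H)$, as required.

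There is really no substantive obstacle; the only subtlety is the point flagged in the first paragraph, namely that $\alpha(1)$ is automatically the identity of the group $\alpha(G)$ (and hence lies in any subgroup $H$), which is what makes $\alpha^{-1}(H)$ a nonempty set containing $1$ and keeps the pullback action compatible with the action on $D$. Once that is noted, each axiom is immediate from its counterpart for $\alpha(G)$.
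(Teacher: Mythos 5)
Your verification is correct, and it is essentially the paper's argument: the paper disposes of this lemma with ``immediate from the definitions,'' and your write-up simply makes that unwinding explicit (including the useful observation that $\alpha(1)$ is the identity of the group $\alpha(G)$, hence lies in $H$, so that $1\in\alpha^{-1}(H)$ and the invariance and disjointness conditions pull back cleanly).
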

\begin{proof}
	Immediate from the definitions.
\end{proof}

\begin{rmk}
	Schanuel asserts that if an abelian group $G$ acts on a commutative, unital semigroup $A$ via a homomorphism $\alpha$, then $G$ is ``effective mod $\ker \alpha$'', i.e.,~$g \cdot a = a$ implies $g \in \ker \alpha$.\footnote{A better term would be \emph{free} mod $\ker \alpha$.} The claim is false: if $A$ is any unital ring under multiplication, then $g \cdot 0 = 0$ for all $g$ in $G$ regardless of $\alpha : G \to A$.
	On the other hand, if we interpret ``effective mod $\ker \alpha$'' to mean 
	\[g \cdot a = a \text{ for \emph{all} $a$ in $A$} \implies g \in \ker \alpha\]
	then the claim is true (take $a = 1$); however, this alternate interpretation renders Lemma 1 false unless $\Phi$ is surjective. 
	Either way, Schanuel apparently never \emph{uses} this claim (nor, for that matter, this notion of relative effectivity).
\end{rmk}

\subsection{Covariant maps}
\begin{lem} \label{lem:covariant_descent}
	Let $X$ and $Y$ be sets, let $\varphi : X \to Y$ be a function, and let $G$ and $H$ be groups acting on $X$ and $Y$, respectively. 
	TFAE:
	\begin{enumerate}[(a)]
		\item (covariance) for all $x$ in $X$ and $g$ in $G$ there exists $h$ in $H$ such that $\varphi(gx) = h\varphi(x)$;
		\item (descent) there exists a function $\Tilde{\varphi} : X/G \to Y/H$ such that $\Tilde{\varphi}(Gx) = H\varphi(x)$ for all $Gx$ in $X/G$, i.e.,~making the obvious square
\[
\begin{tikzcd}[row sep=large]
	X \arrow{r}{\varphi} \arrow{d}{} & Y \arrow{d}{} \\
	X/G \arrow[swap]{r}{\Tilde\varphi} & Y/H
\end{tikzcd}
\]
commute.
	\end{enumerate}
\end{lem}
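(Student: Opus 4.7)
The plan is to prove the two implications separately, treating each as an exercise in unpacking the definitions of orbit, well-definedness, and coset equality.

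For (a) $\Rightarrow$ (b), I would define $\tilde\varphi$ by the formula $\tilde\varphi(Gx) := H\varphi(x)$ and verify that this is independent of the choice of orbit representative. So suppose $Gx = Gx'$; then $x' = gx$ for some $g \in G$. By covariance, there exists $h \in H$ such that $\varphi(x') = \varphi(gx) = h\varphi(x)$, so $H\varphi(x') = Hh\varphi(x) = H\varphi(x)$, and $\tilde\varphi$ is well-defined. The prescribed formula is then automatic.

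For (b) $\Rightarrow$ (a), I would simply read covariance off the existence of $\tilde\varphi$: given $x \in X$ and $g \in G$, the orbits $Gx$ and $G(gx)$ coincide, so applying $\tilde\varphi$ and using the prescribed formula gives $H\varphi(x) = H\varphi(gx)$. This coset equality means precisely that $\varphi(gx) = h\varphi(x)$ for some $h \in H$.

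There is no real obstacle here; the only subtle point is the well-definedness check in the forward direction, and even that amounts to the observation that covariance is exactly what is needed to make the candidate formula respect the equivalence relation defining $X/G$. The proof is essentially symmetric, with (a) encoding ``$\varphi$ sends $G$-orbits into $H$-orbits pointwise'' and (b) encoding ``$\varphi$ sends $G$-orbits into $H$-orbits globally,'' and these are manifestly the same condition.
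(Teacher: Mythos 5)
Your proposal is correct and matches the paper's proof: the forward direction is the same well-definedness check (the paper phrases it as the $G$-invariance of $x \mapsto H\varphi(x)$, which is the same observation), and the converse is the identical coset-equality argument via $\Tilde{\varphi}(Ggx) = \Tilde{\varphi}(Gx)$. Nothing is missing.
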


\begin{proof}
	If $\varphi$ is covariant, 
	then the function $X \ni x \mapsto H\varphi(x) \in Y/H$ is $G$-invariant (as $H\varphi(gx) = Hh\varphi(x) = H\varphi(x)$), so it descends to $X/G$.
	Conversely, if $\Tilde{\varphi}$ exists, then for all $x$ in $X$ and $g$ in $G$ we have 
	$H\varphi(gx) = \Tilde{\varphi}(Ggx) = \Tilde{\varphi}(Gx) = H\varphi(x)$,
	so $\varphi(gx) = h\varphi(x)$ for some $h$ in $H$.
\end{proof}

\begin{lem}[Fibres] \label{lem:fibres}
	Let $G \acts X$ and $H \acts Y$. 
	Let $\varphi : X \to Y$ be covariant.
	Then
	\[\Tilde{\varphi}^{-1}(Hy) = \varphi^{-1}(Hy)/G\]
	for all $Hy \in Y/H$.
\end{lem}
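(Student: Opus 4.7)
The plan is to unpack both sides of the claimed equality as subsets of $X/G$ and check they coincide, using only the descent characterization of $\Tilde\varphi$ established in Lemma \ref{lem:covariant_descent}. The main substantive point is a preliminary observation: the fibre $\varphi^{-1}(Hy) \subseteq X$ is $G$-invariant, so the quotient $\varphi^{-1}(Hy)/G$ appearing on the right-hand side is a well-defined orbit space (and in fact a subset of $X/G$, namely the image of $\varphi^{-1}(Hy)$ under the quotient map $X \to X/G$).

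To establish $G$-invariance of $\varphi^{-1}(Hy)$, I would take $x \in \varphi^{-1}(Hy)$ and $g \in G$, and apply covariance: there exists $h \in H$ with $\varphi(gx) = h\varphi(x)$, so $\varphi(gx) \in hHy = Hy$ since $Hy$ is an $H$-orbit. Hence $gx \in \varphi^{-1}(Hy)$. (This is the only place the covariance hypothesis does work; it is also morally the reason why descent makes fibres behave sensibly.)

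With that in hand, both sides can be described by the same condition. By Lemma \ref{lem:covariant_descent}, $\Tilde\varphi(Gx) = H\varphi(x)$, so
\[Gx \in \Tilde\varphi^{-1}(Hy) \iff H\varphi(x) = Hy \iff \varphi(x) \in Hy \iff x \in \varphi^{-1}(Hy).\]
On the other hand, by definition of the quotient, $Gx \in \varphi^{-1}(Hy)/G$ if and only if the $G$-orbit $Gx$ meets $\varphi^{-1}(Hy)$, which (by $G$-invariance just established) is equivalent to $x \in \varphi^{-1}(Hy)$ itself. The two conditions match, which proves the set equality.

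I do not anticipate any genuine obstacle — once the $G$-invariance of $\varphi^{-1}(Hy)$ is noted, the remainder is a chase through the definitions of $\Tilde\varphi$ and of the induced quotient. The only care needed is to interpret the right-hand side $\varphi^{-1}(Hy)/G$ consistently as a subset of $X/G$ (rather than as an abstract quotient), so that the equality in the statement is literally an equality of subsets of $X/G$.
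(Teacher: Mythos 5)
Your proof is correct and follows essentially the same route as the paper's: note that covariance makes $\varphi^{-1}(Hy)$ a $G$-invariant set, then unwind the definitions via $\Tilde\varphi(Gx) = H\varphi(x)$ to see that both sides consist of exactly the orbits $Gx$ with $\varphi(x) \in Hy$. The only difference is that you spell out the $G$-invariance verification, which the paper leaves as a one-line remark.
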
 

\begin{proof}
	Implicit is the claim that the set $\varphi^{-1}(Hy)$ is $G$-invariant---which is true because $\varphi$ is covariant.
	Unraveling definitions,
	\begin{gather*}
		\Tilde{\varphi}^{-1}(Hy) 
		= \{Gx : \Tilde{\varphi}(Gx) = Hy\}
		= \{Gx : H\varphi(x) = Hy\} \\ 
		= \{Gx : \varphi(x) \in Hy\} 
		= \{Gx : x \in \varphi^{-1}(Hy)\} 
		= \varphi^{-1}(Hy)/G.
		\qedhere 
	\end{gather*}
\end{proof}

\subsection{Primitive points}
Let $R$ be a commutative unital ring 
and let $I$ be an ideal of $R$.
A tuple $a$ of elements of $R$ shall be called \emph{primitive modulo $I$} if the ideal $\langle a \rangle$ generated by the components of $a$ is coprime to $I$.
For each non-negative integer $k$ 
let 
\[
R^k_{\prim/I} := \{a \in R^k : \langle a \rangle + I = R\}
\]
and put $R^k_\prim := R^k_{\prim,\langle 0\rangle}$.
Elements of $R^k_\prim$ are called \emph{primitive}.
Note that 
\begin{itemize} 
	\item $R^k_{\prim/R} = R^k$,
	\item $R^0_{\prim/I} = \varnothing$ unless $I = R$,
	\item $R^1_\prim = R^\times$;
	\item moreover, if $I \leq J$ then $R^k_{\prim/I} \subseteq R^k_{\prim/J}$.
\end{itemize}

\begin{lem} \label{prim_mod_I}
	Let $I^k = I \times \ldots \times I$, the $k$\textsuperscript{th} cartesian power of $I$, act on $R^k$ by translation. Then the set $R^k_{\prim/I}$ is $I^k$-invariant, 
	and 
	\begin{align*}
		R^k_{\prim/I}/I^k = (R/I)^k_\prim.
	\end{align*}
\end{lem}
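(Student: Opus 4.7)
The plan is to handle the two claims in sequence, with the second resting on a standard observation about componentwise reduction.

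For the $I^k$-invariance, I would take $a \in R^k_{\prim/I}$ and $b \in I^k$ and show directly that $\langle a+b\rangle + I = R$. Since each $b_i$ lies in $I$, we have $a_i = (a_i+b_i) - b_i \in \langle a+b\rangle + I$ for every $i$, whence $\langle a\rangle \subseteq \langle a+b\rangle + I$. By symmetry the reverse inclusion holds, so $\langle a+b\rangle + I = \langle a\rangle + I = R$.

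For the equality of orbit-spaces, I would exhibit an explicit bijection via the componentwise reduction map $\varphi : R^k \to (R/I)^k$, $a \mapsto \bar a$. The key observations are: (a) $\varphi$ is a surjective group homomorphism (under addition) with kernel $I^k$, so its fibres are precisely the cosets of $I^k$; (b) for any $a \in R^k$, the correspondence between ideals of $R$ containing $I$ and ideals of $R/I$ yields
\[
\langle a\rangle + I = R \iff \langle \bar a\rangle = R/I,
\]
i.e., $a \in R^k_{\prim/I}$ iff $\bar a \in (R/I)^k_\prim$. Consequently $\varphi$ restricts to a surjection $R^k_{\prim/I} \onto (R/I)^k_\prim$ whose fibres are exactly the $I^k$-orbits in $R^k_{\prim/I}$. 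Passing to the quotient gives the claimed bijection $R^k_{\prim/I}/I^k = (R/I)^k_\prim$.

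There is no real obstacle here; the content is bookkeeping with ideals and cosets. If desired, the second step could be phrased as an application of Lemma \ref{lem:fibres} with $G = I^k$ acting on $R^k$ by translation and $H = \{0\}$ acting trivially on $(R/I)^k$, since $\varphi$ is then trivially covariant, but the direct argument above is just as short.
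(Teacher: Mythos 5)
Your proof is correct and proceeds essentially as the paper's does: the invariance check amounts to the same observation that translating by elements of $I$ preserves the relation $\langle a\rangle + I = R$, and the identification $R^k_{\prim/I}/I^k = (R/I)^k_\prim$ rests on the same fact that primitivity modulo $I$ is detected after componentwise reduction, your reduction-map phrasing being just a slightly more structural wrapping of the paper's one-line verification. No gaps.
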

\begin{proof}
	Invariance: if $r_1 a_1 + \ldots + r_k a_k + x = 1$ for some $x \in I$ and $r_i \in R$, 
	and if $(x_1, \ldots, x_k) \in I^k$, 
	then putting $x' = x - r_1 x_1 - \ldots - r_k x_k$ we have $x' \in I$ 
	and $r_1(x_1 + a_1) + \ldots + r_k (x_k + a_k) + x' = 1$ by construction.
	Equality: follows from the fact that $\Bar{r_1} \Bar{a_1} + \ldots + \Bar{r_k} \Bar{a_k} = \Bar{1}$ in $R/I$ if and only if $r_1 a_1 + \ldots + r_k a_k - 1 \in I$.
\end{proof}

\begin{lem} \label{lem:R_prim_units}
	Let $I$ be an ideal of $R$.
	We have
	\[ R^k_{\prim/I} \supseteq \{a \in R^k : a_i \in R^\times \text{ for some } i\}.\]
	Equality holds if $R$ is local and $I$ is proper.
\end{lem}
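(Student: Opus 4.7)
The plan is to handle the two containments separately, each being a quick consequence of unpacking the definition of primitivity modulo $I$.

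For the inclusion $\supseteq$, I would observe that if some coordinate $a_i$ is a unit of $R$, then $a_i^{-1} a_i = 1 \in \langle a \rangle$, so $\langle a \rangle = R$. Consequently $\langle a \rangle + I = R$ for \emph{any} ideal $I$ whatsoever, and in particular $a \in R^k_{\prim/I}$. Note that this direction requires no hypothesis on $R$ or on $I$, which matches the generality in which it is stated.

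For the reverse inclusion under the additional assumptions, I would invoke the standard characterization of a local ring: the non-units of $R$ coincide with the (unique) maximal ideal $\mathfrak{m}$. Since $I$ is a proper ideal, $I \subseteq \mathfrak{m}$. Now suppose toward contradiction that $a \in R^k_{\prim/I}$ but that no $a_i$ is a unit. Then every $a_i \in \mathfrak{m}$, whence $\langle a \rangle \subseteq \mathfrak{m}$, and therefore
\[ \langle a \rangle + I \subseteq \mathfrak{m} + \mathfrak{m} = \mathfrak{m} \subsetneq R, \]
contradicting the defining property $\langle a \rangle + I = R$ of a point in $R^k_{\prim/I}$. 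Hence some coordinate of $a$ must be a unit.

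The argument is essentially a bookkeeping exercise, so there is no real obstacle; the only point of care is to use both hypotheses simultaneously in the second part (locality, to ensure $\mathfrak{m}$ collects all non-units into one ideal, and properness of $I$, to guarantee $I \subseteq \mathfrak{m}$). Without locality the non-units form only a union of maximal ideals and need not be closed under addition, so the containment $\langle a \rangle \subseteq \mathfrak{m}$ would fail; without properness of $I$ the set $R^k_{\prim/I}$ degenerates to all of $R^k$, which trivially breaks equality as soon as $k \geq 1$ and $R$ has non-units.
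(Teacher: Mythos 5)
Your proposal is correct and follows essentially the same route as the paper: the first inclusion via $\langle a \rangle = R$ when some coordinate is a unit, and the converse by noting that in a local ring all non-unit coordinates force $\langle a \rangle \subseteq \mathfrak{m}$, so $\langle a \rangle + I \subseteq \mathfrak{m} \ne R$ when $I$ is proper. Nothing further is needed.
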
 
\begin{proof}
	Let $a \in R^k$.
	Certainly if some $a_i \in R^\times$ then $\langle a \rangle = R$, so $a \in R^k_\prim \subseteq R^k_{\prim/I}$.
	Conversely if no $a_i \in R^\times$ and if $R$ is local, then $\langle a \rangle$ is proper;
	so if $I$ is also proper then $\langle a \rangle + I \ne R$, meaning $a \not \in R^k_{\prim/I}$.
\end{proof}

\begin{rmk}
	$(\Bar 2, \Bar 3) \in (\Z/6\Z)^2_{\prim}$ yet $\Bar 2, \Bar 3 \not \in (\Z/6\Z)^\times$.
\end{rmk}

\begin{lem} \label{prim_prod}
	Let $I \leq R$ and $J \leq S$ be ideals.
	Then $(R \times S)^k_{\prim/I \times J} = R^k_{\prim/I} \times S^k_{\prim/J}$.
\end{lem}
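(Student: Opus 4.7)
The plan is to reduce the claim to two facts about the ideal structure of the product ring $R \times S$: namely, that ideals generated by pairs factor componentwise, and that sums of ``rectangular'' ideals are rectangular. Throughout, identify an element $a \in (R \times S)^k$ with a pair $(r, s)$ where $r = (r_1, \ldots, r_k) \in R^k$ and $s = (s_1, \ldots, s_k) \in S^k$, so that $a_i = (r_i, s_i)$.

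First I would establish the identity
\[
\langle (r_1, s_1), \ldots, (r_k, s_k) \rangle_{R \times S} = \langle r_1, \ldots, r_k \rangle_R \times \langle s_1, \ldots, s_k \rangle_S.
\]
The inclusion $\subseteq$ is immediate from the componentwise arithmetic in $R \times S$. For $\supseteq$, I would exploit the orthogonal idempotents $e_R := (1, 0)$ and $e_S := (0, 1)$: since $e_R a_i = (r_i, 0)$ and $e_S a_i = (0, s_i)$ lie in $\langle a \rangle$, any element of $\langle r \rangle_R \times \{0\}$ and any element of $\{0\} \times \langle s \rangle_S$ lies in $\langle a \rangle$, and summing gives the claim.

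Next I would record the trivial but essential rectangle rule
\[
(A_1 \times B_1) + (A_2 \times B_2) = (A_1 + A_2) \times (B_1 + B_2)
\]
for ideals $A_j \leq R$, $B_j \leq S$. Combining this with the previous step yields
\[
\langle a \rangle + I \times J = (\langle r \rangle_R + I) \times (\langle s \rangle_S + J).
\]
The conclusion is then immediate: the left-hand side equals $R \times S$ if and only if both factors on the right equal their respective ambient rings, which is precisely the condition that $r \in R^k_{\prim/I}$ and $s \in S^k_{\prim/J}$.

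There is no real obstacle here; the only mild subtlety is the use of the idempotents $e_R, e_S$ to deduce the factorization of the generated ideal, which is where the commutative-unital hypothesis on the rings enters essentially.
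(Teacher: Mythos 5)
Your proof is correct. The paper's own argument is a one-line witness-splitting: a point of $(R \times S)^k$ is primitive mod $I \times J$ iff some relation $\sum_i (r_i, s_i)(a_i, b_i) + (x, y) = (1_R, 1_S)$ holds with $(x,y) \in I \times J$, and since arithmetic in $R \times S$ is componentwise, such a relation exists iff the two component relations $\sum_i r_i a_i + x = 1_R$ and $\sum_i s_i b_i + y = 1_S$ each have witnesses; no more is needed in either direction. You instead prove the stronger intermediate identity $\langle a \rangle_{R \times S} = \langle r \rangle_R \times \langle s \rangle_S$ (the inclusion $\supseteq$ via the idempotents $(1,0)$ and $(0,1)$), combine it with the rectangle rule for sums of ideals, and then read off coprimality. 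Both routes are sound; yours buys a cleaner structural statement about ideals in the product ring (and would be reusable elsewhere), at the cost of slightly more machinery — note in particular that the lemma itself never needs the full equality of ideals, only the equivalence of the two coprimality conditions, so the idempotent step, while correct, is dispensable.
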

\begin{proof}
	A point $((a_1, b_1), \ldots, (a_k, b_k))$ in $(R \times S)^k$ is primitive modulo $I \times J$ 
	if and only if 
	there exist $(r_i, s_i) \in R \times S$ 
	and $(x, y) \in I \times J$ such that 
	\[
(r_1, s_1)(a_1, b_1) + \ldots + (r_k, s_k)(a_k, b_k) + (x, y) = (1_R, 1_S)
	\]
	if and only if there exist $r_i$ in $R$ and $x$ in $I$, as well as $s_i$ in $S$ and $y$ in $J$, such that 
	\[
r_1 a_1 + \ldots + r_k a_k + x = 1_R \quad\text{and}\quad s_1 b_1 + \ldots + s_k b_k + y = 1_S.
\qedhere
\]
\end{proof}

The next Lemma says that ring homomorphisms preserve primitivity.

\begin{lem} \label{prim_phi}
	Let $\varphi : R \to S$ be a ring homomorphism and let $I \leq R$ and $J \leq S$ be ideals. 
	Then the induced map 
	\begin{align*}
		\varphi_* : R^k &\to S^k \\
		(a_1, \ldots, a_k) &\mapsto (\varphi(a_1), \ldots, \varphi(a_k))
	\end{align*}
	sends $R^k_{\prim/I}$ to $S^k_{\prim/\langle \varphi(I) \rangle}$.
	Moreover, if $\varphi$ is surjective, 
	then $\varphi_*^{-1}(S^k_{\prim/J}) \subseteq R^k_{\prim/\varphi^{-1}(J)}$.
\end{lem}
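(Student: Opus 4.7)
The plan is to prove both statements by unraveling the definition of primitivity and applying $\varphi$ directly (for the first claim) or lifting via surjectivity (for the second). Throughout I would use only the defining property: a tuple $a \in R^k$ lies in $R^k_{\prim/I}$ iff there exist $r_1,\dots,r_k \in R$ and $x \in I$ with $r_1 a_1 + \dots + r_k a_k + x = 1_R$, and similarly on the $S$-side.

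For the first statement, start with $a = (a_1,\dots,a_k) \in R^k_{\prim/I}$ and pick a Bezout-type identity $r_1 a_1 + \dots + r_k a_k + x = 1_R$ with $x \in I$. Apply the ring homomorphism $\varphi$ to both sides; since $\varphi$ preserves the ring operations and the identity, this gives $\varphi(r_1)\varphi(a_1) + \dots + \varphi(r_k)\varphi(a_k) + \varphi(x) = 1_S$, where $\varphi(x) \in \varphi(I) \subseteq \langle\varphi(I)\rangle$. This is an $S$-Bezout identity witnessing $\varphi_*(a) \in S^k_{\prim/\langle\varphi(I)\rangle}$, which is exactly what we need.

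For the second statement, assume $\varphi$ is surjective and take $a \in \varphi_*^{-1}(S^k_{\prim/J})$, so there exist $s_1,\dots,s_k \in S$ and $y \in J$ with $s_1 \varphi(a_1) + \dots + s_k \varphi(a_k) + y = 1_S$. By surjectivity choose $r_i \in R$ with $\varphi(r_i) = s_i$. Then $\varphi(r_1 a_1 + \dots + r_k a_k - 1_R) = -y \in J$, so setting $x := 1_R - (r_1 a_1 + \dots + r_k a_k)$ gives $x \in \varphi^{-1}(J)$ and $r_1 a_1 + \dots + r_k a_k + x = 1_R$, proving $a \in R^k_{\prim/\varphi^{-1}(J)}$.

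There is no real obstacle; the only subtlety is remembering that $\varphi^{-1}(J)$ is automatically an ideal of $R$ (so the claim is well-posed), that $\varphi(1_R) = 1_S$ is needed to close the loop in the second half, and that surjectivity is genuinely necessary only in the second half (to lift the $s_i$'s; without it one would only get a containment inside $R^k_{\prim/\varphi^{-1}(J + \langle \varphi(R)\rangle^c)}$ or similar nonsense). The first half works for any ring homomorphism, including nonsurjective ones, which is why only the second claim carries the surjectivity hypothesis.
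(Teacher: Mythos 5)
Your proof is correct and is essentially the same argument as the paper's: apply $\varphi$ to a Bezout identity for the first claim, and for the second claim lift the coefficients $s_i$ by surjectivity and set $x = 1_R - (r_1 a_1 + \ldots + r_k a_k)$, observing $\varphi(x) = y \in J$. No gaps; the remarks about $\varphi(1_R)=1_S$ and the role of surjectivity are accurate.
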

\begin{proof}
	Applying $\varphi$ to any relation of the form 
	\begin{equation} \label{prim_mod_eg}
	r_1 a_1 + \ldots + r_k a_k + x = 1_R
	\end{equation}
	shows that if $a = (a_1, \ldots, a_k) \in R^k_{\prim/I}$ then $\varphi_*(a)
	\in S^k_{\prim/\langle \varphi(I) \rangle}$.
Now if $a \in R^k$ with $\varphi_*(a) \in S^k_{\prim/J}$ 
then there exist $s_i$ in $S$ 
and $y$ in $J$ 
such that 
\[s_1 \varphi(a_1) + \ldots + s_k \varphi(a_k) + y = 1_S.\]
Assuming $\varphi$ is surjective
yields $r_i$ in $R$ such that $\varphi(r_i) = s_i$ for all $i$;
let $x = 1_R - (r_1 a_1 + \ldots + r_k a_k)$.
By construction, \eqref{prim_mod_eg} holds 
and $\varphi(x) = y \in J$.
Thus $a \in R^k_{\prim/\varphi^{-1}(J)}$.
\end{proof}

\subsection{Jordan totients}

\begin{defn}
Let $I$ be an ideal of a commutative unital ring $R$ and let $k$ be a non-negative integer.
	The cardinal $J_{R,k}(I) := |(R/I)^k_\prim|$
	shall be called
	the \emph{$k$\textsuperscript{th} Jordan totient of $I$}.
\end{defn}

\begin{eg}
	By Lemma \ref{prim_mod_I},
	\[J_{\Z,k}(n\Z) = \left|\{(a_1, \ldots, a_k) : 1 \le a_i \le n, \, \gcd(a_1, \ldots, a_k, n) = 1\}\right|\]
	is number of $k$-tuples of integers between 1 and $n$ which, taken together, are coprime to $n$.
	This number is given by Jordan's totient function $J_k(n)$ (so-called because it reduces to Euler's totient function $\varphi(n)$ when $k = 1$).
\end{eg}

The following Lemma summarizes some useful properties of the Jordan totient function.

\begin{lem} \label{properties_J}
\hfill 
\begin{enumerate}[(i)]
	\item $J_{R,k}(I) \le [R : I]^k$ for all $k$, with equality if $I = R$.
	\item $J_{R,k}(I) = J_{R/I',k}(I/I')$ for all ideals $I' \leq I \leq R$.
\end{enumerate}
\end{lem}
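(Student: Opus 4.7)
The plan is to handle each part of the lemma directly from the definitions, with part (ii) requiring only an appeal to the third isomorphism theorem. Neither part presents a serious obstacle; the lemma merely records two bookkeeping identities that will be useful later.

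For part (i), I would first observe that $(R/I)^k_\prim$ is by definition a subset of $(R/I)^k$, which has cardinality $|R/I|^k = [R:I]^k$; this yields the inequality immediately. For the equality in the case $I = R$, the quotient $R/I$ is the zero ring, in which $0 = 1$, so every ideal---in particular every $\langle a \rangle$ with $a \in (R/R)^k$---coincides with the unit ideal. Hence every tuple is automatically primitive, and $(R/R)^k_\prim = (R/R)^k$ has cardinality $1 = [R:R]^k$.

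For part (ii), the plan is to invoke the canonical ring isomorphism
\[
\varphi : (R/I')/(I/I') \simto R/I
\]
supplied by the third isomorphism theorem (the hypothesis $I' \le I$ ensures that $I/I'$ makes sense as an ideal of $R/I'$). Applying $\varphi$ componentwise yields a bijection $\varphi_* : ((R/I')/(I/I'))^k \to (R/I)^k$. The only remaining point is that $\varphi_*$ restricts to a bijection of primitive parts; but since primitivity of a tuple is simply the ring-theoretic property that its components generate the unit ideal, and a ring isomorphism preserves the unit ideal in both directions, this is automatic---alternatively, one can cite Lemma \ref{prim_phi} applied in turn to $\varphi$ and $\varphi^{-1}$. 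Taking cardinalities gives $J_{R,k}(I) = J_{R/I',k}(I/I')$ as required.

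The only step demanding any care is the compatibility of $\varphi$ with primitivity in part (ii), and as noted above this is essentially automatic from the definitions. The lemma is therefore really a check that our notation behaves well under the standard correspondences, rather than a substantive result.
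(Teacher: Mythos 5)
Your argument is correct and follows essentially the same route as the paper: part (i) from the inclusion $(R/I)^k_\prim \subseteq (R/I)^k$ together with the zero-ring observation for $I = R$, and part (ii) by applying Lemma \ref{prim_phi} (in both directions) to the isomorphism $(R/I')/(I/I') \cong R/I$ furnished by the third isomorphism theorem. Nothing further is needed.
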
 
\begin{proof}
Part (i) is immediate from the inclusion $(R/I)^k_\prim \subseteq (R/I)^k$. Note that $R/R$ is the zero ring, whose sole $k$-tuple is primitive. 
Part (ii) follows from Lemma \ref{prim_phi} applied to the isomorphism $(R/I') / (I/I') \cong R/I$.
\end{proof}

\begin{lem} \label{J_is_mult}
$J_{R,k}$ is multiplicative.
\end{lem}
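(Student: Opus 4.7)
The plan is to deduce multiplicativity from the Chinese Remainder Theorem, using the fact (Lemma \ref{prim_prod}) that primitive tuples in a product ring are exactly products of primitive tuples.

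First I would unwind the statement: multiplicativity means that if $I, J$ are ideals of $R$ with $I + J = R$, then $J_{R,k}(IJ) = J_{R,k}(I) \cdot J_{R,k}(J)$. By the Chinese Remainder Theorem, the coprimality $I + J = R$ yields a ring isomorphism
\[
\varphi : R/IJ \simto R/I \times R/J, \qquad \Bar{r} \mapsto (\Bar{r}, \Bar{r}).
\]
The induced componentwise map $\varphi_* : (R/IJ)^k \to (R/I \times R/J)^k$ is then a bijection of sets.

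Next I would show that $\varphi_*$ restricts to a bijection on primitive tuples, i.e.,
\[
\varphi_*\bigl((R/IJ)^k_{\prim}\bigr) = (R/I \times R/J)^k_{\prim}.
\]
This is where Lemma \ref{prim_phi} is applied in \emph{both} directions: taking $I_\varphi = \langle 0 \rangle \leq R/IJ$, that lemma (forward direction, with $\varphi$) gives the inclusion $\subseteq$, since $\varphi$ sends $0$ to $0$. Because $\varphi$ is surjective (being an isomorphism), the ``moreover'' clause of Lemma \ref{prim_phi} applied to $\varphi$ with $J_\varphi = \langle 0 \rangle$ gives the reverse inclusion $\supseteq$. (Alternatively, one applies the forward direction to the inverse isomorphism $\varphi^{-1}$.)

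Finally, combining this with Lemma \ref{prim_prod} (applied to the rings $R/I$ and $R/J$ with the zero ideals) yields
\[
(R/IJ)^k_{\prim} \;\overset{\varphi_*}{\simto}\; (R/I \times R/J)^k_{\prim} \;=\; (R/I)^k_{\prim} \times (R/J)^k_{\prim},
\]
and taking cardinalities gives the desired identity $J_{R,k}(IJ) = J_{R,k}(I) \cdot J_{R,k}(J)$. There is no real obstacle here---the work was done in setting up Lemmas \ref{prim_prod} and \ref{prim_phi}---the only minor point requiring care is verifying both inclusions in the restriction statement, which is why one uses the surjectivity clause of Lemma \ref{prim_phi}.
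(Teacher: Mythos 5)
Your proof is correct and follows essentially the same route as the paper: the paper's argument is exactly the chain $(R/II')^k_\prim \cong (R/I \times R/I')^k_\prim = (R/I)^k_\prim \times (R/I')^k_\prim$, justified by CRT together with Lemma \ref{prim_phi} and then Lemma \ref{prim_prod}, followed by taking cardinalities. Your additional care in checking both inclusions via the surjectivity clause of Lemma \ref{prim_phi} (or by applying it to $\varphi^{-1}$) just makes explicit what the paper's ``[CRT + Lemma \ref{prim_phi}]'' step compresses.
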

\begin{proof} 
Suppose $I + I' = R$. 
Then
\begin{align*}
J_{R,k}(II') 
&= |(R/II')^k_\prim| 
	& \textrm{[definition]} \\
&= |(R/I \times R/I')^k_\prim|
	& \textrm{[CRT + Lemma \ref{prim_phi}]} \\
&= |(R/I)^k_\prim \times (R/I')^k_\prim|
	& \textrm{[Lemma \ref{prim_prod}]} \\
&= |(R/I)^k_\prim| \, |(R/I')^k_\prim|
	& \textrm{[cardinal arithmetic]} \\
&= J_{R,k}(I)  J_{R,k}(I'). 
	& \textrm{[definition]} & \qedhere 
\end{align*} 
\end{proof}

\begin{propn} \label{J_local}
	Let $R$ be a local ring with maximal ideal $\mf m$ and let $I \leq R$ be a proper ideal of finite index.
	Then 
	\[J_{R,k}(I) = [R : I]^k \Big(1 - \frac{1}{[R : \mf m]^k}\Big).\]
\end{propn}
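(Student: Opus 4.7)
The plan is to reduce the problem to counting $k$-tuples over the local ring $R/I$ and to invoke Lemma \ref{lem:R_prim_units}.

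First I would observe that since $R$ is local with unique maximal ideal $\mf m$ and $I$ is proper, we must have $I \subseteq \mf m$. Hence $R/I$ is again a local ring, with maximal ideal $\mf m/I$, and the zero ideal of $R/I$ is proper (since $[R:I]$ is finite and nonzero). By Lemma \ref{properties_J}(ii),
\[
J_{R,k}(I) = J_{R/I,k}(0) = |(R/I)^k_{\prim}|,
\]
so it suffices to count primitive $k$-tuples over $R/I$.

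Next I would apply Lemma \ref{lem:R_prim_units} to the local ring $R/I$ and the proper ideal $0$: a tuple $(\bar a_1,\ldots,\bar a_k) \in (R/I)^k$ is primitive if and only if at least one $\bar a_i$ is a unit, i.e.,\ at least one $\bar a_i \notin \mf m/I$. By complementation,
\[
J_{R,k}(I) = |(R/I)^k| - |(\mf m/I)^k| = [R:I]^k - [\mf m:I]^k.
\]
Finally, the tower law gives $[\mf m:I] = [R:I]/[R:\mf m]$, so factoring $[R:I]^k$ out yields the desired formula. The only thing that needs care is the appeal to Lemma \ref{lem:R_prim_units}, which does require the ambient ring to be local and the ideal in question to be proper---both verified above. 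No step appears to be a genuine obstacle; the proof is essentially a two-line inclusion–exclusion after identifying the non-primitive tuples as those lying entirely in the maximal ideal.
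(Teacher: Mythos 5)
Your proof is correct and follows essentially the same route as the paper: both arguments rest on Lemma \ref{lem:R_prim_units} to identify the non-primitive tuples as exactly those lying in the maximal ideal, then count by complementation and apply the tower law $[R:I]=[R:\mf m][\mf m:I]$. The only (harmless) difference is that you pass to the local ring $R/I$ first and apply the lemma there with the zero ideal, whereas the paper applies the lemma in $R$ to the ideal $I$ and then descends to $I^k$-orbits via Lemma \ref{prim_mod_I}; your variant avoids that orbit-space bookkeeping (and note that $J_{R,k}(I)=|(R/I)^k_\prim|$ is already the definition, so the appeal to Lemma \ref{properties_J}(ii) is not even needed).
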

\begin{proof}
We have
$J_{R,k}(I) = |(R/I)^k_\prim| = |R^k_{\prim/I}/I^k|$.
Since $R$ is local and $I$ is proper, 
Lemma \ref{lem:R_prim_units} implies 
\[R^k_{\prim/I} = \{a \in R^k : a_i \in R^\times \text{ for some }i\} = R^k \setminus \mf m^k,
\] 
i.e.,~the complement of $\mf m^k$ in $R^k$.
By Lemma \ref{prim_mod_I}, $\mf m^k$ is $I^k$-invariant.
Hence $R^k/I^k = R^k_{\prim/I}/I^k \sqcup \mf m^k/I^k$ as orbit-spaces.
Taking cardinalities,
\[
[R^k : I^k] = J_{R,k}(I) + [\mf m^k : I^k].
\]
Now the natural map $R^k \to (R/I)^k$ has kernel $I^k$ 
so $[R^k : I^k] = [R : I]^k$. 
Restricting to $\mf m$ (which contains $I$) we likewise obtain $[\mf m^k : I^k] = [\mf m : I]^k$.
Thus
\[
	[R : I]^k = J_{R,k}(I) + [\mf m : I]^k. 
	\tag{*}
\]
Since $I$ has finite index, it follows from the tower theorem---$[R : I] = [R : \mf m][\mf m : I]$---that
\[J_{R,k}(I) = [R : I]^k - [\mf m : I]^k = [R : I]^k - \frac{[R : I]^k}{[R : \mf m]^k}. \qedhere\]
\end{proof}

\begin{cor} \label{J_Dedekind}
	Let $R$ be a Dedekind domain and let $I$ be a nonzero ideal of finite index. 
	Then 
	\[J_{R,k}(I) = \Nm I^k \prod_{P \mid I} \Big(1 - \frac{1}{\Nm P^k}\Big).\]
\end{cor}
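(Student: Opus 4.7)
The plan is to combine multiplicativity with the local formula of Proposition \ref{J_local}, after reducing modulo a prime power. Because $R$ is a Dedekind domain and $I$ is nonzero, unique factorization of ideals gives
\[
I = P_1^{e_1} \cdots P_r^{e_r}
\]
for pairwise distinct nonzero primes $P_i$ and positive integers $e_i$. Since the $P_i^{e_i}$ are pairwise coprime, Lemma \ref{J_is_mult} yields
\[
J_{R,k}(I) = \prod_{i=1}^r J_{R,k}(P_i^{e_i}).
\]
So everything reduces to a formula for $J_{R,k}(P^e)$ when $P$ is a nonzero prime of $R$ and $e\ge 1$.

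Next, I would pass to the quotient ring. By Lemma \ref{properties_J}(ii) applied with $I' = P^e \le P^e \le R$, we have
\[
J_{R,k}(P^e) = J_{R/P^e,\,k}(0).
\]
The crucial observation is that $R/P^e$ is a local ring. Indeed, in a Dedekind domain the nonzero primes are precisely the maximal ideals, and by unique factorization the only ideals of $R$ containing $P^e$ are $R, P, P^2, \ldots, P^e$; hence $R/P^e$ has the unique maximal ideal $\mf m = P/P^e$. The zero ideal of $R/P^e$ is proper (as $P^e \neq R$) and has finite index (namely $\Nm P^e$), so Proposition \ref{J_local} applies to the local ring $R/P^e$ and the proper ideal $0$, giving
\[
J_{R/P^e,\,k}(0) = [R/P^e : 0]^k \Big(1 - \frac{1}{[R/P^e : \mf m]^k}\Big) = \Nm(P^e)^k \Big(1 - \frac{1}{\Nm P^k}\Big),
\]
where I used $[R/P^e : 0] = |R/P^e| = \Nm P^e$ and $[R/P^e : P/P^e] = |R/P| = \Nm P$.

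Finally, assembling the two reductions,
\[
J_{R,k}(I) = \prod_{i=1}^r \Nm(P_i^{e_i})^k \Big(1 - \frac{1}{\Nm P_i^k}\Big) = \Nm(I)^k \prod_{P \mid I} \Big(1 - \frac{1}{\Nm P^k}\Big),
\]
using multiplicativity of the norm. No step is especially delicate; the only point that deserves verification is the localness of $R/P^e$, which is a standard consequence of the Dedekind property, and the compatibility of Lemma \ref{properties_J}(ii) with Proposition \ref{J_local}'s hypotheses (properness and finite index of the zero ideal in $R/P^e$), both of which are immediate.
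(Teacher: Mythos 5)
Your proof is correct and follows essentially the same route as the paper: reduce to prime powers by multiplicativity (Lemma \ref{J_is_mult}), pass to $R/P^e$ via Lemma \ref{properties_J}(ii), verify that $R/P^e$ is local with maximal ideal $P/P^e$, and apply Proposition \ref{J_local} to the zero ideal. The only cosmetic difference is that the paper spells out the index computation $[R:P^i]=[R:P]^i$ via the filtration $P^{j-1}/P^j$, which you sidestep by working directly with $\Nm(P^e)$ and norm multiplicativity; this is harmless.
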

\begin{proof}
By multiplicativity of the Jordan totient (Lemma \ref{J_is_mult}) and of the norm, it suffices to prove the claim for $I = P^e$ where $P$ is a maximal ideal and $e$ is a positive integer.

We claim that $R/P^e$ is a local ring. 
Indeed, the ideals of $R/P^e$
are in one-to-one correspondence 
with ideals of $R$ containing a.k.a.~dividing $P^e$, and hence are all of the form $P^i / P^e$ for some $i = 0, \ldots, e$. 
Thus, the unique maximal ideal of $R/P^e$ is $P/P^e$.
Since each successive quotient $P^{j-1}/P^j$ is a one-dimensional vector space over $R/P$, we have 
\[[R/P^e : P^i/P^e] = [R : P^i] 
= \prod_{j=1}^i [P^{j-1} : P^j] 
= [R : P]^i\]
for all $i = 0, \ldots, e$.
In particular, $P^e/P^e$ is a proper ideal of finite index.
Thus
\begin{align*}
J_{R,k}(P^e) 
&= J_{R/P^e, k}(P^e/P^e) & \text{[Lem.~\ref{properties_J}(ii)]}\\
&= [R/P^e : P^e/P^e]^k 
\Big( 
1 - \frac{1}{[R/P^e : P/P^e]^k}
\Big) & \text{[Prop.~\ref{J_local}]}  \\
&= [R : P]^{ek}
\Big( 
1 - \frac{1}{[R : P]^k}
\Big) \\
&= \Nm P^{ek} \Big(1 - \frac{1}{\Nm P^k}\Big). && \qedhere
\end{align*}
\end{proof}

\begin{q}
	For which (finite or infinite) rings $R$ is it true that
	\[\sum_{I \leq R} J_{R,k}(I) = |R|^k?\]
\end{q}

\subsection{Na\"ive projective space}

The unit group $R^\times$ acts on $R^k$ by scaling:
\[u(a_1, \ldots, a_k) = (ua_1, \ldots, ua_k).\]
Under this action, each of the sets $R^k_{\prim/I}$ is $R^\times$-invariant, as $\langle ua \rangle = u\langle a \rangle = \langle a \rangle$ for all units $u$.
More importantly:

\begin{lem} \label{lem:unit_group_acts_freely}
The action of $R^\times$ on $R^k_{\prim}$ is free.
\end{lem}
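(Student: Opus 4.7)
The plan is to show that if $u \in R^\times$ fixes some $a = (a_1, \ldots, a_k) \in R^k_\prim$ under the scaling action, then $u$ must equal $1$. Concretely, I want to exhibit the equation $u - 1 = 0$ as an $R$-linear combination of the coordinates $a_i$.

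First, unpack the hypothesis $ua = a$ componentwise: it says $ua_i = a_i$, i.e., $(u-1)a_i = 0$, for each $i = 1, \ldots, k$. Next, invoke primitivity: since $a \in R^k_\prim$, by definition $\langle a_1, \ldots, a_k \rangle = R$, so one can pick $r_1, \ldots, r_k \in R$ with $r_1 a_1 + \ldots + r_k a_k = 1$. Multiplying this identity by $u-1$ and using the previous step gives
\[
u - 1 = (u-1) \sum_{i=1}^k r_i a_i = \sum_{i=1}^k r_i (u-1) a_i = 0,
\]
hence $u = 1$, which is exactly the assertion that $R^\times$ acts freely on $R^k_\prim$.

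There is no real obstacle here; the argument is essentially a one-liner once one observes that an element killing every generator of the unit ideal must itself be zero. Note that invertibility of $u$ is not actually used in the argument --- the scaling action of the full monoid $R$ on $R^k_\prim$ is already ``free'' in the sense that the stabilizer of any primitive tuple is trivial --- but the statement as phrased only concerns $R^\times$.
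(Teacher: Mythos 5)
Your proof is correct and is essentially the paper's argument: both reduce to the observation that an element killing every coordinate of a tuple generating the unit ideal must itself vanish. The paper merely packages this via the identity $\Stab_{R^\times}(a) = R^\times \cap (1 + \Ann_R(a))$ together with $\Ann_R(a) = \Ann_R(a)(\langle a\rangle + I) = \Ann_R(a)I$ for $a$ primitive mod $I$, specializing to $I = 0$, which is the same computation you carry out directly.
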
 
\begin{proof}
The key is that 
$\Stab_{R^\times}(a) = R^\times \cap (1 + \Ann_R(a))$
for all $a$ in $R^k$, as $ua = a$ if and only if $(u-1)a_i = 0$ for all $i$.
If $a$ is primitive mod $I$, then 
$\Ann_R(a) = \Ann_R(a)(\langle a \rangle + I) = \Ann_R(a)I$.
The claim follows on taking $I = 0$.
\end{proof}

\begin{rmk}
It's entirely possible that the ``free locus'' of $R^\times$, i.e.,~the maximal subset of $R^k$ on which $R^\times$ acts freely, is a strict superset of $R^k_\prim$. 
This is the case, for instance, with $R = \Z/q\Z$: the free locus is $(\Z/q\Z)^k_{\prim/\langle \Bar{q/2}\rangle}$ when $q$ is singly even (and only then).
\end{rmk}

\begin{defn}
Let $R$ be a ring and let $m \ge -1$. 
The \emph{(na\"ive) projective $m$-space over $R$} is the orbit-space \[\P^m(R) := R^{m+1}_\prim/R^\times.\]
The image in $\P^m(R)$ of a primitive tuple $x = (x_0, \ldots, x_m)$ is denoted $[x] = [x_0 : \ldots : x_m]$.
\end{defn}

\begin{rmk} \label{rmk:proj}
For an arbitrary scheme $X$, 
the correct definition of $\P^m(X)$ is as the set of $X$-valued points of $\operatorname{Proj} \Z[X_0, \ldots, X_m]$,
which are given by pairs $(\mathcal{L}, (s_0, \ldots, s_m))$ where 
$\mathcal{L}$ is an invertible $\mathcal{O}_X$-module 
and $s_0, \ldots, s_m$ are global sections of $\mathcal{L}$, up to equivalence \cite{Stacks_01NE}.
When $X = \operatorname{Spec}(R)$ is affine, 
the points of $\P^m(\operatorname{Spec} R)$ a.k.a.~$\P^m(R)$ correspond to $R$-submodules of $R^{m+1}$ which are direct summands of rank one \cite{MSE_Georges}.
This agrees with our na\"ive definition when $R$ is a product of finitely many PIDs and local rings---the only case we shall consider---but disagrees when, say, $R$ is a Dedekind domain with nontrivial class group.\footnote{In this case, the ``correct'' $\P^m(R)$ coincides with $\P^m(K)$ where $K = \operatorname{Frac}(R)$, 
whereas our ``na\"ive'' $\P^m(R)$ comprises just those points in $\P^m(K)$ whose coordinates generate a principal ideal.}
\end{rmk}

\begin{propn}
Let $R$ be a Dedekind domain and let $I$ be a nonzero ideal of finite index.
Then 
\[\#\P^m(R/I) 
= \frac{J_{R,m+1}(I)}{J_{R,1}(I)} 
= \Nm I^m \prod_{P \mid I} \Big(1 + \frac{1}{\Nm P} + \ldots + \frac{1}{\Nm P^m} \Big).
\]
\end{propn}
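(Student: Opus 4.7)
The plan is to establish the two equalities in turn. For the first equality, I would apply Lemma \ref{lem:unit_group_acts_freely} to the ring $R/I$: since $(R/I)^\times$ acts freely on $(R/I)^{m+1}_\prim$, each orbit has cardinality exactly $|(R/I)^\times|$. Therefore, by the orbit--stabilizer theorem,
\[
\#\P^m(R/I)
\;=\; \frac{|(R/I)^{m+1}_\prim|}{|(R/I)^\times|}.
\]
The numerator is $J_{R,m+1}(I)$ by definition. For the denominator, note that $R^1_\prim = R^\times$ for any commutative unital ring $R$ (this was observed just after the definition of $R^k_\prim$); applied to $R/I$, this gives $(R/I)^1_\prim = (R/I)^\times$, so the denominator is precisely $J_{R,1}(I)$.

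For the second equality, I would substitute the formula from Corollary \ref{J_Dedekind} into the ratio $J_{R,m+1}(I)/J_{R,1}(I)$. The powers of $\Nm I$ combine as $\Nm I^{m+1}/\Nm I = \Nm I^m$, and the local factors combine as
\[
\prod_{P \mid I} \frac{1 - 1/\Nm P^{m+1}}{1 - 1/\Nm P}.
\]
Applying the finite geometric series identity $(1 - x^{m+1})/(1 - x) = 1 + x + \ldots + x^m$ with $x = 1/\Nm P$ to each local factor then yields the displayed product.

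There is no real obstacle here; the proof is essentially bookkeeping once Lemma \ref{lem:unit_group_acts_freely} and Corollary \ref{J_Dedekind} are in hand. The only subtle point worth pausing on is the verification that $(R/I)^\times$ acts freely on $(R/I)^{m+1}_\prim$, i.e.\ that Lemma \ref{lem:unit_group_acts_freely} applies with $R$ replaced by the quotient ring $R/I$---but this is immediate because the lemma is stated for arbitrary commutative unital rings, and $R/I$ is one (finite, in fact, since $I$ has finite index).
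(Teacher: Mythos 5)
Your proposal is correct and follows exactly the paper's route: the paper's proof simply cites Lemma \ref{lem:unit_group_acts_freely} (free action of the unit group, giving the ratio of Jordan totients) together with Corollary \ref{J_Dedekind} (the explicit totient formula), and your write-up just spells out the same bookkeeping, including the geometric-series simplification. No gaps.
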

\begin{proof}
Immediate from Lemma \ref{lem:unit_group_acts_freely} and Corollary \ref{J_Dedekind}.
\end{proof}

\begin{rmk}
	The formula for $\# \P^m$ reflects the ``geometric'' decomposition 
	\[\P^m = \A^m \sqcup \A^{m-1} \sqcup \ldots \sqcup \A^0.\]
\end{rmk}

For reference, Table \ref{table:PmZq} lists the cardinality of $\P^m(\Z/q\Z)$ for small values of $m$ and $q$, computed as quotients of Jordan totients \cite[Sequences 10, 7434, 59376--8, and 69091]{OEIS}.

\begin{table}
\centering
\small 
\renewcommand{\arraystretch}{1.2}
\begin{tabular}{r|rrrrr}
	$q$ & $\#\P^1(\Z/q\Z)$ & $\#\P^2(\Z/q\Z)$ & $\#\P^3(\Z/q\Z)$ & $\#\P^4(\Z/q\Z)$ & $\#\P^5(\Z/q\Z)$ \\ \hline 
1 & 1 & 1 & 1 & 1 & 1 \\
2 & 3 & 7 & 15 & 31 & 63 \\
3 & 4 & 13 & 40 & 121 & 364 \\
4 & 6 & 28 & 120 & 496 & 2016 \\
5 & 6 & 31 & 156 & 781 & 3906 \\
6 & 12 & 91 & 600 & 3751 & 22932 \\
7 & 8 & 57 & 400 & 2801 & 19608 \\
8 & 12 & 112 & 960 & 7936 & 64512 \\
9 & 12 & 117 & 1080 & 9801 & 88452 \\
10 & 18 & 217 & 2340 & 24211 & 246078 \\
11 & 12 & 133 & 1464 & 16105 & 177156 \\
12 & 24 & 364 & 4800 & 60016 & 733824 \\
13 & 14 & 183 & 2380 & 30941 & 402234 \\
14 & 24 & 399 & 6000 & 86831 & 1235304 \\
15 & 24 & 403 & 6240 & 94501 & 1421784 \\
16 & 24 & 448 & 7680 & 126976 & 2064384 \\
17 & 18 & 307 & 5220 & 88741 & 1508598 \\
18 & 36 & 819 & 16200 & 303831 & 5572476 \\
19 & 20 & 381 & 7240 & 137561 & 2613660 \\
20 & 36 & 868 & 18720 & 387376 & 7874496
\end{tabular}
\caption{$\#\P^m(\Z/q\Z)$ for $1 \le q \le 20$ and $1 \le m \le 5$.}
\label{table:PmZq}
\end{table}

\begin{propn} \label{Pm_preserves_products}
	$\P^m$ is a product-preserving functor from the category of commutative unital rings to the category of sets.
\end{propn}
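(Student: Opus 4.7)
The plan is to verify both parts of the claim in sequence: first functoriality, then product preservation.

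For functoriality, given a ring homomorphism $\varphi : R \to S$, I would take the induced map $\varphi_* : R^{m+1} \to S^{m+1}$. By Lemma \ref{prim_phi} applied with $I = 0$, we have $\varphi_*(R^{m+1}_\prim) \subseteq S^{m+1}_{\prim/\langle \varphi(0)\rangle} = S^{m+1}_\prim$. Since $\varphi$ sends units to units and $\varphi_*(ua) = \varphi(u) \varphi_*(a)$, this map is equivariant with respect to the scaling actions of $R^\times$ and $S^\times$, hence by Lemma \ref{lem:covariant_descent} descends to a well-defined map $\P^m(\varphi) : \P^m(R) \to \P^m(S)$ sending $[x]$ to $[\varphi_*(x)]$. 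Functoriality (preservation of identities and compositions) is then immediate from the componentwise definition of $\varphi_*$.

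For product preservation, fix rings $R, S$ and consider the canonical identification $(R \times S)^{m+1} \cong R^{m+1} \times S^{m+1}$ given by $((a_0, b_0), \ldots, (a_m, b_m)) \leftrightarrow ((a_0, \ldots, a_m), (b_0, \ldots, b_m))$. Under this identification, Lemma \ref{prim_prod} (with $I = 0$ and $J = 0$) yields $(R \times S)^{m+1}_\prim = R^{m+1}_\prim \times S^{m+1}_\prim$. Moreover, $(R \times S)^\times = R^\times \times S^\times$ acts componentwise under this identification: $(u, v) \cdot ((a_i, b_i))_i$ corresponds to $((ua_i)_i, (vb_i)_i)$. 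Since the action on a product set factors through each factor, passing to orbit-spaces gives a canonical bijection
\[
\P^m(R \times S) = \frac{R^{m+1}_\prim \times S^{m+1}_\prim}{R^\times \times S^\times} \;\simto\; \frac{R^{m+1}_\prim}{R^\times} \times \frac{S^{m+1}_\prim}{S^\times} = \P^m(R) \times \P^m(S)
\]
sending $[(a_i, b_i)_i]$ to $([a_i]_i, [b_i]_i)$.

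Finally, I would check naturality of this bijection with respect to pairs of ring homomorphisms $(\varphi, \psi) : (R, S) \to (R', S')$, which again follows componentwise: the induced map $(\varphi \times \psi)_*$ corresponds to $\varphi_* \times \psi_*$ under the identification above, and both compose compatibly with passage to orbit spaces. There is no real obstacle here—the entire argument is formal and rests on two prior lemmas plus the elementary fact that a product of group actions quotients as a product of quotients; the only care required is bookkeeping the reindexing between $((a_i, b_i))_i$ and $((a_i)_i, (b_i)_i)$.
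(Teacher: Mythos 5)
Your proposal is correct and follows essentially the same route as the paper: functoriality via the induced map $\varphi_*$, covariance, and descent through Lemma \ref{lem:covariant_descent} (with Lemma \ref{prim_phi} guaranteeing primitivity is preserved), and product preservation via Lemma \ref{prim_prod} together with $(R\times S)^\times = R^\times \times S^\times$. The extra detail you supply (the explicit quotient-of-a-product argument and the naturality check) is just an elaboration of what the paper dismisses as immediate.
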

\begin{proof}
Let $\varphi : R \to S$ be a ring homomorphism.
Then the induced map $\varphi_*$ (cf.~Lemma \ref{prim_phi}) is covariant, as $\varphi_*(ua) = \varphi(u) \varphi_*(a)$ and $\varphi(R^\times) \subseteq S^\times$.
By Lemma \ref{lem:covariant_descent}, $\varphi_*$ descends to the quotient:
\begin{align*}
\P^m(\varphi) : \P^m(R) &\to \P^m(S) \\
	[a_0 : \ldots : a_m] &\mapsto [\varphi(a_0) : \ldots : \varphi(a_m)]
\end{align*}
By construction, $\P^m(\id_R) = \id_{\P^m(R)}$ and $\P^m(\psi \circ \varphi) = \P^m(\psi) \circ \P^m(\varphi)$.
The claim that $\P^m(R \times S) = \P^m(R) \times \P^m(S)$ is immediate from Lemma \ref{prim_prod}.
\end{proof}

\begin{lem} \label{Pm_constant_degree}
Let $R$ be a finite local ring and let $\varphi : R \to S$ be a surjective ring homomorphism.
Then $\P^m(\varphi)$ has constant degree (i.e.,~every fibre has the same cardinality).
\end{lem}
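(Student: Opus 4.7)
The plan is to compute every fibre of $\P^m(\varphi)$ explicitly and show it has the same cardinality $|I|^m$, where $I = \ker \varphi$. First I handle the trivial case: if $S = 0$ then $\P^m(S)$ is a singleton and there is nothing to prove; so assume $I \subsetneq R$, whence $I \subseteq \mf m$ because $\mf m$ is the unique maximal ideal. Under this hypothesis $S = R/I$ is itself local with maximal ideal $\mf m/I$, and the induced map $R^\times \to S^\times$ is surjective (units in a local ring being exactly the elements outside the maximal ideal).

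Given $[b] \in \P^m(S)$, I pick a primitive representative $b \in S^{m+1}_\prim$. By Lemma \ref{lem:R_prim_units}, some coordinate $b_i$ is a unit; using the surjectivity above, I lift $b_i$ to a unit $\tilde{b}_i \in R^\times$ and lift the remaining coordinates arbitrarily to produce $\tilde{b} \in R^{m+1}_\prim$. By Lemma \ref{lem:fibres}, the fibre over $[b]$ is $\varphi_*^{-1}(S^\times b)/R^\times$. I then claim that
\[
\varphi_*^{-1}(S^\times b) = R^\times \tilde{b} + I^{m+1}.
\]
The inclusion $\supseteq$ is immediate, and conversely any $a$ with $\varphi_*(a) = ub$ can be written as $\tilde{u}\tilde{b} + x$ with $x \in \ker \varphi_* = I^{m+1}$, using that $u \in S^\times$ lifts to some $\tilde{u} \in R^\times$. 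Every such element lies in $R^{m+1}_\prim$: its $i$th coordinate $u\tilde{b}_i + x_i$ is a unit plus an element of $\mf m$, hence a unit.

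The heart of the argument is the counting step. I consider the parametrization
\[
\mu : R^\times \times I^{m+1} \to R^\times \tilde{b} + I^{m+1}, \qquad (u, x) \mapsto u\tilde{b} + x.
\]
Two inputs $(u, x)$ and $(u', x')$ collide iff $(u - u')\tilde{b} = x' - x$; because $\tilde{b}_i \in R^\times$ this forces $u - u' \in I$, and conversely for any $\iota \in I$ the perturbed pair $(u + \iota,\, x + \iota\tilde{b})$ is valid since $u + \iota \in u + \mf m \subseteq R^\times$ (locality once more). Hence every fibre of $\mu$ has exactly $|I|$ preimages, yielding $|R^\times \tilde{b} + I^{m+1}| = |R^\times| \cdot |I|^m$. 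Since $R^\times$ acts freely on $R^{m+1}_\prim$ by Lemma \ref{lem:unit_group_acts_freely}, dividing by $|R^\times|$ produces a fibre of cardinality $|I|^m$, independent of $[b]$.

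The main obstacle is the counting step: the uniform fibre size $|I|$ for $\mu$ hinges crucially on both the locality of $R$ (so $u + \mf m \subseteq R^\times$, guaranteeing that translates of $u$ by elements of $I$ remain units) and the primitivity of the lift $\tilde{b}$ (so $\tilde{b}$ has a unit coordinate, forcing $(u - u')\tilde{b} \in I^{m+1}$ to imply $u - u' \in I$). Both assumptions appear essential; dropping either would allow the collision pattern — and thus the fibre size — to depend on $[b]$.
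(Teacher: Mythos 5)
Your proof is correct, and it shadows the paper's argument up to the reduction: both invoke Lemma \ref{lem:fibres} to write the fibre over $[b]$ as $\varphi_*^{-1}(S^\times b)/R^\times$ (after checking that this preimage already lies in $R^{m+1}_\prim$) and then divide by $\# R^\times$ using freeness of the unit action (Lemma \ref{lem:unit_group_acts_freely}). Where you diverge is in counting $\varphi_*^{-1}(S^\times b)$: the paper writes $S^\times b$ as a disjoint union of singletons $\{ub\}$, notes that $\varphi_*$ is surjective so each fibre $\varphi_*^{-1}(ub)$ is a coset of $(\ker\varphi)^{m+1}$, and gets $\# S^\times (\#\ker\varphi)^{m+1}$ with no lifting and no need for $R^\times \to S^\times$ to be onto. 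You instead lift a primitive representative and parametrize the preimage as $R^\times\tilde b + I^{m+1}$ via the $|I|$-to-one map $\mu$, which buys the cleaner closed form $|I|^m$ for the fibre cardinality (equivalent to the paper's $\# S^\times(\#\ker\varphi)^{m+1}/\# R^\times$, since $R^\times = \varphi^{-1}(S^\times)$ gives $\# R^\times = \# S^\times \cdot \# I$), but costs the extra inputs $I \subseteq \mf m$ and surjectivity of $R^\times \to S^\times$, both of which you correctly secure from locality after disposing of $S = 0$. One small slip in the collision analysis: the pair matching $(u, x)$ should be $(u+\iota,\, x - \iota\tilde b)$, not $(u+\iota,\, x + \iota\tilde b)$; with that sign fixed, the fibre of $\mu$ over $u\tilde b + x$ is exactly $\{(u+\iota,\, x-\iota\tilde b) : \iota \in I\}$, of cardinality $|I|$ as you claim, and the rest of your count goes through unchanged.
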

\begin{proof}
Without loss of generality, $S \ne 0$.
Recall that $\P^m(\varphi)$ is the descent (modulo $R^\times$ and $S^\times$) of the restriction (to $R^{m+1}_\prim$) of the map $\varphi_* : R^{m+1} \to S^{m+1}$ induced by $\varphi$.
With that in mind, 
let $[y] \in \P^m(S)$. 
By Lemma \ref{lem:fibres}, 
\[\P^m(\varphi)^{-1}([y]) = \big( \varphi_*^{-1}(S^\times y) \cap R^{m+1}_\prim \big) / R^\times.\]
But by Lemmas \ref{prim_phi} and \ref{lem:R_prim_units} (noting that $\varphi$ is surjective, $R$ is local, and $\ker\varphi$ is proper),
\[
	\varphi_*^{-1}(S^\times y) \subseteq \varphi_*^{-1}(S^{m+1}_\prim) 
	\subseteq R^{m+1}_{\prim/{\ker\varphi}} 
	= R^{m+1}_\prim 
\]
so in fact 
\[\P^m(\varphi)^{-1}([y]) = \varphi_*^{-1}(S^\times y)/R^\times.\]
Since $R^\times$ acts freely, counting orbits is easy:
\[\# \P^m(\varphi)^{-1}([y]) = \frac{\# \varphi_*^{-1}(S) }{\# R^\times}.\]
Since $S^\times$ acts freely,
\[S^\times y = \bigsqcup_{u \in S^\times} \{uy\}\]
which implies
\[
\# \varphi_*^{-1}(S^\times y)
= \sum_{u \in S^\times} \# \varphi_*^{-1}(uy)
\]
But $\varphi_*$ is surjective (because $\varphi$ is), 
so every fibre of $\varphi_*$ is a coset of $\ker \varphi_* = (\ker \varphi)^{m+1}$.
It follows that 
\begin{equation} \label{eq:fibre_card}
\# \P^m(\varphi)^{-1}([y]) = \frac{\# S^\times (\#{\ker\varphi})^{m+1}}{\# R^\times}
\end{equation}
---independent of $y$, as desired.
\end{proof}

\begin{rmk}
	Of course, as soon as we know $\P^m(\varphi)$ has constant degree, every fibre automatically has cardinality 
	\[\frac{\#\P^m(R)}{\#\P^m(S)} = \frac{\# R^{m+1}_\prim / \# R^\times}{\# S^{m+1}_\prim / \# S^\times}.\]
This is indeed consistent with \eqref{eq:fibre_card}: 
if $\mf m$ is the maximal ideal of $R$,
then $\mf n = \varphi(\mf m)$ is the maximal ideal of $S$;
the isomorphisms $S \cong R/{\ker \varphi}$ and $\mf n \cong \mf m/{\ker \varphi}$
imply $\# R = \# S \cdot \#{\ker\varphi}$
and $\# \mf m = \# \mf n \cdot \#{\ker\varphi}$;
so 
\[
\# R^{m+1}_\prim = \# R^{m+1} - \# \mf m^{m+1} 
= (\# S^{m+1} - \# \mf n^{m+1}) (\#{\ker\varphi})^{m+1}
= \# S^{m+1}_\prim (\#{\ker\varphi})^{m+1}.
\]
\end{rmk}

\subsection{Reduction modulo $I$}

Henceforth, 
$R$ is a Dedekind domain and $K$ is its field of fractions.
The goal is to show that for any nonzero ideal $I$ of $R$ there is a well-defined reduction map $K^{m+1} - 0^{m+1} \to \P^m(R/I)$ given by scaling: $\pi_I(x) = [\Bar{\lambda x}]$ for some $\lambda$.
We start with a fundamental approximation theorem.

\begin{lem} \label{lem:jon}
Let $S$ be a finite set of finite places of $R$.
Then for each function $e : S \to \Z$ 
there exists an element $\lambda$ of $K^\times$ such that 
$v(\lambda) = e(v)$ when $v \in S$ and $v(\lambda) \ge 0$ when $v \not \in S$.
\end{lem}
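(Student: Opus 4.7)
The plan is to apply the Chinese Remainder Theorem (CRT) twice: once to realize the prescribed valuations on $S$, and again to absorb unwanted valuations at the (finitely many) bad primes outside $S$.

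First I would reduce to the case of non-negative prescribed valuations by splitting $e(v) = e^+(v) - e^-(v)$ with $e^\pm(v) \geqslant 0$. For each $v \in S$, since $R$ is Dedekind we have $\mf p_v^{k} \ne \mf p_v^{k+1}$ for every $k \ge 0$, so elements of any exact $\mf p_v$-valuation exist. Because the ideals $\mf p_v^{e^+(v)+1}$ (for $v \in S$) are pairwise coprime, CRT produces $\alpha \in R$ with $v(\alpha) = e^+(v)$ for each $v \in S$; similarly I obtain $\beta \in R$ with $v(\beta) = e^-(v)$ for each $v \in S$. Both $\alpha$ and $\beta$ lie in $R$, so all their valuations outside $S$ are non-negative.

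The candidate $\alpha/\beta$ already has the right valuations on $S$. The main obstacle is outside $S$: $\beta$ may have spurious positive valuation at some primes $w \notin S$, yielding $v_w(\alpha/\beta) < 0$. Since $\beta \in R \setminus 0$, the set
\[
T := \{w \notin S : v_w(\beta) > v_w(\alpha)\}
\]
is finite, and $d_w := v_w(\beta) - v_w(\alpha) > 0$ for $w \in T$. I would apply CRT a second time to the pairwise coprime ideals $\{\mf p_v : v \in S\} \cup \{\mf p_w^{d_w} : w \in T\}$ to produce $\gamma \in R$ with $\gamma \equiv 1 \pmod{\mf p_v}$ for $v \in S$ (so $v(\gamma) = 0$ there) and $\gamma \equiv 0 \pmod{\mf p_w^{d_w}}$ for $w \in T$ (so $v_w(\gamma) \geqslant d_w$).

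Finally I set $\lambda := \alpha\gamma/\beta \in K^\times$ and verify the claim place-by-place: for $v \in S$, $v(\lambda) = e^+(v) + 0 - e^-(v) = e(v)$; for $w \in T$, $v_w(\lambda) \geqslant v_w(\alpha) + d_w - v_w(\beta) = 0$; and for $w \notin S \cup T$, we have $v_w(\alpha) \geqslant v_w(\beta)$ and $v_w(\gamma) \geqslant 0$, giving $v_w(\lambda) \geqslant 0$. The crux is thus the observation that the ``defect locus'' $T$ is finite, which is what makes a second application of CRT sufficient to correct the construction.
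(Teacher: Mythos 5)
Your proof is correct and takes essentially the same route as the paper's: split $e = e^{+} - e^{-}$, use the Chinese Remainder Theorem to prescribe exact valuations at the places of $S$, observe that the denominator introduces only a finite set of bad places outside $S$, and fix these with a second application of CRT. The only cosmetic difference is that the paper folds the correction into the numerator $\lambda^{+}$ (by extending $e^{+}$ to the bad places of $\lambda^{-}$, demanding exact valuations there), whereas you introduce a separate multiplier $\gamma$ that only needs a lower bound on its valuation at those places; both work.
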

\begin{proof}
Initially we prove the claim for non-negative exponents $e$.
Let $v_1, \ldots, v_s$ be the places in $S$
and let $\mf p_i$ be the corresponding prime ideals of $R$.
Choose uniformizers $t_i$ in $R$ so that $v_i(t_i) = 1$ for all $i$.
By the Chinese Remainder Theorem, 
there exists $\lambda$ in $R$ such that
\[\lambda \equiv t_i^{e(v_i)} \pmod {\mf p_i^{e(v_i)+1}}\]
for all $i$.
It follows from the nonarchimedean property that $v_i(\lambda) = e(v_i)$ for all $i$.

Next we deduce the claim for general exponents $e$.
Decompose $e = e^+ - e^-$ into its positive and negative parts.
Since $e^-$ is non-negative, 
there exists $\lambda^-$ 
such that $v(\lambda^-) = e^-(v)$ when $v \in S$ and $v(\lambda^-) \ge 0$ when $v \not \in S$.
Define $S' := \{v \not \in S : v(\lambda^-) > 0\}$
and extend $e^+$ to $S'$ by setting $e^+(v) := v(\lambda^-)$.
Since $e^+$ is non-negative, 
there exists $\lambda^+$ 
such that $v(\lambda^+) = e^+(v)$ when $v \in S \cup S'$ and $v(\lambda^+) \ge 0$ for all other $v$.
By construction,
\begin{itemize} 
	\item if $v \in S$ then $v(\lambda^-) = e^-(P)$ 
and $v(\lambda^+) = e^+(P)$;
	\item if $v \in S'$ then $v(\lambda^-) = v(\lambda^+)$; and
	\item if $v \not \in S \cup S'$ then $v(\lambda^-) = 0$ and $v(\lambda^+) \ge 0$.
\end{itemize} 
Picking $\lambda := \lambda^+ / \lambda^-$ settles the claim.
\end{proof}

First application: it is always possible to pick class group representatives that are coprime to a given fixed integral ideal.\footnote{Of course, this is immediate from the (much deeper) fact that every ideal class is represented by infinitely many prime ideals.}

\begin{cor} \label{cor:class_rep}
Let $I \leq R$ be a nonzero integral ideal and let $J \subseteq K$ be a nonzero fractional ideal.
Then there exists $\lambda \in K^\times$ such that $I + \lambda J = R$.
\end{cor}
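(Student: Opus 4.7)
The plan is to recast the equation $I + \lambda J = R$ as a system of local conditions on the valuations $v_{\mf p}(\lambda)$, and then invoke Lemma \ref{lem:jon} to simultaneously realize these conditions.

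First, I would observe that for any nonzero fractional ideal $A$ of $R$, the equation $A = R$ is equivalent to $v_{\mf p}(A) = 0$ for every nonzero prime $\mf p$ of $R$. Since $v_{\mf p}(I + \lambda J) = \min\{v_{\mf p}(I), v_{\mf p}(\lambda J)\}$, the condition $I + \lambda J = R$ amounts to: for every $\mf p$, (a) $v_{\mf p}(I) \ge 0$ and $v_{\mf p}(\lambda J) \ge 0$, and (b) at least one of these is zero. Part (a) for $I$ is automatic since $I$ is integral.

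Next I would define the finite set of \emph{problem primes} $S := \{v : v(J) \ne 0 \text{ or } \mf p_v \mid I\}$; this is finite because $I$ and $J$ are nonzero. I would then apply Lemma \ref{lem:jon} to the exponent function $e : S \to \Z$ defined by $e(v) := -v(J)$ to obtain $\lambda \in K^\times$ with $v(\lambda) = -v(J)$ for all $v \in S$ and $v(\lambda) \ge 0$ for all $v \notin S$.

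Finally I would verify the desired conclusion place by place: for $v \notin S$, we have $v(J) = 0$ and $v(\lambda) \ge 0$, so $v_{\mf p}(\lambda J) \ge 0$, and since also $\mf p_v \nmid I$ we get $v_{\mf p}(I + \lambda J) = 0$; for $v \in S$ we have $v(\lambda J) = 0$ by construction, so regardless of whether $\mf p_v$ divides $I$, $\min\{v_{\mf p}(I), 0\} = 0$. Hence $I + \lambda J = R$. There is no serious obstacle here once Lemma \ref{lem:jon} is available; the only subtle point is remembering to include in $S$ the primes where $J$ has negative valuation (even if they do not divide $I$), so that the resulting $\lambda J$ is actually integral.
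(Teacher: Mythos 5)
Your proof is correct and follows essentially the same route as the paper: both reduce the statement to Lemma \ref{lem:jon} with the exponent function $e(v) = -v(J)$ and then verify $v(I + \lambda J) = \min\{v(I), v(\lambda J)\} = 0$ place by place. The only (harmless) difference is that the paper first clears denominators to make $J$ integral and takes $S = \{v : \mf p_v \mid I\}$, whereas you keep $J$ fractional and instead enlarge $S$ to include the primes where $v(J) \ne 0$.
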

\begin{proof}
Clearing denominators, we may assume $J$ is integral.
By Lemma \ref{lem:jon},
there exists $\lambda$ in $K$ such that $v(\lambda) = -v(J)$ for each $v \mid I$ and $v(\lambda) \ge 0$ otherwise.
Hence
\[v(I + \lambda J) = \min\{v(I), v(\lambda) + v(J)\} = 0\]
for all $v$. Thus $I + \lambda J = R$.
\end{proof}

Second application: it is always possible to scale points of $K^{m+1} - 0^{m+1}$ to be primitive modulo any fixed integral ideal.

\begin{propn} \label{prop:reduction_mod_I}
Let $I \leq R$ be a nonzero integral ideal.
For each $a \in K^{m+1} - 0^{m+1}$ there exists a scalar $\lambda \in K^\times$ such that $\lambda a \in R^{m+1}_{\prim/I}$. 
Moreover, 
if $\lambda'$ is another such scalar, 
then there exists a unit $\Bar u \in (R/I)^\times$ such that
$\Bar{\lambda'a} = \Bar{u} \Bar{\lambda a}$ in the ring $(R/I)^{m+1}$.
\end{propn}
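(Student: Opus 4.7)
The plan is to handle existence by a direct appeal to Corollary \ref{cor:class_rep}, and to handle the uniqueness clause by localizing at each prime dividing $I$ and then patching via the Chinese Remainder Theorem.

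For existence, set $J := \langle a \rangle = a_0 R + \cdots + a_m R$, which is a nonzero fractional ideal of $R$ because $a \ne 0$. By Corollary \ref{cor:class_rep}, there exists $\lambda \in K^\times$ with $I + \lambda J = R$. Since $I \subseteq R$, this equality of fractional ideals forces $\lambda J \subseteq R$, so $\lambda a \in R^{m+1}$; and since $\langle \lambda a \rangle = \lambda J$ is coprime to $I$, we conclude $\lambda a \in R^{m+1}_{\prim/I}$, as required.

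For the uniqueness clause, suppose $\lambda, \lambda' \in K^\times$ both send $a$ into $R^{m+1}_{\prim/I}$. Set $c := \lambda'/\lambda \in K^\times$. For each prime $\mf p \mid I$ with $e_\mf p := v_\mf p(I)$, the localization $R_\mf p$ is a DVR, and the coprimality conditions $\langle \lambda a \rangle + I = R$ and $\langle \lambda' a \rangle + I = R$ localize to force $v_\mf p(\langle \lambda a \rangle) = v_\mf p(\langle \lambda' a \rangle) = 0$, so $v_\mf p(c) = 0$, i.e., $c \in R_\mf p^\times$. Consequently $c$ defines a unit in each $R_\mf p / \mf p^{e_\mf p} R_\mf p \cong R/\mf p^{e_\mf p}$, and by the Chinese Remainder Theorem these local data assemble into a unit $\bar u \in (R/I)^\times$. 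Choose any lift $u \in R$ of $\bar u$.

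It remains to verify that $\overline{\lambda' a} = \bar u \cdot \overline{\lambda a}$ in $(R/I)^{m+1}$, i.e., that $(c - u) \lambda a_i \in I$ for each $i$. At each $\mf p \mid I$, the construction of $u$ gives $v_\mf p(c - u) \ge e_\mf p$, and $v_\mf p(\lambda a_i) \ge 0$, so $v_\mf p((c-u)\lambda a_i) \ge e_\mf p = v_\mf p(I)$. At each $\mf p \nmid I$, one simply notes that $(c-u) \lambda a_i = \lambda' a_i - u \lambda a_i$ lies in $R$ since $\lambda' a_i, \lambda a_i \in R$ and $u \in R$, so $v_\mf p((c-u)\lambda a_i) \ge 0 = v_\mf p(I)$. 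These bounds hold simultaneously at every prime, so $(c-u)\lambda a_i \in I$, completing the proof.

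The only conceptual obstacle is the passage from the global equality $\lambda' a = c \lambda a$ in $K^{m+1}$—where $c$ need not lie in $R$—to a congruence in $(R/I)^{m+1}$ involving a genuine unit of $R/I$; this is resolved by the local-to-global argument sketched above, which is essentially automatic given the Dedekind hypothesis on $R$.
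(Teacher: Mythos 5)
Your proof is correct. The existence half is the same as the paper's: both apply Corollary \ref{cor:class_rep} to the pair $I$, $\langle a \rangle$ (your explicit remark that $I + \lambda J = R$ with $I$ integral forces $\lambda J \subseteq R$, hence $\lambda a \in R^{m+1}$, is a point the paper leaves implicit). For the uniqueness clause, however, you take a genuinely different route. The paper stays global: it invokes Lemma \ref{lem:jon} to manufacture an $x \in R$ with $v(x) = v(\lambda/\lambda')$ wherever that is positive and $v(x) = 0$ at all $v \mid I$, lifts $\Bar{x}^{-1}$ to $y \in R$, and checks by hand that $u := xy\lambda'/\lambda$ lies in $R^1_{\prim/I}$, so that $u\lambda a - \lambda'a = (xy-1)\lambda'a$ has coordinates in $I$. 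You instead observe that the coprimality conditions force $v_{\mf p}(\lambda'/\lambda) = 0$ at every $\mf p \mid I$, so $c = \lambda'/\lambda$ is a unit in each $R_{\mf p}$, defines a unit in each $R/\mf p^{e_{\mf p}}$, and hence via CRT a unit $\Bar u \in (R/I)^\times$; the congruence $\overline{\lambda'a} = \Bar u\,\overline{\lambda a}$ then follows by checking valuations of $(c-u)\lambda a_i$ prime by prime. Both arguments hinge on the same insight — that $\lambda'/\lambda$, though not in $R$, has a well-defined unit image modulo $I$ — but your localization-plus-CRT packaging is conceptually cleaner and avoids Lemma \ref{lem:jon} here, while the paper's construction is more explicit (consistent with its stated aim of computability) and reuses machinery it has already set up.
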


\begin{proof}
The existence of $\lambda$ follows from Corollary \ref{cor:class_rep} applied to the pair $I$, $\langle a \rangle$.
For uniqueness, 
suppose $\lambda a, \lambda' a \in R^{m+1}_{\prim/I}$ for some $\lambda, \lambda' \in K^\times$.
Using Lemma \ref{lem:jon}, 
choose $x$ in $R$ such that 
	(i) $v(x) = v(\lambda/\lambda')$ for all $v$ where this is positive, and 
	(ii) $v(x) = 0$ for all $v \mid I$.
(This \emph{is} possible, because if $v \mid I$ then $v(\lambda) = v(\lambda')$.)
Then $x \in R^1_{\prim/I}$, 
so $\Bar x \in (R/I)^\times$;  
let $y \in R$ be any representative of $\Bar x^{-1}$
(necessarily, $y \in R^1_{\prim/I}$).

Put $u = xy \lambda'/\lambda$.
Since $v(x) + v(\lambda'/\lambda) \ge 0$ for all $v$, with equality whenever $v(I) > 0$ or $v(\lambda'/\lambda) < 0$, 
we get that $u \in R$.
In fact, $u \in R^1_{\prim/I}$
because $v(x) = v(y) = v(\lambda'/\lambda) = 0$ for all $v \mid I$. 
Thus $\Bar u \in (R/I)^\times$.
Finally, $\Bar u \Bar{\lambda a} = \Bar{\lambda' a}$ because 
every coordinate of
\[u\lambda a - \lambda' a = (xy - 1)\lambda' a\]
belongs to $I$.
\end{proof}

\begin{defn} \label{def:red_I}
	Let $I \leq R$ be a nonzero ideal. 
	The \emph{reduction map} modulo $I$ is the function 
	\begin{align*}
		\pi_I : K^{m+1} - 0^{m+1} &\to \P^m(R/I) \\
		(x_0, \ldots, x_m) &\mapsto [\Bar{\lambda x}]
	\end{align*}
	where $\lambda \in K^\times$ is any scalar such that
$\lambda x \in R^{m+1}_{\prim/I}$ (i.e.,~$v(\lambda) = -v(x)$ for all $v \mid I$).
\end{defn}

\begin{rmk}
	The existence of a map $K^{m+1} \to 0^{m+1} \to \P^m(R/I)$ could also be achieved by first reducing $x$ modulo the prime-power ideals dividing $I$ and then applying the Chinese Remainder Theorem.
	With this approach it is not immediately clear that $\pi_I(x) = [\Bar {\lambda x}]$ for some scalar $\lambda$.
\end{rmk}

\begin{rmk}
	We cannot avoid using projective space, as there is no well-defined scaling map from $K^{m+1} - 0^{m+1}$ to $R^{m+1}_{\prim/I}$, nor even to $(R/I)^{m+1}_\prim$.
	For instance, if $K = \Q$ and $x = (2, 6)$ then $\tfrac{1}{2}x = (1, 3)$ and $\tfrac{3}{2}x = (3, 9)$ both lie in $\Z^2_{\prim/4\Z}$, 
	and remain distinct in $(\Z/4\Z)^2_\prim$, agreeing only modulo units: $(\Bar 1, \Bar 3) = \Bar{-1} \cdot (\Bar 3, \Bar 9)$.
\end{rmk}

Proposition \ref{prop:reduction_mod_I} says that $\pi_I$ is well-defined. 
Note also that $\pi_I$ is $K^\times$-invariant: 
if $\lambda$ works for $x$ (meaning $\lambda x \in R^{m+1}_{\prim/I}$) 
and if $c \in K^\times$, 
then $\lambda c^{-1}$ works for $cx$:
\[\pi_I(cx) = [\Bar{\lambda c^{-1} c x}] = [\Bar{\lambda x}] = \pi_I(x).\]

\section{Excess divisors} \label{sec:excess}

\subsection{Local considerations} 

Let $K$ be a field with discrete valuation $v$.
We begin with an elementary but fundamental ``continuity'' estimate.

\begin{lem} \label{lem:F_cts}
	Let $F_0, \ldots, F_M$ be homogeneous forms of degree $d$ in $m+1$ variables and let $x, y \in K^{m+1}$.
	Then 
	\[
	v(F(x) - F(y)) \ge v(F) + v(x - y) + (d - 1)v(x, y).
	\]
\end{lem}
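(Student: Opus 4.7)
The plan is to reduce to the ``one coordinate at a time'' case by a telescoping argument, and then to exploit the factorization $u^k - w^k = (u - w)(u^{k-1} + u^{k-2}w + \ldots + w^{k-1})$.

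First I would introduce the interpolating points $z^{(i)} = (y_0, \ldots, y_{i-1}, x_i, x_{i+1}, \ldots, x_m)$ for $i = 0, 1, \ldots, m+1$, noting $z^{(0)} = x$ and $z^{(m+1)} = y$. Then for each $j$ I would write the telescoping identity
\[
F_j(x) - F_j(y) = \sum_{i=0}^{m} \big(F_j(z^{(i)}) - F_j(z^{(i+1)})\big),
\]
so that consecutive terms differ only in the single entry $x_i$ vs.\ $y_i$. Expanding $F_j = \sum_\alpha F_{j,\alpha} X^\alpha$, each summand becomes
\[
F_j(z^{(i)}) - F_j(z^{(i+1)})
=
\sum_{\alpha} F_{j,\alpha} \Big( \prod_{k < i} y_k^{\alpha_k} \Big) (x_i^{\alpha_i} - y_i^{\alpha_i}) \Big( \prod_{k > i} x_k^{\alpha_k} \Big),
\]
and the factor $x_i^{\alpha_i} - y_i^{\alpha_i}$ splits as $(x_i - y_i) \sum_{\ell=0}^{\alpha_i - 1} x_i^\ell y_i^{\alpha_i - 1 - \ell}$.

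Next I would estimate the valuation of a single such monomial term. By multiplicativity of $v$ (Gauss's lemma), each monomial is a product of $F_{j,\alpha}$, of $(x_i - y_i)$, and of $d - 1$ further factors each of which is one of the $x_k$'s or $y_k$'s. Hence its valuation is at least
\[
v(F) + v(x_i - y_i) + (d-1)\,v(x, y) \;\ge\; v(F) + v(x - y) + (d-1)\,v(x, y),
\]
using $v(F_{j,\alpha}) \ge v(F)$ and $v(x_i - y_i) \ge v(x - y)$. Applying ultrametricity to collapse the sum over $\alpha$, $\ell$, and $i$ yields
\[
v\big(F_j(x) - F_j(y)\big) \ge v(F) + v(x - y) + (d-1)\,v(x, y)
\]
for every $j$, and taking the minimum over $j$ gives the stated bound.

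No real obstacle is foreseen here: this is a straightforward but genuinely useful estimate, and the only point requiring care is the bookkeeping in the telescoping expansion to make sure every factor in the resulting monomials is accounted for (one factor of $F_{j,\alpha}$, one of $x_i - y_i$, and exactly $d - 1$ entries from $x$ or $y$). The result is the nonarchimedean analogue of the standard Lipschitz-type bound $|F(x) - F(y)| \ll |F| \cdot |x - y| \cdot |x,y|^{d-1}$, and the proof is really just a valuation-theoretic rewriting of that argument.
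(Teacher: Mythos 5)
Your proof is correct, but it takes a genuinely different route from the paper's. The paper expands each monomial difference by the multibinomial theorem centered at $y$, writing $x^\alpha - y^\alpha = \sum_{\beta \ne 0} \binom{\alpha}{\beta}(x-y)^\beta y^{\alpha-\beta}$, and then needs a small monotonicity observation (that the lower bound $|\beta|\,v(x-y) + |\alpha-\beta|\,v(x,y)$ is non-decreasing in $|\beta|$ because $v(x-y) \ge v(x,y)$) to conclude that the minimum over $\beta \ne 0$ occurs at $|\beta| = 1$. You instead telescope through the hybrid points $z^{(i)}$, changing one coordinate at a time, and use the one-variable factorization $u^k - w^k = (u-w)(u^{k-1} + \ldots + w^{k-1})$; this makes every surviving monomial contain \emph{exactly} one factor $x_i - y_i$ and exactly $d-1$ coordinate factors, so the bound $v(F) + v(x-y) + (d-1)v(x,y)$ falls out term by term with no monotonicity step, and the degree bookkeeping you flag ($\sum_{k<i}\alpha_k + \ell + (\alpha_i - 1 - \ell) + \sum_{k>i}\alpha_k = d-1$) indeed checks out. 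What each approach buys: yours is slightly more hands-on but entirely self-contained at the level of single ultrametric estimates; the paper's is notationally more compact in the multiindex calculus and isolates the clean intermediate inequality $v(x^\alpha - y^\alpha) \ge v(x-y) + (|\alpha|-1)v(x,y)$, which is the form it then feeds into the sum over $\alpha$. One cosmetic remark: invoking Gauss's lemma for the valuation of a product of field elements is overkill --- plain additivity of $v$ suffices there; Gauss's lemma is only needed for the polynomial-level extension of $v$, which this step does not use.
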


\begin{proof}
See Section \ref{sec:multiindices} for conventions on multiindices. 
Without loss of generality, assume $d \ge 1$.
By the multibinomial theorem,
\[x^\alpha - y^\alpha 
= 
(x - y + y)^\alpha - y^\alpha 
= 
\sum_{\beta \ne 0} \binom{\alpha}{\beta} (x - y)^\beta y^{\alpha - \beta}.\] 
Since the coefficients are integral and vanish for all $\beta \not \le \alpha$, we have by the ultrametric inequality
\[
v(x^\alpha - y^\alpha) 
\ge 
\min_{0 \ne \beta \le \alpha} 
v((x-y)^\beta) + v(y^{\alpha-\beta}).
\]
Now if $\gamma$ is any multiindex and 
if $z$ is any point, 
then $v(z^\gamma) \ge |\gamma| v(z)$.
This implies
\begin{equation} \label{affine_thing}
v((x-y)^\beta) + v(y^{\alpha-\beta}) 
\ge 
|\beta| v(x-y)
+ |\alpha - \beta| v(x, y)
\end{equation}
whenever $0 \le \beta \le \alpha$.
The r.h.s.~of \eqref{affine_thing} is a non-decreasing extended-real valued function of $|\beta|$ because $v(x - y) \ge v(x, y)$ and $|\alpha - \beta| = |\alpha| - |\beta|$.
Since $\beta \ne 0$, the minimum occurs at $|\beta| = 1$.
Therefore 
\[v(x^\alpha - y^\alpha) \ge v(x-y) + (|\alpha| - 1) v(x,y)\]
for all multiindices $\alpha$ and all points $x, y$.
The lemma follows:
\[
v(F(x) - F(y))
= \min_j v\Big(\sum_{|\alpha|=d} F_{j,\alpha} (x^\alpha - y^\alpha) \Big) \\
\ge v(F) + \min_{|\alpha|=d} v(x^\alpha - y^\alpha).
\qedhere 
\]
\end{proof}

\begin{defn} \label{def_eps}
	Let $f : \P^m \to \P^M$ be a morphism of degree $d$ defined over $K$ and let $F$ be a homogeneous lift of $f$.
	Let $x \in K^{m+1} - 0^{m+1}$.
	The \emph{excess valuation} of $F$ at $x$ is the quantity
	\[\varepsilon_f(x) := v(F(x)) - dv(x) - v(F).\]
\end{defn}

\begin{lem} \label{lem:eps_trio}
The excess valuation is lift-independent, scale-invariant, non-negative, and bounded.
In fact, 
\[\lVert \varepsilon_f \rVert_\infty := \sup_{x \ne 0} \varepsilon_f(x) \le \inf_G -\big(v(F) + v(G)\big) \le v(\Res f).\]
where $G$ ranges over all pseudoinverses of $F$.
\end{lem}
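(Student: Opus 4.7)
The plan is to verify the four properties in order---lift-independence, scale-invariance, non-negativity, and boundedness---with the main effort going into the boundedness estimate, which is the only nontrivial claim.

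For the first three properties, the arguments are essentially formal. Any two lifts of $f$ differ by a nonzero scalar $\lambda \in K^\times$; so $F \mapsto \lambda F$ produces $v(F(x)) \mapsto v(\lambda) + v(F(x))$ and $v(F) \mapsto v(\lambda) + v(F)$, and the change cancels in the definition of $\varepsilon_f(x)$. Scale-invariance follows similarly: since each $F_j$ is homogeneous of degree $d$, one has $F(cx) = c^d F(x)$ for $c \in K^\times$, so $v(F(cx)) = d \, v(c) + v(F(x))$ while $d \, v(cx) = d \, v(c) + d \, v(x)$, and again the $v(c)$ terms cancel. Non-negativity is an immediate consequence of the Gauss-type inequality for polynomial evaluation: writing $F_j(x) = \sum_\alpha F_{j,\alpha} x^\alpha$, the ultrametric inequality gives $v(F_j(x)) \ge v(F) + d\,v(x)$ for each $j$, hence $v(F(x)) \ge v(F) + d\, v(x)$ as well.

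The key step is the upper bound involving the pseudoinverse. By Proposition \ref{sylv_null}, $F$ admits some pseudoinverse $G \in K[X]^{(m+1)\times(M+1)}$ of degree $e - d$, satisfying $\sum_j G_{ij}(X) F_j(X) = X_i^e$ for all $i$. Evaluating at $x \in K^{m+1} - 0^{m+1}$ and using the ultrametric inequality together with the polynomial-evaluation estimate $v(G_{ij}(x)) \ge (e-d) v(x) + v(G_{ij}) \ge (e-d) v(x) + v(G)$, I get
\[
e \, v(x_i) = v(x_i^e) \ge \min_j \bigl( v(G_{ij}(x)) + v(F_j(x)) \bigr) \ge (e - d) v(x) + v(G) + v(F(x))
\]
for each $i$, where I have also used $v(F_j(x)) \ge v(F(x))$. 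Taking the minimum over $i$ gives $e \, v(x) \ge (e-d) v(x) + v(G) + v(F(x))$, so that $v(F(x)) \le d \, v(x) - v(G)$. Rearranging,
\[
\varepsilon_f(x) = v(F(x)) - d\, v(x) - v(F) \le -\bigl( v(F) + v(G) \bigr),
\]
which holds for every pseudoinverse $G$ of $F$. Taking the infimum over $G$ yields $\lVert \varepsilon_f \rVert_\infty \le \inf_G -(v(F) + v(G))$.

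Finally, the inequality $\inf_G -(v(F) + v(G)) \le v(\Res f)$ is a direct consequence of Proposition \ref{vRes}(ii), which guarantees the existence of a pseudoinverse $G$ for $F$ satisfying $v(F) + v(G) \ge -v(\Res f)$. The hardest step conceptually is the bound via the pseudoinverse, though all of the ingredients---the Nullstellensatz-based pseudoinverse identity and the routine ultrametric estimates for polynomial evaluation---are already in hand, so the argument reduces to bookkeeping of valuations.
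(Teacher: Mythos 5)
Your proposal is correct and follows essentially the same route as the paper: formal cancellation for lift-independence and scale-invariance, the ultrametric evaluation bound for non-negativity (the paper phrases this as the $y=0$ case of its continuity estimate, Lemma \ref{lem:F_cts}), and the same pseudoinverse chain of inequalities for boundedness, with Proposition \ref{vRes}(ii) supplying the comparison with $v(\Res f)$.
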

\begin{proof}
	Lift-independence and scale-invariance are immediate from homogeneity of $F$ and additivity of $v$:
	\[v((\lambda F)(x)) - v(\lambda F) = v(F(x)) - v(F) \quad\text{and}\quad v(F(\lambda x)) - dv(\lambda x) = v(F(x)) - dv(x)\]
	for all $\lambda \in K^\times$.
	Non-negativity is the special case $y = 0$ of Lemma \ref{lem:F_cts}.
	Boundedness is standard: since $f$ is a morphism, Proposition \ref{sylv_null} implies $F$ has a pseudoinverse $G$ of some degree $e-d$; and for any such $G$,
	\begin{align*}
	ev(x_i) &= v(x_i^e) \\
	&\ge \min_j v(G_{ij}(x)) + v(F_j(x)) \\
	&\ge \min_j v(G_{ij}) + (e-d)v(x) + v(F_j(x)) \\
	&\ge v(G) + (e - d)v(x) + v(F(x)).
	\end{align*}
	for all $i$. Since the r.h.s.~is independent of $i$,
	it is dominated by $ev(x)$.
	Rearranging gives $\varepsilon_f(x) \le -\big(v(F) + v(G)\big)$.
	The resultant enters the picture via Proposition \ref{vRes}(ii).
\end{proof}

\begin{rmk}
	The quantity $\inf_G -\big(v(F) + v(G)\big)$ is lift-independent, because pseudoinverses for $F$ are in bijection with those for $\lambda F$ via scaling by $\lambda^{-1}$.
\end{rmk} 

The inequalities in Lemma \ref{lem:eps_trio} are not sharp,
as shown by the following example.

\begin{eg}
Let 
\[f(z) = \frac{z^2 + 4}{z^2 + 1} \text{ over } K = \Q \text{ at the place } v = 3.\]
Then:
\begin{enumerate}[(i)]
\item $\lVert \varepsilon_f \rVert_\infty = 0$;
\item $\inf_G -{\big(v(F) + v(G)\big)} = 1$;
\item $v(\Res f) = 2$.
\end{enumerate}
\end{eg}
\begin{proof}
Choose the lift $F(X, Y) = (X^2 + 4Y^2, X^2 + Y^2)$.
Then: 
\begin{enumerate}[(i)]
	\item $v(x^2 + y^2) = 2v(x, y)$ for all $(x, y) \in \Q^2$ because the binary form $X^2 + Y^2$ has no roots over $\mathbf{F}_3$.
	\item Over the extension $K = \Q(i)$, where $v$ remains inert, 
	we have $\varepsilon_f(i, 1) = v(3, 0) = 1$. 
	On the other hand, the relations
	\begin{align*}
		(X^2 + 4Y^2) - 4(X^2 + Y^2) &= -3X^2 \\
		(X^2 + 4Y^2) - (X^2 + Y^2) &= 3Y^2
	\end{align*}
	imply that $F$ has a pseudoinverse $G$ of degree 0 with $v(G) = -1$.
	\item The Sylvester--Macaulay matrix 
	\[\begin{bmatrix}
		1 &   & 1 &   \\
		0 & 1 & 0 & 1 \\
		4 & 0 & 1 & 0 \\
		  & 4 &   & 1
		\end{bmatrix}\]
	is square with determinant 9.
	\qedhere
\end{enumerate}
\end{proof}

We record an important relationship between good reduction and the excess valuation.

\begin{cor} \label{good_reduction_epsilon}
	If $f$ has good reduction at $v$, then $\varepsilon_f$ is identically zero.
\end{cor}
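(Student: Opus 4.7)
The plan is to chain together the sandwich inequality from Lemma \ref{lem:eps_trio} with the pseudoinverse characterization of good reduction in Proposition \ref{good_red}. Since Lemma \ref{lem:eps_trio} already supplies the lower bound $\varepsilon_f \ge 0$, the entire content of the Corollary reduces to producing a lift $F$ and pseudoinverse $G$ for which the upper bound $-(v(F) + v(G))$ vanishes.

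First I would exploit lift-independence (also recorded in Lemma \ref{lem:eps_trio}) to pick a $v$-normalized lift $F$ of $f$, so that $v(F) = 0$. Then, invoking Proposition \ref{good_red}, the hypothesis of good reduction at $v$ yields a $v$-integral pseudoinverse $G$ of $F$. The footnote attached to Proposition \ref{good_red} observes that any $v$-integral pseudoinverse of a $v$-normalized lift is automatically $v$-normalized, so $v(G) = 0$ as well.

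With these choices, Lemma \ref{lem:eps_trio} specializes to
\[0 \le \varepsilon_f(x) \le -\bigl(v(F) + v(G)\bigr) = 0\]
for every nonzero $x \in K^{m+1}$, forcing $\varepsilon_f \equiv 0$. There is no real obstacle: the heavy lifting has been done by the preceding results (in particular, the nontrivial direction of Proposition \ref{good_red} via the Sylvester--Macaulay argument), and the Corollary is essentially a one-line consequence once those tools are in place.
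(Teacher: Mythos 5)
Your proposal is correct and follows essentially the same route as the paper: normalize the lift, obtain a $v$-integral pseudoinverse via Proposition \ref{good_red}, and squeeze $\varepsilon_f$ between $0$ and $-\bigl(v(F)+v(G)\bigr)$ using Lemma \ref{lem:eps_trio}. The only cosmetic difference is that the paper bounds $-v(G)\le 0$ directly from $v$-integrality rather than citing the footnote's equality $v(G)=0$, which changes nothing.
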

\begin{proof}
	Let $F$ be a $v$-normalized lift of $f$. 
	By Proposition \ref{good_red},
	$F$ has a $v$-integral pseudoinverse $G$.
	By Lemma \ref{lem:eps_trio}, 
	$0 \le \lVert \varepsilon \rVert_\infty \le -\big(v(F) + v(G)\big) = -v(G) \le 0$.
\end{proof}

Let $O = \{x \in K : v(x) \ge 0\}$ denote the ring of $v$-integers of $K$.
Then $O$ is a local ring with unique maximal ideal $\mf m = \{x \in K : v(x) > 0\}$.
By Lemma \ref{lem:R_prim_units},
\[
O^{m+1}_\prim 
= \{x \in K^{m+1} : v(x) = 0\},
\]
which is precisely the set of \emph{$v$-normalized} points.

The final result of this section says that $\varepsilon_f$ is 
locally constant.

\begin{propn} \label{eps_per}
	Let $x, y \in O^{m+1}_\prim$.
	If $v(x - y) \ge \lVert \varepsilon_f \rVert_\infty$ then $\varepsilon_f(x) = \varepsilon_f(y)$.
\end{propn}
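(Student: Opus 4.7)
\medskip

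\noindent \textbf{Proof proposal.} The plan is to reduce everything to a statement about $v(F(x))$ versus $v(F(y))$ and then apply the nonarchimedean continuity estimate together with strict ultrametricity.

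First, since $x$ and $y$ are primitive we have $v(x) = v(y) = 0$, so $\varepsilon_f(x) = v(F(x)) - v(F)$ and $\varepsilon_f(y) = v(F(y)) - v(F)$. It therefore suffices to prove $v(F(x)) = v(F(y))$. Setting $\varepsilon := \lVert \varepsilon_f \rVert_\infty$, Lemma \ref{lem:F_cts} applied to our two primitive points (so $v(x,y) = 0$) gives
\[
v(F(x) - F(y)) \ge v(F) + v(x-y) + (d-1) v(x,y) \ge v(F) + \varepsilon.
\]

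Next I would argue by contradiction using the standard fact that in an ultrametric, unequal valuations force equality of the valuation of a difference with the minimum. Suppose $v(F(x)) \ne v(F(y))$; without loss of generality $v(F(x)) < v(F(y))$. Then $v(F(x) - F(y)) = v(F(x))$, and combined with the estimate above this yields $v(F(x)) \ge v(F) + \varepsilon$, i.e.\ $\varepsilon_f(x) \ge \varepsilon$. By definition of $\varepsilon$ as the supremum, equality must hold: $\varepsilon_f(x) = \varepsilon$, whence $v(F(x)) = v(F) + \varepsilon$. But then $v(F(y)) > v(F(x)) = v(F) + \varepsilon$ gives $\varepsilon_f(y) > \varepsilon$, contradicting the definition of $\varepsilon$. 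Hence $v(F(x)) = v(F(y))$, and we are done.

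There is no genuine obstacle here; the only thing to watch is the ``boundary'' case where one of the two excess valuations already attains the supremum $\varepsilon$, which is handled transparently by the supremum property once ultrametricity has been invoked. Lemma \ref{lem:eps_trio} guarantees $\varepsilon < \infty$, so the argument is not vacuous, and the hypothesis $v(x-y) \ge \varepsilon$ is used in exactly one place—to push $v(F(x) - F(y))$ past the a priori bound $v(F) + \varepsilon$ on both $v(F(x))$ and $v(F(y))$.
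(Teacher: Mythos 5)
Your proposal is correct and uses exactly the same ingredients as the paper's proof: Lemma \ref{lem:F_cts} applied to the two $v$-normalized points, strict ultrametricity when $v(F(x)) \ne v(F(y))$, and the bound $\varepsilon_f \le \lVert \varepsilon_f \rVert_\infty$. The only (cosmetic) difference is where the contradiction lands—the paper concludes $v(x-y) < \lVert \varepsilon_f \rVert_\infty$, contradicting the hypothesis, while you push the smaller excess valuation up to the supremum and then contradict the supremum property itself; both are the same computation read in opposite directions.
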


\begin{proof}
We suppress the subscript $f$.
By lift-independence, 
we may assume $v(F) = 0$.
Since $x$ and $y$ are $v$-normalized, $v(x) = v(y) = 0$.
Thus 
$\varepsilon(x) = v(F(x))$
and
$\varepsilon(y) = v(F(y))$.
Suppose $\varepsilon(x) \ne \varepsilon(y)$.
Then by Lemma \ref{lem:F_cts} and the nonarchimedean property,
\begin{align*}
	v(x - y) &\le v(F(x) - F(y)) 
	\\
	&= \min\{\varepsilon(x), \varepsilon(y)\}
	\\
	&< \max\{\varepsilon(x), \varepsilon(y)\} \\
	&\le \lVert \varepsilon \rVert_\infty. \qedhere 
\end{align*}
\end{proof}

\begin{q}
	The excess valuation $\varepsilon_f$ extends continuously to $0$ just when it's constant: indeed, if $x$ is arbitrary and $|t| < 1$, then 
	$\varepsilon_f(x) = \lim_k \varepsilon_f(t^k x) = \varepsilon_f(0)$ by scale-invariance and continuity.
	Can $\varepsilon_f$ be a \emph{nonzero} constant?
\end{q}

\subsection{Global consequences}

For the rest of Section \ref{sec:excess},
\begin{itemize} 
	\item $K$ is the field of fractions of a Dedekind domain $R$;
\end{itemize}
for each finite place $v$ of $K$, 
\begin{itemize}
	\item $\mf p_v$ is the corresponding prime ideal of $R$, 
	\item $O_v = \{z \in K : v(z) \ge 0\} \supseteq R$,
	\item $\mf m_v = \{z \in K : v(z) = 0\} = \mf p_v O_v$,
	\item $\varepsilon_{f,v}$ is the excess valuation of $f$ with respect to $v$.
\end{itemize}

\begin{lem} \label{only_finitely_many_bad_places}
	Let $f : \P^m \to \P^M$ be a morphism defined over $K$.
	Then at all but at most finitely many finite places $v$ of $K$, $f$ has good reduction at $v$ (and hence $\varepsilon_{f,v}$ is identically zero).
\end{lem}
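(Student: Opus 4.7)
The plan is to reduce the claim to the existence of a pseudoinverse with controlled coefficients, and then observe that any single nonzero element of $K$ has nonnegative valuation at all but finitely many finite places.

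First, by Corollary \ref{good_reduction_epsilon}, it suffices to show that $f$ has good reduction at all but finitely many $v$, since the vanishing of $\varepsilon_{f,v}$ then follows automatically. So fix a lift $F \in K[X]_d^{M+1}$ of $f$. Because $f$ is a morphism, Proposition \ref{sylv_null} guarantees that $F$ admits a pseudoinverse $G$ defined over $K$. Concretely, $F$ has finitely many nonzero coefficients $F_{j,\alpha}$ and $G$ has finitely many (possibly nonzero) coefficients $G_{ij,\beta}$, so the union of these two finite subsets of $K$ gives a finite list whose elements each have $v$-valuation equal to zero at all but finitely many finite places of $K$ (this is the standard consequence of unique factorization into prime ideals in $R$).

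Let $S$ be the finite set of finite places $v$ at which some coefficient of either $F$ or $G$ has nonzero valuation, together with the finite set of places at which $v(F) \ne 0$. For every $v \notin S$, the lift $F$ is $v$-normalized (i.e.\ $v(F) = 0$) and the pseudoinverse $G$ is $v$-integral (i.e.\ $v(G) \ge 0$). By Proposition \ref{good_red}, this means $f$ has good reduction at $v$. Applying Corollary \ref{good_reduction_epsilon} then gives $\varepsilon_{f,v} \equiv 0$ for all $v \notin S$.

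The only conceptual ingredient is the existence of a single \emph{global} pseudoinverse (over $K$) supplied by Proposition \ref{sylv_null}; once we have that, the finiteness is a one-line consequence of the corresponding statement for each individual coefficient. There is no real obstacle: the proof is essentially bookkeeping, and an alternative route through Proposition \ref{vRes}(iii) would work equally well, since $v(\Res f) = 0$ whenever $v(F) = 0$ and some fixed nonzero maximal minor $\Delta_0$ of the Sylvester--Macaulay matrix (whose existence is guaranteed by Macaulay's bound together with Proposition \ref{sylv_null}) satisfies $v(\Delta_0) = 0$, which again happens outside a finite set of places.
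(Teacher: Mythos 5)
Your proof is correct and follows essentially the same route as the paper: fix a lift $F$, invoke Proposition \ref{sylv_null} to obtain a pseudoinverse $G$ over $K$, note that $v(F)=0$ and $v(G)\ge 0$ outside a finite set of places since $F$ and $G$ have finitely many nonzero coefficients, and conclude via Proposition \ref{good_red} and Corollary \ref{good_reduction_epsilon}. The extra remark about the alternative via Proposition \ref{vRes}(iii) is fine but not needed.
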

\begin{proof}
	Let $F$ be a lift of $f$.
	By Proposition \ref{sylv_null}(iii), 
	$F$ admits a pseudoinverse $G$.
	Since $F$ and $G$ are defined over $K$
	and each have a finite nonzero number of nonzero coefficients, 
	$v(F) = v(G) = 0$ for almost all $v$.
	The Lemma now follows from Proposition \ref{good_red} (and Corollary \ref{good_reduction_epsilon}).
\end{proof}

\begin{defn}
	Let $x \in K^{m+1} - 0^{m+1}$.
	The \emph{excess divisor} of $f$ at $x$ is the ideal
	\[\ell_f(x) := \prod_{v \nmid \infty} \mf p_v^{\varepsilon_{f,v}(x)}.\]
\end{defn}

Note that the product always converges: in light of Lemma \ref{only_finitely_many_bad_places}, only finitely many terms differ from $R = \langle 1 \rangle$. 

\begin{propn} \label{ell_properties}
	Let $f : \P^m \to \P^M$ be a morphism defined over $K$. 
\begin{enumerate}[(i)]
	\item Let $F$ be a lift of $f$. 
	Consider the three fractional ideals 
\begin{gather*}
	\langle F \rangle = \sum_{\substack{|\alpha|=d \\ 0 \le j \le M}} F_{j,\alpha} R,
	\\
	\langle x \rangle = \sum_{i=0}^m x_i R, \quad\text{and}\quad \langle F(x) \rangle = \sum_{j=0}^M F_j(x) R.
\end{gather*}
Then
\[
	\langle F(x) \rangle = \langle x \rangle^d \langle F \rangle \ell_f(x)
\]
for all $x \ne 0$.
	\item $\ell_f$ is $K^\times$-invariant, i.e.,~$\ell_f(\lambda x) = \ell_f(x)$ for all $\lambda \in K^\times$.
	\item Let $S = \{v \nmid \infty : \lVert \varepsilon_{f,v} \rVert_\infty > 0\}$.
	Then the set of excess divisors of $f$ is precisely
	\[\im \ell_f = \Big\{\prod_{v\in S} \mf p_v^{i_v} : i_v \in \im \varepsilon_{f,v} \text{ for all $v$ in $S$}\Big\}.\]
	In particular, for each $x \ne 0$, $\ell_f(x)$ is one of a fixed finite set of integral ideals.
	\item The set of excess divisors is closed under $\gcd$ and $\lcm$.
\end{enumerate}
\end{propn}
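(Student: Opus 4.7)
The plan is to prove the four parts roughly in the order stated, with (iii) carrying the real weight. The key observation throughout is that $\ell_f(x) = \prod_{v \nmid \infty} \mf p_v^{\varepsilon_{f,v}(x)}$ is a finite product of prime powers whose exponents are precisely the local excess valuations, so global statements about $\ell_f$ reduce to local statements about the $\varepsilon_{f,v}$'s, most of which were already established earlier in Section \ref{sec:excess}.

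For part (i), I will compare $v$-adic valuations at each finite place of $R$. Both sides of the claimed identity are fractional ideals, and for any nonzero tuple $a \in K^{k+1}$ the ideal $\langle a \rangle = \sum a_i R$ has $v$-adic valuation $\min_i v(a_i) = v(a)$. So the identity reduces to $v(F(x)) = dv(x) + v(F) + \varepsilon_{f,v}(x)$ at each $v$, which is just Definition \ref{def_eps}. Part (ii) is then immediate from the scale-invariance of $\varepsilon_{f,v}$ recorded in Lemma \ref{lem:eps_trio}.

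Part (iii) is where the real work lies. The containment $\subseteq$ and finiteness of the right-hand side are immediate: $S$ is finite by Lemma \ref{only_finitely_many_bad_places}, each $\im \varepsilon_{f,v}$ is bounded (hence finite) by Lemma \ref{lem:eps_trio}, and for $v \notin S$ the excess valuation vanishes identically. For the reverse containment, given any choice $(i_v)_{v \in S}$ with $i_v \in \im \varepsilon_{f,v}$, I will construct $x \in R^{m+1} - 0^{m+1}$ realizing $\varepsilon_{f,v}(x) = i_v$ for all $v \in S$ simultaneously. For each $v \in S$, pick $y_v \in K_v^{m+1}$ with $\varepsilon_{f,v}(y_v) = i_v$, rescale so that $v(y_v) = 0$, and use density of $R$ in the $v$-adic completion to approximate $y_v$ by some $z_v \in R^{m+1}$ with $v(z_v - y_v) \ge \lVert \varepsilon_{f,v} \rVert_\infty$; by the ultrametric property, $v(z_v) = 0$, and by Proposition \ref{eps_per} we have $\varepsilon_{f,v}(z_v) = i_v$. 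Now the Chinese Remainder Theorem in $R$, applied in each coordinate separately, produces $x \in R^{m+1}$ with $x \equiv z_v \pmod{\mf p_v^N}$ for all $v \in S$, where $N$ exceeds every $\lVert \varepsilon_{f,v} \rVert_\infty$; a second application of Proposition \ref{eps_per} gives $\varepsilon_{f,v}(x) = i_v$ for $v \in S$. The main obstacle is simply orchestrating these approximation steps: insisting that each $z_v$ be $v$-primitive ensures $v(x) = v(z_v) = 0$ at any fixed $v \in S$, hence $x \ne 0$; the degenerate case $S = \varnothing$ corresponds to $\ell_f \equiv R$ and is trivial.

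Part (iv) follows formally from the product structure furnished by (iii). Given $\mf l = \prod_{v \in S} \mf p_v^{i_v}$ and $\mf l' = \prod_{v \in S} \mf p_v^{j_v}$ in $\im \ell_f$, the standard formulas yield $\gcd(\mf l, \mf l') = \prod_{v \in S} \mf p_v^{\min(i_v, j_v)}$ and $\lcm(\mf l, \mf l') = \prod_{v \in S} \mf p_v^{\max(i_v, j_v)}$. Since $\min(i_v, j_v)$ and $\max(i_v, j_v)$ each coincide with one of $i_v, j_v$, they belong to $\im \varepsilon_{f,v}$, and so both $\gcd$ and $\lcm$ again lie in $\im \ell_f$ by (iii).
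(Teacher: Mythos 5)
Your parts (i), (ii) and (iv) coincide with the paper's proof (valuation-by-valuation comparison via Definition \ref{def_eps}, scale-invariance from Lemma \ref{lem:eps_trio}, and the min/max-of-exponents argument), and your $\subseteq$ half of (iii) is the same as well. Where you genuinely diverge is the $\supseteq$ half of (iii): the paper deliberately postpones it and deduces it later from the Density Formula (Proposition \ref{delta_product_formula}), arguing that $\delta_f\big(\prod_{v\in S}\mf p_v^{i_v}\big)=\prod_v \delta_{f,v}(i_v)>0$ forces the defining set to be non-empty; you instead construct a witness directly, choosing for each $v\in S$ a $v$-normalized point realizing $i_v$, approximating it by a point of $R^{m+1}$ to precision $\ge\lVert\varepsilon_{f,v}\rVert_\infty$, and gluing by the Chinese Remainder Theorem, with Proposition \ref{eps_per} guaranteeing at each stage that the excess valuation is unchanged (and the ultrametric inequality keeping the points $v$-normalized, hence nonzero). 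Your argument is correct: the only points needing care are that $\varepsilon_{f,v}$ and its sup-norm bound extend verbatim to the completion (the pseudoinverse inequality of Lemma \ref{lem:eps_trio} is algebraic, so this is harmless) and that approximation of $v$-integral elements of $K$ (or $K_v$) by elements of $R$ is available because $R/\mf p_v^k\cong O_v/\mf m_v^k$ — both of which you implicitly use and which hold. The trade-off: your route keeps Proposition \ref{ell_properties} self-contained, with no forward reference to the reduction map, the descent of $\ell_f$ to $\P^m(R/I)$, or the density formalism of Section \ref{sec:excess}; the paper's route costs that forward dependence but yields the stronger quantitative fact — needed later anyway — that every ideal of the stated form occurs with strictly positive density $\delta_f(\mf l)$, so the existence statement comes for free once the densities are in place. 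At bottom both arguments rest on CRT plus local constancy (Proposition \ref{eps_per}); yours applies them directly to points of $R^{m+1}$, the paper's applies them once and for all to $\P^m(R/I)$ via Lemma \ref{eps_factors_thru_pi}.
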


\begin{rmk*}
Property (i) is the \textit{raison d'\^etre} for $\ell_f(x)$, 
for if $K$ is a number field, 
then 
\[H_K(F(x)) = \frac{1}{\Nm {\langle F(x) \rangle}} \prod_{\sigma : K \into \C} |\sigma(F(x))|_\infty.\]
This, along with property (ii) and $\subseteq$ of (iii), will be essential to the proof of Theorem \ref{thm:main}.
Property (iv) is just a curiosity.
\end{rmk*} 

\begin{proof} \hfill 
\begin{enumerate}[(i)]
	\item 
	Recall that for any fractional ideals $\mf a_i$ and for any principal ideal $aR$, 
	\[v\big(\sum_i \mf a_i\big) = \min_i v(\mf a_i) \quad\text{and}\quad v(aR) = v(a).\]
	Thus, by unique factorization and the definition
	of the excess valuation,
	\begin{align*}
\langle F(x) \rangle 
	&= \prod_{v \nmid \infty} \mf p_v^{v(F(x))}
= \prod_{v \nmid \infty}\mf p_v^{dv(x) + v(F) + \varepsilon_{f,v}(x)}
= \langle x \rangle^d \langle F \rangle \ell_f(x).
\end{align*}
	\item 
	This is immediate from scale-invariance of $\varepsilon_{f,v}$ (cf.~Lemma \ref{lem:eps_trio}).
	\item 
	Let $x \ne 0$. 
	That $\ell_f(x)$ is an integral ideal is immediate from non-negativity of $\varepsilon_{f,v}(x)$ (cf.~Lemma \ref{lem:eps_trio}).
	Suppose $\mf p_v \mid \ell_f(x)$. 
	Then by definition,
	\[1 
	= v(\mf p_v) \le v(\ell_f(x)) 
	= \varepsilon_{f,v}(x)
	\in \im \varepsilon_{f,v}.\]
	Thus $v \in S$. This proves $\subseteq$; we postpone the proof of $\supseteq$ til after Proposition \ref{delta_product_formula}.
	In any case, 
	by Lemma \ref{only_finitely_many_bad_places}, $S$ is a finite set, 
	and by Lemma \ref{lem:eps_trio}, $\# \im \varepsilon_{f,v} \le 1 + \lVert \varepsilon_{f,v} \rVert < \infty$.
	Therefore
	\[\#\im \ell_f = \prod_{v\in S} \# \im \varepsilon_{f,v}\]
	is finite.
	\item 
	Suppose $\ell_f(x) = \prod_v \mf p_v^{i_v}$ and $\ell_f(y) = \prod_v \mf p_v^{j_v}$ are two elements of $\im \ell_f$.
	Then 
		\[
		\textstyle 
		\ell_f(x) + \ell_f(y) = \prod_v \mf p_v^{\min\{i_v, j_v\}}
		\quad\text{and}\quad 
		\ell_f(x) \cap \ell_f(y) = \prod_v \mf p_v^{\max\{i_v, j_v\}}.
		\]
		But for all $v$ in $S$, 
		the exponents $\min\{i_v, j_v\}$ and $\max\{i_v, j_v\}$, being among $i_v$ and $j_v$ themselves, 
		belong to $\im \varepsilon_{f,v}$.
		Thus, by part (iii), 
		there exist $z$ and $w$ 
		such that 
		\[\ell_f(x) + \ell_f(y) = \ell_f(z)
		\quad\text{and}\quad 
		\ell_f(x) \cap \ell_f(y) = \ell_f(w).
		\qedhere\]
\end{enumerate}
\end{proof}

We now show that the distribution of $\ell_f$ is completely determined by its values on a finite set.
To do so, we need the following technical result.
For ease of exposition, 
we declare a nonzero ideal $I$ of $R$ to be \emph{$v$-sufficiently large} if $v(I) \ge \lVert \varepsilon_{f,v} \rVert_\infty$,
and \emph{sufficiently large} if it is $v$-sufficiently large for all finite places $v$ of $K$.

\begin{lem} \label{eps_factors_thru_pi}
Let $I \leq R$ be a nonzero ideal
and let 
$\pi_I : K^{m+1} - 0^{m+1} \to \P^m(R/I)$ 
be the reduction map (cf.~Definition \ref{def:red_I}).
If $I$ is $v$-sufficiently large,
then there exists a function $\Tilde \varepsilon_{f,v} : \P^m(R/I) \to \Z$ 
such that \[\Tilde \varepsilon_{f,v} \circ \pi_I = \varepsilon_{f,v}.\]
\end{lem}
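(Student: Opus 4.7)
The plan is to show that $\varepsilon_{f,v}$ is constant on the fibres of $\pi_I$, and then to define $\Tilde \varepsilon_{f,v}$ by setting $\Tilde \varepsilon_{f,v}(\pi_I(x)) := \varepsilon_{f,v}(x)$ for each $x \in K^{m+1} - 0^{m+1}$ (and extending by $0$, say, on any points of $\P^m(R/I)$ not in the image).

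First I would dispatch the trivial case $\lVert \varepsilon_{f,v} \rVert_\infty = 0$, where $\varepsilon_{f,v} \equiv 0$ and there is nothing to prove. So assume $\lVert \varepsilon_{f,v} \rVert_\infty \ge 1$; then $v$-sufficient largeness forces $v(I) \ge 1$, i.e., $\mf p_v \mid I$. This is the pivotal consequence used below.

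Next, suppose $\pi_I(x) = \pi_I(y)$. Unwinding Definition \ref{def:red_I}, there exist $\lambda, \mu \in K^\times$ with $\lambda x, \mu y \in R^{m+1}_{\prim/I}$ and $[\Bar{\lambda x}] = [\Bar{\mu y}]$ in $\P^m(R/I)$. By definition of the equivalence, there is $\Bar u \in (R/I)^\times$ with $\Bar{\lambda x} = \Bar u \cdot \Bar{\mu y}$ in $(R/I)^{m+1}$; any lift of $\Bar u$ to $R$ is coprime to $I$ and (since $I \ne R$) nonzero, hence may be treated as an element $u \in K^\times$. By construction $\lambda x - u\mu y \in I^{m+1}$, so $v(\lambda x - u\mu y) \ge v(I) \ge \lVert \varepsilon_{f,v} \rVert_\infty$.

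Finally I would invoke Proposition \ref{eps_per} on the rescaled points. Since $\mf p_v \mid I$ and $\lambda x \in R^{m+1}_{\prim/I}$, the primitivity relation $\langle \lambda x \rangle + I = R$ forces $v(\lambda x) = 0$; identically $v(u\mu y) = 0$. Thus both $\lambda x$ and $u \mu y$ lie in $O_v^{m+1}$ and are $v$-normalized, so Proposition \ref{eps_per} applies and yields $\varepsilon_{f,v}(\lambda x) = \varepsilon_{f,v}(u\mu y)$. Scale-invariance of $\varepsilon_{f,v}$ (Lemma \ref{lem:eps_trio}) then collapses this to $\varepsilon_{f,v}(x) = \varepsilon_{f,v}(y)$, completing the proof. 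The only mildly subtle point --- hardly an obstacle --- is the observation that $v$-sufficient largeness automatically upgrades ``primitive modulo $I$'' to ``$v$-normalized'' at the place $v$ (in the nontrivial case), which is exactly what lets the local continuity estimate bite.
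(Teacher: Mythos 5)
Your proposal is correct and follows essentially the same route as the paper: reduce to constancy on the fibres of $\pi_I$, handle the trivial case, note that the difference of the rescaled representatives has all coordinates in $I$ so its valuation is at least $\lVert \varepsilon_{f,v} \rVert_\infty$, check that primitivity modulo $I$ (with $\mf p_v \mid I$) makes those representatives $v$-normalized, and then apply Proposition \ref{eps_per} together with scale-invariance. The only cosmetic difference is that the paper verifies $v$-normalization via Lemmas \ref{prim_phi} and \ref{lem:R_prim_units} rather than your direct valuation argument, which is the same observation.
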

\begin{proof}
For clarity, drop the subscripts $f$, $v$, $I$.
The existence of 
$\Tilde \varepsilon$
is equivalent to $\varepsilon$ being constant on the fibres of $\pi$, so we show the latter.
This is trivial if $v(I) = 0$ (in this case $\varepsilon$ vanishes identically)
so assume $v(I) > 0$, 
and let us suppose $\pi(x) = \pi(y)$. 
By definition of $\pi$, there exist $\lambda, \zeta \in K^\times$ such that $[\Bar{\lambda x}] = [\Bar{\zeta y}]$.
By definition of $\P^m(R/I)$, there exists $\Bar u \in (R/I)^\times$ such that $\Bar{\lambda x} = \Bar{u \zeta y}$.
By definition of $(R/I)^{m+1}$, every coordinate of $\lambda x - u\zeta y$ belongs to $I$.
Thus by hypothesis,
\[v(\lambda x - u\zeta y) \ge v(I) \ge \lVert \varepsilon \rVert_\infty.\]
Now $\lambda x$ and $u\zeta y$ are primitive mod $I$, hence primitive mod every divisor of $I$, including $\mf p = \mf p_v$.
Also, $R \subseteq O$, 
so by Lemma \ref{prim_phi} applied to the inclusion map (and by Lemma \ref{lem:R_prim_units}),
\[\lambda x, u\zeta y \in R^{m+1}_{\prim/\mf p} \subseteq O^{m+1}_{\prim/\mf m} = O^{m+1}_\prim.\]
Applying Proposition \ref{eps_per} 
yields, by scale-invariance,
\[
\varepsilon(x) =
\varepsilon(\lambda x) = 
\varepsilon(u\zeta y) =
\varepsilon(y). \qedhere 
\]
\end{proof}

\begin{cor} \label{ell_descends}
The excess divisor descends to $\P^m(R/I)$ 
for any sufficiently large ideal $I$.
\end{cor}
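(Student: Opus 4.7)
The plan is to factor $\ell_f$ into its local contributions and descend each factor separately using the preceding Lemma. The two key ingredients are already in hand: Lemma \ref{only_finitely_many_bad_places} (only finitely many places contribute) and Lemma \ref{eps_factors_thru_pi} (each contributing factor descends individually).

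More concretely, I would first invoke Lemma \ref{only_finitely_many_bad_places} to fix a finite set $S$ of finite places of $K$ outside which $\varepsilon_{f,v}$ is identically zero. Thus $\ell_f(x) = \prod_{v \in S} \mf p_v^{\varepsilon_{f,v}(x)}$ is really just a finite product for every $x$, and in particular $\ell_f$ is well-defined even without any convergence subtleties.

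Next, since $I$ is sufficiently large, it is $v$-sufficiently large for every finite place $v$ (trivially so for $v \notin S$, where $\lVert \varepsilon_{f,v} \rVert_\infty = 0$). Applying Lemma \ref{eps_factors_thru_pi} at each $v \in S$ furnishes functions $\Tilde\varepsilon_{f,v} : \P^m(R/I) \to \Z$ satisfying $\Tilde\varepsilon_{f,v} \circ \pi_I = \varepsilon_{f,v}$. I would then define
\[
\Tilde\ell_f : \P^m(R/I) \to \{\text{nonzero integral ideals of } R\}, \qquad \Tilde\ell_f(P) := \prod_{v \in S} \mf p_v^{\Tilde\varepsilon_{f,v}(P)},
\]
a finite product, and verify $\Tilde\ell_f \circ \pi_I = \ell_f$ simply by reading off the exponents factor-by-factor.

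There is no real obstacle; the Corollary is essentially a packaging statement assembling the local descents provided by Lemma \ref{eps_factors_thru_pi} into a global one, with Lemma \ref{only_finitely_many_bad_places} ensuring the assembly involves only finitely many factors. The one mild subtlety worth noting is that the finite set $S$ of contributing places depends only on $f$ and not on $I$, so enlarging $I$ further does not alter the structure of $\Tilde\ell_f$ beyond reindexing via the natural projection $\P^m(R/I') \to \P^m(R/I)$ whenever $I \mid I'$.
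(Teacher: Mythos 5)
Your proposal is correct and follows essentially the same route as the paper: apply Lemma \ref{eps_factors_thru_pi} at each finite place to descend the exponents $\varepsilon_{f,v}$, then define $\Tilde\ell_f$ by the corresponding product of prime powers and check $\Tilde\ell_f \circ \pi_I = \ell_f$ exponent by exponent. Your explicit restriction of the product to the finite set $S$ via Lemma \ref{only_finitely_many_bad_places} is a harmless (and slightly more careful) reformulation of what the paper does with the full product over all $v \nmid \infty$.
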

\begin{proof}
Since $I$ is $v$-sufficiently large for every $v$, 
the excess valuations $\varepsilon_{f,v}$ descend to well-defined functions $\Tilde\varepsilon_{f,v}$ on $\P^m(R/I)$.
Thus for $Q \in \P^m(R/I)$ we may define
\[
\Tilde{\ell}_f(Q) := \prod_{v \nmid \infty} \mf p_v^{\Tilde{\varepsilon}_{f,v}(Q)}.
\]
Since $\Tilde{\varepsilon}_{f,v} \circ \pi = \varepsilon_{f,v}$ we likewise have $\Tilde{\ell}_f \circ \pi = \ell_f$.
\end{proof}

Henceforth, we will drop the tilde ( $\Tilde{}$ ) from the notation, 
thereby allowing $\ell_f$ and $\varepsilon_{f,v}$, originally defined on $K^{m+1} - 0^{m+1}$, to take inputs from any sufficiently large projective space $\P^m(R/I)$, as well as $\P^m(K)$ itself.

\begin{rmk}
By Lemma \ref{lem:eps_trio}, 
$\Res f$ is a sufficiently large ideal of $R$.
In particular, every excess divisor of $f$ divides the resultant ideal.
\end{rmk}

\subsection{Divisor densities}

\begin{defn}
Let $v$ be a finite place of $K$,
let $i$ be an integer,
and let $\mf l$ be a fractional ideal.
The \emph{local density}
is the rational number
\[
\delta_{f,v}(i) := \frac{\#\{Q \in \P^m(R/I) : \varepsilon_{f,v}(Q) = i\}}{\#\P^m(R/I)}
\]
where $I$ is any $v$-sufficiently large ideal;
similarly,
the \emph{global density} 
is 
\[
	\delta_f(\mf l) := \frac{\#\{Q \in \P^m(R/I) : \ell_f(Q) = \mf l\}}{\# \P^m(R/I)}
\]
where $I$ is sufficiently large.
\end{defn}

Note that the above definitions are independent of the choice of ideal:
if $J \le I$ are sufficiently large 
then by Proposition \ref{Pm_preserves_products}, 
Lemma \ref{Pm_constant_degree}, 
and the Chinese Remainder Theorem,
the reduction map $\P^m(R/J) \to \P^m(R/I)$ has constant degree.

The following properties of the local density
are straightforward consequences of the definitions,
so we omit the proof.

\begin{lem} \label{delta_trio}
For each $v \nmid \infty$ and each $i \in \Z$,
	\begin{enumerate}[(i)]
		\item $\delta_{f,v}(i) \ge 0$ with equality if and only if $i \not \in \im \varepsilon_{f,v}$,
		\item $\delta_{f,v}(i) \le 1$ with equality if and only if $\im \varepsilon_{f,v} = \{i\}$.
	\end{enumerate}
	Moreover,
	\begin{enumerate}[(i)] \setcounter{enumi}{2}
		\item $\displaystyle \sum_{i=0}^\infty \delta_{f,v}(i) = 1$.
	\end{enumerate}
\end{lem}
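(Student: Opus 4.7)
The plan is to verify all three statements by direct unpacking of the definition, leveraging the fact (established via Lemma \ref{eps_factors_thru_pi} and Corollary \ref{ell_descends}) that for any $v$-sufficiently large ideal $I$, the excess valuation $\varepsilon_{f,v}$ descends to a well-defined function on $\P^m(R/I)$. The key preliminary observation I would record is that the reduction map $\pi_I : K^{m+1} - 0^{m+1} \to \P^m(R/I)$ is surjective (given a class $[\bar a] \in \P^m(R/I)$, any lift $a \in R^{m+1}_{\prim/I}$ satisfies $\pi_I(a) = [\bar a]$), so that $\varepsilon_{f,v}$ has the same image when viewed on $K^{m+1} - 0^{m+1}$ as on $\P^m(R/I)$; both coincide with the set denoted $\im \varepsilon_{f,v}$ in Lemma \ref{lem:eps_trio}.

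For part (i), I would observe that the numerator $\#\{Q \in \P^m(R/I) : \varepsilon_{f,v}(Q) = i\}$ is a non-negative integer, giving $\delta_{f,v}(i) \ge 0$ immediately; equality holds precisely when the set is empty, which by the surjectivity remark above is equivalent to $i \notin \im \varepsilon_{f,v}$. For part (ii), I would note that the numerator is bounded above by $\#\P^m(R/I)$ since it counts a subset, forcing $\delta_{f,v}(i) \le 1$; equality holds precisely when every $Q$ in $\P^m(R/I)$ has $\varepsilon_{f,v}(Q) = i$, which again by surjectivity is equivalent to $\im \varepsilon_{f,v} = \{i\}$.

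For part (iii), I would first invoke Lemma \ref{lem:eps_trio} to note that $\varepsilon_{f,v}$ is non-negative and bounded (by $v(\Res f)$, for instance), so the sum is actually finite—only terms with $0 \le i \le v(\Res f)$ contribute. Then the sets $\{Q \in \P^m(R/I) : \varepsilon_{f,v}(Q) = i\}$ partition $\P^m(R/I)$ as $i$ ranges over $\Z$, so summing the cardinalities yields $\#\P^m(R/I)$, and dividing gives 1.

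Honestly, there is no real obstacle here: each claim reduces to an elementary statement about cardinalities of fibres of a function on a finite set, and the ``hard work'' is already packed into the earlier lemmas, especially the well-definedness of $\delta_{f,v}(i)$ (independent of the choice of $I$), which follows from the constant-degree property of the reduction map $\P^m(R/J) \onto \P^m(R/I)$ invoked in the preceding paragraph of the paper. The only small subtlety worth flagging explicitly in the write-up is the identification between the image of $\varepsilon_{f,v}$ on $K^{m+1} - 0^{m+1}$ and on $\P^m(R/I)$; once this is noted, (i) and (ii) follow by inspection and (iii) by partition.
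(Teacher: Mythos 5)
Your proof is correct and is exactly the "straightforward consequence of the definitions" that the paper deliberately omits: non-negativity and the upper bound come from counting a (possibly empty) subset of the finite set $\P^m(R/I)$, the equality criteria follow once one notes the surjectivity of $\pi_I$ (so that the image of $\varepsilon_{f,v}$ on $K^{m+1}-0^{m+1}$ agrees with its image on $\P^m(R/I)$), and (iii) is the partition of $\P^m(R/I)$ into level sets, finite by the boundedness in Lemma \ref{lem:eps_trio}. The one subtlety you flag, the identification of the two images via surjectivity of the reduction map, is precisely the point worth making explicit, so nothing is missing.
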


\begin{propn}[Density Formula] \label{delta_product_formula}
	For all fractional ideals $\mf l$, 
	\[\delta_f(\mf l) = \prod_{v \nmid \infty} \delta_{f,v}(v(\mf l)).\]
\end{propn}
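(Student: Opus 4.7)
The strategy is to reduce to the Chinese Remainder Theorem. Fix a sufficiently large ideal $I$; for concreteness, $I = \Res f$ works by Lemma \ref{lem:eps_trio}. Factor $I = \prod_v \mf p_v^{e_v}$, a finite product in which each $e_v \ge \lVert \varepsilon_{f,v}\rVert_\infty$. Both $\delta_f(\mf l)$ and each $\delta_{f,v}(v(\mf l))$ may then be computed using $I$ and its local factors, respectively (since the defining quotient is independent of the sufficiently large ideal chosen, as noted just after the definitions).

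Applying Proposition \ref{Pm_preserves_products} to the CRT isomorphism $R/I \simto \prod_v R/\mf p_v^{e_v}$ yields a natural bijection
\[
\P^m(R/I) \simto \prod_v \P^m(R/\mf p_v^{e_v}).
\]
The key compatibility is that under this bijection, the descended excess valuation $\varepsilon_{f,v} : \P^m(R/I) \to \Z$ corresponds to projection onto the $v$-th factor followed by $\varepsilon_{f,v} : \P^m(R/\mf p_v^{e_v}) \to \Z$. This is forced by the uniqueness part of Lemma \ref{eps_factors_thru_pi}: both candidate functions on $\P^m(R/I)$ pull back along the (surjective) reduction map $\pi_I$ to the original $\varepsilon_{f,v}$ on $K^{m+1} - 0^{m+1}$, because the outer triangle $K^{m+1} - 0^{m+1} \to \P^m(R/I) \to \P^m(R/\mf p_v^{e_v})$ commutes with $\pi_{\mf p_v^{e_v}}$.

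With this identification, the event ``$\ell_f(Q) = \mf l$'' for $Q \in \P^m(R/I)$ decomposes, via $Q \leftrightarrow (Q_v)_v$, into the conjunction of the independent events ``$\varepsilon_{f,v}(Q_v) = v(\mf l)$'' across the finite set of $v$ dividing $I$. Multiplicativity of cardinality over cartesian products, followed by division by $\#\P^m(R/I) = \prod_v \#\P^m(R/\mf p_v^{e_v})$, gives $\delta_f(\mf l) = \prod_{v \mid I} \delta_{f,v}(v(\mf l))$. Extending the product to all $v \nmid \infty$ is harmless: at places not dividing $I$, $\varepsilon_{f,v} \equiv 0$ by Corollary \ref{good_reduction_epsilon}, so $\delta_{f,v}(v(\mf l))$ equals $1$ when $v(\mf l) = 0$ and $0$ otherwise; in the latter case the product correctly vanishes, in agreement with the LHS since no $Q$ can realize such an $\mf l$ (by Proposition \ref{ell_properties}(iii)). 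The only nontrivial step is the compatibility verification in the second paragraph, but this amounts to routine diagram-chasing with descent rather than a genuine obstacle.
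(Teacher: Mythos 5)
Your proof is correct and follows essentially the same route as the paper: fix a sufficiently large ideal $I$, use the CRT bijection $\P^m(R/I)\simto\prod_{v\mid I}\P^m(R/\mf p_v^{v(I)})$, check that the descended excess valuations are compatible with the factor projections, and dispose of $\mf l$ not supported on $I$ trivially. Your descent-plus-surjectivity argument for the compatibility step is exactly the verification the paper's proof sketches (and leaves to the reader), so there is nothing substantive to add.
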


\begin{rmk*}
	The global density is not a multiplicative function unless
	$f$ has no nontrivial excess divisor, 
	in which case $\delta_f(\mf l) = 0$ for all $\mf l \ne \langle 1 \rangle$.
\end{rmk*}

\begin{proof}
Fix a sufficiently large ideal $I$.
If $\mf l \nmid I$ 
then for some $v$ we have $v(\mf l) > v(I) \ge \lVert \varepsilon_{f,v} \rVert_\infty$.
This means 
$v(\mf l) \not \in \im \varepsilon_{f,v}$,
so $\mf l \not \in \im \ell_f$ 
(by $\subseteq$ of Proposition \ref{ell_properties}(iii)),
so the Density Formula holds trivially ($0 = 0$).

Now suppose $\mf l \mid I$.
If $v(I) = 0$
then $v(\mf l) = 0$ and $\im \varepsilon_{f,v} = \{0\}$ (since $I$ is sufficiently large).
For these places $v$, 
we have $\delta_{f,v}(v(\mf l)) = \delta_{f,v}(0) = 1$.
Thus the r.h.s.~of the Density Formula 
reduces to a product over just $v \mid I$.
Here, let 
	\[\varphi : \P^m(R/I) \overset{\sim}{\to} \prod_{v \mid I} \P^m(R/\mf p_v^{v(I)})\]
	be the bijection induced by the Chinese Remainder Theorem.
Put
	\[Z(\mf l) := \{Q \in \P^m(R/I) : \ell_f(Q) = \mf l\}\]
and, for each integer $i$, 
	\[Z_v(i) := \{Q \in \P^m(R/\mf p_v^{v(I)}) : \varepsilon_{f,v}(Q) = i\}.\]
Then 
\[
\delta_f(\mf l) = \frac{\# Z(\mf l)}{\# \P^m(R/I)}
\quad\text{and}\quad 
\delta_{f,v}(v(\mf l)) = \frac{\# Z_v(v(\mf l))} {\# \P^m(R/\mf p_v^{v(I)})}
\]
for all $v \mid I$, the latter holding because the ideal $\mf p_v^{v(I)}$ is $v$-sufficiently large.
Therefore it's enough to prove
	\[\varphi(Z(\mf l)) = \prod_{v \mid I} Z_v(v(\mf l)). \tag{*}\]
But \emph{this} holds thanks to the following observation:
if $\varphi(Q) = (Q_v : v \mid I)$
then the $Q_v$'s are just the reductions of $Q$ modulo the $\mf p_v^{v(I)}$'s, and hence 
\[v(\ell_f(Q)) = \varepsilon_{f,v}(Q) = \varepsilon_{f,v}(Q_v)\]
for all $v \mid I$.
We leave the verification of the mutual inclusions in (*) to the reader.
\end{proof}

Armed with the Density Formula,
we may now complete our description of $\im \ell_f$.

\begin{proof}[Proof of Proposition \ref{ell_properties}(iii) ($\supseteq$)]
Suppose $\mf l = \prod_{v \in S} \mf p_v^{i_v}$
where each $i_v \in \im \varepsilon_{f,v}$.
Then by Lemma \ref{delta_trio} 
and the Density Formula,
\[\delta_f(\mf l) 
	= \prod_{v \nmid \infty} \delta_{f,v}(v(\mf l))
	= \prod_{v \in S} \delta_{f,v}(i_v) \prod_{v \not\in S} \delta_{f,v}(0)
	> 0\]
so the set defining $\delta_f(\mf l)$ is non-empty.
In particular, $\mf l = \ell_f(x)$ for some $x$.
\end{proof}

The following calculation is of fundamental importance.
\begin{propn} \label{delta_mult_sum} 
Let $\chi$ be any multiplicative function.
Then 
\[\sum_{\mf l} \chi(\mf l) \delta_f(\mf l) = \prod_{v \nmid \infty} \sum_{i=0}^\infty \chi(\mf p_v^i) \delta_{f,v}(i)\]
where $\mf l$ ranges over all (nonzero, integral) ideals.
\end{propn}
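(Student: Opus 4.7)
The plan is to reduce both sides to the same Euler-type product by combining the Density Formula (Proposition \ref{delta_product_formula}) with the multiplicativity of $\chi$. The only subtlety is bookkeeping: one must control the support of $\delta_f$ and justify swapping a sum with an a priori infinite product.

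First I would set $S := \{v \nmid \infty : \lVert \varepsilon_{f,v}\rVert_\infty > 0\}$, which is finite by Lemma \ref{only_finitely_many_bad_places}. For each $v \notin S$, the excess valuation vanishes identically, so by Lemma \ref{delta_trio} we have $\delta_{f,v}(0) = 1$ and $\delta_{f,v}(i) = 0$ for $i > 0$. Via the Density Formula, this shows $\delta_f(\mf l) = 0$ unless $\mf l = \prod_{v \in S} \mf p_v^{i_v}$ for some exponents $i_v \ge 0$. Hence the left-hand sum has support confined to ideals indexed by the finite-dimensional lattice $\N^S$.

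Next I would expand: using multiplicativity of $\chi$ (so $\chi(\mf l) = \prod_{v \in S} \chi(\mf p_v^{i_v})$) together with the Density Formula, the left-hand side becomes
\[
\sum_{(i_v) \in \N^S} \prod_{v \in S} \chi(\mf p_v^{i_v}) \, \delta_{f,v}(i_v).
\]
Because $S$ is finite and each local sum $\sum_{i \ge 0} \chi(\mf p_v^i) \delta_{f,v}(i)$ is actually a \emph{finite} sum (by Lemma \ref{lem:eps_trio}, the image of $\varepsilon_{f,v}$ is finite, so only finitely many $\delta_{f,v}(i)$ are nonzero), the standard distributive law applies and yields
\[
\prod_{v \in S} \sum_{i \ge 0} \chi(\mf p_v^{i}) \, \delta_{f,v}(i).
\]
Finally, for each $v \notin S$ the corresponding local factor collapses to $\chi(\mf p_v^0) \delta_{f,v}(0) = 1 \cdot 1 = 1$, so one may freely adjoin these trivial factors to recover the full product over all finite places, $\prod_{v \nmid \infty} \sum_{i \ge 0} \chi(\mf p_v^i) \delta_{f,v}(i)$.

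There is no serious obstacle: the substantive content is already captured by Proposition \ref{delta_product_formula}, and the finite-support properties ensure that everything literally reduces to a finite product of finite sums, so no analytic convergence argument is needed.
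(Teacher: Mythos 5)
Your proposal is correct and follows essentially the same route as the paper: both arguments rest on the Density Formula (Proposition \ref{delta_product_formula}) together with multiplicativity of $\chi$ and the finiteness of the support (finitely many bad places, finitely many values of each $\varepsilon_{f,v}$), so that the identity reduces to distributing a finite product of finite sums and adjoining trivial factors equal to $1$. The only difference is bookkeeping --- you index the support by the finite set $S$ and tuples in $\N^S$, whereas the paper indexes by divisors of a fixed sufficiently large ideal $I$ --- which is an immaterial variation.
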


\begin{proof}
	Let $I$ be any sufficiently large ideal.
Then
\begin{align*}
\sum_{\mf l} \chi(\mf l) \delta_f(\mf l)
&= \sum_{\mf l \mid I} \chi(\mf l) \delta_f(\mf l) 
&& \text{[since $\delta_f(\mf l) = 0$ for $\mf l \nmid I$]} 
\\
&= \sum_{\mf l \mid I} \chi(\mf l) \prod_{v \nmid \infty} \delta_{f,v}(v(\mf l)) 
&& \text{[by the Density Formula]} 
\\
&= \sum_{\mf l \mid I} \chi(\mf l) \prod_{v \mid I} \delta_{f,v}(v(\mf l))
&& \text{[since $\delta_{f,v}(v(\mf l)) = 1$ for $v \nmid I$]} \\
&= \sum_{\mf l \mid I} \prod_{v \mid I} \chi(\mf p_v^{v(\mf l)}) \delta_{f,v}(v(\mf l))
&& \text{[by multiplicativity]}
\\
&= \prod_{v\mid I} \sum_{i=0}^{v(I)} \chi(\mf p_v^i) \delta_{f,v}(i)
&& \text{[by unique factorization]} 
\\
&= \prod_{v\mid I} \sum_{i=0}^\infty \chi(\mf p_v^i) \delta_{f,v}(i)
&& \text{[by Lemma \ref{delta_trio}(i)]}
\\
&= \prod_{v \nmid \infty} \sum_{i=0}^\infty \chi(\mf p_v^i) \delta_{f,v}(i);
\end{align*}
the last line holds because if $v \nmid I$ then $\sum_{i=0}^\infty \chi(\mf p_v^i) \delta_{f,v}(i) = \chi(1) \delta_{f,v}(0) = 1$.
\end{proof}

\section{Nonarchimedean computations} \label{sec:nonarch}

In this section,
\begin{itemize} 
\item $K$ is a number field 
\item $\mathcal{O}_K$ is the ring of integers of $K$
\item $v$ is a nonarchimedean place of $K$ corresponding to the prime ideal $\mf p_v \subset \mathcal{O}_K$
\item $|{\,\cdot\,}|_v = q_v^{-v(\cdot)}$ is the $v$-adic absolute value
\item $K_v$ is the completion of $K$ w.r.t.~$v$
\item $O_v$ is the ring of $v$-integers of $K_v$ (n.b.~$O_v \subset \mathcal{O}_K$)
\item $\mf m_v$ is the maximal ideal of $O_v$
\item $\lVert {\,\cdot\,} \rVert_v = [O_v : \mf m_v]^{-v(\cdot)}$ is the normalized $v$-adic absolute value
\item $B_v(a, r) = \{z \in K_v^{m+1} : |z - a|_v \le r\}$ is the polydisc of radius $r$ centered at $a$
\item $\mu_v$ is the Haar measure on $K_v^{m+1}$ with $\mu_v(B_v(0, 1)) = 1$
\end{itemize}
Keep in mind that the Haar measure is homogeneous and translation-invariant:
\begin{equation} \label{eq:Haar_properties}
\mu_v(\lambda S) = \lVert \lambda \rVert_v^{m+1} \mu_v(S)
\quad\text{and}\quad 
\mu_v(z + S) = \mu_v(S)
\end{equation}
for all measurable sets $S \subseteq K_v^{m+1}$, scalars $\lambda \in K_v$, and points $z \in K_v^{m+1}$.

\begin{lem} \label{many_faces_of_delta}
Let $\varepsilon_{f,v}$ be the excess valuation
and let $\delta_{f,v}$ be the local density.
Then for all integers $i$,
\[
\delta_{f,v}(i) = \lim_{X \to \infty} \frac{\#\{P \in \P^m(K) : H(P) \le X \text{ and } \, \varepsilon_{f,v}(P) = i\}}{\Num_{H, \P^m(K)}(X)}
\tag{\textit{i}}
\]
and
\[
\delta_{f,v}(i) = \mu_v \big(B_v(0, 1) \cap \varepsilon_{f,v}^{-1}(i)\big).
\tag{\textit{ii}}
\]
\end{lem}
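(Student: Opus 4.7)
The plan is to prove (ii) first by a direct Haar-measure computation, then to deduce (i) by fibering the rational-point count through the reduction map and invoking a congruence refinement of Schanuel's theorem. For (ii), fix any $k \ge \lVert \varepsilon_{f,v} \rVert_\infty$, so that by Lemma \ref{eps_factors_thru_pi} (applied with $R$ replaced by $O_v$) the excess valuation $\varepsilon_{f,v}$ descends to a function on $\P^m(O_v/\mf p_v^k O_v) \cong \P^m(\mathcal{O}_K/\mf p_v^k)$. Fix a uniformizer $t_v$ for $O_v$. Using $O_v^{m+1}_\prim = O_v^{m+1} \setminus \mf m_v^{m+1}$ (Lemma \ref{lem:R_prim_units}), the punctured unit polydisc stratifies as
\[
B_v(0,1) \setminus \{0\} = \bigsqcup_{j \ge 0} t_v^j \cdot O_v^{m+1}_\prim,
\]
and scale invariance of $\varepsilon_{f,v}$ (Lemma \ref{lem:eps_trio}) intersects this stratification layer by layer with $\varepsilon_{f,v}^{-1}(i)$. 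Homogeneity of $\mu_v$ together with the geometric sum $\sum_{j \ge 0} q_v^{-j(m+1)} = \mu_v(O_v^{m+1}_\prim)^{-1}$ then reduce the claim to showing
\[
\frac{\mu_v\bigl(O_v^{m+1}_\prim \cap \varepsilon_{f,v}^{-1}(i)\bigr)}{\mu_v(O_v^{m+1}_\prim)} \ = \ \delta_{f,v}(i).
\]
Because $\varepsilon_{f,v}$ is constant on cosets of $\mf p_v^k O_v^{m+1}$ (each of measure $q_v^{-k(m+1)}$), both the numerator and the denominator are integer multiples of this common coset-measure; by freeness of the $(O_v/\mf p_v^k O_v)^\times$-action on primitive tuples (Lemma \ref{lem:unit_group_acts_freely}), those integers in turn factor through $\P^m(O_v/\mf p_v^k O_v)$ after dividing by $\#(O_v/\mf p_v^k O_v)^\times$. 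The common unit and coset-measure factors cancel, leaving precisely the ratio defining $\delta_{f,v}(i)$.

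For (i), Lemma \ref{eps_factors_thru_pi} identifies $\{P \in \P^m(K) : \varepsilon_{f,v}(P) = i\}$ as the preimage under the reduction map $\pi : \P^m(K) \to \P^m(\mathcal{O}_K/\mf p_v^k)$ of $\{Q : \varepsilon_{f,v}(Q) = i\}$, so that
\[
\#\{P \in \P^m(K) : H(P) \le X,\ \varepsilon_{f,v}(P) = i\}
\ = \sum_{\substack{Q \in \P^m(\mathcal{O}_K/\mf p_v^k) \\ \varepsilon_{f,v}(Q) = i}} \#\bigl\{P \in \pi^{-1}(Q) : H(P) \le X\bigr\}.
\]
The remaining work is to show that each fibre $\#\{P \in \pi^{-1}(Q) : H(P) \le X\}$ is asymptotic to $\Num_{H,\P^m(K)}(X)/\#\P^m(\mathcal{O}_K/\mf p_v^k)$ as $X \to \infty$. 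Summing over admissible $Q$ and dividing by $\Num_{H,\P^m(K)}(X)$ then yields the limit $\delta_{f,v}(i)$.

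The main obstacle is precisely this equidistribution of bounded-height $K$-rational points among residue classes modulo $\mf p_v^k$, i.e., a congruence refinement of Schanuel's theorem. It can be established by rerunning Schanuel's lattice-point argument with $\mathcal{O}_K^{m+1}$ replaced by the full-rank sublattice of representatives reducing to a fixed $Q \in \P^m(\mathcal{O}_K/\mf p_v^k)$ (whose covolume exceeds that of $\mathcal{O}_K^{m+1}$ by the factor $\#\P^m(\mathcal{O}_K/\mf p_v^k)$), or more directly by invoking Widmer's counting principle \cite[Theorem 5.4]{Widmer_prim}; the error introduced by each fibre being subsumed in lower-order terms is exactly what allows the ratio to converge.
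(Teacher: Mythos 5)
Your proposal is correct and follows essentially the same route as the paper: part (ii) is the same Haar-measure computation (descend $\varepsilon_{f,v}$ modulo a sufficiently large power of $\mf p_v$, stratify the unit polydisc by powers of a uniformizer, and reduce to a count of residue classes in $\P^m(\mathcal{O}_K/\mf p_v^k)$ — the paper organizes this as showing each projective fibre of the reduction map has measure $1/\#\P^m(O_v/\mf m_v^k)$, which is equivalent to your coset-and-units count), and part (i) is the same fibering through the reduction map. The only real difference is that the paper simply asserts the equidistribution of bounded-height points among the residue classes, whereas you flag it as the key input and sketch its proof via lattice cosets and Widmer's counting principle, which is if anything more careful (just note the representatives over a fixed class form a union of cosets of $\mf p_v^{k(m+1)}$, not a single sublattice).
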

\begin{rmk}
	Part (i) justifies the intuition that $\delta_{f,v}(i)$ is the ``probability'' that $v(F(x)) = dv(x) + v(F) + i$.
	Part (ii) identifies $\delta_{f,v}(i)$ as a local volume, which we will use to calculate $c_{K,v}(f)$ soon.
\end{rmk} 
\begin{proof}
	To ease notation, drop the subscripts $f$ and $v$.
Let $s \ge \lVert \varepsilon \rVert_\infty$
and let $\pi : K^{m+1}-0^{m+1} \onto \P^m(\mathcal{O}_K/\mf p^s)$ be the reduction map.
Since $s$ is sufficiently large, 
$\varepsilon(P) = \varepsilon(\pi(P))$ for all $P \in \P^m(K)$ (cf.~Lemma \ref{eps_factors_thru_pi}).
	Put 
	\[Z := \{Q \in \P^m(\mathcal{O}_K/\mf p^s) : \varepsilon(Q) = i\}.\]
	Then
	\begin{align*}
		\#\{P \in \P^m(K) : H(P) \le X, \ \varepsilon(P) = i\} 
		&= \sum_{Q \in Z} \{P \in \P^m(K) : H(P) \le X, 
		\ \pi(P) = Q\} \\
		&\sim |Z| \cdot \frac{\Num_{H, \P^m(K)}(X)}{\#\P^m(\mathcal{O}_K/\mf p^s)}
		\qquad (X \to \infty)
	\end{align*}
	which proves part (i).
	
	To prove (ii), 
	again let $s \ge \lVert \varepsilon \rVert_\infty$.
	If $s = 0$ then $\varepsilon^{-1}(i)$ is either empty or the whole space (according as $i$ is positive or zero) and the result is trivial.
	So, assume $s > 0$.
Let $\pi : K_v^{m+1} - 0^{m+1} \onto \P^m(O/\mf m^s)$ be the reduction map (note the different codomain);
in terms of a uniformizer $t$, 
we have $\pi(z) = [\Bar{t^{-v(z)} z}]$ for all $z$.
	If $Q = [\Bar y]$ for some primitive $y$
	then 
	\[\pi^{-1}(Q) = \bigsqcup_{k \in \Z} \bigsqcup_{\Bar u \in (O/\mf m^s)^\times} t^k B(uy, q^{-s}).\]
	Intersecting with $B(0, 1)$ restricts the union to non-negative $k$; indeed, if $z = t^k a \in B(0, 1) \cap t^k B(uy, q^{-s})$ 
	for some $a$ with $|a - uy| \le q^{-s} < 1$, 
	then $|a| = 1$ because $|uy| = |y| = 1$, 
	so $|z| = |t^k a| = q^{-k} \le 1$.
	It follows that
	\[
	\mu \big(B(0, 1) \cap \pi^{-1}(Q)\big) 
	= \sum_{k \ge 0} \sum_{\Bar u \in (O/\mf m^s)^\times} \mu \big(t^k B(uy, q^{-s})\big).
	\]
	By properties of the Haar measure \eqref{eq:Haar_properties} we have
	\[
	\mu \big(t^k B(uy, q^{-s})\big) 
	= 
	\lVert t^{s+k} \rVert^{m+1}
	= [O : \mf m]^{-(s+k)(m+1)}
	\]
	for all $v$-units $u$.
	Summing the geometric series 
	yields 
	\[\mu \big( B(0, 1) \cap \pi^{-1}(Q) \big)
	=
	|(O/\mf m^s)^\times| \frac{[O : \mf m]^{-s(m+1)}}{1 - [O : \mf m]^{-(m+1)}} 
	= \frac{J_{O,1}(\mf m^s)}{J_{O,m+1}(\mf m^s)} = \frac{1}{\#\P^m(O/\mf m^s)}\]
	by Proposition \ref{J_local} and the fact that $[O : \mf m]^s = [O : \mf m^s]$.
	Finally,
	the inclusion $\mathcal{O}_K \subset O$ 
	induces a bijection $\P^m(\mathcal{O}_K/\mf p^s) \simto \P^m(O/\mf m^s)$.
	Identifying $Z \subseteq \P^m(\mathcal{O}_K/\mf p^s)$ from part (i) with its image in $\P^m(O/\mf m^s)$ under this bijection,
	part (ii) follows from the identity 
	\[
	\varepsilon^{-1}(i) = \bigsqcup_{Q \in Z} \pi^{-1}(Q). \qedhere 
	\]
\end{proof}

\begin{cor} \label{epsilon_pushforward}
	As discrete measures on $\Z$, 
	\[(\varepsilon_{f,v})_* \big(\mu_v|_{B_v(0,1)}\big) = \sum_{i \ge 0} \delta_{f,v}(i) \mu_i\]
where $\mu_i$ is the point mass at $i$.
\end{cor}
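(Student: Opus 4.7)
The plan is to deduce the corollary directly from Lemma \ref{many_faces_of_delta}(ii), which has already identified $\delta_{f,v}(i)$ as the Haar measure of the level set $B_v(0,1) \cap \varepsilon_{f,v}^{-1}(i)$. The content of the corollary is essentially a repackaging: since $\mathbf{Z}$ is a countable discrete space, any Borel measure on $\mathbf{Z}$ is determined by its values on singletons and is automatically a countable sum of point masses.

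First I would unwind the definition of the pushforward: for any $i \in \mathbf{Z}$,
\[
\bigl( (\varepsilon_{f,v})_* (\mu_v|_{B_v(0,1)}) \bigr)(\{i\})
= \mu_v\bigl(B_v(0,1) \cap \varepsilon_{f,v}^{-1}(i)\bigr)
= \delta_{f,v}(i),
\]
the second equality being exactly Lemma \ref{many_faces_of_delta}(ii). Next I would note that by Lemma \ref{lem:eps_trio}, the excess valuation is non-negative-integer-valued, so $\varepsilon_{f,v}^{-1}(i) = \varnothing$ whenever $i < 0$; in particular the pushforward is supported on $\{0, 1, 2, \ldots\}$. Extending by countable additivity from singletons to arbitrary subsets of $\mathbf{Z}$ then yields the identity of measures
\[
(\varepsilon_{f,v})_* (\mu_v|_{B_v(0,1)}) = \sum_{i \ge 0} \delta_{f,v}(i) \mu_i.
\]

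There is no real obstacle here, since the substantive content (the evaluation of the local Haar measure of each fibre of $\varepsilon_{f,v}$) has been carried out in the preceding lemma; what remains is purely formal. One minor subtlety worth flagging is the check that the total pushforward mass equals $\mu_v(B_v(0,1)) = 1$, which reproduces Lemma \ref{delta_trio}(iii); this is a convenient consistency check but plays no role in the argument.
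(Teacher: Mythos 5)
Your proposal is correct and is essentially the paper's own argument: the paper also proves the corollary by evaluating both measures at each singleton $\{i\}$ and invoking Lemma \ref{many_faces_of_delta}(ii), with the rest being the formal fact that a measure on the discrete space $\Z$ is determined by its values on singletons. Your additional remarks (non-negativity of $\varepsilon_{f,v}$ via Lemma \ref{lem:eps_trio}, and the total-mass consistency check with Lemma \ref{delta_trio}(iii)) are harmless elaborations of the same route.
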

\begin{proof}
Evaluate both sides at each singleton $\{i\}$
using Lemma \ref{many_faces_of_delta}(ii).
\end{proof}

\begin{cor} \label{nonarch_not_vol}
\hfill
\begin{enumerate}[(i)]
\item In the notation of Theorem \ref{thm:main}, 
\[c_{K,v}(f) = \displaystyle \sum_{i \ge 0} \Nm \mf p_v^{(m+1)i/d} \delta_{f,v}(i).\]
\item Let $D_{f,v} = \{z \in K_v^{m+1} : |F(z)|_v \le |F|_v\}$. Then
\[
\mu_v(D_{f,v}) = \sum_{i \ge 0} \Nm \mf p_v^{(m+1)\lfloor i/d \rfloor} \delta_{f,v}(i).
\]
\item $\mu_v(D_{f,v}) \le c_{K,v}(f)$ with equality if and only if $\delta_{f,v}(i) = 0$ for all $d \nmid i$.
\end{enumerate}
\end{cor}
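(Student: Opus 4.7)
The unifying observation is that the integrand defining $c_{K,v}(f)$ can be rewritten as a power of $\Nm \mf p_v$ applied to $\varepsilon_{f,v}$. Indeed, using $|x|_v = q_v^{-v(x)/n_v}$ where $q_v = \Nm \mf p_v$, and taking logarithms base $q_v$, one checks that
\[
\Bigl( \frac{|z|_v \, |F|_v^{1/d}}{|F(z)|_v^{1/d}} \Bigr)^{n_v(m+1)}
= q_v^{(m+1)\varepsilon_{f,v}(z)/d}
= \Nm \mf p_v^{(m+1)\varepsilon_{f,v}(z)/d}
\]
directly from Definition \ref{def_eps}. Thus the integral defining $c_{K,v}(f)$ is
\[
c_{K,v}(f) = \int_{O_v^{m+1}} \Nm \mf p_v^{(m+1)\varepsilon_{f,v}(z)/d} \, d\mu_v(z),
\]
and (i) follows by changing variables along $\varepsilon_{f,v}$ using Corollary \ref{epsilon_pushforward}. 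This is the cleanest part.

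For (ii), I would exploit the scale-invariance of $\varepsilon_{f,v}$ (Lemma \ref{lem:eps_trio}) to slice $D_{f,v}$ by its level sets. Since $|F(z)|_v \le |F|_v$ is equivalent to $dv(z) + \varepsilon_{f,v}(z) \ge 0$, i.e., $v(z) \ge -\lfloor \varepsilon_{f,v}(z)/d \rfloor$, we may write
\[
D_{f,v} = \bigsqcup_{i \ge 0} \bigsqcup_{k \ge -\lfloor i/d \rfloor} t^k \cdot P_i,
\]
where $t$ is a uniformizer and $P_i := \{z' \in O_v^{m+1} : v(z') = 0, \ \varepsilon_{f,v}(z') = i\}$ is the ``primitive fiber'' at level $i$. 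By homogeneity of $\mu_v$, $\mu_v(t^k P_i) = q_v^{-k(m+1)} \mu_v(P_i)$, and by Lemma \ref{many_faces_of_delta}(ii) combined with the scale-invariance decomposition $\{z \in O_v^{m+1} : \varepsilon_{f,v}(z) = i\} = \bigsqcup_{k \ge 0} t^k P_i$, we obtain $\mu_v(P_i) = (1 - q_v^{-(m+1)}) \delta_{f,v}(i)$. Summing the inner geometric series in $k$ starting at $-\lfloor i/d \rfloor$ cancels the factor $(1 - q_v^{-(m+1)})$ and leaves $q_v^{(m+1)\lfloor i/d \rfloor}$, which is exactly the claimed formula.

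Part (iii) is then immediate: comparing (i) and (ii) term by term, the inequality $\lfloor i/d \rfloor \le i/d$ together with $\delta_{f,v}(i) \ge 0$ (Lemma \ref{delta_trio}(i)) gives $\mu_v(D_{f,v}) \le c_{K,v}(f)$, and equality in the sum forces $\lfloor i/d \rfloor = i/d$ (i.e., $d \mid i$) whenever $\delta_{f,v}(i) > 0$.

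The main obstacle is bookkeeping in step (ii): one must (a) verify that the normalization conventions for $|\cdot|_v$, $\lVert \cdot \rVert_v$, and $\mu_v$ yield $\mu_v(tS) = q_v^{-(m+1)} \mu_v(S)$, (b) correctly translate the inequality $v(F(z)) \ge v(F)$ into the range $k \ge -\lfloor i/d \rfloor$, and (c) confirm that scale-invariance of $\varepsilon_{f,v}$ (which \emph{a priori} is only asserted for $\lambda \in K^\times$) applies also to $\lambda \in K_v^\times$; this last point follows because $\varepsilon_{f,v}$ depends only on $v$-adic data and Lemma \ref{lem:eps_trio} was proved purely from the valuation axioms.
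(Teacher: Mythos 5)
Your proposal is correct and follows essentially the same route as the paper: part (i) by rewriting the integrand as $\Nm \mathfrak{p}_v^{(m+1)\varepsilon_{f,v}(z)/d}$ and pushing forward along $\varepsilon_{f,v}$ via Corollary \ref{epsilon_pushforward}, and part (iii) by termwise comparison of the two series. In (ii) your shell decomposition into the sets $t^k P_i$ (with two geometric series, one to recover $\mu_v(P_i)$ from $\delta_{f,v}(i)$ and one to resum over $k \ge -\lfloor i/d\rfloor$) is only a bookkeeping variant of the paper's single rescaling $B(0,q^{i/d})\cap\varepsilon_{f,v}^{-1}(i)=t^{-\lfloor i/d\rfloor}\bigl(B(0,1)\cap\varepsilon_{f,v}^{-1}(i)\bigr)$; the only slip is that your displayed set equality omits the origin, which lies in $D_{f,v}$ but not in any $t^k P_i$ --- harmless here since $\{0\}$ has $\mu_v$-measure zero, as the paper notes by writing $D_{f,v}=\{0\}\sqcup\cdots$.
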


\begin{proof}
To declutter, 
we drop the subscripts $f$ and $v$ where no confusion can arise.
By definition of $\varepsilon$, 
\[
|F(z)| = |z|^d |F| q^{-\varepsilon(z)} \qquad (z \ne 0).
\]
In terms of the local degree $n_v = [K_v : \Q_v]$,
we have $q = \Nm \mf p^{1/n_v}$.
Thus
\begin{align*}
	c_{K,v}(f)
	&= \int_{O^{m+1}} 
	\bigg( 
	\frac{|z| |F|^{1/d}}{|F(z)|^{1/d}} 
	\bigg)^{n_v(m+1)}
	\d \mu(z) \\
	&= \int_{O^{m+1}} 
	q^{n_v(m+1)\varepsilon(z)/d} \, \d \mu(z) \\
	&= \sum_{i \ge 0} q^{n_v(m+1)i/d} \delta(i)
\end{align*}
by Corollary \ref{epsilon_pushforward}.
Part (i) 
follows from the identity $q^{n_v} = \Nm \mf p$.
For part (ii), 
note that
\[
|F(z)| \le |F| \iff |z| \le q^{\varepsilon(z)/d}.
\]
Thus
\begin{equation} \label{eq:Dfv_decomposition}
	D_{f,v}
	= \{0\} \sqcup \bigsqcup_{i \ge 0} B(0, q^{i/d}) \cap \varepsilon^{-1}(i).
\end{equation}
In general, 
if $|K_v^\times| = q^\Z$ for some $q > 1$ then 
\begin{equation} \label{rounded_radius}
	B(a, r) = B(a, \lfloor r \rfloor_v) 
	\ \text{where} \ \lfloor r \rfloor_v = \sup |K_v| \cap [0, r] = q^{\lfloor \frac{\log r}{\log q}\rfloor}
\end{equation}
for all real $r \ge 0$.
Thus, if $t$ is a uniformizer (i.e.,~$|t| = q^{-1}$)
then it follows from \eqref{rounded_radius} and scale-invariance of $\varepsilon$ 
that 
\[B(0, q^{i/d}) \cap \varepsilon^{-1}(i) = t^{-\lfloor i/d \rfloor}\big(B(0, 1) \cap \varepsilon^{-1}(i)\big).\]
This, \eqref{eq:Dfv_decomposition},
and	Lemma \ref{many_faces_of_delta}(ii) imply
\[
\mu(D_{f,v}) = 0 + \sum_{i \ge 0} \lVert t^{-\lfloor i/d \rfloor\!} \rVert^{m+1} \mu \big(B(0, 1) \cap \varepsilon^{-1}(i)\big)
= \sum_{i \ge 0} \Nm \mf p^{(m+1)\lfloor i/d \rfloor} \delta(i)
\]
which proves part (ii).
Part (iii) now follows from parts (i) and (ii):
since $\lfloor i/d \rfloor \le i/d$ for all $i$, 
we have $\mu(D_{f,v}) \le c_{K,v}(f)$ 
with equality if and only if  
\[ \big({\Nm \mf p^{(m+1)\{i/d\}} - 1}\big)\delta(i) = 0 \ \text{for all} \ i \ge 0 \]
which is equivalent to the vanishing of $\delta(i)$ for each $i$ not divisible by $d$.
\end{proof}

The divisibility condition in Corollary \ref{nonarch_not_vol}(iii) is not vacuous.

\begin{eg} \label{eg:nonarch_not_vol}
For $d > 1$ let 
\[f(z) = pz^d + 1 \text{ over } K = \Q \text{ at the place } v = p.\]
Then
\[\delta_{f,v}(0) = \frac{p}{p+1} \quad\text{and}\quad \delta_{f,v}(1) = \frac{1}{p+1}.\]
\end{eg}
\begin{proof}
Choose the lift $F(X, Y) = (pX^d + Y^d, Y^d)$.
Clearly $v(F) = 0$. 
If $(x, y) \in \Q^2 - (0, 0)$ then
$v(px^d) \ge v(y^d)$ implies $v(px^d + y^d) \ge v(y^d)$ 
while $v(px^d) < v(y^d)$ implies $v(px^d + y^d) = v(px^d)$.
It follows that
\[v(F(x, y)) 
= \begin{dcases*}
	dv(y)  & if $v(x) \ge v(y),$ \\
	1 + dv(x) & if $v(x) < v(y).$
\end{dcases*}
\]
Thus, $\varepsilon_{f,v}(x, y) = 1$ if and only if $(x, y)$ lies over $[\Bar 1 : \Bar 0] \in \P^1(\Z/p\Z)$.
\end{proof}

\begin{cor} \label{nonarch_vol}
In the notation of Theorem \ref{thm:main}, we have 
\[
c_{K,0}(f) 
=
\sum_{\mf l} \Nm \mf l^{(m+1)/d} \delta_f(\mf l).
\]
\end{cor}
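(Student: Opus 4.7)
The proof is a direct application of two earlier results. The plan is to recognize $\chi(\mf l) := \Nm \mf l^{(m+1)/d}$ as a multiplicative function on nonzero integral ideals, then invoke Proposition \ref{delta_mult_sum} to convert the sum over $\mf l$ into an Euler product over finite places, and finally use Corollary \ref{nonarch_not_vol}(i) to identify each local factor as $c_{K,v}(f)$.

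First, I would verify multiplicativity of $\chi$: for coprime ideals $\mf a$, $\mf b$, we have $\Nm(\mf{ab}) = \Nm \mf a \cdot \Nm \mf b$ (indeed, the norm is completely multiplicative on nonzero ideals of a Dedekind domain), and raising to the fixed real exponent $(m+1)/d$ preserves this. Moreover $\chi(\langle 1 \rangle) = 1$.

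Next, applying Proposition \ref{delta_mult_sum} with this choice of $\chi$ yields
\[
\sum_{\mf l} \Nm \mf l^{(m+1)/d} \delta_f(\mf l)
= \prod_{v \nmid \infty} \sum_{i=0}^\infty \chi(\mf p_v^i) \delta_{f,v}(i)
= \prod_{v \nmid \infty} \sum_{i=0}^\infty \Nm \mf p_v^{(m+1)i/d} \delta_{f,v}(i).
\]
By Corollary \ref{nonarch_not_vol}(i), the inner sum at each $v \nmid \infty$ is exactly $c_{K,v}(f)$. Thus the product equals $\prod_{v \nmid \infty} c_{K,v}(f)$, which is the definition of $c_{K,0}(f)$ given in the statement of Theorem \ref{thm:main}.

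There is no serious obstacle here: Proposition \ref{delta_mult_sum} has already absorbed the content (the Density Formula and the fact that almost all $\delta_{f,v}$ are trivial), and Corollary \ref{nonarch_not_vol}(i) has already packaged the local integral. The only things to check are that $\chi$ is multiplicative and that the product over $v$ converges—but convergence is automatic since $\delta_{f,v}(i) = 0$ for all $i > 0$ whenever $f$ has good reduction at $v$ (cf.\ Remark \ref{rmk:good_reduction_cKvf} and Lemma \ref{only_finitely_many_bad_places}), so the Euler product has only finitely many nontrivial factors.
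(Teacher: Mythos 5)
Your proposal is correct and follows exactly the paper's route: the paper's proof of this corollary is precisely the combination of Proposition \ref{delta_mult_sum} (applied to the multiplicative function $\mf l \mapsto \Nm \mf l^{(m+1)/d}$) with Corollary \ref{nonarch_not_vol}(i) identifying each local factor as $c_{K,v}(f)$. Your additional checks of multiplicativity and finiteness of the nontrivial Euler factors are sound but already implicit in those two results.
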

\begin{proof}
This is immediate from
Proposition \ref{delta_mult_sum}
and
Corollary \ref{nonarch_not_vol}.
\end{proof}

\section{Some number theory} \label{sec:nt}

The calculations in this section
will simplify some steps in the proof of Theorem \ref{thm:main}.

\subsection{Solving congruences}

\begin{lem}[Generalized Chinese Remainder Theorem] \label{GCRT}
	Let $R$ be a ring containing ideals $I, J$
	and elements $a, b$.
	Then the system of congruences
	\[x \equiv a \!\!\! \pmod I \quad\text{and}\quad x \equiv b \!\!\! \pmod J\]
	has a solution $x_0$ in $R$ 
	if and only if 
	$a \equiv b \pmod {I + J}$, 
	in which case the full solution set is $x_0 + (I \cap J)$.
\end{lem}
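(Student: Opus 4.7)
The plan is to handle the two directions separately and then verify the parametrization of the full solution set.

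For the forward direction, I would assume a solution $x_0$ exists and observe that $x_0 - a \in I$ and $x_0 - b \in J$. Subtracting gives $a - b = (x_0 - b) - (x_0 - a) \in I + J$, which is exactly the congruence $a \equiv b \pmod{I+J}$. This takes at most two lines.

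For the converse, I would assume $a - b \in I + J$ and write $a - b = i - j$ for some $i \in I$ and $j \in J$ (choosing signs conveniently). Then $x_0 := a - i = b - j$ is a single element of $R$ satisfying $x_0 \equiv a \pmod I$ and $x_0 \equiv b \pmod J$ by construction. This is the constructive core of the Lemma, and it is essentially a one-liner.

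For the description of the full solution set, given any solution $x_0$, I would note that an element $x \in R$ solves the system if and only if $x - x_0$ lies in both $I$ and $J$, hence in $I \cap J$; conversely, adding any element of $I \cap J$ to $x_0$ preserves both congruences. Thus the solution set is precisely the coset $x_0 + (I \cap J)$.

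There is no real obstacle here; the proof is pure ideal arithmetic and does not even require $R$ to be commutative or unital beyond what is implicit in the statement. The only mild subtlety is bookkeeping of signs when writing $a - b \in I + J$, and remembering that the classical CRT ($I + J = R$) is simply the special case where the compatibility condition $a \equiv b \pmod{I+J}$ is automatic.
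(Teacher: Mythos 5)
Your proof is correct and is in substance the same as the paper's: the paper packages the argument as computing the image and kernel of the map $\varphi : R \to R/I \times R/J$, but its key steps — writing $a-b=i-j$ with $i\in I$, $j\in J$ and taking $x_0=a-i$, and identifying the solution set as a coset of $I\cap J$ (the kernel of $\varphi$) — are exactly your computations. No gap; the direct elementwise phrasing versus the homomorphism/fibre phrasing is purely cosmetic.
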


\begin{proof}
	Consider the ring homomorphism
	\begin{align*}
		\varphi : R &\to R/I \times R/J \\
		x &\mapsto (x + I, x + J).
	\end{align*}
	Solving the given system is equivalent to 
	describing the fibre of $\varphi$ over $(a + I, b + J)$.
	Thus, 
	a solution $x_0$ exists iff $(a + I, b + J) \in \im \varphi$,
	in which case $\varphi^{-1}(a + I, b + J) = x_0 + \ker \varphi$.
	It's clear that $\ker \varphi = I \cap J$,
	so we're done once we show that 
	\[\im \varphi = \{(r + I, s + J) : r - s \in I + J\}.\]
	One inclusion is obvious: $x - x = 0$.
	For the other, suppose $r - s \in I + J$, 
	say $r - s = i - j$ for some $i$ in $I$ and $j$ in $J$.
	If we let $x = r - i$,
	then $r - x = i \in I$
	and $s - x = s - r + i = j - i + i = j \in J$.
	Thus $(r + I, s + J) = (x + I, x + J) \in \im \varphi$.
\end{proof}

\begin{lem} \label{union_of_cosets}
Let $\mf a, \mf b, \mf l, \mf R$ be nonzero integral ideals,
with
$\mf R$ sufficiently large
and coprime to $\mf a$.
Recall that
\[Z(\mf l) = \{Q \in \P^m(\mathcal{O}_K/\mf R) : \ell_f(Q) = \mf l\}
\quad\text{and}\quad 
U(\mf R) = (\mathcal{O}_K/\mf R)^\times.\]
Then there exist points $x_{Q,\Bar u}$,
one for each $Q \in Z(\mf l)$
and $\Bar u \in U(\mf R)$,
such that
\begin{gather*}
\hspace{-12em}
\{x \in (\mathcal{O}_K)^{m+1}_{\prim/\mf R} : \langle x \rangle \subseteq \mf{ab} \ \text{and} \ \, \ell_f(x) = \mf l\} 
\\ 
\hspace{12em} =
\begin{dcases*}
	\bigsqcup_{Q \in Z(\mf l)}
	\bigsqcup_{\Bar u \in U(\mf R)}
	x_{Q, \Bar u} + (\mf{ab} \cap \mf R)^{(m+1)}
	& if $\mf b \perp \mf R$, \\
	\varnothing & otherwise.
\end{dcases*}
\end{gather*}
\end{lem}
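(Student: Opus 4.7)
The plan splits according to whether $\mf b$ is coprime to $\mf R$. If not, pick a prime $\mf p$ dividing both. Then any $x$ in the set on the left would satisfy $\langle x \rangle \subseteq \mf{ab} \subseteq \mf p$, forcing $\mf p \mid \langle x \rangle$; but primitivity modulo $\mf R$ gives $\langle x \rangle + \mf R = \mathcal{O}_K$, hence $\mf p \nmid \langle x \rangle$---a contradiction, so the set is empty.

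Henceforth assume $\mf b \perp \mf R$; together with $\mf a \perp \mf R$ this yields $\mf{ab} + \mf R = \mathcal{O}_K$ and $\mf{ab} \cap \mf R = \mf{ab} \cdot \mf R$. For each $Q \in Z(\mf l)$, fix once and for all a representative $y_Q \in (\mathcal{O}_K)^{m+1}_{\prim/\mf R}$ with $\pi_{\mf R}(y_Q) = Q$, and for each $\Bar u \in U(\mf R)$ pick any lift $u \in \mathcal{O}_K$. Applying Lemma \ref{GCRT} coordinatewise, the system
\[x \equiv 0 \!\!\! \pmod{\mf{ab}}, \qquad x \equiv u y_Q \!\!\! \pmod{\mf R}\]
is compatible---trivially, since $\mf{ab} + \mf R = \mathcal{O}_K$---and hence admits a solution $x_{Q,\Bar u} \in \mathcal{O}_K^{m+1}$ with full solution set $x_{Q,\Bar u} + (\mf{ab} \cap \mf R)^{m+1}$. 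Each such coset lies in the left-hand set: the first congruence yields $\langle x \rangle \subseteq \mf{ab}$; the second forces $x \in (\mathcal{O}_K)^{m+1}_{\prim/\mf R}$ (since $\Bar u \Bar{y_Q}$ is primitive modulo $\mf R$); and $\pi_{\mf R}(x) = [\Bar u \Bar{y_Q}] = [\Bar{y_Q}] = Q$, so by Corollary \ref{ell_descends} we have $\ell_f(x) = \ell_f(Q) = \mf l$.

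Conversely, any $x$ in the left-hand set yields $Q := \pi_{\mf R}(x)$, which belongs to $Z(\mf l)$ by Corollary \ref{ell_descends}; and by freeness of the $U(\mf R)$-action on $(\mathcal{O}_K/\mf R)^{m+1}_\prim$ (Lemma \ref{lem:unit_group_acts_freely}), there is a unique $\Bar u$ with $\Bar x = \Bar u \Bar{y_Q}$, placing $x$ in the coset indexed by $(Q, \Bar u)$. The same freeness forces disjointness of the cosets: if two cosets meet, then reducing modulo $\mf R$ gives $\Bar u \Bar{y_Q} = \Bar u' \Bar{y_{Q'}}$, whence $Q = Q'$ projectively and then $\Bar u = \Bar u'$. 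The only non-routine ingredients are the hypothesis that $\mf R$ be sufficiently large---so that $\ell_f$ factors through $\pi_{\mf R}$ and $Z(\mf l)$ is the correct indexing set---and the coprimality assumptions, which collapse the compatibility condition in the Generalized CRT. I expect no genuine obstacle beyond careful bookkeeping of reduction maps and primitivity constraints.
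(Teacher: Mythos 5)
Your proof is correct and takes essentially the same route as the paper's: index by pairs $(Q, \Bar u)$, solve the two coordinatewise congruences via the Generalized Chinese Remainder Theorem after collapsing the compatibility condition with the coprimality hypotheses, and use freeness of the $U(\mf R)$-action together with the descent of $\ell_f$ (Corollary \ref{ell_descends}) for the indexing and disjointness. The only cosmetic difference is that you dispose of the case $\mf b \not\perp \mf R$ by a direct primitivity contradiction, whereas the paper derives the condition $\mf b + \mf R = \mathcal{O}_K$ from the GCRT compatibility criterion using primitivity of the representative $y$.
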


\begin{proof}
Of course,
by definition of $\P^m(\mathcal{O}_K/\mf R)$,
\begin{gather*}
\hspace{-12em}
\{x \in (\mathcal{O}_K)^{m+1}_{\prim/\mf R} : \langle x \rangle \subseteq \mf{ab} \ \text{and} \ \ell_f(x) = \mf l\} 
\\ 
\hspace{12em} =
\bigsqcup_{[\Bar y] \in Z(\mf l)} 
\bigsqcup_{\Bar u \in U(\mf R)} 
\{x \in (\mathcal{O}_K)^{m+1}_{\prim/\mf R} : \langle x \rangle \subseteq \mf{ab} \ \text{and} \ \Bar x = \Bar u \Bar y\}
\end{gather*}
where the second condition holds in the ring $(\mathcal{O}_K/\mf R)^{m+1}$.
Note that $Z(\mf l)$ is well-defined by Corollary \ref{ell_descends};
note also that the inner union is disjoint because $U(\mf R)$ acts freely on $(\mathcal{O}_K/\mf R)^{m+1}_\prim$ (Lemma \ref{lem:unit_group_acts_freely}).
The conditions $\langle x \rangle \subseteq \mf{ab}$
and $\Bar x = \Bar {uy}$ 
are, respectively, equivalent to the (componentwise!)~congruences $x \equiv 0^{m+1}$ (mod $\mf{ab}$) and $x \equiv uy$ (mod $\mf R$).
Moreover, since $uy$ is primitive mod $\mf R$, 
Lemma \ref{prim_mod_I} 
implies that
any $x \in \mathcal{O}_K^{m+1}$ congruent (mod $\mf R$) to $uy$ is likewise primitive mod $\mf R$.
It follows that 
the set 
\[\{x \in (\mathcal{O}_K)^{m+1}_{\prim/\mf R} : \langle x \rangle \subseteq \mf{ab} \ \text{and} \ \Bar x = \Bar{uy}\}\]
is precisely the solution space of the system of congruences
	\begin{equation} \label{congruences}
		x \equiv 0^{m+1} \!\!\! \pmod{(\mf{ab})^{(m+1)}} 
		\quad\text{and}\quad 
		x \equiv uy \!\!\! \pmod{\mf R^{(m+1)}}
	\end{equation}
in the ring $\mathcal{O}_K^{m+1}$.
	By the Generalized Chinese Remainder Theorem,
	this set is empty if 
	$uy \not \in (\mf{ab})^{(m+1)} + \mf R^{(m+1)} =
	(\mf{ab} + \mf R)^{(m+1)}$,
	or else it is a coset of $(\mf{ab})^{(m+1)} \cap \mf R^{(m+1)} = (\mf{ab} \cap \mf R)^{(m+1)}$.
	Now, since $u \in \mathcal{O}_K$ is invertible modulo $\mf R$,
	$u$ is \emph{a fortiori} invertible modulo any ideal containing $\mf R$.
	Also, 
	\begin{equation} \label{eq:ab_plus_R}
		\mf {ab} + \mf R 
		= 
		\mf {ab} + \mf{Rb} + \mf R
		=
		(\mf a + \mf R) \mf b + \mf R 
		=
		\mf b + \mf R
	\end{equation}
	because $\mf {Rb} \subseteq \mf R$ and $\mf a + \mf R = \mathcal{O}_K$.
	Thus the condition on $y$ simplifies to
	\[
	uy \in (\mf{ab} + \mf R)^{(m+1)} 
	\iff 
	y \in (\mf b + \mf R)^{(m+1)} 
	\iff 
	\langle y \rangle \subseteq \mf b + \mf R.
	\]
	But because $y$ is primitive mod $\mf R$, 
	if $\langle y \rangle \subseteq \mf b + \mf R$ 
	then $\mathcal{O}_K = \langle y \rangle + \mf R \subseteq \mf b + \mf R + \mf R = \mf b + \mf R \subseteq \mathcal{O}_K$, 
	so in fact $\mf b + \mf R = \mathcal{O}_K$;
	the converse is obvious.
	To finish the proof of the Lemma (as stated),
	define $x_{Q,\Bar u} \in \mathcal{O}_K^{m+1}$ for each $Q = [\Bar y] \in Z(\mf l)$ and $\Bar{u} \in U(\mf R)$ as follows:
	If $\mf b$ and $\mf R$ are coprime,
	let $x_{Q, \Bar u}$ be a solution to \eqref{congruences};
	otherwise, let $x_{Q, \Bar u}$ be arbitrary.
\end{proof}

\subsection{Dirichlet series}

The number of positive integers up to $x$ 
is obviously
\[
N_\Q(x) := \sum_{k \le x} 1 = \lfloor x \rfloor = x + O(1).
\]
The generalization of this formula 
to number fields 
is given by \emph{Weber's theorem}.

\begin{propn}[Weber]
Let $K$ be a number field of degree $n$,
and let $a_k$ be the number of (nonzero) 
integral ideals $\mf a \subseteq \mathcal{O}_K$ with $\Nm \mf a = k$.
Then 
\[N_K(x) := \sum_{k \le x} a_k = \frac{2^{r_1} (2\pi)^{r_2} hR}{w\sqrt{d_K}} x + O(x^{1-1/n}).\]
In particular, 
there exists a constant $A$
(depending only on $K$)
such that $|N_K(x)| \le Ax$ for all $x \ge 0$.
\end{propn}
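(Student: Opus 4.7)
The plan is to reduce Weber's theorem to a lattice-point count in a homogeneously expanding domain of Minkowski space, following the classical strategy (and matching the first few steps of Schanuel's original argument). Fiber by ideal class:
\[
N_K(x) = \sum_{\mathcal{C} \in \Cl(K)} N_K(x, \mathcal{C}), \qquad N_K(x, \mathcal{C}) := \#\{\mf a \in \mathcal{C} : \Nm \mf a \le x\}.
\]
For each class $\mathcal{C}$, fix a nonzero integral ideal $\mf b \in \mathcal{C}^{-1}$. Then $\alpha \mapsto \alpha\mf b^{-1}$ is a bijection
\[
\{\alpha \in \mf b - 0\}/\mathcal{O}_K^\times \simto \{\mf a \in \mathcal{C}\}, \qquad \text{with } \Nm(\alpha \mf b^{-1}) = |\Nm\alpha|/\Nm\mf b,
\]
so $N_K(x, \mathcal{C})$ equals the number of $\mathcal{O}_K^\times$-orbits in $\{\alpha \in \mf b - 0 : |\Nm\alpha| \le x \Nm\mf b\}$.

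Next, embed everything in Minkowski space $K \otimes_\Q \R \cong \R^{r_1} \times \C^{r_2}$, identified with $\R^n$ via $\sigma \mapsto (\Re\sigma, \Im\sigma)$ at each complex place. Under this identification, $\mf b$ becomes a full lattice of covolume $2^{-r_2}\sqrt{d_K}\,\Nm\mf b$, and the norm form $\Nm : K^\times \to \Q^\times$ extends to a continuous homogeneous (degree $n$) function $\Nm : K \otimes \R \to \R$. Dirichlet's unit theorem gives a fundamental domain $\mathcal{F}$ (modulo the roots of unity $\mu(K)$) for the multiplicative action of $\mathcal{O}_K^\times$ on the locus $\{\Nm \ne 0\}$; $\mathcal{F}$ is a cone (stable under positive real scaling), so $\mathcal{F}_T := \{z \in \mathcal{F} : |\Nm z| \le T\}$ equals $T^{1/n} \mathcal{F}_1$. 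A direct calculation using the logarithmic embedding of units (carried out in exactly the form Schanuel uses; see e.g.\ \cite{Schanuel}) gives
\[
\vol(\mathcal{F}_T) = 2^{r_1} (2\pi)^{r_2} R \cdot T
\]
where the factor $(2\pi)^{r_2} = (2)^{r_2}(\pi)^{r_2}$ absorbs the Jacobian from the complex coordinates and the change to ``polar angle'' along each complex direction.

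Now count. By standard geometry of numbers (e.g.\ the Lipschitz counting principle used later in Section \ref{sec:geometry}), if $\bdy \mathcal{F}_1$ admits a Lipschitz parametrization by finitely many maps $[0,1]^{n-1} \to \R^n$, then the number of lattice points of $\mf b$ in $\mathcal{F}_T$ is $\vol(\mathcal{F}_T)/\text{covol}(\mf b) + O(T^{(n-1)/n})$. Specializing to $T = x\Nm\mf b$ and dividing by $w = \#\mu(K)$ (to pass from $\mathcal{F}$-representatives to $\mathcal{O}_K^\times$-orbits) gives
\[
N_K(x, \mathcal{C}) = \frac{1}{w} \cdot \frac{2^{r_1}(2\pi)^{r_2} R \cdot x \Nm\mf b}{2^{-r_2}\sqrt{d_K}\,\Nm\mf b} + O(x^{1-1/n}) = \frac{2^{r_1}(2\pi)^{r_2} R}{w\sqrt{d_K}}\, x + O(x^{1-1/n}).
\]
Summing over the $h$ ideal classes produces the claimed main term. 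The main obstacle is the Lipschitz-parametrizability of $\bdy\mathcal{F}_1$, which Schanuel handles explicitly by cutting $\mathcal{F}$ into images of an $(n{-}1)$-cube under smooth maps coming from the logarithmic embedding of $\mathcal{O}_K^\times$; this is exactly the piece of machinery that Sections \ref{sec:aux}--\ref{sec:geometry} refine for the general $f$-twisted setting.

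Finally, the ``in particular'' clause: because $\Nm \mf a \ge 1$ for every nonzero integral ideal, $N_K(x) = 0$ for $x < 1$, while for $x \ge 1$ the asymptotic just proved yields $N_K(x) \le C_1 x + C_2 x^{1-1/n} \le (C_1 + C_2) x$. Taking $A := C_1 + C_2$ gives $|N_K(x)| \le Ax$ for all $x \ge 0$.
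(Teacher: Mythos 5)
The paper does not actually prove this proposition: it simply cites Murty--Van Order (Theorem 5 there), so your blind write-up is a genuine re-proof rather than a parallel of the paper's argument. Your route is the classical one --- fibre over ideal classes, pass from ideals $\mf a\in\mathcal{C}$ to elements of a fixed ideal $\mf b\in\mathcal{C}^{-1}$ modulo units, embed in Minkowski space, cut out a cone-shaped fundamental domain for the unit action via the logarithmic embedding, and count lattice points in the homogeneously expanding region $\mathcal{F}_T=T^{1/n}\mathcal{F}_1$ --- and this is exactly the machinery that Schanuel's proof (and hence the paper's Sections 7--9) generalizes, so conceptually the approach is sound and even fits the paper's framework better than a bare citation would. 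What each approach buys is clear: the citation keeps the paper short, while your argument makes the constant $2^{r_1}(2\pi)^{r_2}hR/(w\sqrt{d_K})$ and the exponent $1-1/n$ transparent and shows they come from the same volume/boundary analysis used later for $D_{F,K}(T)$.

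Two points need repair, one of them a real inconsistency. First, your normalizations clash by a factor $2^{r_2}$: if you identify $\C\cong\R^2$ via $(\Re,\Im)$, so that $\operatorname{covol}(\mf b)=2^{-r_2}\sqrt{d_K}\,\Nm\mf b$, then the correct volume is $\vol(\mathcal{F}_T)=2^{r_1}\pi^{r_2}R\,T$, not $2^{r_1}(2\pi)^{r_2}R\,T$; as written, your displayed quotient
\[
\frac{1}{w}\cdot\frac{2^{r_1}(2\pi)^{r_2}R\,x\Nm\mf b}{2^{-r_2}\sqrt{d_K}\,\Nm\mf b}
=\frac{2^{r_1}(2\pi)^{r_2}2^{r_2}R}{w\sqrt{d_K}}\,x
\]
does not equal the constant you then assert, so either the volume formula or the covolume must be adjusted (the two conventions differ precisely by that $2^{r_2}$, and with either consistent choice the final constant comes out right). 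Second, the Lipschitz parametrizability of $\bdy\mathcal{F}_1$ is asserted rather than argued; it is standard (the boundary pieces are images of $(n-1)$-cubes under smooth maps built from the exponential of the unit-lattice parallelotope and the norm-one hypersurface, exactly as in Schanuel's Lemma 8 and as redone in Lemma \ref{exists_good_DFK} of the paper), but in a self-contained proof you would need to spell this out, since it is where the $O(x^{1-1/n})$ error term actually comes from. Neither issue is a conceptual gap; both are fixable bookkeeping/detail matters.
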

\begin{proof}
See \cite[Theorem 5]{MurtyVanOrder}.
\end{proof}

\begin{lem} \label{lem:weber}
	Let $K$ be a number field of degree $n$.
	Then for all $x \ge 1$,
	\[
	\sum_{\Nm \mf b \le x} \frac{1}{\Nm \mf b} = O(1 + \log x)
	\]
	and, for $\sigma > 1$, 
	\[
	\sum_{\Nm \mf b > x} \frac{1}{\Nm \mf b^\sigma} = O(x^{1-\sigma})
	\]
	where the implicit constants depend only on $K$ and $\sigma$.
\end{lem}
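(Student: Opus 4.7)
The plan is to deduce both estimates from Weber's theorem---more precisely, from the crude consequence $|N_K(x)| \le Ax$ for some constant $A = A(K)$---via Abel summation. Write each sum as a Stieltjes integral against the counting measure $dN_K(t) = \sum_k a_k \delta_k$, so that we may transfer derivatives onto the smooth factor $1/t^\sigma$ and estimate using only the growth of $N_K$.

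For the first estimate, I would observe
\[
\sum_{\Nm \mf b \le x} \frac{1}{\Nm \mf b}
= \sum_{k \le x} \frac{a_k}{k}
= \frac{N_K(x)}{x} + \int_1^x \frac{N_K(t)}{t^2}\,dt
\]
by partial summation, and then bound the right-hand side using $N_K(t) \le At$:
\[
\frac{N_K(x)}{x} + \int_1^x \frac{N_K(t)}{t^2}\,dt
\le A + A\int_1^x \frac{dt}{t} = A(1 + \log x),
\]
which gives the first claim.

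For the tail estimate, I would perform the analogous partial summation on the interval $(x, M]$ and let $M \to \infty$. Since $N_K(M)/M^\sigma \to 0$ as $M \to \infty$ (using $\sigma > 1$ and $N_K(M) = O(M)$), one obtains
\[
\sum_{\Nm \mf b > x} \frac{1}{\Nm \mf b^\sigma}
= -\frac{N_K(x)}{x^\sigma} + \sigma \int_x^\infty \frac{N_K(t)}{t^{\sigma+1}}\,dt.
\]
Applying $|N_K(t)| \le At$ to both terms yields
\[
\sum_{\Nm \mf b > x} \frac{1}{\Nm \mf b^\sigma}
\le \sigma A \int_x^\infty \frac{dt}{t^\sigma} = \frac{\sigma A}{\sigma - 1}\, x^{1-\sigma},
\]
which is the second claim, with implicit constant depending only on $K$ and $\sigma$.

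There is no real obstacle here: the entire content sits inside Weber's theorem, and both parts are standard Abel-summation exercises. The only care needed is to handle the boundary term $N_K(x)/x^\sigma$ (trivially $\le Ax^{1-\sigma}$) and to verify that Weber's remainder is not needed---just the linear upper bound, which holds for all $x \ge 0$ and hence avoids any ``for $x$ sufficiently large'' disclaimer.
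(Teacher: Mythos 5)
Your proposal is correct and follows essentially the same route as the paper: partial (Abel) summation against $N_K$, the linear bound $N_K(t)\le At$ from Weber's theorem, and letting the upper limit tend to infinity in the tail estimate using $N_K(y)=o(y^\sigma)$ for $\sigma>1$. No further comment is needed.
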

\begin{proof}
This is just Abel summation.
By Weber's theorem we have
	\[
	\sum_{\Nm \mf b \le x} \frac{1}{\Nm \mf b}
	= 
	\sum_{k \le x} \frac{a_k}{k} 
	= 
	\frac{N_K(x)}{x} + \int_1^x \frac{N_K(t)}{t^2} \, \d t
	\le 
	A + \int_1^x \frac{A}{t} \, \d t = A(1 + \log x).
	\]
	Similarly, 
	\[
	\sum_{x < \Nm \mf b \le y} \frac{1}{\Nm \mf b^\sigma} 
	=
	\sum_{x < k \le y} \frac{a_k}{k^\sigma} 
	= 
	\frac{N_K(y)}{y^\sigma} - \frac{N_K(x)}{x^\sigma} + \sigma \int_x^y \frac{N_K(t)}{t^{\sigma+1}} \, \d t.
	\]
	If $\sigma > 1$, then $N_K(y) = o(y^\sigma)$, so we may let $y \to \infty$, giving
	\[
	\sum_{\Nm \mf b > x} \frac{1}{\Nm \mf b^\sigma} 
	= 
	-\frac{N_K(x)}{x^\sigma} + \sigma\int_x^\infty \frac{N_K(t)}{t^{\sigma+1}} \, \d t
	\le
	\sigma \int_x^\infty \frac{A}{t^\sigma} \, \d t
	= 
	\frac{A\sigma}{\sigma - 1} x^{1-\sigma}. \qedhere 
	\]
\end{proof}

Up next is the technical heart of the nonarchimedean part.

\begin{lem} \label{lem:dirichlet_series}
	Let $\mf a, \mf R \subseteq \mathcal{O}_K$ be nonzero coprime ideals, 
	and let $s \in \C$ with $\sigma := \Re(s) \ge 1$.
	Define 
	\[
	\chi_s(\mf b) := \mu_K(\mf b) [\mf b \perp \mf R] \frac{\Nm \mf a^s}{\Nm (\mf{ab} \cap \mf R)^s}.
	\]
	(Here, $\Nm \mf a^s$ means $(\Nm \mf a)^s$, etc.)
	Then as $x \to \infty$, 
	\[
	\sum_{\Nm \mf b \le x} \chi_s(\mf b)
	= 
	\frac{1}{\zeta_K(s) J_{\mathcal{O}_K,s}(\mf R)} + O\Big(\frac{x^{1-\sigma}}{\Nm \mf R^\sigma}\Big) \qquad (\sigma > 1)
	\tag{\textit{i}}
	\]
	(cf.~[Corollary 4.11]) and
	\[
	\sum_{\Nm \mf b \le x} \left| \chi_s(\mf b) \right|
	= 
	O\Big(\frac{1+\log x}{\Nm \mf R^\sigma}\Big) \qquad (\sigma \ge 1)
	\tag{\textit{ii}}
	\] 
	where the implicit constants depend only on $K$ and $\sigma$,
	and the $\log x$ disappears for $\sigma > 1$.
\end{lem}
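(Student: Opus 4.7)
The plan is to reduce the sum to a standard Dirichlet series by exploiting the coprimality hypothesis, and then invoke Lemma \ref{lem:weber} for the estimates.

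First I would simplify $\chi_s$. Since $\mf a \perp \mf R$, and the Iverson bracket restricts attention to $\mf b \perp \mf R$, the product $\mf{ab}$ is also coprime to $\mf R$. In a Dedekind domain, coprime ideals satisfy $\mf I \cap \mf J = \mf I \mf J$, so $\Nm (\mf{ab} \cap \mf R) = \Nm \mf a \cdot \Nm \mf b \cdot \Nm \mf R$ whenever $\mf b \perp \mf R$. The two factors of $\Nm \mf a^s$ cancel, yielding
\[
\chi_s(\mf b) = \frac{\mu_K(\mf b) \, [\mf b \perp \mf R]}{\Nm \mf b^s \, \Nm \mf R^s}.
\]
In particular $\chi_s$ no longer depends on $\mf a$.

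For part (i), I would let $x \to \infty$ in the full Dirichlet series. By the Euler product and multiplicativity of the Möbius function,
\[
\sum_{\mf b \perp \mf R} \frac{\mu_K(\mf b)}{\Nm \mf b^s}
= \prod_{\mf p \nmid \mf R} \Big(1 - \frac{1}{\Nm \mf p^s}\Big)
= \frac{1}{\zeta_K(s)} \prod_{\mf p \mid \mf R} \Big(1 - \frac{1}{\Nm \mf p^s}\Big)^{-1},
\]
which, after multiplying by $\Nm \mf R^{-s}$ and using the identity $J_{\mathcal{O}_K,s}(\mf R) = \Nm \mf R^s \prod_{\mf p \mid \mf R}(1 - \Nm \mf p^{-s})$ (the analytic extension of Corollary \ref{J_Dedekind} to complex $s$), gives the main term $1/(\zeta_K(s) J_{\mathcal{O}_K,s}(\mf R))$. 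The tail is then controlled by
\[
\bigg|\sum_{\Nm \mf b > x} \chi_s(\mf b)\bigg|
\le \frac{1}{\Nm \mf R^\sigma} \sum_{\Nm \mf b > x} \frac{1}{\Nm \mf b^\sigma}
= O\Big(\frac{x^{1-\sigma}}{\Nm \mf R^\sigma}\Big),
\]
by the second estimate of Lemma \ref{lem:weber}, valid for $\sigma > 1$.

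For part (ii), since $|\mu_K(\mf b)| \le 1$, the same simplification gives
\[
\sum_{\Nm \mf b \le x} |\chi_s(\mf b)|
\le \frac{1}{\Nm \mf R^\sigma} \sum_{\Nm \mf b \le x} \frac{1}{\Nm \mf b^\sigma},
\]
and the first estimate of Lemma \ref{lem:weber} bounds the inner sum by $O(1 + \log x)$; when $\sigma > 1$ the series converges absolutely, so the logarithmic factor can be dropped. There is no real obstacle here: once the cancellation $\Nm \mf a^s / \Nm(\mf{ab} \cap \mf R)^s = \Nm \mf R^{-s} \Nm \mf b^{-s}$ is observed, the whole lemma is a routine application of Weber's theorem. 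The only subtle point is the analytic extension of the Jordan totient to complex arguments, which should be stated explicitly as a definition before the computation.
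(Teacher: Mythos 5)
Your proposal is correct and follows essentially the same route as the paper: simplify $\chi_s(\mf b)$ to $\mu_K(\mf b)[\mf b \perp \mf R]/(\Nm \mf b \Nm \mf R)^s$ using the coprimality of $\mf a$ (and $\mf b$) with $\mf R$, bound the partial sums and tail via Lemma \ref{lem:weber}, and evaluate the full series by an Euler product over $\mf p \nmid \mf R$, identifying $\Nm \mf R^s \prod_{\mf p \mid \mf R}(1 - \Nm \mf p^{-s})$ with $J_{\mathcal{O}_K,s}(\mf R)$. The only cosmetic difference is that the paper derives the simplification for all $\mf b$ via the lcm--gcd identity rather than restricting to $\mf b \perp \mf R$, and it spells out the multiplicativity of the bracket $[\mf b \perp \mf R]$ before factoring, both of which are implicit in your argument.
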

\begin{rmk}
	For our purposes, $s$ will be $m+1$ or $m+1-1/n$, 
	where $m$ is the dimension of $\P^m$ and $n$ is degree of $K$. 
	Note,
	$m + 1 - 1/n \ge 1$
	with equality if and only if $n = m = 1$.
\end{rmk}
\begin{proof}
	We simplify $\chi_s(\mf b)$ first.
	By \eqref{eq:ab_plus_R} 
	and the lcm--gcd identity
	\[(I \cap J)(I + J) = IJ\]
	we have 
	\[
	\mf{ab} \cap \mf R 
	= 
	\frac{\mf{abR}}{\mf{ab} + \mf R}
	= 
	\frac{\mf{abR}}{\mf{b} + \mf R}
	\]
	and so
	\[
	\frac{\mf a}{\mf{ab} \cap \mf R}
	= \frac{\mf b + \mf R}{\mf {bR}}.
	\]
	Given that $\Nm (\mf b + \mf R) = 1$ whenever $\mf b$ is coprime to $\mf R$, 
	it follows that 
	\[
	\chi_s(\mf b) = \frac{\mu_K(\mf b) [\mf b \perp \mf R]}{(\Nm \mf {bR})^s}
	\]
	for all $\mf b$ and all $s$.
	In particular, 
	\begin{equation} \label{eq:chi_bound}
		|\chi_s(\mf b)|
		\le \frac{1}{(\Nm \mf{bR})^\sigma}
	\end{equation}
	for all $\mf b$ and all $s$.
	
	Now we prove part (ii).
	If $\sigma = 1$, 
	then by \eqref{eq:chi_bound} and Lemma \ref{lem:weber},
	\[
	\sum_{\Nm \mf b \le x} |\chi_s(\mf b)|
	\le
	\frac{1}{\Nm \mf R} \sum_{\Nm \mf b \le x} \frac{1}{\Nm \mf b}
	= O\Big(\frac{1 + \log x}{\Nm \mf R}\Big).
	\]
	On the other hand, if $\sigma > 1$, 
	then by \eqref{eq:chi_bound} again,
	\[
	\sum_{\Nm \mf b \le x} |\chi_s(\mf b)| 
	< \frac{1}{\Nm \mf R^\sigma} 
	\sum_{\mf b} \frac{1}{\Nm \mf b^\sigma}
	= \frac{\zeta_K(\sigma)}{\Nm \mf R^\sigma}
	\]
	regardless of $x$.
	
	It remains to prove part (i).
	By what we just showed,
	the series 
	\begin{equation} \label{eq:chi_series}
		\sum_{\mf b} \chi_s(\mf b)
	\end{equation}
	converges absolutely.
	Moreover, its tail decays like
	\[\bigg| \sum_{\Nm \mf b > x} \chi_s(\mf b) \, \bigg|
	\le \sum_{\Nm \mf b > x} \!\! |\chi_s(\mf b)|
	\le \frac{1}{\Nm \mf R^\sigma} \! \sum_{\Nm \mf b > x} \frac{1}{\Nm \mf b^\sigma}
	= O\Big(\frac{x^{1-\sigma}}{\Nm \mf R^\sigma}\Big)
	\]
	by \eqref{eq:chi_bound} and Lemma \ref{lem:weber}.
	To evaluate the sum \eqref{eq:chi_series},
	we use Euler products.
	Let 
	\[\xi_s(\mf b) := 
	\frac{\mu_K(\mf b) [\mf b \perp \mf R]}{\Nm \mf b^s}\]
	so that 
	\[
	\sum_{\mf b} \chi_s(\mf b)
	= \frac{1}{\Nm \mf R^s}
	\sum_{\mf b} \xi_s(\mf b).
	\]
	We claim that $\xi_s$ is a multiplicative function of $\mf b$. 
	Since the M\"obius function and the ideal norm are already multiplicative, 
	it suffices to prove our claim for the Iverson bracket $\mf b \mapsto [\mf b \perp \mf R]$.
	To that end, 
	suppose $\mf b$ and $\mf b'$ are coprime.
	Then 
	\begin{align*}
		(\mf b + \mf R)(\mf b' + \mf R) &= \mf {bb}' + \mf {bR} + \mf {Rb}' + \mf R^2 
		\\ &= \mf {bb}' + (\mf b + \mf b')\mf R + \mf R^2 
		\\ &= \mf {bb}' + \mf R + \mf R^2 
		\\ &= \mf {bb}' + \mf R,
	\end{align*}
	which means $\mf {bb}' + \mf R = \langle 1 \rangle$ if and only if $\mf b + \mf R = \langle 1 \rangle$ and $\mf b' + \mf R = \langle 1 \rangle$.
	Thus $\xi_s$ is indeed multiplicative,
	and therefore 
	\[\sum_{\mf b} \chi_s(\mf b)
	= \frac{1}{\Nm \mf R^s} \prod_{\mf p} \sum_{i=0}^\infty \xi_s(\mf p^i).
	\]
	Noting that $\mu_K(\mf p^i) = 0$ unless $i \le 1$, 
	and that $\mf p \perp \mf R$ if and only if $\mf p \nmid \mf R$,
	we calculate
	\[\xi_s(\mf p^i) = 
	\begin{dcases} 1 & i = 0, \\ 
		-{\Nm \mf p^{-s}} & i = 1 \text{ and } \mf p \nmid \mf R, \\ 0 & \text{otherwise.}
	\end{dcases}
	\]
	It follows (upon completing the zeta function) that
	\begin{align*}
		\sum_{\mf b} \chi_s(\mf b)
		= \frac{1}{\Nm \mf R^s}
		\prod_{\mf p \nmid \mf R} (1 - \Nm \mf p^{-s})
		= \frac{1}{\zeta_K(s) \underbrace{\textstyle \Nm \mf R^s \prod_{\mf p \mid \mf R} (1 - \Nm \mf p^{-s})}_{J_{\mathcal{O}_K,s}(\mf R)}}
	\end{align*}
	as desired.
\end{proof}

\section{Auxiliary results} \label{sec:aux}

\subsection{Lattices}

A \emph{lattice} is a discrete subgroup of Euclidean space.
Every lattice is a free abelian group 
and is thus isomorphic to $\Z^k$ for some integer $k$ called its \emph{rank}.
Let $\Lambda \subset \R^l$ be a lattice of rank $k$.
By discreteness, $k \le l$.
If $B = \begin{bmatrix} b_1 & \ldots & b_k\end{bmatrix}$ is an $l$-by-$k$ matrix whose columns constitute a $\Z$-basis for $\Lambda$, 
then the \emph{determinant} of $\Lambda$ 
is the number
\begin{align*} 
	\det \Lambda 
	&= \sqrt{\det B^T \! B}, \\
	&= |{\det B}| \quad\quad\quad \text{if $k = l$}.
\end{align*}
Note this is independent of the choice of $A$.
(Careful---$\det(BB^T) = 0$ if $k < l$.)
Geometrically, $\det \Lambda$ 
measures the $k$-volume of 
the \emph{fundamental parallelotope}
\[
B[0,1)^k = b_1[0, 1) + \ldots + b_k[0, 1)
\]
defined by $B$.
Every such parallelotope is a fundamental domain for the translation action of $\Lambda$ on $\Span_\R \{b_1, \ldots, b_k\}$.
The \emph{successive minima} of $\Lambda$
are the numbers 
\[\lambda_i = \inf \{\lambda \ge 0 : \text{$\Lambda$ contains $i$ linearly independent vectors of length at most $\lambda$}\}.\]
The first successive minimum $\lambda_1$ coincides with the length of the shortest nonzero vector in $\Lambda$.
By definition,
\[
\lambda_0 = 0 < \lambda_1 \le \ldots \le \lambda_k < \infty = \lambda_{k+1} = \ldots
\]
Beware that $\Lambda$ need not have a basis of vectors of length at most $\lambda_k$: 
the standard counterexample is
\[
\Z^l + \tfrac{1}{2}(1, \ldots, 1)\Z \subset \R^l \qquad (l \ge 5).
\]
Finally,
\emph{Minkowski's second theorem} asserts that for a lattice $\Lambda$ of rank $k$,
\begin{equation} \label{minkowski_minima}
	\frac{2^k \det \Lambda}{k! V_k}
	\le 
	\lambda_1 \ldots \lambda_k 
	\le 
	\frac{2^k \det \Lambda}{V_k}
\end{equation}
where 
\begin{equation} \label{vol_kball}
V_k = \frac{\pi^{k/2}}{\Gamma(k/2 + 1)}
\end{equation}
is the $k$-volume of the unit $k$-ball.

\subsection{Ideals and units}

\begin{propn}[Minkowski] \label{propn:Minkowski}
	The diagonal map 
	\begin{align*}
		\Phi : K &\to \prod_{v \mid \infty} K_v \ (\cong \R^n) \\
		x &\mapsto (x : v \mid \infty) = (x, \ldots, x)
	\end{align*}
	is a $K$-linear\footnote{i.e.,~$\Phi(x + y) = \Phi(x) + \Phi(y)$ for all $x, y$ in $K$ and $\Phi(\lambda x) = \lambda \Phi(x)$ for all $\lambda$ in $K$.} embedding.
	The image of any nonzero fractional ideal $\mf a$ of $K$ is a full-rank lattice with
\begin{equation*} 
	\det \Phi(\mathfrak{a}) = \frac{\Nm \mathfrak{a} \sqrt{d_K}}{2^{r_2}}.
\end{equation*}
\end{propn}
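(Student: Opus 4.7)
The $K$-linearity of $\Phi$ is immediate from the definition: each coordinate map $K \to K_v$ is a field embedding, so $\Phi$ is additive and respects multiplication by $K$ (viewed diagonally in the product). Injectivity is likewise immediate, since any one coordinate is already injective. It remains to show that $\Phi(\mf a)$ is a full-rank lattice in $\prod_{v \mid \infty} K_v$ with the claimed determinant. My plan is to first establish this for $\mf a = \mathcal{O}_K$ and then extend by scaling.

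Discreteness of $\Phi(\mathcal{O}_K)$ follows from the classical observation that algebraic integers of bounded degree with bounded conjugates form a finite set (their minimal polynomials have bounded integer coefficients). The real dimension of the target is $r_1 + 2r_2 = n$, matching the $\Z$-rank of $\mathcal{O}_K$; full rank will follow once the determinant below is shown to be nonzero. To compute it, fix an integral basis $\omega_1, \ldots, \omega_n$ of $\mathcal{O}_K$ and list all complex embeddings of $K$ as $\sigma_1, \ldots, \sigma_{r_1}$ (real) together with $\tau_1, \bar\tau_1, \ldots, \tau_{r_2}, \bar\tau_{r_2}$ (complex, paired by conjugation). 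The matrix $M$ whose rows list these embeddings applied to the $\omega_j$'s satisfies $|\det M|^2 = d_K$ by the classical discriminant identity. Identifying each complex $K_v \cong \C$ with $\R^2$ via $z \mapsto (\Re z, \Im z)$, the matrix $B$ representing $\Phi$ in real coordinates is obtained from $M$ by replacing each conjugate pair of rows $(\tau_k, \bar\tau_k)$ with $(\tfrac{1}{2}(\tau_k + \bar\tau_k), \tfrac{1}{2i}(\tau_k - \bar\tau_k))$---a block-diagonal linear transformation with $r_1$ trivial $1\times 1$ blocks and $r_2$ blocks each of determinant $\pm i/2$. Hence $|\det B| = 2^{-r_2}|\det M| = 2^{-r_2}\sqrt{d_K}$, giving both nonvanishing (hence full rank) and the desired formula for $\mathcal{O}_K$.

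Finally, to extend to an arbitrary nonzero fractional ideal $\mf a$, choose a positive integer $c$ with $c\mf a \subseteq \mathcal{O}_K$. Then $c\mf a$ is integral with $[\mathcal{O}_K : c\mf a] = \Nm (c \mf a) = c^n \Nm \mf a$, so $\det \Phi(c\mf a) = c^n \Nm \mf a \cdot \det \Phi(\mathcal{O}_K)$. On the other hand, $\det \Phi(c\mf a) = c^n \det \Phi(\mf a)$ since scaling every coordinate by $c$ multiplies the lattice determinant by $c^n$. Cancelling $c^n$ yields the claim. The main bookkeeping hurdle is tracking the factor $2^{-r_2}$: it is sensitive to the identification $\C \cong \R^2$ and to the choice of Lebesgue (rather than self-dual Haar) measure on $\C$, so alternative normalizations would shift the exponent of $2$, and one must confirm that the convention used here matches the rest of the paper.
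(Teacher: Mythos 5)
Your proof is correct. The paper itself offers no argument here: it quotes the result as classical (Minkowski), and its later uses (e.g.\ Corollary \ref{map_Psi} and the factor $2^{r_2(m+1)}/\sqrt{d_K}^{\,m+1}\Nm\mathfrak{c}^{m+1}$ in Corollary \ref{counting_principle}) simply take the stated covolume formula as given. Your argument is the standard one and all the steps check out: the $2\times 2$ blocks converting the conjugate pair of rows $(\tau_k,\bar\tau_k)$ into $(\Re\tau_k,\Im\tau_k)$ have determinant $\pm i/2$, giving $|\det B| = 2^{-r_2}\sqrt{d_K}$; and the passage from $\mathcal{O}_K$ to a fractional ideal via $[\mathcal{O}_K : c\mathfrak{a}] = c^n\Nm\mathfrak{a}$ together with $\det\Phi(c\mathfrak{a}) = c^n\det\Phi(\mathfrak{a})$ is clean. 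As for the normalization worry you raise at the end: the convention you used (Lebesgue measure on $K_v\cong\R^2$ at complex places, i.e.\ no extra factor of $2$) is exactly the one the paper needs, as can be checked against the constant $c_K(m)$ in \eqref{cKm} and the lattice-point count in Corollary \ref{counting_principle}, so no adjustment is required.
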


\begin{cor} \label{map_Psi}
The map
\begin{align*}
	\Psi : K^{m+1} &\to \prod_{v \mid \infty} K_v^{m+1} \ (\cong \R^{n(m+1)}) \\
	(x_0, \ldots, x_m) &\mapsto ((x_0, \ldots, x_m) : v \mid \infty)
\end{align*}
is a $K$-linear embedding. 
The image of any nonzero fractional ideal $\mf a$ of $K$ is a full-rank lattice with 
\[\det \Psi(\mf a) = \det \Phi(\mf a)^{m+1}.\]
\end{cor}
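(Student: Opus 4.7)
The plan is to deduce the corollary directly from Minkowski's result (Proposition \ref{propn:Minkowski}) by recognizing $\Psi$ as an $(m+1)$-fold direct sum of $\Phi$. First, I would exploit the natural identification
\[
\prod_{v \mid \infty} K_v^{m+1}
\;\cong\;
\Bigl(\prod_{v \mid \infty} K_v\Bigr)^{m+1}
\]
obtained by interchanging the product over places with the product over coordinates. Under this transposition, $\Psi(x_0, \ldots, x_m)$ corresponds to $(\Phi(x_0), \ldots, \Phi(x_m))$; that is, $\Psi$ is the block map $\Phi \oplus \cdots \oplus \Phi$ ($m+1$ summands). The $K$-linearity and injectivity of $\Psi$ are then immediate from the corresponding properties of $\Phi$ stated in Proposition \ref{propn:Minkowski}.

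Next, I would interpret $\Psi(\mathfrak{a})$ as $\Psi(\mathfrak{a}^{m+1}) \subseteq \Psi(K^{m+1})$, which is the only reading consistent with the domain of $\Psi$. Under the identification above, this image decomposes as the direct sum
\[
\Psi(\mathfrak{a}^{m+1}) \;=\; \Phi(\mathfrak{a}) \times \cdots \times \Phi(\mathfrak{a}) \;=\; \Phi(\mathfrak{a})^{m+1}
\]
inside $(\R^n)^{m+1} \cong \R^{n(m+1)}$. Since $\Phi(\mathfrak{a})$ is a rank-$n$ lattice in $\R^n$ by Proposition \ref{propn:Minkowski}, its $(m+1)$-fold direct sum is a rank-$n(m+1)$ lattice in the ambient $n(m+1)$-dimensional real vector space, and is therefore full-rank.

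For the determinant, I would invoke the standard fact that a $\Z$-basis for a direct sum of lattices is obtained by assembling $\Z$-bases of the summands into a block-diagonal matrix. Concretely, if $B$ is a basis matrix for $\Phi(\mathfrak{a})$, then $\mathrm{diag}(B, \ldots, B)$ (with $m+1$ blocks) is a basis matrix for $\Psi(\mathfrak{a}^{m+1})$. Taking the absolute value of its determinant yields
\[
\det \Psi(\mathfrak{a}^{m+1}) \;=\; |{\det B}|^{m+1} \;=\; \det \Phi(\mathfrak{a})^{m+1},
\]
which is the claimed formula.

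I do not anticipate any serious obstacle: the whole argument amounts to the observation that Minkowski's embedding commutes with the formation of direct sums (up to the harmless reshuffling of product orders), so the corollary is essentially formal. The only step requiring any care is being explicit about the interpretation of the symbol $\Psi(\mathfrak{a})$ and about the identification of ambient real vector spaces.
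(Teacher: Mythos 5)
Your proposal is correct and matches the paper's (implicit) argument: the paper leaves this corollary unproved precisely because, as you observe, $\Psi(\mathfrak a)$ is understood as $\Psi(\mathfrak a^{(m+1)})$ and, after permuting coordinates, is identified with $\Phi(\mathfrak a)^{m+1}$, so linearity, injectivity, full rank, and the block-diagonal determinant computation all follow at once from Proposition \ref{propn:Minkowski}. Your care in pinning down the interpretation of $\Psi(\mathfrak a)$ and the coordinate reshuffling (a determinant-preserving permutation) is exactly the right level of detail.
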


\begin{lem}
For all nonzero $\mf a$,
$\lambda_1(\Psi(\mf a)) \ge \Nm \mf a^{1/n}$.
\end{lem}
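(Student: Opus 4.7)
The plan is to bound $|\Psi(x)|$ below for each nonzero lattice vector $x = (x_0, \ldots, x_m) \in \mathfrak{a}^{m+1}$. The key observation is that any single nonzero coordinate $x_{i_0} \in \mathfrak{a}$ generates a principal sub-ideal of $\mathfrak{a}$, so $\mathfrak{a}$ divides $\langle x_{i_0} \rangle$, and therefore $\Nm \mathfrak{a} \le |N_{K/\Q}(x_{i_0})|$. This reduces the problem to comparing the absolute norm of a single element of $K$ with the Euclidean length of its image under $\Psi$.

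Next I would rewrite $|N_{K/\Q}(x_{i_0})|$ as the product $\prod_{v \mid \infty} |x_{i_0}|_v^{n_v}$, already recorded in the paper's definition of the height. Applying the weighted AM--GM inequality with weights $n_v/n$ (which sum to $1$) to the quantities $|x_{i_0}|_v^2$ yields
\[
|N_{K/\Q}(x_{i_0})|^{2/n} \;\le\; \sum_{v \mid \infty} \tfrac{n_v}{n} |x_{i_0}|_v^2 \;\le\; \sum_{v \mid \infty} |x_{i_0}|_v^2.
\]
Finally, under the identification $\prod_{v\mid\infty} K_v^{m+1} \cong \R^{n(m+1)}$ (real and imaginary parts at each complex place), the squared Euclidean length of $\Psi(x)$ equals $\sum_{v\mid\infty}\sum_{i=0}^m |x_i|_v^2$, which is bounded below by $\sum_{v \mid \infty} |x_{i_0}|_v^2$. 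Stringing these inequalities together gives $\Nm \mathfrak{a}^{2/n} \le |\Psi(x)|^2$, and taking square roots over all nonzero $x$ concludes that $\lambda_1(\Psi(\mathfrak{a})) \ge \Nm \mathfrak{a}^{1/n}$.

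There is really no obstacle here; the only thing to be careful about is the normalization of the Euclidean structure on $\prod_{v\mid\infty} K_v^{m+1}$, which must be the one compatible with the convention $|x|_v^2 = \Re\sigma(x)^2 + \Im\sigma(x)^2$ used in \eqref{abs_sigma} so that $|\Phi(y)|^2 = \sum_{v\mid\infty}|y|_v^2$ agrees with Proposition \ref{propn:Minkowski}. Once this is fixed, the chain of inequalities above is automatic, and the bound is sharp, e.g., when $x$ is a single nonzero generator of $\mathfrak{a}$ placed in one coordinate.
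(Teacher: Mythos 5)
Your proof is correct and follows essentially the same route as the paper's (which is based on Masser--Vaaler): both reduce to a single nonzero coordinate $x_{i_0}\in\mathfrak{a}$, use the inclusion $x_{i_0}\mathcal{O}_K\subseteq\mathfrak{a}$ to get $\Nm\mathfrak{a}\le|N_{K/\Q}(x_{i_0})|=\prod_{v\mid\infty}|x_{i_0}|_v^{n_v}$, and conclude by the weighted AM--GM inequality with weights $n_v/n$ together with $n_v\le n$; the paper merely phrases the first step as $\lambda_1(\Psi(\mathfrak{a}))=\lambda_1(\Phi(\mathfrak{a}))$ instead of dropping the other coordinates as you do. The only caveat is your closing aside about sharpness, which holds only in special cases (equality forces all $|x_{i_0}|_v$ to coincide), but this does not affect the proof of the lemma.
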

\begin{proof}[Proof (based on {\cite[Lemma 5]{MV}})]
It's clear that $\Psi(\mf a) \cong \Phi(\mf a)^{m+1}$ has the same first minimum as $\Phi(\mf a)$.	
	For each finite place $v \nmid \infty$ let $\mf p_v \subset \mathcal{O}_K$ be the corresponding prime ideal;
	for each infinite place $v \mid \infty$ 
	let $\sigma_v : K \into \C$ be the corresponding embedding. 
	If $0 \ne x \in \mf a$, 
	then 
	from the inclusion $x\mathcal{O}_K \subseteq \mf a$ and the product formula we have
	\[
	\Nm \mf a 
	\le 
	\Nm x\mathcal{O}_K 
	= 
	\prod_{v \nmid \infty} \Nm \mf p_v^{v(x)}
	=
	\prod_{v \mid \infty} |\sigma_v(x)|^{n_v}.
	\]
	By the AM-GM inequality with weights $n_v$, 
	\[\Nm \mf a^{2/n}
	\le \sqrt[n]{\prod_{v \mid \infty} |\sigma_v(x)|^{2n_v}}
	\le \frac{1}{n}\sum_{v\mid\infty} n_v |\sigma_v(x)|^2
	\le \frac{1}{n} \max_{v \mid \infty} n_v \sum_{v \mid \infty} |\sigma_v(x)|^2
	\] 
	and the claim follows on recognizing the final sum as the squared length of $\Phi(x)$.
\end{proof}

A typical element $z$ of the \emph{Minkowski--Schanuel space} 
\[
K_\R^{m+1} := 
\prod_{v \mid \infty} K_v^{m+1}\] 
shall be denoted 
by a $v$-tuple $z = (z_v : v \mid \infty)$ of $(m+1)$-tuples $z_v = (z_{v0}, \ldots, z_{vm})$ of elements of $K_v$.
For instance, the image of $\Psi$ is given by all elements of the form 
\begin{equation} \label{eq:image_MS}
z = (x_0, \ldots, x_m : v \mid \infty) \qquad (x \in K^{m+1}).
\end{equation}

\begin{propn}[Dirichlet] \label{map_lambda}
	Let 
	\begin{align*}
		\Lambda &: \mathcal{O}_K^\times \to \prod_{v \mid \infty} \R \ (\cong \R^{r+1}) \\
		u &\mapsto (\log {\lVert u \rVert_v} : v \mid \infty).
	\end{align*}
	Then $\Lambda$ is a group homomorphism
	whose kernel is the group $\mu(K)$ of roots of unity in $K$
	and whose image is a lattice of determinant $R \sqrt{r+1}$ contained in the hyperplane $\Pi = \{y : (1, \ldots, 1) \cdot y = 0\}$.
\end{propn}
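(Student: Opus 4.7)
The plan is to verify the four assertions in order: homomorphism, image contained in $\Pi$, kernel equal to $\mu(K)$, and image a lattice of the stated determinant. The main obstacle is the rank computation, which amounts to Dirichlet's unit theorem.

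Homomorphism and containment in $\Pi$ are formal. Multiplicativity $\lVert uv \rVert_v = \lVert u \rVert_v \lVert v \rVert_v$ gives the first. For the second, I would invoke the product formula $\prod_v \lVert x \rVert_v = 1$ together with the observation that $v(u) = 0$ at every nonarchimedean place when $u \in \mathcal{O}_K^\times$, so $\lVert u \rVert_v = 1$ there and the product collapses to $\prod_{v \mid \infty} \lVert u \rVert_v = 1$; taking logs places $\Lambda(u) \in \Pi$. For the kernel and for discreteness of the image, a single Northcott-style argument suffices: if $\Lambda(u)$ is bounded, then each $|u|_v$ is bounded, hence so are the coefficients of the minimal polynomial of $u$ over $\Z$ (as symmetric functions of the conjugates); since $u \in \mathcal{O}_K$, only finitely many $u$ qualify. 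Applied to bounded neighbourhoods of $0$ this yields discreteness of the image; applied to $\{0\}$ and combined with Kronecker's theorem (an algebraic integer whose conjugates all lie on the unit circle is a root of unity), it gives $\ker \Lambda \subseteq \mu(K)$, and the reverse inclusion is trivial.

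The rank computation is the one substantive step. I would follow the classical geometry-of-numbers approach: for each archimedean place $v_0$, apply Minkowski's convex body theorem to judiciously scaled boxes in Minkowski space $\prod_{v \mid \infty} K_v$, using the lattice $\Phi(\mathcal{O}_K)$ of Proposition \ref{propn:Minkowski}, to produce a unit $u_{v_0}$ whose log-image $\Lambda(u_{v_0})$ is strictly negative at every archimedean place except $v_0$. The resulting $r+1$ vectors are constrained to lie in the $r$-dimensional hyperplane $\Pi$, yet any $r$ of them are linearly independent (the sole dependence being forced by containment in $\Pi$), so $\Lambda(\mathcal{O}_K^\times) \otimes \R = \Pi$ and, combined with discreteness, the image is a full-rank lattice in $\Pi$.

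Finally, for the determinant: pick a fundamental system of units $u_1, \ldots, u_r$ and let $B$ be the $(r+1) \times r$ matrix with columns $\Lambda(u_j)$, so $\det \Lambda(\mathcal{O}_K^\times) = \sqrt{\det(B^T B)}$. Augment $B$ by the column $\mathbf{1}/\sqrt{r+1}$ to produce an $(r+1) \times (r+1)$ matrix $\Tilde B$. Since the columns of $B$ are orthogonal to $\mathbf{1}$, the Gram matrix $\Tilde B^T \Tilde B$ is block-diagonal with blocks $B^T B$ and $1$, whence $|\det \Tilde B| = \sqrt{\det(B^T B)}$. On the other hand, cofactor expansion of $\det \Tilde B$ along the appended column, together with the observation that the vector of signed minors $\big((-1)^{i+r+1} \det B^{(i)}\big)_{i=1}^{r+1}$ (where $B^{(i)}$ denotes $B$ with row $i$ deleted) lies in $\ker B^T = \R \mathbf{1}$ and so has constant entries of absolute value $R$, yields $|\det \Tilde B| = \sqrt{r+1}\, R$.
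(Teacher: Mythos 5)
The paper gives no proof of this Proposition at all: it is recalled as a classical fact (Dirichlet's unit theorem together with the standard covolume formula for the unit lattice), on the same footing as Proposition \ref{propn:Minkowski}. Your proposal therefore does strictly more than the paper, reproving the theorem by the classical route. The formal parts are fine (homomorphism; containment in $\Pi$ from the product formula since units are trivial at the finite places; kernel $=\mu(K)$ by the Northcott-style bound plus Kronecker; discreteness by the same bound), and your determinant computation is correct and is exactly the standard argument: orthogonality of the columns of $B$ to $\mathbf{1}$ makes $\Tilde B^T\Tilde B$ block diagonal, so $|\det\Tilde B|=\sqrt{\det(B^TB)}$, while the cofactor vector lies in $\ker B^T=\R\mathbf{1}$ and has entries of modulus $R$, giving $|\det\Tilde B|=\sqrt{r+1}\,R$.

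Two points in the rank step are asserted rather than proved, and one of the stated justifications is invalid as written. First, Minkowski's convex body theorem applied to scaled boxes only yields elements of $\mathcal{O}_K$ of bounded norm that are small at the places $v\ne v_0$; to extract a \emph{unit} you still need the pigeonhole over the finitely many ideals of bounded norm (two such elements generating the same ideal have unit ratio). Second, and more seriously, the parenthetical reason for ``any $r$ of the vectors $\Lambda(u_{v_0})$ are linearly independent'' --- that the sole dependence is forced by containment in $\Pi$ --- is not an argument: $r+1$ vectors in an $r$-dimensional subspace can perfectly well have rank smaller than $r$. What actually forces the rank to be exactly $r$ is the sign pattern you constructed: the $(r+1)\times(r+1)$ matrix whose rows are the $\Lambda(u_{v_0})$ has positive diagonal entries, negative off-diagonal entries, and zero row sums, and the standard diagonal-dominance lemma then shows that deleting one row and one column leaves a nonsingular $r\times r$ matrix. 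With that lemma supplied (and the unit-extraction step made explicit), your proof is complete.
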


\subsection{Schanuel's maps}

We now introduce Schanuel's maps $\rho$, $\eta$, and $\pr$, modified to accommodate $f$,
and record their fundamental properties.
It's worth keeping in mind that the Schanuel--Minkowski space $K_\R^{m+1}$ comes equipped with \emph{two} actions by scalar multiplication: by elements of $K$ and by elements of $\R = \Q_v \subseteq K_v$.

\begin{propn} \label{map_rho}
Define
	\begin{align*}
		\rho &: K_\R^{m+1} \to [0, \infty) \\ 
		z &\mapsto \prod_{v \mid \infty} \lVert F(z_v) \rVert_v^{1/d} \ \Big({=} \prod_{v \mid \infty} \max_{0 \le j \le M} |F_j(z_{v0}, \ldots, z_{vm})|_v^{n_v/d} \Big).
	\end{align*}
	Then for all $z$, we have: 
	\begin{enumerate}[(i)]
		\item $\rho(\lambda z) = |N_{K/\Q}(\lambda)| \rho(z)$ for all $\lambda$ in $K$;
		\item $\rho(tz) = |t|^n \rho(z)$ for all $t$ in $\R$.
	\end{enumerate}
	Moreover, 
	\begin{enumerate}[(i)] \setcounter{enumi}{2}
		\item $\rho(\Psi(x)) = H_\infty(F(x))^{1/d}$
		for all $x$ in $K^{m+1}$.
	\end{enumerate}
\end{propn}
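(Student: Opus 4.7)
The plan is to verify each of (i), (ii), (iii) by direct calculation from the definition of $\rho$, using only homogeneity of $F$ (degree $d$), basic properties of absolute values on $K_v$, and the factorization of the field norm as a product over archimedean places.

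For (i), I would unravel the scalar action: if $\lambda\in K$ and $z=(z_v : v\mid\infty)$, then $\lambda z=(\lambda z_v : v\mid\infty)$ via the diagonal embedding. Since $F$ is homogeneous of degree $d$, we have $F(\lambda z_v)=\lambda^d F(z_v)$, so $\lVert F(\lambda z_v)\rVert_v^{1/d}=\lVert\lambda\rVert_v\lVert F(z_v)\rVert_v^{1/d}$ at each archimedean $v$. Taking the product over $v\mid\infty$ factors cleanly, and the archimedean part of the product formula
\[
\prod_{v\mid\infty}\lVert\lambda\rVert_v
=\prod_{v\mid\infty}|\sigma_v(\lambda)|^{n_v}
=\prod_{\sigma:K\hookrightarrow\C}|\sigma(\lambda)|
=|N_{K/\Q}(\lambda)|
\]
(where each complex place contributes both $\sigma$ and $\bar\sigma$ via $n_v=2$) delivers (i).

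For (ii), a scalar $t\in\R$ acts componentwise as well, and sits diagonally inside each $K_v$ with $\lVert t\rVert_v=|t|^{n_v}$. Homogeneity of $F$ again gives $F(tz_v)=t^d F(z_v)$, hence $\lVert F(tz_v)\rVert_v^{1/d}=|t|^{n_v}\lVert F(z_v)\rVert_v^{1/d}$. Taking the product over $v\mid\infty$ and using $\sum_{v\mid\infty}n_v=n$ yields $\rho(tz)=|t|^n\rho(z)$.

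For (iii), I would simply substitute: $\Psi(x)=(x:v\mid\infty)$ has every component equal to $x\in K^{m+1}$, so
\[
\rho(\Psi(x))=\prod_{v\mid\infty}\lVert F(x)\rVert_v^{1/d}=\Big(\prod_{v\mid\infty}\lVert F(x)\rVert_v\Big)^{1/d}=H_\infty(F(x))^{1/d},
\]
where the last equality is just the definition of the archimedean part of the height (i.e.\ $H_\infty(y):=\prod_{v\mid\infty}\lVert y\rVert_v=\prod_\sigma|\sigma(y)|$). There is no serious obstacle here; the only subtlety worth flagging is the bookkeeping between $|\cdot|_v$ and the normalized $\lVert\cdot\rVert_v=|\cdot|_v^{n_v}$, which is also the source of the exponents in (i) and (ii) and must be tracked carefully to ensure the product formulas line up.
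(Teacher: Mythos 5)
Your proposal is correct and follows essentially the same route as the paper: homogeneity of $F$ of degree $d$ reduces (i) and (ii) to evaluating $\prod_{v\mid\infty}\lVert c\rVert_v$ for $c\in K$ (giving $|N_{K/\Q}(c)|$) or $c\in\R$ (giving $|c|^n$), and (iii) is the same direct substitution using the diagonal form of $\Psi(x)$. The bookkeeping between $|\cdot|_v$ and $\lVert\cdot\rVert_v=|\cdot|_v^{n_v}$ that you flag is exactly the point the paper's computation rests on, so there is nothing further to add.
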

\begin{proof}
Because $F$ is homogeneous of degree $d$, 
we have
	\[\frac{\rho(cz)}{\rho(z)} = \prod_{v \mid \infty} \lVert c^d \rVert_v^{1/d}
	= \begin{dcases}
		|N_{K/\Q}(c)| & c \in K \\
		|c|^n & c \in \R
	\end{dcases}\]
whence parts (i) and (ii).
For part (iii), if $x = (x_0, \ldots, x_m)$ 
then by \eqref{eq:image_MS}
	\[
	\rho(\Psi(x))
	= \prod_{v\mid\infty} \lVert F(x_0, \ldots x_m) \rVert_v^{1/d} = H_\infty(F(x))^{1/d}.\qedhere
	\]
\end{proof}

\begin{propn} \label{map_eta}
Define 
\begin{align*}
	\eta &: \prod_{v \mid \infty} (K_v^{m+1} - 0^{m+1})
	\to \prod_{v \mid \infty} \R \\
	z &\mapsto (\log{ \lVert F(z_v) \rVert_v^{1/d} } : v \mid \infty).
\end{align*}
Then for all $z$ in the domain of $\eta$, we have:
\begin{enumerate}[(i)]
	\item $\eta(uz) = \Lambda(u) + \eta(z)$ for all $u$ in $\mathcal{O}_K^\times$;
	\item $\eta(tz) = (n_v) \log |t| + \eta(z)$ for all nonzero $t$ in $\R$.
\end{enumerate}
\end{propn}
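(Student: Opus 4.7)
The plan is to verify both identities by a direct coordinate-wise calculation, leveraging two facts: $F$ is homogeneous of degree $d$, and the normalized absolute value $\lVert \cdot \rVert_v$ on $K_v$ is multiplicative with the scaling property $\lVert t \rVert_v = |t|^{n_v}$ for $t \in \R \subseteq K_v$. The structure closely parallels the proof of Proposition \ref{map_rho}: whereas $\rho$ produces a single real number via a product over $v$, the map $\eta$ records the individual local contributions as a tuple, so the multiplicative identities for $\rho$ become additive identities in the target $\prod_{v \mid \infty} \R$.

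For part (i), the unit group $\mathcal{O}_K^\times$ acts diagonally on $K_\R^{m+1}$, so $(uz)_v = u \cdot z_v$ for each infinite place $v$. By homogeneity, $F(u z_v) = u^d F(z_v)$; applying $\lVert \cdot \rVert_v^{1/d}$ and using multiplicativity yields
\[
\lVert F(u z_v) \rVert_v^{1/d} = \lVert u \rVert_v \cdot \lVert F(z_v) \rVert_v^{1/d}.
\]
Taking logarithms and assembling into a tuple over $v \mid \infty$ gives
\[
\eta(u z) = (\log \lVert u \rVert_v : v \mid \infty) + \eta(z) = \Lambda(u) + \eta(z),
\]
where the final equality is the definition of $\Lambda$ from Proposition \ref{map_lambda}.

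For part (ii), scalar multiplication by $t \in \R$ again acts within each factor, so $(tz)_v = t z_v$ and $F(t z_v) = t^d F(z_v)$. Since $t \in \R \subseteq K_v$, we have $\lVert t \rVert_v = |t|^{n_v}$, hence $\lVert t^d \rVert_v^{1/d} = |t|^{n_v}$, and therefore
\[
\lVert F(t z_v) \rVert_v^{1/d} = |t|^{n_v} \, \lVert F(z_v) \rVert_v^{1/d}.
\]
Taking logarithms coordinate-wise delivers $\eta(tz) = (n_v \log|t| : v \mid \infty) + \eta(z)$, as claimed.

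The argument is entirely mechanical; there is no real obstacle beyond keeping the local normalizations straight. The one point worth flagging is that $t$ enters each local factor via the canonical inclusion $\R \hookrightarrow K_v$ (not via the diagonal embedding $\Phi$), which is why the scaling factor $n_v = [K_v : \R]$ appears per-place rather than being summed into $n = \sum_v n_v$ as happens in Proposition \ref{map_rho}(ii).
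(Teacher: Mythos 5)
Your proof is correct and follows essentially the same route as the paper's: both rest on the single identity $\log \lVert F(cz_v) \rVert_v^{1/d} = \log \lVert c \rVert_v + \log \lVert F(z_v) \rVert_v^{1/d}$ coming from homogeneity of $F$, specialized to $c = u \in \mathcal{O}_K^\times$ (giving $\Lambda(u)$ by definition) and to $c = t \in \R$ (giving $n_v \log |t|$ since $\lVert t \rVert_v = |t|^{n_v}$). Your closing remark about the per-place factor $n_v$ versus the summed $n$ in Proposition \ref{map_rho}(ii) is an accurate observation, and no gaps remain.
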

\begin{proof}
Since $F$ is homogeneous of degree $d$, 
\[
\log {\lVert F(cz_v) \rVert_v^{1/d}} 
= \log {\lVert c \rVert_v} + \log {\lVert F(z_v) \rVert_v^{1/d}}
\]
for each $v \mid \infty$ and each scalar $c$.
Part (i) follows from the definition of $\Lambda$, and part (ii) follows from the fact that $\lvert c \rvert_v = |c|$ for all $c$ in $\R$.
\end{proof}

\begin{propn}\label{map_pr}
Define
\begin{align*}
	\pr &: \prod_{v \mid \infty} \R \to \Pi \\
	y &\mapsto (y_v - \frac{n_v}{n} \sum_{w \mid \infty} y_w : v \mid \infty).
\end{align*}
Then:
\begin{enumerate}[(i)]
	\item $\pr$ is $\R$-linear,
	\item $\pr(y) = y$ for all $y$ in $\Pi$, and
	\item $(n_v) \in \ker \pr$.
\end{enumerate}
Moreover:
\begin{enumerate}[(i)] \setcounter{enumi}{3}
	\item We have the direct sum decomposition 
	\begin{align*}
		\prod_{v \mid \infty} \R &= \Pi \oplus \R(n_v) \\
		y &= \pr(y) + c(n_v)
		\shortintertext{where}
		c &= \frac{1}{n} \sum_{v \mid \infty} y_v.
	\end{align*}
\end{enumerate}
\end{propn}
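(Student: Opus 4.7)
The plan is to verify (i)--(iv) directly from the defining formula for $\pr$, leaning throughout on the single arithmetic identity $\sum_{v\mid\infty} n_v = n$ (degrees of local completions sum to the global degree).

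First, (i) is immediate from inspection: each coordinate of $\pr(y)$ is manifestly an $\R$-linear combination of the entries of $y$, with coefficients independent of $y$. For (ii), since $\Pi$ is defined by the single equation $\sum_w y_w = 0$, any $y \in \Pi$ kills the correction term $\tfrac{n_v}{n}\sum_w y_w$ in every coordinate, so $\pr(y) = y$. For (iii), substituting $y_v = n_v$ into the formula and invoking $\sum_w n_w = n$ collapses each coordinate to $n_v - \tfrac{n_v}{n} \cdot n = 0$.

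For (iv), the claimed decomposition is essentially forced by (ii) and (iii). Given $y$, I set $c := \tfrac{1}{n}\sum_v y_v$ and observe that $y - c(n_v)$ agrees coordinatewise with $\pr(y)$, which gives the desired equality $y = \pr(y) + c(n_v)$. It remains to verify this is a genuine \emph{direct} sum, which reduces to two short checks. First, that $\pr(y) \in \Pi$: summing the coordinates of $\pr(y)$ yields $\sum_v y_v - \tfrac{1}{n}(\sum_v n_v)(\sum_w y_w) = 0$, again using $\sum_v n_v = n$. Second, that $\Pi \cap \R(n_v) = 0$: if $t(n_v) \in \Pi$ for some $t \in \R$, then $tn = t\sum_v n_v = 0$, forcing $t = 0$. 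Uniqueness of the decomposition then follows from standard linear algebra.

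The argument is essentially bookkeeping, so I do not anticipate any real obstacle; the only conceptual point worth flagging is that the normalization factor $n_v/n$ appearing in the formula for $\pr$ is precisely what makes $\pr$ the linear projection onto $\Pi$ along the line $\R(n_v)$, and this hinges entirely on the identity $\sum_v n_v = n$.
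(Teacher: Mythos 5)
Your proof is correct and takes essentially the same elementary route as the paper: parts (i)--(iii) by direct computation from the formula using $\sum_{v\mid\infty} n_v = n$, and part (iv) via the explicit decomposition $y = \pr(y) + c(n_v)$ together with the observation that $t(n_v)\in\Pi$ forces $tn=0$, hence $\Pi\cap\R(n_v)=0$. The only cosmetic difference is that you verify $\pr(y)\in\Pi$ by summing coordinates directly, whereas the paper deduces $\im\pr=\Pi$ and $\ker\pr=\R(n_v)$ from (ii)--(iii) via rank--nullity before writing down the same decomposition; both amount to the same bookkeeping.
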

\begin{rmk*}
	In terms of dot products, 
	\begin{equation} \label{map_pr_vector}
\pr(y) = y - \frac{(1, \ldots, 1) \cdot y}{(1, \ldots, 1) \cdot (n_v)} (n_v).
	\end{equation}
	Thus, $\pr$ is \emph{not} the orthogonal projection along the vector $(n_v)$---unless $K$ is totally real.
\end{rmk*}
\begin{proof}
Parts (i--iii) are immediate from \eqref{map_pr_vector}.
Now, parts (ii) and (iii) imply $\im \pr \supseteq \Pi$ and $\ker \pr \supseteq \R(n_v)$ respectively, so by the rank--nullity theorem 
\[\im \pr = \Pi \quad \text{and} \quad  \ker \pi = \R(n_v).\]
Since $(n_v) \not \in \Pi$ 
these subspaces are independent, i.e.,~$\Pi \cap \R (n_v)$ is trivial.
Thus 
\[\dim \big( {\Pi + \R(n_v)} \big) = \dim \Pi + \dim \R(n_v) - \dim \big(\Pi \cap \R(n_v)\big) = r + 1\]
whence
\[\prod_{v \mid \infty} \R = \Pi \oplus \R(n_v).\]
Explicitly, if $y$ is arbitrary, then 
\[y = \pr(y) + (y - \pr(y)) = \pr(y) + \Big(\!\underbrace{\frac{1}{n} \sum_{w\mid\infty} y_w}_{c}\!\Big) (n_v)\]
which proves (iv). 
\end{proof}

We conclude with a beautiful functional equation 
relating all three of Schanuel's maps,
which seems to have gone unnoticed.

\begin{cor} \label{map_trio}
For all $z$ in the domain of $\eta$, 
\[\eta(z) = \pr(\eta(z)) + \frac{\log \rho(z)}{n} (n_v).\]
\end{cor}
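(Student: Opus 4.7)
The plan is to apply the direct sum decomposition from Proposition \ref{map_pr}(iv) to the vector $y = \eta(z)$ and then identify the resulting scalar coefficient as $\tfrac{1}{n} \log \rho(z)$. There is essentially nothing to show beyond unwrapping the definitions; this is a ``functional'' observation that packages a computation that would otherwise be repeated each time $\eta$ and $\rho$ appear side by side.

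First I would invoke Proposition \ref{map_pr}(iv) with $y := \eta(z) \in \prod_{v\mid\infty} \R$. This gives, unconditionally,
\[
\eta(z) = \pr(\eta(z)) + c \, (n_v), \qquad c = \frac{1}{n} \sum_{v \mid \infty} \eta(z)_v.
\]
Next I would compute the sum $\sum_{v\mid\infty} \eta(z)_v$ directly from the definition of $\eta$ in Proposition \ref{map_eta}: since $\eta(z)_v = \log \lVert F(z_v) \rVert_v^{1/d}$, the logarithm turns the sum into a single logarithm of a product,
\[
\sum_{v \mid \infty} \eta(z)_v = \sum_{v\mid\infty} \log \lVert F(z_v) \rVert_v^{1/d} = \log \prod_{v\mid\infty} \lVert F(z_v) \rVert_v^{1/d} = \log \rho(z),
\]
the last equality being the definition of $\rho$ from Proposition \ref{map_rho}. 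Hence $c = \log \rho(z)/n$, which is precisely the coefficient appearing in the stated identity.

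The only ``obstacle'' worth flagging is making sure the expression $\log \rho(z)$ is well-defined, i.e.~that $\rho(z) > 0$; this is automatic because $z$ lies in the domain of $\eta$, which requires $z_v \ne 0^{m+1}$ for every archimedean $v$, and since $f$ is a morphism, each factor $\lVert F(z_v)\rVert_v^{1/d}$ is then strictly positive by the lower bound in \eqref{intro:asymp}. No other step requires attention; the corollary follows from two lines of definition-chasing once Proposition \ref{map_pr}(iv) is in hand.
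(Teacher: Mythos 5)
Your proposal is correct and is essentially the same argument as the paper's own proof: apply Proposition \ref{map_pr}(iv) to $y = \eta(z)$ and identify the coefficient $c = \tfrac{1}{n}\sum_{v\mid\infty}\eta(z)_v$ with $\tfrac{1}{n}\log\rho(z)$ directly from the definitions of $\eta$ and $\rho$. Your extra remark on $\rho(z)>0$ is a harmless addition the paper leaves implicit.
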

\begin{proof}
By Proposition \ref{map_pr}(iv),
\[\eta(z) = \pr(\eta(z)) + c(n_v)\]
where 
\[
c 
= \frac{1}{n} \sum_{v \mid \infty} \eta(z)_v
= \frac{1}{n} \sum_{v \mid \infty} \log {\lVert F(z_v) \rVert_v^{1/d} }
= \frac{1}{n} \log \rho(z). \qedhere
\]
\end{proof}

\begin{figure}[h]
	\[
	\begin{tikzcd}[row sep=large]
		K^{m+1} \arrow[r,"F"] \arrow[hookrightarrow,d,"\Psi"] & K^{M+1} \arrow[d,"H_\infty^{1/d}"]\\
		K_\R^{m+1} \arrow[r,"\rho"] &
		{[0, \infty)}
		\\
		\displaystyle \prod_{v \mid \infty} (K_v^{m+1} - 0^{m+1}) \arrow[hookrightarrow,u] \arrow[r,"\eta"] &
		\displaystyle \prod_{v \mid \infty} \R 
		\arrow[twoheadrightarrow,r,"\pr"]
		& 
		\Pi
		\\ 
		& \mathcal{O}_K^\times \arrow[u,"\Lambda"]
	\end{tikzcd}
	\]
	\caption{Diagram of the auxiliary maps involved in the proof of Theorem \ref{thm:main}.} \label{figaux}
\end{figure}

\subsection{Actions of the unit group}

Recall that $K^\times$ acts on $K^{m+1}$ and $K_\R^{m+1}$ 
by scalar multiplication; these actions are inherited by $\mathcal{O}_K^\times$.
Meanwhile, the Dirichlet embedding $\Lambda$
induces an action of $\mathcal{O}_K^\times$ on $\prod_{v\mid\infty} \R$
via 
\[
u\cdot y := \Lambda(u) + y.
\]

\begin{lem} \label{maps_equivar}
The maps $\Psi$, $\eta$, and $\pr$ are $\mathcal{O}_K^\times$-equivariant.
\end{lem}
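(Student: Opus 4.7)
The plan is to verify each of the three equivariance claims separately, drawing on the properties of $\Psi$, $\eta$, and $\pr$ established in the preceding propositions. In each case, the relevant action of $\mathcal{O}_K^\times$ on the source or target has already been spelled out, so the task is really one of bookkeeping.

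For $\Psi$, I would simply invoke $K$-linearity (Corollary \ref{map_Psi}): given $u \in \mathcal{O}_K^\times \subseteq K^\times$ and $x \in K^{m+1}$, we have $\Psi(ux) = u\Psi(x)$ because scalar multiplication in $K_\R^{m+1}$ coincides with componentwise multiplication, exactly matching the definition of the induced action. This is the easy case.

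For $\eta$, equivariance is literally the content of Proposition \ref{map_eta}(i): $\eta(uz) = \Lambda(u) + \eta(z) = u \cdot \eta(z)$ by the definition of the induced action via $\Lambda$. Nothing more needs to be said.

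The only step requiring a genuine (if small) argument is the case of $\pr$. Here we must show $\pr(u \cdot y) = u \cdot \pr(y)$, i.e., $\pr(\Lambda(u) + y) = \Lambda(u) + \pr(y)$. By $\R$-linearity of $\pr$ (Proposition \ref{map_pr}(i)), this reduces to verifying $\pr(\Lambda(u)) = \Lambda(u)$. But Proposition \ref{map_lambda} states that the image of $\Lambda$ is contained in the hyperplane $\Pi$, and Proposition \ref{map_pr}(ii) says that $\pr$ restricts to the identity on $\Pi$. Combining these gives $\pr(\Lambda(u)) = \Lambda(u)$, as required. There is no real obstacle here; the lemma is essentially a formal consequence of design choices already made, and its main purpose is presumably to allow later descent of these maps to orbit spaces.
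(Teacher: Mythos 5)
Your proof is correct and follows essentially the same route as the paper: $K$-linearity for $\Psi$, Proposition \ref{map_eta}(i) for $\eta$, and for $\pr$ the combination of linearity with the fact that $\Lambda(\mathcal{O}_K^\times)\subseteq\Pi$ and $\pr|_\Pi=\id$ (the paper cites Proposition \ref{map_pr}(i, ii), with the containment in $\Pi$ used implicitly where you make it explicit via Proposition \ref{map_lambda}).
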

\begin{proof}
Equivariance of $\Psi$ follows from its $K$-linearity (Corollary \ref{map_Psi}); equivariance of $\eta$ is Proposition \ref{map_eta}(i).
As for the projection, Proposition \ref{map_pr}(i, ii) implies
\[\pr(u \cdot y) = \pr(\Lambda(u) + y)
= \pr(\Lambda(u)) + \pr(y)
= \Lambda(u) + \pr(y)
= u \cdot \pr(y)
\]
for all units $u$ and all vectors $y$.
\end{proof}

\begin{propn} \label{counting_orbits}
Let $A$ be an $\mathcal{O}_K^\times$-invariant subset of
$K^{m+1} - 0^{m+1}$.
Then the number of orbits in $A$ is 
\[\big|A/\mathcal{O}_K^\times \big| = \frac{1}{w} \big|\Psi(A) \cap ({\pr} \circ \eta)^{-1}(D_K) \big|\]
where $w$ is the number of roots of unity in $K$
and $D_K$ is any fundamental domain for $\Lambda(\mathcal{O}_K^\times)$.
\end{propn}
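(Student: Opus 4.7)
The plan is to construct a relative fundamental domain inside $A$ for the action of $\mathcal{O}_K^\times$ by successively pulling back $D_K$ through the equivariant composition $\pr \circ \eta \circ \Psi$, and then invoke the enlargement principle together with freeness of the $\mu(K)$-action to convert orbits into points.

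First I would assemble the necessary equivariance. By Lemma \ref{maps_equivar}, the maps $\Psi$, $\eta$, $\pr$ in Figure \ref{figaux} are $\mathcal{O}_K^\times$-equivariant, with $\mathcal{O}_K^\times$ acting on $K^{m+1} - 0^{m+1}$, $K_\R^{m+1}$, and $\prod_{v\mid\infty}(K_v^{m+1}-0^{m+1})$ by scalar multiplication, and on $\prod_{v\mid\infty}\R$ and $\Pi$ via the induced action through $\Lambda$. Two additional observations: $\Psi$ restricts to an injection $K^{m+1}-0^{m+1} \hookrightarrow \prod_{v\mid\infty}(K_v^{m+1}-0^{m+1})$ (the diagonal image of a nonzero tuple is nonzero in every archimedean component), and $\mu(K)$ acts freely on $K^{m+1}-0^{m+1}$ (if $\zeta x = x$ with $x$ nonzero, then $(\zeta-1)x_i=0$ for all $i$ forces $\zeta=1$ in the field $K$).

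Next I would run the pullback sequence. Since $D_K \subseteq \Pi$ is fundamental for the regular action of $\Lambda(\mathcal{O}_K^\times)$ on $\Pi$, Lemma \ref{schanuel_missing_lemma} promotes it to being fundamental (mod $\mu(K) = \ker\Lambda$) for the induced action of $\mathcal{O}_K^\times$ on $\Pi$. Three successive applications of Lemma \ref{schanuel_lemma_1} along the equivariant maps $\pr$, $\eta$, and $\Psi$ then show that
\[
\Sigma := (\pr \circ \eta \circ \Psi)^{-1}(D_K)
\]
is fundamental (mod $\mu(K)$) for $\mathcal{O}_K^\times$ acting by scalar multiplication on $K^{m+1}-0^{m+1}$. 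Because $A$ is $\mathcal{O}_K^\times$-invariant, the three axioms for a relative fundamental domain restrict without loss: $\mu(K)(A\cap\Sigma) = A \cap \mu(K)\Sigma = A\cap\Sigma$; every $a\in A$ lies in $\mathcal{O}_K^\times \Sigma$, and the representative in $\Sigma$ then lies in $\mathcal{O}_K^\times A \cap \Sigma = A\cap\Sigma$; and disjointness of translates is inherited from $\Sigma$. Hence $A\cap\Sigma$ is fundamental (mod $\mu(K)$) for $\mathcal{O}_K^\times \acts A$.

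Finally, Proposition \ref{enlargement} yields a bijection $(A\cap\Sigma)/\mu(K) \simto A/\mathcal{O}_K^\times$. Since $\mu(K)$ acts freely on $A\cap\Sigma \subseteq K^{m+1}-0^{m+1}$, every $\mu(K)$-orbit has exactly $w$ elements, giving $|A/\mathcal{O}_K^\times| = |A\cap\Sigma|/w$; and the injectivity of $\Psi$ converts this to $|\Psi(A) \cap (\pr\circ\eta)^{-1}(D_K)|/w$, as required. The main delicate point will be bookkeeping: at each pullback step one must be careful to identify which $\mathcal{O}_K^\times$-action (scalar multiplication versus the one induced via $\Lambda$) is being used, and verifying that intersection with an invariant set preserves the relative-fundamental-domain property (the sole nontrivial axiom being the transitivity of the induced covering).
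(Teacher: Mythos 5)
Your proposal is correct and follows essentially the same route as the paper: pull $D_K$ back through the $\mathcal{O}_K^\times$-equivariant chain $\pr\circ\eta\circ\Psi$ using Lemmas \ref{schanuel_missing_lemma} and \ref{schanuel_lemma_1}, then apply Proposition \ref{enlargement} and the freeness of the $\mu(K)$-action to divide by $w$. The only (cosmetic) difference is that you pull back all the way to $K^{m+1}-0^{m+1}$ and then verify that intersecting with the invariant set $A$ preserves the relative-fundamental-domain property, whereas the paper applies Lemma \ref{schanuel_lemma_1} directly to the corestricted inclusion $\iota:\Psi(A)\to\prod_{v\mid\infty}(K_v^{m+1}-0^{m+1})$, which makes that intersection step automatic.
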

\begin{rmk*}
This is extracted from Schanuel \cite[p.~438\textit{f}]{Schanuel}.
\end{rmk*}
\begin{proof}
First, since $\Psi$ is injective and $\mathcal{O}_K^\times$-equivariant,  
\begin{equation} \label{eq_orbit_count_1}
\big|A/\mathcal{O}_K^\times \big| = \big|\Psi(A)/\mathcal{O}_K^\times \big|.
\end{equation}
The hypothesis that $A$ does not contain $0^{m+1}$ 
implies that 
the action of $\mathcal{O}_K^\times$ is free
and that the co-restricted inclusion map 
\[\iota : \Psi(A) \to \prod_{v\mid\infty} K_v^{m+1} - 0^{m+1} \ \big(\! = \operatorname{dom} \eta \big)\]
is well-defined. Note that $\iota$ is obviously $\mathcal{O}_K^\times$-equivariant,
so by Lemma \ref{maps_equivar} the composite map ${\pr} \circ \eta \circ \iota$ is $\mathcal{O}_K^\times$-equivariant.

Let $D_K$ be any fundamental domain for the unit lattice.
Then $D_K$ is, in other words, 
fundamental (mod 1) for 
the regular action of $\Lambda(\mathcal{O}_K^\times)$ on the hyperplane $\Pi$. 
By Lemma \ref{schanuel_missing_lemma}, 
$D_K$ is fundamental (mod $\Lambda^{-1}(1) = \ker \Lambda = \mu(K)$)
for $\mathcal{O}_K^\times$ acting on $\Pi$.
Lemma \ref{schanuel_lemma_1} 
then says that $({\pr} \circ \eta \circ \iota)^{-1}(D_K) = \Psi(A) \cap ({\pr} \circ \eta)^{-1}(D_K)$ is fundamental (mod $\mu(K)$) for $\mathcal{O}_K^\times$ acting on $\Psi(A)$.
Thus by Proposition \ref{enlargement}, 
\begin{equation} \label{eq_orbit_count_2}
	\big| \Psi(A) / \mathcal{O}_K^\times \big| = \big|\big(\Psi(A) \cap ({\pr} \circ \eta)^{-1}(D_K)\big) / \mu(K) \big|.
\end{equation}
Now since $\mu(K)$, being a subgroup of $\mathcal{O}_K^\times$, acts freely on $\Psi(A)$, each orbit has cardinality $w$.
Therefore
\begin{equation} \label{eq_orbit_count_3}
	\big|\big(\Psi(A) \cap ({\pr} \circ \eta)^{-1}(D_K)\big) / \mu(K) \big| = \frac{1}{w} \big|\Psi(A) \cap ({\pr} \circ \eta)^{-1}(D_K) \big|.
\end{equation}
Combining \eqref{eq_orbit_count_1}, \eqref{eq_orbit_count_2}, and \eqref{eq_orbit_count_3} completes the proof.
\end{proof}

\subsection{The homogeneously expanding domain}
During the proof of Theorem \ref{thm:main}, 
we will need to count $\mathcal{O}_K^\times$-orbits 
in sets of the form $A = B \cap \Psi^{-1}(D_F(T))$ 
where 
\begin{equation} \label{def:DF}
	D_F(T)
	:= 
	\rho^{-1}(-\infty, T].
\end{equation}
By Proposition \ref{counting_orbits},
this is equivalent to 
counting how many points of $\Psi(B)$ lie in
\begin{equation} \label{def:DFK}
D_{F,K}(T) := D_F(T) \cap ({\pr} \circ \eta)^{-1}(D_K).
\end{equation}
In light of this,
the next few results focus on the geometry of this domain.

\begin{propn} \label{properties_DFS}
Let $S \subseteq \Pi$ be any set
and let $T > 0$ be a real number.
Put 
\[D_{F,S}(T) = D_F(T) \cap ({\pr} \circ \eta)^{-1}(S).\]
Then:
\begin{enumerate}[(i)]
	\item $D_{F,S}(T) = T^{1/n} D_{F,S}(1)$;
	\item $D_{F,S}(1) = \eta^{-1}(S + (-\infty, 0](n_v))$.
\end{enumerate}
\end{propn}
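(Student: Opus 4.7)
The plan is to prove both parts by direct manipulation of the auxiliary maps, leaning on the homogeneity identities for $\rho$ and $\eta$ and the algebraic properties of $\pr$ established in Propositions \ref{map_rho}, \ref{map_eta}, \ref{map_pr}, and Corollary \ref{map_trio}. No genuine obstacle arises; each part reduces to applying one identity on the nose.

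For part (i), I would show that the dilation $z \mapsto T^{1/n}z$ restricts to a bijection $D_{F,S}(1) \to D_{F,S}(T)$. By Proposition \ref{map_rho}(ii), $\rho(T^{1/n}z) = T\,\rho(z)$, so $\rho(z) \le 1$ if and only if $\rho(T^{1/n}z) \le T$. By Proposition \ref{map_eta}(ii), $\eta(T^{1/n}z) = \eta(z) + \tfrac{\log T}{n}(n_v)$, and since $(n_v) \in \ker\pr$ by Proposition \ref{map_pr}(iii), applying $\pr$ gives $\pr(\eta(T^{1/n}z)) = \pr(\eta(z))$. Hence $T^{1/n}z \in (\pr\circ\eta)^{-1}(S)$ exactly when $z \in (\pr\circ\eta)^{-1}(S)$, and both defining conditions of $D_{F,S}$ are preserved.

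For part (ii), I would invoke the functional equation from Corollary \ref{map_trio}:
\[
\eta(z) = \pr(\eta(z)) + \tfrac{\log \rho(z)}{n}(n_v).
\]
For $\subseteq$, if $z \in D_{F,S}(1)$, then $\pr(\eta(z)) \in S$ and $\log \rho(z) \le 0$, so $\eta(z) \in S + (-\infty,0](n_v)$ directly. For $\supseteq$, suppose $\eta(z) = s + c(n_v)$ with $s \in S \subseteq \Pi$ and $c \le 0$. The direct sum decomposition $\prod_{v\mid\infty}\R = \Pi \oplus \R(n_v)$ from Proposition \ref{map_pr}(iv) guarantees uniqueness of such an expression, so comparison with the functional equation forces $s = \pr(\eta(z)) \in S$ and $c = \tfrac{1}{n}\log\rho(z)$; the assumption $c \le 0$ then gives $\rho(z) \le 1$, i.e.\ $z \in D_F(1)$.

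One small bookkeeping point to flag: $(\pr\circ\eta)^{-1}(S)$ is by definition a subset of $\dom\eta = \prod_{v\mid\infty}(K_v^{m+1} - 0^{m+1})$, so on $D_{F,S}(T)$ we have $\rho(z) > 0$ and $\log\rho(z) \in \R$, which is needed to invoke Corollary \ref{map_trio} without ambiguity. Beyond this, the argument is purely formal.
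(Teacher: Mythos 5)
Your proof is correct and follows essentially the same route as the paper: part (i) via the homogeneity identities for $\rho$ and $\eta$ together with $(n_v) \in \ker \pr$ (the paper phrases this as $D_F(T) = T^{1/n}D_F(1)$ plus $\R^\times$-invariance of ${\pr}\circ\eta$, which is the same computation done setwise rather than pointwise), and part (ii) via Corollary \ref{map_trio} and uniqueness in the direct sum decomposition of Proposition \ref{map_pr}(iv). Your remark that everything takes place inside $\dom\eta$, so $\rho(z)>0$, is a sensible bookkeeping note and matches the paper's implicit usage.
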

\begin{rmk*}
Part (i) is the analogue of \cite[Lemma 3]{Schanuel}.
Part (ii) can be gleaned from the proof of \cite[Lemma 4]{Schanuel} and was essentially taken as the definition of $D_{F,S}$ in \cite[p.~439]{MV} (\textit{alias} $S_F(T)$).
\end{rmk*}
\begin{rmk*}
	What if $T = 0$? 
	Because $F$ has no nontrivial zeroes, 
	\[D_F(0) = \{z \in K_\R^{m+1} : z_v = 0 \text{ for some } v \mid \infty\}.\]
	However, this set is disjoint from $\dom \eta$, 
	so $D_{F,S}(0) = \varnothing$.
	Thus, part (i) is false (as $0D_F(1) = \{0\}$),
	as is part (ii)---unless we interpret $(-\infty, \log 0]$ as $\varnothing$.
\end{rmk*}
\begin{proof}
	By Proposition \ref{map_rho}(ii), $\rho^{-1}$ is homogeneous of degree $1/n$.
	Hence
\begin{equation} \label{DFY_DF1}
	D_F(T) 
	= \rho^{-1}(T(-\infty, 1]) 
	= T^{1/n} \rho^{-1}(-\infty, 1] 
	= T^{1/n} D_F(1).
\end{equation}
Meanwhile,
if $z \in \dom \eta$ 
and $t \in \R^\times$
then 
Propositions \ref{map_eta}(ii)
and \ref{map_rho}(i,iii) imply
\[
\pr(\eta(tz))
=
\pr((n_v) \log |t| + \eta(z))
= 
\pr((n_v) \log |t|) + \pr(\eta(z))
= 
\pr(\eta(z)).
\]
Thus, ${\pr} \circ \eta$ is $\R^\times$-invariant.
It follows that 
\begin{equation} \label{Y_pr_eta}
T^{1/n} ({\pr} \circ \eta)^{-1}(S)
=
({\pr} \circ \eta)^{-1}(S).
\end{equation}
Combining \eqref{DFY_DF1} 
and \eqref{Y_pr_eta}, 
proves part (i):
\begin{align*}
T^{1/n} D_{F,S}(1)
&= 
T^{1/n} \left( D_F(1) \cap ({\pr} \circ \eta)^{-1}(S) \right) \\
&= 
T^{1/n} D_F(1) \cap T^{1/n} ({\pr} \circ \eta)^{-1}(S) 
\\
&= D_F(T) \cap ({\pr} \circ \eta)^{-1}(S) \\
&= D_{F,S}(T).
\end{align*}
As for part (ii),
if $z \in D_{F,S}(1)$
then $\rho(z) \le 1$ and $\pr(\eta(z)) \in S$,
so by the functional equation (Corollary \ref{map_trio})
\[\eta(z) 
= \underbrace{\pr(\eta(z))}_{\in S} + \underbrace{\frac{\log \rho(z)}{n}}_{\le 0} (n_v)
\in S + (-\infty, 0](n_v).
\]
Conversely,
if $\eta(z) = y + c(n_v)$ for some $y \in S$ and $c \le 0$ 
then by uniqueness of the direct sum decomposition (Proposition \ref{map_pr}(iv))
$\pr(\eta(z)) = y \in S$ and $\rho(z) = e^{cn} \le 1$.
\end{proof} 

\begin{cor} \label{D_FS_is_bounded}
If $S$ is bounded, then $D_{F,S}(1)$ is bounded.
\end{cor}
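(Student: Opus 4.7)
The plan is to reduce the statement to the description of $D_{F,S}(1)$ supplied by Proposition~\ref{properties_DFS}(ii) and then invoke the morphism hypothesis on $f$ at the archimedean places. Since $D_{F,S}(1) = \eta^{-1}\bigl(S + (-\infty, 0](n_v)\bigr)$, any $z$ in $D_{F,S}(1)$ satisfies $\eta(z) = s + c(n_v)$ for some $s \in S$ and $c \le 0$. Reading this coordinatewise gives
\[
\log \lVert F(z_v) \rVert_v^{1/d} \;=\; s_v + c\, n_v \;\le\; s_v
\]
at every $v \mid \infty$, the last inequality because $c \le 0$ and $n_v > 0$. The hypothesis that $S$ is bounded then provides a real constant $M$, independent of $z$, with $|s_v| \le M$ for every $s \in S$ and every archimedean $v$; consequently $|F(z_v)|_v$ is uniformly bounded above on $D_{F,S}(1)$ by some constant depending only on $F$, $S$, and $K$.

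To promote this upper bound on $|F(z_v)|_v$ to an upper bound on $|z_v|_v$, I will invoke the nontrivial half of the estimate \eqref{intro:asymp}: because $f$ is a morphism, the Nullstellensatz yields constants $C_v$ satisfying \eqref{intro_constants_1}, whence
\[
|z_v|_v \;\le\; C_v \, \frac{|F(z_v)|_v^{1/d}}{|F|_v^{1/d}}.
\]
Taking the maximum over the (finitely many) archimedean places then produces a uniform bound on $|z|_v$ for every $z \in D_{F,S}(1)$, which is exactly the boundedness of $D_{F,S}(1) \subset \prod_{v \mid \infty} K_v^{m+1} \cong \R^{n(m+1)}$.

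There is no genuine obstacle here: the corollary is essentially a direct consequence of unwinding Proposition~\ref{properties_DFS}(ii) together with the lower bound $|F(z)|_v \gg |z|_v^d$, which encodes precisely the fact that $f$ is a morphism rather than merely a rational map. The one point worth flagging is that the set $(-\infty,0](n_v)$ is unbounded \emph{below}, but this only forces $|F(z_v)|_v$---and hence $|z_v|_v$---to be small, so it poses no threat to boundedness of $D_{F,S}(1)$.
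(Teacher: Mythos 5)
Your proof is correct and follows essentially the same route as the paper: unwind Proposition~\ref{properties_DFS}(ii) to bound $\eta(z)_v$ above using the boundedness of $S$, then use the constants $C_v$ from \eqref{intro_constants_1} (the Nullstellensatz lower bound, i.e.\ the morphism hypothesis) to convert the bound on $|F(z_v)|_v$ into a bound on $|z_v|_v$ at each archimedean place.
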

\begin{rmk*}
This is the analogue of \cite[Lemma 4]{Schanuel}.
\end{rmk*}
\begin{proof}
Suppose $\lVert y \rVert_2 \le B$ for all $y \in S$.
Then also
\[y_v \le |y_v| \le \lVert y \rVert_\infty \le B\]
for all $v \mid \infty$.
Let $z \in D_{F,S}(1)$.
By Proposition \ref{properties_DFS}(ii),
$\eta(z)$
is the sum of a vector in $S$ and a vector with non-positive coordinates. 
Thus $\eta(z)_v \le B$ as well.
But now by \eqref{intro_constants_1},
\[
e^B 
\ge e^{\eta(z)_v}
= \lVert F(z_v) \rVert_v^{1/d} 
\ge \frac{ \lVert z_v \rVert_v \lVert F \rVert_v^{1/d} }{ C_v^{n_v} }
\]
so, taking $n_v$\textsuperscript{th} roots,
\[
|z_{vi}|_v \le |z_v|_v \le C_v e^{B/n_v} |F|_v^{-1/d}
\]
for all $0 \le i \le m$ and all $v \mid \infty$.
Therefore by \eqref{error_constants},
\[
\lVert z \rVert_2
= 
\bigg(\sum_{v \mid \infty} 
\sum_{i=0}^m |z_{vi}|_v^2
\bigg)^{1/2}
\le
\sqrt{(r+1)(m+1)} C_f^{\infty} e^B \max_{v \mid \infty} |F|_v^{-1/d}.
\qedhere
\]
\end{proof}

The following archimedean volume computation 
is a generalization of \cite[p.~443\textit{f}]{Schanuel}
and a special case of \cite[Lemma 4]{MV};
we offer a bit more detail in the proof.
Note we have assumed $\bdy D_{F,K}(1)$ is measurable;
this will follow from 
Corollary \ref{counting_principle}
and Lemma \ref{exists_good_DFK} below.

\begin{propn} \label{volume_DFK}
Let $D_K$ be any fundamental parallelotope for the unit lattice,
and put 
\[D_{F,K}(1) = D_F(1) \cap ({\pr} \circ \eta)^{-1}(D_K).\]
Then, under the canonical identification $K_\R^{m+1} \cong \R^{n(m+1)}$, we have
\[
\vol D_{F,K}(1) = (m+1)^r R \, c_{K,\infty}(f) \prod_{v \mid \infty} \lVert F \rVert_v^{-(m+1)/d}
\]
\end{propn}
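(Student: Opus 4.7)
The plan is to compute $\vol D_{F,K}(1)$ via three successive changes of variables, converting the implicit condition on $\eta(z)$ into linear constraints in Euclidean space. By Proposition~\ref{properties_DFS}(ii),
\[
D_{F,K}(1) = \eta^{-1}\bigl(D_K + (-\infty, 0](n_v)\bigr),
\]
so everything reduces to understanding how $\eta$ pushes the Lebesgue measure forward.

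First I would introduce real ``polar'' coordinates at each archimedean place. Since $f$ is a morphism, $|F(z_v)|_v > 0$ for every nonzero $z_v \in K_v^{m+1}$, and homogeneity of degree $d$ provides a unique representation $z_v = t_v w_v$ with $t_v := (|F(z_v)|_v/|F|_v)^{1/d} > 0$ and $w_v$ on the level hypersurface $\Sigma_v := \{w \in K_v^{m+1} : |F(w)|_v = |F|_v\}$. The measure factors accordingly as $d\mu_v = t_v^{n_v(m+1)-1}\, dt_v\, d\sigma_v(w_v)$ for a Borel measure $\sigma_v$ on $\Sigma_v$, and integrating the trivial constraint $t_v \le 1$ recovers
\[
\sigma_v(\Sigma_v) = n_v(m+1)\, \mu_v(D_{f,v}) = n_v(m+1)\, c_{K,v}(f).
\]

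Next, in these coordinates $\eta(z)_v = n_v \log t_v + \log \lVert F \rVert_v^{1/d}$, so the substitution $y_v := \eta(z)_v$ gives
\[
t_v^{n_v(m+1)-1}\, dt_v = \tfrac{1}{n_v}\, \lVert F \rVert_v^{-(m+1)/d}\, e^{(m+1)y_v}\, dy_v.
\]
I would then decompose $y = y_0 + c'(n_v)$ with $y_0 \in \Pi$ and $c' \in \R$ as in Proposition~\ref{map_pr}(iv); the constraint becomes simply $y_0 \in D_K$ and $c' \le 0$, while $\sum_v y_v = nc'$ since $y_0 \in \Pi$. The Jacobian $\prod_v dy_v = (n/\sqrt{r+1})\, dy_0\, dc'$ equals the $(r{+}1)$-volume of the parallelepiped spanned by an orthonormal basis of $\Pi$ together with $(n_v)$: its base has unit $r$-volume and its height is the perpendicular distance from $(n_v)$ to $\Pi$, namely $|(1,\ldots,1) \cdot (n_v)|/\sqrt{r+1} = n/\sqrt{r+1}$.

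Assembling these pieces, using $\int_{D_K} dy_0 = R\sqrt{r+1}$ from Proposition~\ref{map_lambda} together with $\int_{-\infty}^0 e^{n(m+1)c'}\, dc' = 1/(n(m+1))$, the various factors of $n_v$, $\sqrt{r+1}$, and $(m+1)$ collapse cleanly to give
\[
\vol D_{F,K}(1) = \Bigl(\prod_{v \mid \infty} \lVert F \rVert_v^{-(m+1)/d}\Bigr) \cdot R(m+1)^r\, c_{K,\infty}(f),
\]
which is the claimed formula. The main technical obstacle is the first step: justifying the polar decomposition and measure factorization rigorously requires some care, but is standard once one observes that $\Sigma_v$ is a piecewise smooth hypersurface, being a level set of the everywhere-nonzero, piecewise-smooth function $z_v \mapsto |F(z_v)|_v$ on $K_v^{m+1} \setminus 0^{m+1}$.
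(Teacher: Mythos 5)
Your proposal is correct, and it follows the same overall strategy as the paper: both start from Proposition~\ref{properties_DFS}(ii), both reduce to a product of local contributions times the integral of $e^{(m+1)\sum_v y_v}$ over $D_K + (-\infty,0](n_v)$, and both finish with the same constants. The differences are in how the two steps are executed. Locally, the paper never introduces the level hypersurface $\Sigma_v$ or a cone measure $\sigma_v$: it computes the distribution function of the pushforward $(\eta_v)_*\mu_v$ directly, observing that the sublevel set $\{z : \lVert F(z)\rVert_v^{1/d} \le e^{\xi}\}$ is exactly the dilate $e^{\xi/n_v}|F|_v^{-1/d}D_{f,v}$, whence its measure is $e^{(m+1)\xi}\lVert F\rVert_v^{-(m+1)/d}c_{K,v}(f)$ by homogeneity of the Haar measure, and the exponential density follows by differentiating in $\xi$. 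This scaling trick sidesteps precisely the point you flag as your main technical obstacle (rigorously justifying the polar factorization $d\mu_v = t_v^{n_v(m+1)-1}\,dt_v\,d\sigma_v$); your route is sound, and your bookkeeping $\sigma_v(\Sigma_v) = n_v(m+1)\,c_{K,v}(f)$ reproduces the paper's per-place coefficient $(m+1)c_{K,v}(f)\lVert F\rVert_v^{-(m+1)/d}$, but the distribution-function argument gets there with less measure-theoretic overhead. Globally, you split $y = y_0 + c'(n_v)$ with an orthogonal-style Jacobian $n/\sqrt{r+1}$ and use $\vol_r(D_K) = R\sqrt{r+1}$ from Proposition~\ref{map_lambda}, while the paper substitutes $y = \sum_i x_i\Lambda(u_i) + t(n_v)$ and evaluates the Jacobian as $\pm nR$ by row operations on the regulator matrix; the two factorizations of $nR$ are equivalent, and your handling of the degenerate case $r=0$ is consistent with the paper's.
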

\begin{proof}
Let $\mu_v$ denote the Haar measure on $K_v^{m+1}$.
By ``$\vol D_{F,K}(1)$'' we mean $\mu (D_{F,K}(1))$ 
where 
\begin{equation} \label{mu_defn}
\mu := \bigotimes_{v \mid \infty} \mu_v
\end{equation}
is the measure on $K_\R^{m+1}$ 
corresponding to the Lebesgue measure on $\R^{n(m+1)}$.
By Proposition \ref{properties_DFS}(ii)
and the change-of-variables\footnote{If 
	$f : X \to Y$ is measurable, 
	then 
	\[\int_X f^* g \, \d \mu = \int_Y g \, \d f_*\mu.\]
} 
formula,
\begin{equation} \label{vol_DFK2}
	\vol D_{F,K}(1) = \int_{D_{F,K}(1)} \d\mu
	= 
	\int_{D_K + (-\infty, 0](n_v)} \d (\eta_* \mu).
\end{equation}
We claim that
the pushforward measure $\eta_* \mu$ on $\R^{r+1}$
decomposes as a product of the one-dimensional measures $(\eta_v)_* \mu_v$ on $\R$,
where
\begin{align*}
	\eta_v &: K_v^{m+1} - 0^{m+1} \to \R \\
	     z &\mapsto \log {\lVert F(z) \rVert_v^{1/d}}.
\end{align*}
This is purely formal:
since $\eta(z)_v = \eta_v(z_v)$,
\begin{equation} \label{cyl_sets}
\eta^{-1}\big(\prod_{v \mid \infty} E_v\big) = \prod_{v \mid \infty} \eta_v^{-1}(E_v)
\end{equation}
for all cylinder sets $\prod_{v\mid\infty} E_v$;
it follows from \eqref{mu_defn} and \eqref{cyl_sets}
that 
\[
(\eta_* \mu) \big( \prod_{v\mid\infty} E_v \big)
= \mu \big( \prod_{v \mid \infty} \eta_v^{-1}(E_v) \big) 
= \prod_v \big( (\eta_v)_* \mu_v \big) (E_v)
\]
Thus 
\begin{equation} \label{vol_DFK3}
	\eta_* \mu = \bigotimes_v (\eta_v)_* \mu_v.
\end{equation}
To identify what the measure $(\eta_v)_* \mu_v$ is,
we calculate its distribution function, making key use of homogeneity:
\begin{align*}
((\eta_v)_* \mu_v)(-\infty, \xi] 
&= \mu_v(\eta_v^{-1}(-\infty, \xi]) \\
&= \mu_v(\{z \in K_v^{m+1} - 0^{m+1} : \log {\lVert F(z) \rVert_v^{1/d}} \le \xi\}) \\
&= \mu_v(\{z \in K_v^{m+1} : |F(z)|_v \le e^{\xi d/n_v}\}) \\
&= \mu_v(e^{\xi/n_v} |F|_v^{-1/d} \{z \in K_v^{m+1} : |F(z)|_v \le |F|_v\}) \\
&= \big \lVert e^{\xi/n_v} |F|_v^{-1/d} \big \rVert_v^{m+1} \mu_v (D_{f,v}) \\
&= e^{\xi(m+1)} \lVert F \rVert_v^{-(m+1)/d} c_{K,v}(f).
\end{align*}
In particular, $(\eta_v)_* \mu_v$
is absolutely continuous,
and its Radon--Nikodym derivative 
with respect to the Lebesgue measure on $\R$
is
\begin{equation} \label{vol_DFK4}
\frac{\d}{\d \xi} \frac{c_{K,v}(f)}{\lVert F \rVert_v^{(m+1)/d}} e^{\xi(m+1)} = (m+1) \frac{c_{K,v}(f)}{\lVert F \rVert_v^{(m+1)/d}} e^{\xi(m+1)}.
\end{equation}
Thus from \eqref{vol_DFK2}, \eqref{vol_DFK3} and \eqref{vol_DFK4}, 
we have
\[\vol D_{F,K}(1) 
=
(m+1)^{r+1} \prod_{v \mid \infty} \frac{c_{K,v}(f)}{\lVert F \rVert_v^{(m+1)/d}}
\cdot 
\int_{D_K + (-\infty, 0](n_v)} 
e^{(m+1) \sum_v y_v} \, \d y.\]
To evaluate the integral,
we perform a substitution.
Let $u_1, \ldots, u_r \in \mathcal{O}_K^\times$ be the fundamental units defining $D_K$, 
and let 
\[y = \sum_{i=1}^r x_i \Lambda(u_i) + t(n_v) \qquad 0 < x_i < 1, \quad -\infty < t < 0.\]
The integrand becomes $e^{n(m+1)t}$, 
and
the Jacobian determinant is
\[
\frac{\partial y}{\partial(x, t)}
= 
\left| 
\begin{matrix}
\Lambda(u_1) & \ldots & \Lambda(u_r) & (n_v)
\end{matrix} 
\right|.
\]
Note that every column sums to $0$ except the last, which sums to $n$.
Adding one copy of each of the first $r$ rows to the last one yields a row of the form $\begin{bmatrix}0 & \ldots & 0 & n\end{bmatrix}$. 
Since these operations do not alter the determinant, we conclude (expanding along the new bottom row) that the Jacobian is $\pm nR$.
Therefore, 
\begin{align*}
\int_{D_K + (-\infty, 0](n_v)} 
e^{(m+1) \sum_v y_v} \, \d y
&= nR \int_0^1 \ldots \int_0^1 \int_{-\infty}^0 e^{n(m+1)t} \, dt \, dx_1 \ldots dx_r \\
&= nR \cdot 1 \cdot \ldots \cdot 1 \cdot \frac{1}{n(m+1)}. \qedhere 
\end{align*}
\end{proof}

\section{Geometry of numbers} \label{sec:geometry}

\subsection{Counting lattice points}
A fundamental counting principle in the geometry of numbers is that
the number of integer lattice points in a plane region $D$
is approximately equal to the area of $D$,
up to an error proportional to the perimeter of $D$.
If the boundary of $D$ 
can be covered by $N$ curves
each of length at most $L$,
then the perimeter of $D$ is at most $NL$.
It's easy to see that 
the length of a rectifiable curve $C$
is no greater than any Lipschitz constant 
for any map from the unit interval onto $C$
(in fact, the length is the infimum of said Lipschitz constants).
In this manner,
Lipschitz parametrizability provides a robust handle on the error term in lattice-point counting problems.

The following definition of Lipschitz parametrizability is adapted from Masser--Vaaler \cite[p.~431]{MV} and Widmer \cite[Definition 2.2]{Widmer_class}.

\begin{defn}
A subset of $\R^k$
is said to be of \emph{Lipschitz class} $(N, L)$ 
(\emph{in codimension $o$})
if it can be covered by the images of 
$N$ maps $\phi_i : [0,1]^{k-o} \to \R^k$
such that
\[
\Lip(\phi_i) := \sup_{x \ne y} \frac{\lVert \phi_i(x) - \phi_i(y) \rVert}{\lVert x - y \rVert} \le L \qquad (i = 1, \ldots, N)
\]
where $\lVert \, \cdot \, \rVert$ is the Euclidean norm.
\end{defn}

Henceforth we will neglect to explicitly specify the codimension when it is 1.
With these conventions in hand, 
we can now state a version of the fundamental counting principle due to Widmer \cite[Theorem 5.4]{Widmer_prim}, 
modified to suit our purposes.

\begin{propn} \label{propn:geometry_of_numbers}
	Let $z$ be a point,
	$\Lambda$ a full-rank lattice,
	and $D$ a bounded set, all in $\R^k$.
	Suppose every nonzero vector in $\Lambda$ has length at least $\lambda > 0$, 
	and suppose $\bdy D$ is of Lipschitz class $(N, L)$.
	Then $D$ is measurable, and
	\[
	|(z + \Lambda) \cap tD| = \frac{\vol D}{\det \Lambda} t^k
	+ O\big(N (1 + (tL/\lambda)^{k-1})\big)
	\]
	for all $t \ge 0$,
	where the implicit constant depends only on $k$.
\end{propn}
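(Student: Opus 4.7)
The plan is to deduce this result directly from Widmer's counting principle \cite[Theorem 5.4]{Widmer_prim}, after making two preparatory reductions. Before invoking anything, I dispatch the measurability claim: each of the $N$ parametrizing maps $\phi_i : [0,1]^{k-1} \to \R^k$ is Lipschitz, so its image has vanishing $k$-dimensional Hausdorff (hence Lebesgue) measure; therefore $\bdy D$, being a finite union of such images, has $k$-dimensional Lebesgue measure zero. Since $D$ is bounded, it is Jordan measurable, and \emph{a fortiori} Lebesgue measurable.

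Next I reduce to the case $z = 0$. The translation $x \mapsto x - z$ is a bijection from $(z + \Lambda) \cap tD$ onto $\Lambda \cap (tD - z)$, and it preserves both the volume of $tD$ and the Lipschitz class of its boundary. Then I track how the Lipschitz class scales under dilation: if $\phi_1, \ldots, \phi_N$ cover $\bdy D$ with $\Lip(\phi_i) \le L$, then the maps $t \phi_i - z$ cover $\bdy(tD - z) = t \bdy D - z$ with Lipschitz constant at most $tL$. Thus $\bdy(tD - z)$ is of Lipschitz class $(N, tL)$, and has the same first successive minimum $\lambda$ assumption still in force for $\Lambda$.

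Now I apply Widmer's principle \cite[Theorem 5.4]{Widmer_prim} to the set $S = tD - z$, which yields
\[
\bigl| |\Lambda \cap S| - \vol(S)/\det(\Lambda) \bigr|
\ll_k N \max_{0 \le j \le k-1} \frac{(tL)^j}{\lambda_1 \cdots \lambda_j}.
\]
Since $\lambda_j \ge \lambda_1 \ge \lambda$ for every $j$, the bracketed maximum is dominated by $\max_{0 \le j \le k-1} (tL/\lambda)^j \le 1 + (tL/\lambda)^{k-1}$. Combined with $\vol(tD - z) = t^k \vol(D)$, this gives the stated estimate. The only nontrivial ingredient is Widmer's principle itself, whose proof (which we would not reproduce here) involves slicing a fundamental parallelotope of $\Lambda$ by cubes of side commensurate with $\lambda$ and bounding the number of such cubes straddling $\bdy S$ via the Lipschitz cover; all remaining work in our statement is bookkeeping, so no real obstacle arises beyond correctly simplifying the max using the single quantity $\lambda$.
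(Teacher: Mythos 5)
Your proof is correct and follows essentially the same route as the paper: reduce to $z=0$ by translation, note that $\bdy(tD-z)$ has Lipschitz class $(N,tL)$, apply Widmer's counting principle, and bound the maximum over $j$ using $\lambda_1\cdots\lambda_j\ge\lambda^j$. The only (minor) difference is that you justify measurability directly via the Lipschitz images having measure zero, where the paper simply cites Spain's observation on sets with piecewise Lipschitz-parametrizable boundary; both are fine.
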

\begin{proof}
Already Spain \cite{Spain} observed the measurability of bounded sets with ``piecewise Lipschitzable'' boundary;
thus $D$ is measurable.
	Obviously
	\[
		|(z + \Lambda) \cap tD| = |\Lambda \cap (tD - z)|
	\]
	and,
	by translation-invariance and homogeneity of the Lebesgue measure,
	\[
		\vol {(tD - z)} = \vol {tD} = t^k \vol D.
		\] 
	Moreover, the boundary of $tD - z$, namely
	\[\bdy (tD - z) = \bdy (tD) - z = t (\bdy D) - z,\]
	is clearly of Lipschitz class $(M, tL)$: simply scale and shift the maps for $\bdy D$; their number is unchanged.
	Thus by \cite[Theorem 5.4]{Widmer_prim}, 
	\[
		\left| |(z + \Lambda) \cap tD| - \frac{\vol D}{\det \Lambda} t^k \right| 
		= 
		\left| |\Lambda \cap (tD - z)| - \frac{\vol {(tD - z)}}{\det \Lambda} \right|
		\le 
		N k^{3k^2/2} \max_{0 \le j < k} \frac{(tL)^j}{\lambda_1 \ldots \lambda_j}
	\] 
	where $\lambda_1 \le \ldots \le \lambda_k$ are the successive minima of $\Lambda$.
	By hypothesis, $\lambda_i \ge \lambda$ for all $i$,
	so 
	\[
	\frac{(tL)^j}{\lambda_1 \ldots \lambda_j}
	= 
	\prod_{i=1}^j tL/\lambda_i
	\le 
	(tL/\lambda)^j
	\]
	for each $0 \le j < k$.
	Thus 
	\[
		\max_{0 \le j < k} 
		\frac{(tL)^j}{\lambda_1 \ldots \lambda_j} 
		\le 
		\max_{0 \le j < k} (tL/\lambda)^j
		= 
		\max \{1, (tL/\lambda)^{k-1}\}
		\le 
		1 + (tL/\lambda)^{k-1}
		\]
	and the Proposition follows.
\end{proof}

\begin{cor} \label{counting_principle}
	Let $x \in K^{m+1}$ be a point 
	and let $\mf c \subset K$ be a nonzero fractional ideal.
	Let $D \subset \prod_{v \mid \infty} K_v^{m+1} \cong \R^{n(m+1)}$ be a bounded set whose boundary is of Lipschitz class $(N, L)$. 
	Then $D$ is measurable, and
	\[
	|\Psi(x + \mf c^{(m+1)}) \cap tD|
	= \frac{2^{r_2(m+1)} \vol D}{\sqrt{d_K}^{m+1} \Nm \mf c^{m+1}}
	t^{n(m+1)}
	+ O\Big(N \big(1 + \frac{(tL)^{n(m+1)-1}}{\Nm \mf c^{m+1-1/n}}\big)\Big)
	\]
	for all $t \ge 0$,
	where the implicit constant depends only on $n$ and $m$.
\end{cor}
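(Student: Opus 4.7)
The plan is to apply Proposition \ref{propn:geometry_of_numbers} directly, with $z = \Psi(x)$, $\Lambda = \Psi(\mf c^{(m+1)})$, $k = n(m+1)$, and the given $D$ and $t$. All the ingredients needed are already in place; the work is just bookkeeping.

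First I would note that $\Psi(x + \mf c^{(m+1)}) = \Psi(x) + \Psi(\mf c^{(m+1)})$ by the $K$-linearity of $\Psi$ (Corollary \ref{map_Psi}), so the left-hand side equals $|(\Psi(x) + \Lambda) \cap tD|$. Next, I would verify that $\Lambda = \Psi(\mf c^{(m+1)})$ is a full-rank lattice in $\R^{n(m+1)}$: under the identification $\prod_{v\mid\infty} K_v^{m+1} \cong \R^{n(m+1)}$, we have $\Psi(\mf c^{(m+1)}) \cong \Phi(\mf c)^{m+1}$, so by Proposition \ref{propn:Minkowski} and Corollary \ref{map_Psi},
\[
\det \Lambda = \det \Phi(\mf c)^{m+1} = \bigg(\frac{\Nm \mf c \sqrt{d_K}}{2^{r_2}}\bigg)^{m+1} = \frac{\Nm \mf c^{m+1} \sqrt{d_K}^{m+1}}{2^{r_2(m+1)}}.
\]

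The key estimate on the first successive minimum is furnished by the Lemma just after Corollary \ref{map_Psi}: any nonzero $y \in \Psi(\mf c^{(m+1)})$ arises as $\Psi(x_0, \ldots, x_m)$ with some $x_j \in \mf c \setminus \{0\}$, and its Euclidean length dominates that of $\Phi(x_j)$, which is $\ge \Nm \mf c^{1/n}$. Hence I may take $\lambda = \Nm \mf c^{1/n}$. Feeding $\det \Lambda$ and $\lambda$ into Proposition \ref{propn:geometry_of_numbers} yields the main term
\[
\frac{\vol D}{\det \Lambda}\, t^{n(m+1)} = \frac{2^{r_2(m+1)}\vol D}{\sqrt{d_K}^{m+1} \Nm \mf c^{m+1}}\, t^{n(m+1)}
\]
and the error
\[
O\Big(N\big(1 + (tL/\lambda)^{n(m+1)-1}\big)\Big) = O\Big(N\Big(1 + \frac{(tL)^{n(m+1)-1}}{\Nm \mf c^{(n(m+1)-1)/n}}\Big)\Big),
\]
and the exponent simplifies to $m + 1 - 1/n$ as required. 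Measurability of $D$ is built into Proposition \ref{propn:geometry_of_numbers}, and the implicit constant there depends only on the ambient dimension $k = n(m+1)$, hence only on $n$ and $m$.

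There is no real obstacle: the statement is a clean translation of Proposition \ref{propn:geometry_of_numbers} from the abstract lattice setting into the Minkowski--Schanuel setup, and the only thing to be careful about is tracking the interaction of the norm $\Nm \mf c$ with the diagonal embedding, which is precisely what the two results cited above are designed to handle.
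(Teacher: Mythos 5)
Your proposal is correct and is essentially identical to the paper's own proof: both apply Proposition \ref{propn:geometry_of_numbers} with $k = n(m+1)$, $z = \Psi(x)$, $\Lambda = \Psi(\mf c^{(m+1)})$, and $\lambda = \Nm \mf c^{1/n}$, using Proposition \ref{propn:Minkowski} and Corollary \ref{map_Psi} for the determinant and the subsequent Lemma for the first minimum. The bookkeeping of the determinant and the exponent $m+1-1/n$ in your write-up matches the intended computation exactly.
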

\begin{proof}
	Apply Proposition \ref{propn:geometry_of_numbers} 
	with $k = n(m+1)$, $z = \Psi(x)$, $\Lambda = \Psi(\mf c^{(m+1)})$, and $\lambda = \Nm \mf c^{1/n}$ (cf.~Proposition \ref{propn:Minkowski} and its Corollary).
\end{proof}

As alluded to in the previous section, 
the proof of Theorem \ref{thm:main} involves counting lattice points in certain dilations of $D_{F,K}(1)$.
By Corollary \ref{D_FS_is_bounded}, $D_{F,K}(1)$ is bounded for any choice of fundamental parallelotope $D_K$;
so, to apply Corollary \ref{counting_principle}, 
we just have to show that $\bdy D_{F,K}(1)$ is 
Lipschitz parametrizable.
Of course, this is immediate from \cite[Lemma 3]{MV} of Masser--Vaaler, 
but their result yields no information about the Lipschitz class.
The latter was supplied in a heroic calculation by Widmer \cite[Lemma 7.1]{Widmer_prim}, 
which we shall very gratefully use to obtain the explicit error term advertised in Theorem \ref{thm:main}.

The application of Widmer's lemma requires two major ingredients:
\begin{enumerate}
	\item an estimate for the Lipschitz class of $\bdy D_{f,v}$ for each $v \mid \infty$;
	\item a ``well-rounded'' fundamental parallelotope for $\Lambda(\mathcal{O}_K^\times)$.
\end{enumerate}
These are prepared in the next two sections.

\subsection{Local fundamental domains}

We begin with a geometric interpretation 
of some invariants associated to $f$.

\begin{propn} \label{propn:kappa}
Let $K$ be a field
with an absolute value $|\cdot|_v$
and let $K_v$ denote the completion.
Let $f : \P^m \to \P^M$ 
be a morphism of degree $d > 0$
defined over $K$
and let $F$ be a lift of $f$.
Let
\[
D_{f,v} = \{z \in K_v^{m+1} : |F(z)|_v \le |F|_v\}
\]
be the local fundamental domain for $f$,
and define
\begin{align*}
\kappa_{F,v} &:= \sup \{\kappa \ge 0 : |F(z)|_v^{1/d} \ge \kappa |z|_v \textup{ for all } z \in K_v^{m+1}\} 
\\
R_{f,v} &:= \inf \{R \le \infty : |z|_v \le R \textup{ for all } z \in D_{f,v}\}.
\end{align*} 
Then:
\begin{enumerate}[(i)]
	\item $\displaystyle \kappa_{F,v} = \inf_{\substack{z \in K_v^{m+1} \\ z \ne 0}} \frac{|F(z)|_v^{1/d}}{|z|_v}$, 
	\item $\displaystyle R_{f,v} = \sup_{z \in D_{f,v}} |z|_v$.
\end{enumerate}
Moreover, putting $\kappa_{f,v} := |F|_v^{-1/d} \kappa_{F,v}$, 
the following hold.
\begin{enumerate}[(i)] \setcounter{enumi}{2}
	\item If $v$ is archimedean or trivial, then $R_{f,v} = \kappa_{f,v}^{-1}$.
	\item If $v$ is nonarchimedean, 
	then 
	\[R_{f,v} = q^{\lfloor \lVert \varepsilon_{f,v} \rVert / d \rfloor} \quad \text{while} \quad \kappa_{f,v}^{-1} = q^{\lVert \varepsilon_{f,v} \rVert / d}\]
assuming $|\cdot|_v = q^{-v(\cdot)}$ for some valuation $v$ and some real number $q > 1$.
\end{enumerate}
\end{propn}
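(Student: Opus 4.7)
Parts (i) and (ii) will fall out by unraveling definitions: $\kappa_{F,v}$ is defined as the largest $\kappa$ for which the universal bound $|F(z)|_v^{1/d} \ge \kappa |z|_v$ holds, so (since the $z=0$ case is trivial) it coincides with $\inf_{z \ne 0} |F(z)|_v^{1/d}/|z|_v$; likewise $R_{f,v}$ is by construction the supremum of $|z|_v$ over $D_{f,v}$.

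The engine behind parts (iii) and (iv) is the identity
\[ |F(z)|_v = |z|_v^d \cdot |F|_v \cdot q_v^{-\varepsilon_{f,v}(z)} \qquad (z \ne 0) \]
valid in the nonarchimedean case (with $|\cdot|_v = q_v^{-v(\cdot)}$), which is immediate from the definition of the excess valuation. In the archimedean and trivial cases, the substitute is the homogeneity $|F(\lambda z)|_v^{1/d}/|\lambda z|_v = |F(z)|_v^{1/d}/|z|_v$, expressing that the ratio is constant on rays.

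For part (iii), I would first note that for any $z \in D_{f,v}$, the chain $\kappa_{F,v}|z|_v \le |F(z)|_v^{1/d} \le |F|_v^{1/d}$ yields $R_{f,v} \le \kappa_{f,v}^{-1}$ directly. For the reverse inequality, I would take a near-minimizing sequence $z_n \ne 0$ of $|F(z)|_v^{1/d}/|z|_v$ and rescale by a scalar $\lambda_n \in K_v$ with $|\lambda_n|_v = (|F|_v/|F(z_n)|_v)^{1/d}$, placing $\lambda_n z_n$ on $\partial D_{f,v}$ while homogeneity forces $|\lambda_n z_n|_v \to \kappa_{f,v}^{-1}$. The crucial point is the existence of such $\lambda_n$: in the archimedean case this uses surjectivity of $|\cdot|_v$ on $K_v^\times$ onto $(0, \infty)$, and in the trivial case the required absolute value is $1$, which any nonzero scalar realizes.

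For part (iv), substituting into the identity gives $\kappa_{F,v} = |F|_v^{1/d} q_v^{-\lVert \varepsilon_{f,v}\rVert_\infty/d}$, and the claimed formula $\kappa_{f,v}^{-1} = q_v^{\lVert \varepsilon_{f,v}\rVert_\infty/d}$ follows. For $R_{f,v}$, unwinding the membership $|F(z)|_v \le |F|_v$ through the identity becomes $v(z) \ge -\varepsilon_{f,v}(z)/d$; integrality of $v(z)$ then rounds this up to $v(z) \ge -\lfloor \varepsilon_{f,v}(z)/d \rfloor$, so $|z|_v \le q_v^{\lfloor \varepsilon_{f,v}(z)/d \rfloor}$, and maximizing over $z$ delivers the desired floor expression. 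Equality is realized by choosing $z_0$ with $\varepsilon_{f,v}(z_0) = \lVert \varepsilon_{f,v}\rVert_\infty$, which exists because $\varepsilon_{f,v}$ has finite image (Lemma \ref{lem:eps_trio} bounds it, and Proposition \ref{ell_properties}(iii) shows the supremum is attained), and then rescaling so $v(z_0) = -\lfloor \lVert \varepsilon_{f,v}\rVert_\infty/d \rfloor$. The appearance of the floor in (iv) -- absent in (iii) -- is the sole substantive difference between the two cases and is dictated by the discreteness of the nonarchimedean value group; this contrast between archimedean and nonarchimedean scalar ranges is the only real subtlety in the whole proof.
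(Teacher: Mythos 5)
Your proof is correct and follows essentially the same route as the paper's: the upper bound $R_{f,v}\le\kappa_{f,v}^{-1}$ from the defining inequality, the reverse bound by rescaling a near-minimizer using the divisibility of the archimedean value group (with the trivial case handled separately), and part (iv) by the identity $|F(z)|_v=|z|_v^d\,|F|_v\,q^{-\varepsilon_{f,v}(z)}$ together with discreteness of the value group, which the paper packages via the partition \eqref{eq:Dfv_decomposition} while you instead give a direct two-sided bound with an explicit maximizer. The only cosmetic remark: attainment of $\lVert\varepsilon_{f,v}\rVert_\infty$ needs no appeal to Proposition \ref{ell_properties}(iii) --- it is immediate since $\im\varepsilon_{f,v}$ is a bounded set of non-negative integers (Lemma \ref{lem:eps_trio}).
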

\begin{proof}
To ease notation, omit the subscript $v$.
Parts (i) and (ii) are straightforward.
For part (iii),
note that 
$|F(z)|^{1/d} \ge \kappa_F |z|$ for all $z \ne 0$
by part (i),
and plainly this holds for $z = 0$ as well.
In particular, if $z \in D_f$, 
then
\[|z| \le \kappa_F^{-1} |F(z)|^{1/d} \le \kappa_F^{-1} |F|^{1/d} = \kappa_f^{-1}.\]
Thus $R_f \le \kappa_f^{-1}$ by part (ii).

Now if $|\cdot|$ is trivial then $R_f = 1 = \kappa_f^{-1}$.
Moving on, suppose $|\cdot|$ is archimedean 
and let $\varepsilon > 0$. 
By part (i) there exists $z_0 \ne 0$ such that 
	\[\frac{|F(z_0)|^{1/d}}{|z_0|} < \kappa_F + \varepsilon.\]
Since the l.h.s.~is homogeneous of degree $d/d - 1 = 0$, it remains unchanged if we scale $z_0$. 
Given that the value group $|K_v^\times| = (0, \infty)$ is divisible,
there exists a scalar $\lambda$ such that $|\lambda|^d = |F|/|F(z_0)|$. 
With $z_1 := \lambda z_0$ 
we observe by homogeneity 
that 
\[\frac{|F(z_1)|^{1/d}}{|z_1|} < \kappa_F + \varepsilon\]
and
\[|F(z_1)| = |F(\lambda z_0)| = |\lambda^d F(z_0)| = |\lambda|^d |F(z_0)| = |F|.\]
In particular, $z_1 \in D_f$, 
whence by part (ii)
\[R_f \ge |z_1| > \frac{|F(z_1)|^{1/d}}{\kappa_F + \varepsilon} = \frac{|F|^{1/d}}{\kappa_F + \varepsilon}.\]
Since $\varepsilon$ was arbitrary, $R_f \ge |F|^{1/d} \kappa_F^{-1} = \kappa_f^{-1}$.
Together with the preceding paragraph, 
this proves part (iii).
	
Part (iv) is proved by direct calculation.
By definition of the excess valuation,
\[|F(z)| = |z|^d |F| q^{-\varepsilon_f(z)}\]
for all $z \ne 0$;
combining this with part (i) yields
\[\kappa_f = |F|^{-1/d} \inf_{z \ne 0} \frac{|F(z)|^{1/d}}{|z|} 
= \inf_{z \ne 0} q^{-\varepsilon_f(z)/d}
= q^{-\lVert \varepsilon_f \rVert / d}.
\]
Meanwhile, 
the partition
\eqref{eq:Dfv_decomposition}
and scale-invariance of $\varepsilon_f$ 
imply
\[\{|z| : z \in D_f\}
= \{0\} \sqcup 
\!\! \bigsqcup_{i \in \im \varepsilon_f} \!\!
\{q^j : j \le i/d \text{ and } j \in \Z\}
\]
whence by part (ii)
\[
R_f 
= \sup_{i \in \im \varepsilon_f} q^{\lfloor i/d \rfloor}
= q^{\lfloor \lVert \varepsilon_f \rVert / d \rfloor}. \qedhere
\]
\end{proof}

Next, we prove a lemma concerning the parametrizability of level sets of homogeneous functions.

\begin{lem} \label{level_sets}
Let $\theta : \R^k \to \R$ be a continuous function which is continuously differentiable on $\R^k \setminus \theta^{-1}(0)$. 
Suppose $\theta$ is positive-homogeneous, i.e.,~there exists some $r \ne 0$ such that 
\begin{equation} \label{homog}
\theta(tx) = t^r \theta(x)
\end{equation}
for all $x \in \R^k$ and all $t > 0$.
Then every point of $\theta^{-1}(1)$ has a relatively open neighbourhood contained in the image of some continuously differentiable map $[0, 1]^{k-1} \to \R^k$.
\end{lem}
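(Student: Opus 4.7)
The plan is to apply the implicit function theorem, using positive-homogeneity to guarantee that the gradient of $\theta$ does not vanish on $\theta^{-1}(1)$.

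First, let $p \in \theta^{-1}(1)$. Since $\theta(p) = 1 \ne 0$, the point $p$ lies in the region where $\theta$ is continuously differentiable. Differentiating the homogeneity relation \eqref{homog} with respect to $t$ at $t = 1$ yields Euler's identity
\[
\grad \theta(p) \cdot p = r\, \theta(p) = r \ne 0,
\]
so in particular $\grad \theta(p) \ne 0$. Consequently some partial derivative $\partial \theta / \partial x_i$ is nonzero at $p$.

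Next I would apply the implicit function theorem: after relabelling coordinates, $\theta^{-1}(1)$ is, in an open neighbourhood $U$ of $p$, the graph of a continuously differentiable function $g : V \to \R$ defined on some open box $V \subset \R^{k-1}$ containing the projection of $p$. Choosing a smaller closed box $[a_1, b_1] \times \cdots \times [a_{k-1}, b_{k-1}] \subset V$ whose interior still contains the projection of $p$, the map
\[
\phi(u_1, \ldots, u_{k-1}) := \bigl(\, a_1 + (b_1 - a_1)u_1,\, \ldots,\, a_{k-1} + (b_{k-1} - a_{k-1})u_{k-1},\, g(\ldots)\,\bigr)
\]
from $[0,1]^{k-1}$ to $\R^k$ (with the $i$\textsuperscript{th} coordinate omitted and reinserted in the appropriate slot) is continuously differentiable, and its image contains a relatively open neighbourhood of $p$ in $\theta^{-1}(1)$.

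The main obstacle -- really the only substantive point -- is establishing non-vanishing of $\grad \theta$ on the level set, but this falls out immediately from Euler's identity together with the hypothesis $r \ne 0$. Everything else is a standard application of the implicit function theorem followed by an affine reparametrization of the chart domain.
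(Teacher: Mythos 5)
Your proposal is correct and follows essentially the same route as the paper: both hinge on Euler's identity $\grad\theta(p)\cdot p = r\theta(p) \ne 0$ to get a nonvanishing partial derivative, then produce a local $C^1$ chart of the level set reparametrized affinely over $[0,1]^{k-1}$. The only cosmetic difference is that you invoke the implicit function theorem directly, while the paper applies the inverse function theorem to $x \mapsto (x_1,\ldots,x_{k-1},\theta(x))$ and restricts the inverse to the slice $x_k = 1$, which yields exactly your graph map $\hat x \mapsto (\hat x, g(\hat x))$.
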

\begin{proof}
Differentiating \eqref{homog} at $t = 1$ (via chain rule)
shows that $\theta$ satisfies Euler's equation
\[
	\grad \theta(x) \cdot x = r \theta(x)
\]
for all $x \not \in \theta^{-1}(0)$.
In particular, if $a \in \theta^{-1}(1)$ then $\grad \theta(a) \ne 0$
(n.b.~it has nonzero dot product with $a$).
Without loss of generality, 
\[\frac{\del \theta}{\del x_k}(a) \ne 0.\]
It follows that the continuously differentiable map 
\begin{align*}
	\vartheta : \R^k \setminus \{0\} &\to \R^k \\
	x &\mapsto (x_1, \ldots, x_{k-1}, \theta(x))
\end{align*}
has invertible derivative at $a$, 
so by the Inverse Function Theorem 
there exist open neighbourhoods $U$ of $a$ and $V$ of $\vartheta(a) = (a_1, \ldots, a_{k-1}, 1)$ 
such that $\vartheta : U \to V$ is bijective and $\vartheta^{-1} : V \to U$ is continuously differentiable.
Let $\varepsilon > 0$ be such that 
\[V_0 := 
\{x \in \R^k : |x_i - a_i| \le \varepsilon \text{ for all } i < k \text{ and } |x_k - 1| \le \varepsilon\} \subseteq V\]
and let $U_0$ be the preimage of the interior of $V_0$ under $\vartheta$. 
Then $U_0$ is an open neighbourhood of $a$.
Lastly, define
\begin{align*}
	\Psi : [0, 1]^{k-1} &\to \R^k \\
	(t_1, \ldots, t_{k-1}) &\mapsto \vartheta^{-1}(a_1 + (2t_1 - 1)\varepsilon, \ldots, a_{k-1} + (2t_{k-1} - 1)\varepsilon, 1).
\end{align*}
Then $\Psi$ is continuously differentiable.
Now if $x \in \theta^{-1}(1) \cap U_0$ 
then $\theta(x) = 1$ and $\vartheta(x) = (x_1, \ldots, x_{k-1}, 1) \in V_0$, 
meaning $x_i \in (a_i - \varepsilon, a_i + \varepsilon)$ for all $i = 1, \ldots, k-1$. 
Hence there exist $t_i \in (0, 1)$ such that $x_i = a_i + (2t_i - 1)\varepsilon$ for each $i$,
and so
\[\Psi(t_1, \ldots, t_{k-1}) = \vartheta^{-1}(x_1, \ldots, x_{k-1}, 1) = x \in \Psi([0, 1]^{k-1}). \qedhere\]
\end{proof}

\begin{propn} \label{local_fundamental_domains}
Let $K$ be a number field 
and let
	\[D_{f,v} = \{z \in K_v^{m+1} : |F(z)|_v \le |F|_v\}\]
be the local fundamental domain for $f$
at some archimedean place $v$.
Then: 
\begin{enumerate}[(i)]
	\item $D_{f,v}$ is compact, and
	\item $\bdy D_{f,v}$ 
	is of finite Lipschitz class.
\end{enumerate}
\end{propn}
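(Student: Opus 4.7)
For part (i), I would first observe that $D_{f,v}$ is closed as the preimage of the closed set $[0, |F|_v]$ under the continuous map $z \mapsto |F(z)|_v$. For boundedness, I would invoke Proposition \ref{propn:kappa}(iii): since $v$ is archimedean, $\sup_{z \in D_{f,v}} |z|_v = R_{f,v} = \kappa_{f,v}^{-1}$, and $\kappa_{F,v} > 0$ because $f$ is a morphism---indeed, the Nullstellensatz lower bound in \eqref{intro:asymp} yields a constant $c > 0$ with $|F(z)|_v \ge c\, |z|_v^d$, so $\kappa_{F,v} \ge c^{1/d}$. As $K_v^{m+1}$ is a finite-dimensional real normed space, Heine--Borel gives compactness.

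For part (ii), the plan is to decompose the boundary along coordinate forms and then apply Lemma \ref{level_sets}. Since $|F(z)|_v = \max_j |F_j(z)|_v$, we have
\[
\bdy D_{f,v} = \bigcup_{j=0}^M \{z \in K_v^{m+1} : |F_j(z)|_v = |F|_v \text{ and } |F_i(z)|_v \le |F|_v \text{ for all } i\},
\]
and the $j$th piece is contained in the level set $\theta_j^{-1}(1)$, where $\theta_j(z) := |F_j(z)|_v / |F|_v$. Identifying $K_v^{m+1}$ with $\R^k$ for $k = n_v(m+1)$, the function $\theta_j$ is continuous, positive-homogeneous of degree $d$, and continuously differentiable off its zero locus $\{F_j = 0\}$ (because $|\cdot|_v$ is smooth away from the origin on $\R$ or $\C$, and $F_j$ is polynomial). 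Hence Lemma \ref{level_sets} applies: every point of $\theta_j^{-1}(1)$ has a relatively open neighborhood in $\R^k$ contained in the image of some $C^1$ map $[0,1]^{k-1} \to \R^k$.

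Since $\bdy D_{f,v}$ is a closed subset of the compact set $D_{f,v}$, it is itself compact, so a finite subcollection of these neighborhoods already covers $\bdy D_{f,v}$. The corresponding finitely many $C^1$ parametrizations are defined on the compact cube $[0,1]^{k-1}$, where their derivatives are uniformly bounded, and so each is Lipschitz. Taking $N$ to be the number of parametrizations and $L$ the maximum of their Lipschitz constants displays $\bdy D_{f,v}$ as of Lipschitz class $(N, L)$.

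The main technical obstacle is checking the regularity hypothesis of Lemma \ref{level_sets} at points where $F_j$ vanishes, but such points satisfy $\theta_j(z) = 0 \ne 1$ and are therefore irrelevant to the level set being parametrized. A secondary concern is handling the ``corner'' points where several $|F_j|_v$ simultaneously achieve the maximum, but these are automatically absorbed into each individual piece of the decomposition, so no extra work is needed.
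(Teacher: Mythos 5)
Your proposal follows essentially the same route as the paper: part (i) is closedness plus the bound $R_{f,v} = \kappa_{f,v}^{-1} < \infty$ from Proposition \ref{propn:kappa}, and part (ii) covers $\bdy D_{f,v}$ by level sets of the individual coordinate forms, invokes Lemma \ref{level_sets}, and uses compactness to extract finitely many $C^1$ (hence Lipschitz) parametrizations. The only cosmetic difference is that the paper rescales $F$ so that $|F|_v = 1$ and applies Lemma \ref{level_sets} to $\theta = |F_j|_v^2$ (which is real-analytic), whereas you use $\theta_j = |F_j|_v/|F|_v$; this changes nothing, since the lemma only requires $C^1$ regularity off the zero locus.
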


\begin{proof} \hfill 
\begin{enumerate}[(i)]
		\item The set $D_{f,v}$ is closed because the map $z \mapsto |F(z)|_v$ is continuous.
		By \eqref{intro_constants_1} 
		and Proposition \ref{propn:kappa}, 
		\[|z|_v \le R_{f,v} = \kappa_{f,v}^{-1} = \sup_{z \ne 0} \frac{|z|_v |F|_v^{1/d}}{|F(z)|_v^{1/d}} \le C_v < \infty\]
		for all $z \in D_{f,v}$. 
		Thus $D_{f,v}$ is bounded.
		\item 
This argument is based on \cite[Lemma 8]{Schanuel}.
Rescaling $F$ if necessary, we can assume $|F|_v = 1$. 
Recall that the boundary of an intersection is contained in the union of the boundaries.
Here, 
\begin{align*}
D_{f,v} 
&= \{z \in K_v^{m+1} : \max_{0 \le j \le M} |F_j(z)|_v \le 1\} 
= \bigcap_{j=0}^M \{z \in K_v^{m+1} : |F_j(z)|_v \le 1\}
\shortintertext{so, by \cite[IV.2, Theorem I.A, p.~105]{Cassels},}
\bdy D_{f,v} &\subseteq \bigcup_{j=0}^M \bdy \{z \in K_v^{m+1} : |F_j(z)|_v \le 1 \} 
= \bigcup_{j=0}^M \{z \in K_v^{m+1} : |F_j(z)|_v = 1\}.
\end{align*}
In particular, 
for each $z$ in $\bdy D_{f,v}$ 
there exists $j \in \{0, \ldots, M\}$ 
such that $|F_j(z)|_v^2 = 1$.
But since $F$ is defined by 
homogeneous polynomials of degree $d > 0$,
the function 
$\theta(x) := |F_j(x)|_v^2$ is real-analytic and positive-homogeneous of degree $2d > 0$.
Thus, Lemma \ref{level_sets} applies:
for each $z$ in $\bdy D_{f,v}$ 
there exists an open set $U_z$ and a $C^1$ map $\Psi_z : [0, 1]^{n_v(m+1)-1} \to K_v^{m+1}$ 
such that $z \in U_z \cap \bdy D_{f,v} \subseteq \im \Psi_z$.
Since $[0, 1]^{n_v(m+1) - 1}$ is convex, 
a standard argument involving the Mean Value Theorem 
and the Cauchy--Schwarz inequality 
shows that $\Lip(\Psi_z) < \infty$
for each $z$.
Now, $\bdy D_{f,v}$ is covered by the open sets $U_z$,
so by compactness
there exists a finite subcover, indexed (say) by $z_1, \ldots, z_N$.
Taking the corresponding maps $\Psi_{z_1}, \ldots, \Psi_{z_N}$ does the trick. 
\qedhere 
\end{enumerate}
\end{proof}

\subsection{``Well-rounded'' parallelotopes} \label{sec:wellrounded}

Let $K$ be a number field
with degree $n$, 
regulator $R$, and unit rank $r$.
A fundamental parallelotope $D_K$ for the unit lattice  $\Lambda(\mathcal{O}_K^\times)$ 
will be called \emph{well-rounded}
if either:
\begin{enumerate}[(a)]
	\item $r = 0$ and $D_K = \{0\}$, or
	\item $r > 0$ and 
	\begin{enumerate}[(i)]
		\item $D_K$ is contained in a Euclidean ball of radius $\ll_n R$, and
		\item $\bdy D_K$ is of Lipschitz class $\ll_n (1, R)$ in codimension 2.		
	\end{enumerate}
\end{enumerate}
The existence of well-rounded fundamental parallelotopes 
will be a consequence of the following three Lemmas.

\begin{lem} \label{lattices_c1_c2}
	There exist increasing functions $c_1(k)$ and $c_2(k)$
	such that every lattice $\Lambda \subset \R^l$ 
	of rank $k \ge 1$ 
	admits a fundamental parallelotope
	$D$
	whose every member has length at most $c_1(k) \lambda_k$
	and whose boundary is 
of Lipschitz class $(2k, c_2(k) \lambda_k)$ in codimension $l-k+1$.
\end{lem}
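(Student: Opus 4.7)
The plan is to construct $D$ as the fundamental parallelotope associated to a ``reduced'' basis of $\Lambda$. The key classical input is a theorem from the geometry of numbers (see for instance Cassels' \emph{An Introduction to the Geometry of Numbers}, Chapter VIII, or Mahler's selection lemma) asserting the existence of an absolute constant $c(k)$ such that every lattice of rank $k$ admits a $\Z$-basis $b_1, \ldots, b_k$ satisfying
\[
\lVert b_i \rVert \le c(k) \lambda_i \qquad (i = 1, \ldots, k).
\]
Replacing $c(k)$ with $\max_{j \le k} c(j)$, we may assume $c$ is increasing.

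Given such a basis, I would take $D$ to be the associated half-open fundamental parallelotope
\[
D := \Big\{ \sum_{i=1}^k t_i b_i : 0 \le t_i < 1 \Big\}.
\]
Every element $\sum t_i b_i$ of $D$ satisfies $\lVert \sum t_i b_i \rVert \le \sum \lVert b_i \rVert \le c(k) \sum_{i=1}^k \lambda_i \le k\, c(k)\, \lambda_k$, so the diameter bound holds with $c_1(k) := k\, c(k)$, which is increasing in $k$.

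For the boundary, I would simply observe that $\bdy D$ is contained in the union of the $2k$ closed affine faces
\[
F_{j,\epsilon} := \Big\{ \epsilon\, b_j + \sum_{i \ne j} t_i b_i : 0 \le t_i \le 1 \Big\} \qquad (j = 1, \ldots, k,\ \epsilon \in \{0, 1\}),
\]
each of which is the image of the affine map $\phi_{j,\epsilon} : [0,1]^{k-1} \to \R^l$ sending $(t_i)_{i \ne j}$ to $\epsilon b_j + \sum_{i \ne j} t_i b_i$. By linearity and Cauchy--Schwarz, the Lipschitz constant of $\phi_{j,\epsilon}$ is at most $\sqrt{\sum_{i \ne j} \lVert b_i \rVert^2} \le \sqrt{k}\cdot c(k)\,\lambda_k$, so taking $c_2(k) := \sqrt{k}\,c(k)$ (again replacing by a running max to make it increasing) yields Lipschitz class $(2k, c_2(k)\lambda_k)$. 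Since each $F_{j,\epsilon}$ is $(k-1)$-dimensional sitting inside $\R^l$, the codimension is $l - (k-1) = l-k+1$, as required.

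The only nontrivial ingredient is the existence of the constant $c(k)$; everything else is bookkeeping around an explicit parametrization of the faces of a parallelotope. Since no quantitative control over $c_1, c_2$ is sought (only their existence and dependence on $k$), this classical reduction-theoretic input is more than sufficient, and the proof has no real obstacle beyond citing it.
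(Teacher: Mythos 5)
Your proposal is correct and follows essentially the same route as the paper: a reduced basis with $\lVert b_i \rVert \ll_k \lambda_i$ (the paper gets this from the Mahler--Weyl lemma, \cite[V.4, Lemma 8]{Cassels}, which is the precise form of the ``classical input'' you invoke, rather than Mahler's selection lemma), the associated half-open parallelotope, the diameter bound by summing basis lengths, and the $2k$ affine face parametrizations with Lipschitz constants controlled via Cauchy--Schwarz. Your Lipschitz bound $\sqrt{k}\,c(k)\lambda_k$ is in fact marginally cleaner than the paper's $k^{3/2}\lambda_k$.
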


\begin{proof}
	Write $\lVert x \rVert$ for the length of $x$,
	and $B(0, r)$ for the ball of radius $r$.
	By definition,
	$\Lambda$
	contains $k$ linearly independent vectors 
	$a_1, \ldots, a_k$
	of length at most $\lambda_k$.
	Reordering if necessary, 
	we may assume $\lVert a_1 \rVert \le \ldots \le \lVert a_k \rVert$.
	By the Mahler--Weyl lemma \cite[V.4, Lemma 8, p.~135\textit{f}]{Cassels},
	there exists a basis $b_1, \ldots, b_k$ of $\Lambda$ 
	such that
	\[\lVert b_i \rVert \le \max\{\lVert a_i \rVert, \frac{1}{2}(\lVert a_1 \rVert + \ldots + \lVert a_i \rVert)\}
	\le \max\{1, \frac{i}{2}\} \lVert a_i \rVert 
	\le k \lambda_k
	\]
	for all $i = 1, \ldots, k$.
	Define
	\[D := [0, 1) b_1 + \ldots + [0, 1) b_k.\]
	First, if $t \in [0, 1)^k$ then 
	\[
	\Big\lVert \sum_{i=1}^k t_i b_i \Big\rVert < \sum_{i=1}^k \lVert b_i \rVert \le \sum_{i=1}^k k \lambda_k
	\]
	so that we may take $c_1(k) := k^2$.
	Next, $\bdy D$ may be parametrized by the maps 
	\[\phi_{j,\epsilon}(t) = \sum_{i=1}^{j-1} t_i b_i + \epsilon b_j + \sum_{i=j}^{k-1} t_i b_{i+1} \quad\qquad t = (t_1, \ldots, t_{k-1}) \in [0,1]^{k-1}\]
	for $1 \le j \le k$ and $\epsilon \in \{0, 1\}$.
	Clearly there are $2k$ such maps.
	And if $t, t' \in [0,1]^{k-1}$ 
	then 
	by the Cauchy--Schwarz inequality,
	\[
	\lVert \phi_{j,\epsilon}(t) - \phi_{j,\epsilon}(t') \rVert 
	= \lVert \phi_{j,0}(t - t') \rVert 
	\le 
	k \lambda_k \sum_{i=1}^{k-1} |t_i - t_i'|
	\le k \lambda_k \lVert t - t' \rVert \sqrt{k-1}
	\]
	so that we may take $c_2(k) := k^{3/2}$.
\end{proof}

\begin{rmk}
	Since $\Bar D$ is compact and convex, Bauer's maximum principle implies that the norm is maximized at an extreme point of $\Bar D$.
	Thus, in practise, one only has to check $2^r - 1$ vectors,
	namely $\sum_{i\in I} b_i$
	for $\varnothing \ne I \subseteq \{1, \ldots, r\}$, 
	to find the bounding radius.
\end{rmk}

\begin{rmk}
	Since $[0,1]^k - [0,1]^k = [-1, 1]^k$, the optimal Lipschitz constant of $\phi_{j,\epsilon}$ is given by the operator norm of $\phi_{j,0}$ w.r.t.~the Euclidean norms. This, in turn, is given by the largest singular value of $\phi_{j,0}$.
\end{rmk}

\begin{lem} \label{lattices_c3}
	There exists an increasing function $c_3(k)$
	such that for every lattice $\Lambda$ of rank $k \ge 1$,
	\[
	\lambda_k \le c_3(k) \frac{\det \Lambda}{\lambda_1^{k-1}}.
	\]
\end{lem}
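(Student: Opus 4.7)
The plan is to derive the bound directly from Minkowski's second theorem, which is already recorded as \eqref{minkowski_minima} in the excerpt. That theorem gives the upper bound
\[
\lambda_1 \lambda_2 \cdots \lambda_k \le \frac{2^k \det \Lambda}{V_k}
\]
where $V_k$ is the volume of the unit $k$-ball. This is tailor-made for our situation: the left side involves a product of all $k$ successive minima, while the desired inequality involves only $\lambda_k$ and $\lambda_1$.

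The bridge between them is the monotonicity $\lambda_1 \le \lambda_2 \le \ldots \le \lambda_k$, which immediately implies $\lambda_1^{k-1} \le \lambda_1 \lambda_2 \cdots \lambda_{k-1}$. Multiplying by $\lambda_k$ and then applying Minkowski yields
\[
\lambda_k \lambda_1^{k-1} \le \lambda_1 \cdots \lambda_k \le \frac{2^k \det \Lambda}{V_k},
\]
so dividing by $\lambda_1^{k-1}$ (which is positive since $\Lambda$ has rank $k \ge 1$) delivers the bound with $c_3(k) := 2^k/V_k$.

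The only remaining issue is to verify that $c_3$ so defined is increasing in $k$. Using the explicit formula \eqref{vol_kball}, $V_k = \pi^{k/2}/\Gamma(k/2+1)$, the ratio $c_3(k+1)/c_3(k) = 2 V_k / V_{k+1}$ equals $(2/\sqrt{\pi})\Gamma(k/2+3/2)/\Gamma(k/2+1)$, which is bounded below by $2/\sqrt{\pi} \cdot \Gamma(3/2)/\Gamma(1) = 1$ and strictly exceeds $1$ for all $k \ge 1$ (as one checks from the basic recursion $\Gamma(x+1) = x\Gamma(x)$, since the gamma ratio grows like $\sqrt{k/2}$). Alternatively one may simply redefine $c_3(k) := \max_{1 \le j \le k} 2^j/V_j$, which is trivially non-decreasing. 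There is no real obstacle here; the result is essentially a direct corollary of Minkowski's second theorem combined with the ordering of the successive minima.
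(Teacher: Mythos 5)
Your proposal is correct and follows essentially the same route as the paper: apply Minkowski's second theorem together with $\lambda_1^{k-1}\lambda_k \le \lambda_1\cdots\lambda_k$ and take $c_3(k) = 2^k/V_k$, then check monotonicity via the gamma-function formula for $V_k$ (the paper phrases this as $V_{k+1}/V_k$ being decreasing and bounded by $\pi/2 < 2$, which is the same computation as your ratio bound). The fallback $c_3(k) := \max_{1\le j\le k} 2^j/V_j$ is also fine, since only (non-strict) monotonicity is ever used downstream.
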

\begin{proof}
	Minkowski's second theorem \eqref{minkowski_minima} implies
	\[\lambda_1^{k-1} \lambda_k \le \lambda_1 \ldots \lambda_k \le \frac{2^k}{V_k} \det \Lambda;\]
	let 
	\[c_3(k) := \frac{2^k}{V_k}.\]
	Now $c_3(k) < c_3(k+1)$ if and only if $V_{k+1} < 2 V_k$.
	By \eqref{vol_kball},
	\[
	\frac{V_{k+1}}{V_k}
	= \sqrt{\pi} \frac{\Gamma(k/2 + 1)}{\Gamma(k/2 + 3/2)}
	\]
	which is decreasing in $k$,
	ultimately because the digamma function is increasing.\footnote{If $g(x) := \Gamma(x) / \Gamma(x+t)$ then $g(x) > 0$ and $g'(x)/g(x) = (\log g(x))' = \psi(x) - \psi(x+t) < 0$ for all $x, t > 0$.}
	Thus, 
	\[
	\frac{V_{k+1}}{V_k} 
	\le 
	\frac{V_2}{V_1}
	= \frac{\pi}{2} < 2. \qedhere 
	\]
\end{proof}

\begin{rmk}
	By Stirling's approximation,
	\[c_3(k) \sim \sqrt{\pi k} \Big(\frac{2k}{\pi e}\Big)^{k/2}.\]
\end{rmk}

\begin{lem} \label{jon_lambda1_units}
	$\lambda_1(\Lambda(\mathcal{O}_K^\times)) \gg_n 1$.
\end{lem}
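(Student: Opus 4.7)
The plan is to show that $\|\Lambda(u)\|_2$ is bounded below by a constant depending only on $n$ whenever $u \in \mathcal{O}_K^\times$ is not a root of unity; since $\ker\Lambda = \mu(K)$, every nonzero vector of $\Lambda(\mathcal{O}_K^\times)$ is of this form, and the bound transfers directly to $\lambda_1$.

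The first step converts a small $\|\Lambda(u)\|_2$ into a small Weil height. For any unit $u$, the ideal $\langle 1,u\rangle = \mathcal{O}_K$ and $\|u\|_v = 1$ at every nonarchimedean $v$, so the product formula gives $\sum_{v \mid \infty} \log \|u\|_v = 0$. Decomposing $\log\|u\|_v = \log^+\|u\|_v - \log^-\|u\|_v$ and using this vanishing yields $\sum_{v\mid\infty} \log^+\|u\|_v = \tfrac{1}{2}\sum_{v\mid\infty} |\log\|u\|_v|$. On the other hand, the definition of $H$ (applied to $(1 : u) \in \mathbf{P}^1(K)$) gives $n \log H(u) = \sum_{v\mid\infty} \log^+\|u\|_v$, so $\|\Lambda(u)\|_1 = 2n \log H(u)$. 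The Cauchy--Schwarz estimate $\|x\|_2 \ge \|x\|_1/\sqrt{r+1}$ together with $r+1 \le n$ then gives
\[
\|\Lambda(u)\|_2 \ \ge\ 2\sqrt{n}\,\log H(u).
\]

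The second step is to invoke a uniform Kronecker--Northcott lower bound: there exists a constant $c(n) > 0$ such that every algebraic integer $u$ of degree at most $n$ over $\mathbf{Q}$ which is not a root of unity satisfies $\log H(u) \ge c(n)$. Indeed, by Northcott the set of algebraic numbers of degree $\le n$ and height $\le H_0$ is finite for any fixed $H_0$; among its elements, Kronecker's theorem (an algebraic integer of height $1$ is a root of unity) leaves only finitely many non-torsion units, so their heights admit a positive infimum depending only on $n$. (Explicit refinements of Dobrowolski, Schinzel--Zassenhaus, or Voutier would give quantitative values of $c(n)$, but are not needed here.) Combining with the previous step yields $\|\Lambda(u)\|_2 \ge 2\sqrt{n}\,c(n)$ for every non-torsion unit $u$, which is precisely the claim.

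There is no genuine obstacle: the only subtle point is that the height bound must be uniform in $K$ (depending only on $n$), which is exactly what Kronecker + Northcott supplies. Everything else is a formal manipulation of the product formula and a conversion between $\ell^1$ and $\ell^2$ norms on $\prod_{v\mid\infty} \mathbf{R}$.
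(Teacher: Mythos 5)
Your proof is correct and follows essentially the same route as the paper's: bound $\lVert \Lambda(u) \rVert_2$ from below by a multiple of $\sqrt{n}\,h(u)$ via comparison of the $1$- and $2$-norms on $\prod_{v\mid\infty}\R$, then invoke a Northcott--Kronecker lower bound $h(u) \ge c(n) > 0$ for non-torsion units of degree at most $n$. The only cosmetic difference is that you use the product formula to get the identity $\lVert \Lambda(u) \rVert_1 = 2n\,h(u)$, whereas the paper settles for the inequality $\lVert \Lambda(u) \rVert_1 \ge n\,h(u)$ and pins down the constant as the minimum of $h$ over the finite set $\{1 < H(\alpha) \le 2,\ \deg \alpha \le n\}$.
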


\begin{proof}
	Let $u \in \mathcal{O}_K^\times$.
	In terms of the 1-norm,
	\[
	\lVert \Lambda(u) \rVert_1 
	= \sum_{v \mid \infty} \big|{\log {\lVert u \rVert_v}}\big|
	\ge \sum_{v \mid \infty} \max\{0, \log {\lVert u \rVert_v}\}
	= nh(u)
	\]
	because every unit is an algebraic integer.
	But the 1-norm is related to the 2-norm via
	\[
	\lVert y \rVert_1 
	\le \sqrt{r+1} \, { \lVert y \rVert_2 }
	\]
	for all $y$ in $\R^{r+1}$.
	Since $r+1 \le n$, we thus have
	\[
	\lVert \Lambda(u) \rVert_2 
	\ge \sqrt{n} \, h(u).
	\]
	Now, 
	the set 
	\[\{\alpha \in \Bar \Q : 1 < H(\alpha) \le 2 \text{ and } [\Q(\alpha) : \Q] \le n\}\]
	is finite by Northcott's theorem,
	and nonempty because it contains $\alpha = 2$;
	let $c_0(n)$ be the minimum value of $h$ on this set.
	If $\alpha$ is algebraic of degree at most $n$, 
	then either 
	\begin{enumerate}[(a)]
		\item $h(\alpha) = 0$, in which case $\alpha$ is zero or a root of unity by Kronecker's theorem; or
		\item $h(\alpha) > 0$, in which case $h(\alpha) \ge c_0(n) > 0$ by construction.
	\end{enumerate}
	It follows that if $u \not \in \ker \Lambda = \mu(K)$, 
	then $\lVert \Lambda(u) \rVert_2 \ge \sqrt{n} \, c_0(n)$
	as desired.
\end{proof}

\begin{rmk} \label{rmk:Lehmer}
	The example $\alpha = 2^{1/n}$ shows that $c_0(n) \le n^{-1} \log 2$; it is widely believed that $c_0(n) \ge n^{-1} c$ for some absolute positive constant $c$ (cf.~\textit{Lehmer's conjecture}).
\end{rmk}

\begin{propn} \label{shapely_domains_exist}
Every number field has a well-rounded fundamental parallelotope for its unit lattice.
\end{propn}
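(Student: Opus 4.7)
The plan is to handle the trivial case $r=0$ directly and, for $r \geq 1$, to assemble Lemmas \ref{lattices_c1_c2}, \ref{lattices_c3}, and \ref{jon_lambda1_units} in a short chain. Concretely, when $r = 0$ the unit lattice is the trivial subgroup of $\R^{r+1} = \R$, and the singleton $\{0\}$ is a fundamental parallelotope satisfying condition (a) of well-roundedness by definition.

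For $r \geq 1$, I would view $\Lambda(\mathcal{O}_K^\times)$ as a rank-$r$ lattice inside $\R^{r+1}$ (cf.~Proposition \ref{map_lambda}) and feed it into Lemma \ref{lattices_c1_c2} with $k = r$ and $l = r+1$. This yields a fundamental parallelotope $D_K$ whose points have length at most $c_1(r)\lambda_r$ and whose boundary is of Lipschitz class $(2r, c_2(r)\lambda_r)$ in codimension $l - k + 1 = 2$. Since $r \leq n - 1$, the constants $c_1(r)$, $c_2(r)$, and $2r$ are all controlled by $n$. The only thing left is to show $\lambda_r \ll_n R$.

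To bound $\lambda_r$, I combine Lemma \ref{lattices_c3} (with $k = r$) and Proposition \ref{map_lambda} (giving $\det \Lambda(\mathcal{O}_K^\times) = R\sqrt{r+1}$) to get
\[
\lambda_r \leq c_3(r) \frac{R\sqrt{r+1}}{\lambda_1^{r-1}}.
\]
By Lemma \ref{jon_lambda1_units}, $\lambda_1 \gg_n 1$, hence $\lambda_1^{r-1} \gg_n 1$ (the exponent depends only on $n$), and therefore $\lambda_r \ll_n R$. Plugging this back in, the bounding radius $c_1(r)\lambda_r$ and the Lipschitz constant $c_2(r)\lambda_r$ are both $\ll_n R$, and the number of parametrizing maps $2r$ is $\ll_n 1$. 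Thus $D_K$ satisfies conditions (b)(i) and (b)(ii), which finishes the proof.

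I expect no serious obstacle here: every ingredient is in place, and the only nuance is keeping track of which constants depend on $n$ versus on $r$. Since $r < n$, monotonicity of $c_1, c_2, c_3$ in Lemmas \ref{lattices_c1_c2} and \ref{lattices_c3} lets us absorb all $r$-dependence into $n$-dependence uniformly, which is exactly what the Vinogradov-style $\ll_n$ notation in the definition of well-roundedness demands.
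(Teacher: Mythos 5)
Your proposal is correct and follows essentially the same route as the paper: the trivial $r=0$ case, then Lemma \ref{lattices_c1_c2} applied to the rank-$r$ unit lattice in $\R^{r+1}$, with $\lambda_r$ bounded via Lemma \ref{lattices_c3}, the determinant $R\sqrt{r+1}$ from Proposition \ref{map_lambda}, and the lower bound on $\lambda_1$ from Lemma \ref{jon_lambda1_units}, absorbing all $r$-dependence into $n$ by monotonicity. No gaps.
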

\begin{proof}
Trivial if $r = 0$, so assume $r > 0$.
Combining Lemmas \ref{lattices_c1_c2}, \ref{lattices_c3}, and \ref{jon_lambda1_units},
we obtain a fundamental parallelotope $D_K$ 
for $\Lambda(\mathcal{O}_K^\times)$
such that 
\[
y \in D_K \implies 
\lVert y \rVert_2 \le c_1(r) \lambda_r
\]
and $\bdy D_K$ is covered by the images $2r$ maps 
$\phi_i : [0, 1]^{r-1} \to \R^{r+1}$ 
with 
\[
1 \le i \le 2r \implies \Lip(\phi_i) \le c_2(r) \lambda_r
\]
where 
\[
\lambda_r \le c_3(r) \frac{\sqrt{r+1}}{\lambda_1^{r-1}} R
\]
and 
$\lambda_1 \ge \sqrt{n} c_0(n)$, 
which is less than 1 by Remark \ref{rmk:Lehmer}.
Since the right-hand sides of the last three displays are increasing in $r < n$,
$D_K$ is well-rounded.
\end{proof}

\subsection{Lipschitz parametrizability}

At last:

\begin{lem} \label{exists_good_DFK}
Suppose that for each $v \mid \infty$, 
the boundary of
$D_{f,v}$ 
is of Lipschitz class $(N_f, L_f)$.
If $D_K$ is well-rounded,
then the boundary of  
\[D_{F,K}(1) := D_F(1) \cap ({\pr} \circ \eta)^{-1}(D_K)\]
is of Lipschitz class 
$(\Tilde N_F, \Tilde L_F)$,
where $\Tilde N_F \ll_n N_f^n$ 
and 
\[
\Tilde L_F 
\ll_{m,n,R}
\max_{v \mid \infty} |F|_v^{-1/d} 
\cdot 
\begin{dcases}
L_f & r = 0, \\
L_f + C^\infty_f & r > 0.
\end{dcases}
\]
\end{lem}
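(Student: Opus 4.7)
The approach adapts Widmer's parametrization scheme \cite[Lemma 7.1]{Widmer_prim} to our setting, exploiting a polar-type decomposition tailored to $f$: every point $z \in \prod_{v \mid \infty}(K_v^{m+1} - 0^{m+1})$ is uniquely of the form $z_v = c_v w_v$ with $c_v = (|F(z_v)|_v/|F|_v)^{1/d} > 0$ real and $w_v \in \bdy D_{f,v}$. Combining this with the direct-sum decomposition of Proposition \ref{map_pr}(iv) and the functional equation of Corollary \ref{map_trio}, I would trade the $c_v$'s for coordinates $(x, \sigma) \in [0, 1]^r \times [0, 1]$ on $D_{F,K}(1)$ via the explicit formula
\[
c_v = \sigma \cdot |F|_v^{-1/d} \prod_{j=1}^r |u_j|_v^{x_j},
\]
where $x = (x_1, \ldots, x_r)$ encodes $({\pr} \circ \eta)(z) = \sum_i x_i \Lambda(u_i) \in D_K$ and $\sigma = \rho(z)^{1/n}$ encodes the overall radial scale.

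Under this parametrization, $\bdy D_{F,K}(1)$ decomposes as a union of ``faces'': one from $\sigma = 1$ (where $\rho(z) = 1$) and two from each $x_i \in \{0, 1\}$ (where $({\pr}\circ\eta)(z) \in \bdy D_K$); the stratum $\sigma = 0$ collapses to the origin and may be ignored. Each face is an $r$-cube paired with the ``angular'' factor $\prod_{v \mid \infty} \bdy D_{f,v}$, which by hypothesis is covered by $N_f^{r+1}$ products of maps $[0, 1]^{n_v(m+1)-1} \to K_v^{m+1}$ of Lipschitz constant at most $L_f$. Multiplying yields $(2r+1) N_f^{r+1} \ll_n N_f^n$ maps from $[0, 1]^{n(m+1)-1}$ into $\R^{n(m+1)}$ covering $\bdy D_{F,K}(1)$, which establishes $\Tilde N_F \ll_n N_f^n$.

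For the Lipschitz estimate, differentiate $z_v = c_v(x, \sigma) \phi_v(t_v)$ and bound the Frobenius norm of the Jacobian column by column:
\[
\frac{\partial z_v}{\partial x_j} = c_v \log|u_j|_v \cdot \phi_v(t_v), \qquad
\frac{\partial z_v}{\partial \sigma} = \frac{c_v}{\sigma} \phi_v(t_v), \qquad
\frac{\partial z_v}{\partial t_{v,k}} = c_v \frac{\partial \phi_v}{\partial t_{v,k}}(t_v).
\]
Crucially, $c_v/\sigma = |F|_v^{-1/d} \prod_j |u_j|_v^{x_j}$ is independent of $\sigma$, so it does not blow up as $\sigma \to 0$. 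Well-roundedness of $D_K$ bounds $|\log|u_j|_v| \ll_n R$, whence both $c_v$ and $c_v/\sigma$ are $\ll_{n,R} |F|_v^{-1/d}$; combined with $|\phi_v(t_v)|_v \le R_{f,v} \le C_v \le C^\infty_f$ (from Proposition \ref{propn:kappa}(ii) and \eqref{error_constants}) and $\Lip(\phi_v) \le L_f$, this yields $\Tilde L_F \ll_{m,n,R} \max_{v \mid \infty} |F|_v^{-1/d}(L_f + C^\infty_f)$.

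The case $r = 0$ is immediate: then $\Pi = \{0\}$, so $({\pr}\circ\eta) \equiv 0 \in D_K = \{0\}$ is automatic, giving $D_{F,K}(1) = D_F(1) = |F|_v^{-1/d} D_{f,v}$ by homogeneity; its boundary is therefore of Lipschitz class $(N_f, L_f |F|_v^{-1/d})$, matching the claim with $C^\infty_f$ replaced by $0$. The main obstacle is the uniform bookkeeping of the Jacobian norm through the face/angular decomposition across real and complex places, and arranging the well-roundedness estimates so that the $e^{O_n(R)}$ growth in $c_v$, $c_v/\sigma$, and $\log|u_j|_v$ is absorbed into the $\ll_{m,n,R}$ notation.
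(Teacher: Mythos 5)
Your proposal is correct in substance, but it takes a genuinely different route from the paper. The paper's proof disposes of $r=0$ exactly as you do and then, for $r\ge 1$, simply translates notation---Widmer's normalized domains are $D_{F,v}=|F|_v^{-1/d}D_{f,v}$, so one may take $N_{F,v}=N_f$, $L_{F,v}=|F|_v^{-1/d}L_f$, and $C_F^{inf}\le\max_{v\mid\infty}|F|_v^{-1/d}C_f^\infty$---and invokes \cite[Lemma 7.1]{Widmer_prim} as a black box, noting only that Widmer's normalization $c_v\le 1$ is superfluous, with well-roundedness of $D_K$ supplying $B_S, L_S\ll_n R$ and $N_S\ll_n 1$. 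You instead re-derive the content of that lemma: the radial decomposition $z_v=c_v w_v$ with $w_v\in\bdy D_{f,v}$ and $c_v=(|F(z_v)|_v/|F|_v)^{1/d}$, the log-linear change of variables $c_v=\sigma|F|_v^{-1/d}\prod_j|u_j|_v^{x_j}$ coming from Corollary \ref{map_trio} and Proposition \ref{map_pr}(iv), and the face-by-face parametrization giving $(2r+1)N_f^{r+1}\ll_n N_f^n$ maps of the correct dimension $n(m+1)-1$ together with the stated Lipschitz bound. Your observation that $c_v/\sigma$ is independent of $\sigma$ is exactly what keeps the $\sigma\to 0$ stratum harmless (and since the $x_j$-faces carry $\sigma\in[0,1]$, the origin is already covered, so it needs no separate map); well-roundedness bounds $\lVert\Lambda(u_j)\rVert\ll_n R$, hence $|\log|u_j|_v|\ll_n R$, and the resulting $e^{O_n(R)}$ factors are legitimately absorbed into $\ll_{m,n,R}$, matching the $e^{\sqrt r(L_S+B_S)}$ in Widmer's explicit formula. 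What your route buys is a self-contained argument with explicit constants and no need to inspect Widmer's appendix to drop the $c_v\le 1$ requirement; what the paper's route buys is brevity.

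Two small repairs are needed. First, the maps $\phi_v$ are only assumed Lipschitz, so you cannot literally differentiate and bound a Jacobian; either estimate differences directly, via
$\lVert c_v(p)\phi_v(t_v)-c_v(p')\phi_v(t_v')\rVert \le c_v^{\max}L_f\lVert t_v-t_v'\rVert+\Lip(c_v)\sup_t\lVert\phi_v(t)\rVert\,\lVert p-p'\rVert$
with $c_v^{\max},\ \Lip(c_v)\ll_{n,R}|F|_v^{-1/d}$, or note that the maps furnished by Proposition \ref{local_fundamental_domains} can be taken $C^1$. Second, the definition of Lipschitz class only requires the images of the $\phi_v$ to \emph{cover} $\bdy D_{f,v}$, not to be contained in it, so the bound $|\phi_v(t_v)|_v\le R_{f,v}$ is not automatic; discard any $\phi_v$ whose image misses $\bdy D_{f,v}$ and, for the remaining ones, pick $t_0$ with $\phi_v(t_0)\in\bdy D_{f,v}$ to get $\sup_t|\phi_v(t)|_v\le R_{f,v}+L_f\sqrt{n_v(m+1)-1}\ll_m C_f^\infty+L_f$, which your final bound $\ll_{m,n,R}\max_{v\mid\infty}|F|_v^{-1/d}(L_f+C_f^\infty)$ already accommodates.
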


\begin{proof}
If $r = 0$ then there is only one archimedean place $v$, 
and $n = 1$ or $2$ (according as $v$ is real or complex).
In this case, $D_{F,K}(1) = |F|_v^{-1/d} D_{f,v}$.
By hypothesis, we may take $\Tilde{N}_F := N_f \le N_f^n$ and $\Tilde{L}_F := |F|_v^{-1/d} L_f$.

So, suppose $r = 1$. 
We wish to apply \cite[Lemma 7.1]{Widmer_prim}, 
but it takes a bit of care 
to translate perspectives.
Widmer works in terms of the ``normalized'' fundamental domains
\[D_{F,v} := \{z \in K_v^{m+1} : |F(z)|_v \le 1\}\] 
and writes $(N_{F,v}, L_{F,v})$ for the Lipschitz class of 
$\bdy D_{F,v}$, 
for each $v \mid \infty$ 
\cite[Definition 2.2, axiom (iii)]{Widmer_prim}.
He then defines 
\[N_F := \max_{v\mid\infty} N_{F,v} \quad\text{and}\quad L_F := \max_{v \mid \infty} L_{F,v}.\]
He also lets $c_v \in (0, 1]$ be constants satisfying
\begin{equation} \label{Widmer_constants}
	|F(z)|_v^{1/d} \ge c_v |z|_v
\end{equation}
for all $z \in K_v^{m+1}$,
and defines [\textit{sic}]
\[
C_F^{inf} := \max_{v \mid \infty} c_v^{-1}.
\]
Equipped with these data, 
\cite[Lemma 7.1]{Widmer_prim} states that 
if $r \ge 1$ 
and $S \subseteq \Pi \subset \R^{r+1}$
is contained in an origin-centered ball of radius $B_S$
and has boundary of Lipschitz class $(N_S, L_S)$ (in codimension 2), 
then $\bdy D_{F,S}(1)$ is of Lipschitz class $(\Tilde{N}_F, \Tilde{L}_F)$ 
where 
\begin{equation} \label{Widmer_lemma}
\begin{aligned}
\Tilde N_F &= (N_S + 1) N_F^{r+1}, \\
\Tilde L_F &= 3\sqrt{n(m+1)}(L_S + B_S + 1) e^{\sqrt{r}(L_S + B_S)} (L_F + C_F^{inf}).
\end{aligned}
\end{equation}

Now we translate.
Widmer's fundamental domains 
are related to ours via
\[D_{F,v} = |F|_v^{-1/d} D_{f,v}.\]
Thus, by hypothesis, we may take $N_{F,v} = N_f$ and $L_{F,v} = |F|_v^{-1/d} L_f$
for each $v \mid \infty$.
Then
\[
N_F = N_f 
\quad\text{and}\quad 
L_F = \max_{v \mid \infty} |F|_v^{-1/d} L_f.
\]
Next, although Widmer stipulates 
that the constants $c_v$ satisfying
\eqref{Widmer_constants}
should not exceed 1,
inspecting his proof of Lemma 7.1 \cite[Appendix A, final paragraph]{Widmer_prim} reveals that this requirement is superfluous.
So, we may take 
\[
c_v = \inf_{z \ne 0} \frac{|F(z)|_v^{1/d}}{|z|_v}
\]
(cf.~Proposition \ref{propn:kappa}).
Then 
by \eqref{error_constants},
\[C_F^{inf} 
= \max_{v \mid \infty} c_v^{-1}
\le \max_{v \mid \infty} |F|_v^{-1/d} C_v
\le \max_{v \mid \infty} |F|_v^{-1/d} C_f^\infty.
\]
Lastly, 
taking $S$ to be a well-rounded fundamental domain $D_K$, 
we get that $S$ is contained in an origin-centered ball of radius $B_S \ll_n R$ 
and has boundary of Lipschitz class $(N_S, L_S)$ 
where $N_S \ll_n 1$ and $L_S \ll_n R$.
Putting it all together, 
we conclude from \eqref{Widmer_lemma} 
that
$\bdy D_{F,K}(1)$ is of Lipschitz class $(\Tilde{N}_F, \Tilde{L}_F)$
where 
\[\Tilde N_F \ll_n N_F^{r+1} \le N_f^n\]
and
\[\Tilde L_F \ll_{n,m,R} L_F + C_F^{inf}
\le \max_{v \mid \infty} |F|_v^{-1/d} (L_f + C_f^\infty).
\qedhere 
\]
\end{proof}

\section{Proof of the Main Theorem} \label{sec:final}

Schanuel's original proof \cite{Schanuel} may be straightened out as follows.
\begin{enumerate}
	\item Lift to affine space: $\P^m(K) = (K^{m+1} - 0^{m+1}) / K^\times$
	\item Fibre over ideal classes \textcolor{gray}{[Corollary, p.~447]}\footnote{There's a typo: $\lambda^m$ should be $\Tilde\lambda^m$}
	\item Replace $K^\times$ by $\mathcal{O}_K^\times$ \textcolor{gray}{[Theorem 3, p.~446]}
	\item Perform a M\"obius inversion \textcolor{gray}{[Lemma 12, p.~446]}
	\item Replace $\mathcal{O}_K^\times$ by $\mu(K)$ by picking a fundamental domain $\Delta$ \textcolor{gray}{[pp.~436--438]}\footnote{This is a somewhat breathless passage, involving several auxiliary functions; and there's a gap---filled by our Lemma \ref{schanuel_missing_lemma}.}
	\item Embed into $K_\R^{m+1} \cong \R^{n(m+1)}$ 
	to count lattice points in $\Delta$ \textcolor{gray}{[Proposition 1, p.~438]}
	\item Show that $\Delta$ is homogeneously expanding \textcolor{gray}{[Lemma 3, p.~438]}
	and bounded with $C^1$-parametrizable boundary \textcolor{gray}{[Proposition 2, p.~439]}\footnote{Itself proved as [Lemmas 4--10, pp.~439--442].}
	\item Calculate the volume of $\Delta$ \textcolor{gray}{[Proposition, p.~443\textit{f}]}
	\item Apply geometry of numbers \textcolor{gray}{[Theorem 2, p.~438]}
\end{enumerate}

Our proof diverges from Schanuel's in the following respects.
Because we must account for the presence of $f$:
in (2) we also fibre over excess divisors;
in (3) we involve a lift of $f$; 
in (5) our fundamental domain depends on $f$; 
and in (6) we count points in \emph{cosets} of a lattice.
The volume calculation (8) is supplied by Masser--Vaaler \cite[Lemma 4]{MV}, 
while the Lipschitz parametrizability (7) and geometry of numbers (9) are provided by Widmer \cite[Theorem 5.4 and Lemma 7.1]{Widmer_prim}.
In order to better keep track of our error constants, 
we do not sum the Dirichlet series (4) until the very end.

\begin{proof}[Proof of Theorem \ref{thm:main}]
If $X < 1$ then both sides vanish,
so assume $X \ge 1$.

\smallskip 
\noindent\textit{Step 1---Ideal classes and excess divisors.}
Let $F$ be a homogeneous lift of $f$
and write $K^{m+1}_\prim = K^{m+1} - 0^{m+1}$.
The objective is to count orbits under the $K^\times$-action:
\[
\Num_{f^* \! H, \P^m(K)}(X) 
= \left|\frac{\{x \in K^{m+1}_\prim : H(F(x)) \le X\}}{K^\times} \right|.
\]
Define
\begin{equation} \label{eq:R}
\mf R := \prod_{v \nmid \infty} \mf p_v^{\lVert \varepsilon_{f,v} \rVert}
\end{equation}
and select (using Corollary \ref{cor:class_rep}) class group representatives $\mf a_1, \ldots, \mf a_h$ coprime to $\mf R$.
Writing $\langle x \rangle$ for the fractional ideal $x_0 \mathcal{O}_K + \ldots + x_m \mathcal{O}_K$ 
generated by the coordinates of $x$,
we observe that
\begin{equation*} \label{idk}
\langle \lambda x \rangle = \langle \lambda \rangle \langle x \rangle
\end{equation*}
for all $\lambda$ in $K$ and all $x$ in $K^{m+1}$.
Thus, for each $i$, the set 
\[\{x \in K^{m+1}_\prim : \Cl{\langle x \rangle} = \Cl \mf a_i\}\] 
is $K^\times$-invariant.
Moreover, it is easily verified that this set admits its subset
\[\{x \in (\mathcal{O}_K)^{m+1}_{\prim/\mf R} : \langle x \rangle = \mf a_i\}\]
as a fundamental domain modulo $\mathcal{O}_K^\times$.
Thus by Proposition \ref{enlargement}, we have
\begin{align*}
\Num_{f^* \! H, \P^m(K)}(X) 
&= \sum_{i=1}^h \left|\frac{\{x \in K^{m+1}_\prim : \Cl{\langle x \rangle} = \Cl{\mathfrak{a}_i} \text{ and } H(F(x)) \le X\}}{K^\times} \right| \\
&= \sum_{i=1}^h \left|\frac{\{x \in (\mathcal{O}_K)^{m+1}_{\prim/\mf R} : \langle x \rangle = \mathfrak{a}_i \text{ and } H(F(x)) \le X\}}{\mathcal{O}_K^\times} \right|.
\end{align*}

Next, 
\[
H(F(x)) \le X 
\iff H_K(F(x)) \le X^n
\iff H_\infty(F(x)) 
\le X^n \Nm {\langle F(x) \rangle}.
\]
But if $\langle x \rangle = \mf a_i$ and $\ell_f(x) = \mf l$ 
then by Proposition \ref {ell_properties}(i),
\[{\langle F(x) \rangle} = \mf a_i \langle F \rangle \mf l.\]
By the same Proposition, the excess divisor is scale-invariant (ii) and divides $\mf R$ (iii).
Thus we may fibre the sum further:
\[
\Num_{f^* \! H, \P^m(K)}(X)
= \sum_{i=1}^h \sum_{\mf l} \left| 
\frac{\{x \in (\mathcal{O}_K)^{m+1}_{\prim/\mf R} : \langle x \rangle = \mf a_i, \, \ell_f(x) = \mf l, \, H_\infty(F(x)) \le Y_{i,\mf l}\}}{\mathcal{O}_K^\times}\right|
\]
where $\mf l \mid \mf R$ and
\begin{equation} \label{Y}
Y_{i, \mf l} := X^n \Nm \mf a_i^d \langle F \rangle \mf l.
\end{equation}

\smallskip 
\noindent\textit{Step 2---M\"obius inversion.}
Fix an ideal $\mf l$ and a real number $Y \in \R$.
For the nonce, define two functions of a nonzero integral ideal variable $\mf a$:
\begin{align*}
	\varsigma(\mf a) &= \left|\frac{\{x \in (\mathcal{O}_K)^{m+1}_{\prim/\mf R} : 
		\langle x \rangle = \mf a, \, 
		\ell_f(x) = \mf l, \,
		H_\infty(F(x)) \le Y\}}
	{\mathcal{O}_K^\times} \right|
	\\
	\upsilon(\mf a) &= \left|\frac{\{x \in (\mathcal{O}_K)^{m+1}_{\prim/\mf R} : 
		\langle x \rangle \subseteq \mf a, \, 
		\ell_f(x) = \mf l, \, 
		H_\infty(F(x)) \le Y\}}
		{\mathcal{O}_K^\times} \right|
\end{align*}
and observe that 
\begin{equation} \label{g_intermsof_f}
\upsilon(\mf a) = \sum_{\mf b} \varsigma(\mf{ab})
\end{equation}
where $\mf b$ ranges over nonzero integral ideals.
This is because
\[\langle x \rangle \subseteq \mf a \iff \mf a \mid \langle x \rangle \iff \langle x \rangle = \mf{ab} \text{ for some } \mf b \subseteq \mathcal{O}_K.\]
In order for the M\"obius inversion formula 
\begin{equation} \label{f_intermsof_g}
\varsigma(\mf a) = \sum_{\mf b} \mu_K(\mf b) \upsilon(\mf {ab})
\end{equation}
to hold, it suffices to show that
$\varsigma(\mf a) = 0$ for all but finitely many $\mf a$
(see \cite[Corollary 1]{Rota}).
To that end, note that the absolute height is bounded below by 1;
thus if $x$ lies in the set defining $\varsigma(\mf a)$ 
then 
\[Y \ge H_\infty(F(x)) \ge \Nm {\langle F(x) \rangle} = \Nm \mf a^d \langle F \rangle \mf l
\]
whence $\Nm \mf a \le (Y / \Nm {\langle F \rangle \mf l})^{1/d}$.
In particular, $\upsilon(\mf a) = 0$ outside the same range, by \eqref{g_intermsof_f}.
So \eqref{f_intermsof_g} holds, and is actually a finite sum,
with 
$\Nm \mf b 
\le (Y / \Nm {\langle F \rangle \mf l})^{1/d} / \Nm \mf a$.
It follows that 
\begin{align*}
\Num_{f^* \! H, \P^m(K)}(X) & \\
=
\sum_{i=1}^h 
\sum_{\mf l}
&\sum_{\mf b} \mu_K(\mf b)
\left|\frac{\{x \in (\mathcal{O}_K)^{m+1}_{\prim/\mf R} : 
	\langle x \rangle \subseteq \mf a_i \mf b, \, 
	\ell_f(x) = \mf l, \, 
	H_\infty(F(x)) \le Y_{i,\mf l}\}}
{\mathcal{O}_K^\times} \right|,
\end{align*}
where $\mf b$ satisfies
\[\Nm \mf b \le (Y_{i,\mf l} / \Nm {\langle F \rangle \mf l})^{1/d} / \Nm \mf a_i = X^{n/d}\]
by \eqref{Y}.

\smallskip 
\noindent\textit{Step 3---Embedding in Minkowski space.}
To rid ourselves of the quotient by $\mathcal{O}_K^\times$,
we select fundamental domains. 
Let $D_K$ be a fundamental parallelotope 
for the unit lattice $\Lambda(\mathcal{O}_K^\times)$;
by Proposition \ref{shapely_domains_exist}, 
we may assume $D_K$ is well-rounded.
For any $T > 0$, let
\begin{align*}
D_F(T) &= \rho^{-1}(-\infty, T]
\shortintertext{and}
D_{F, K}(T) &= D_F(T) \cap ({\pr} \circ \eta)^{-1}(D_K).
\end{align*}
Let $\Psi : K^{m+1} \to K_\R^{m+1}$ be the Minkowski--Schanuel embedding.
By Propositions \ref{map_rho}(iii)
and \ref{counting_orbits},
\begin{align*}
&\left|\frac{\{x \in (\mathcal{O}_K)^{m+1}_{\prim/\mf R} : 
	\langle x \rangle \subseteq \mf {ab}, \, 
	\ell_f(x) = \mf l, \, 
	H_\infty(F(x)) \le Y\}}
{\mathcal{O}_K^\times} \right|
\\
&=
\frac{1}{w} \left|
 \Psi\big(\{x \in (\mathcal{O}_K)^{m+1}_{\prim/\mf R} : \langle x \rangle \subseteq \mf {ab}, \, \ell_f(x) = \mf l\}\big) \cap D_{F,K}(Y^{1/d})
 \right|,
\end{align*}
and it follows that 
\begin{align*}
\Num_{f^* \! H, \P^m(K)}(X) & \\
= \frac{1}{w}
\sum_{i=1}^h 
\sum_{\mf l}
& \sum_{\mf b} 
\mu_K(\mf b)
\left|
\Psi\big(\{x \in (\mathcal{O}_K)^{m+1}_{\prim/\mf R} : \langle x \rangle \subseteq \mf a_i \mf b, \, \ell_f(x) = \mf l\}\big) \cap 
D_{F,K}(Y_{i,\mf l}^{1/d})
\right|.
\end{align*}
By Proposition \ref{local_fundamental_domains}(ii),
there exist an integer $N_f$ and a real number $L_f$ 
such that $\bdy D_{f,v}$ is of Lipschitz class $(N_f, L_f)$ for each $v \mid \infty$.
By Proposition \ref{properties_DFS}(i) and Lemma \ref{exists_good_DFK}, 
$D_{F,K}(Y^{1/d}) = Y^{1/dn} D_{F,K}(1)$ is a homogeneously-expanding domain whose boundary is of Lipschitz class $(\Tilde{N}_F, Y^{1/dn} \Tilde{L}_F)$ 
where $\Tilde{N}_F \ll_n N_f^n$ and 
\begin{equation} \label{final_Lipschitz_constant}
\Tilde{L}_F \ll_{m, K} (L_f + C_f^\infty)
\max_{v\mid\infty} |F|_v^{-1/d}
\end{equation}
(and we may drop $C_f^\infty$ if $r = 0$).
Meanwhile, 
each
\[
\Psi\big(\{x \in (\mathcal{O}_K)^{m+1}_{\prim/\mf R} : \langle x \rangle \subseteq \mf a_i \mf b, \, \ell_f(x) = \mf l\}\big)
\]
is a (possibly empty) union of disjoint cosets of the lattice $\Psi\big((\mf a_i \mf b \cap \mf R)^{(m+1)}\big)$.
More precisely,
in the notation of 
Lemma \ref{union_of_cosets},
there exist points $x_{i, \mf b, Q, \Bar u} \in \mathcal{O}_K^{m+1}$, 
where 
$Q \in Z(\mf l)$ and $\Bar u \in U(\mf R),$
such that 
\begin{align*}
	\Num_{f^* \! H, \P^m(K)}(X) & \\
	= \frac{1}{w}
	\sum_{i=1}^h 
	\sum_{\mf l}
	& \sum_{\mf b} 
	\sum_Q
	\sum_{\Bar u}
	\mu_K(\mf b)
	[\mf b \perp \mf R]
	\left|
	\Psi\big(x_{i,\mf b, Q, \Bar u} +(\mf a_i \mf b \cap \mf R)^{(m+1)}\big) \cap
Y_{i,\mf l}^{1/dn} D_{F,K}(1)
	\right|.
\end{align*}
It now follows from Widmer's counting principle (Corollary \ref{counting_principle}) that 
\begin{equation} \label{last_true_equality}
\begin{aligned}
	\Num_{f^* \! H, \P^m(K)}(X) & \\
	= \frac{1}{w}
	\sum_{i=1}^h 
	\sum_{\mf l}
	& \sum_{\mf b} 
	\sum_Q
	\sum_{\Bar u}
	\mu_K(\mf b)
	[\mf b \perp \mf R]
	\bigg(
	\frac{2^{r_2(m+1)} \vol D_{F,K}(1) (Y_{i,\mf l}^{1/dn})^{n(m+1)}}
	{\sqrt{d_K}^{m+1} \Nm {(\mf a_i \mf b \cap \mf R)}^{m+1}} 
	+ E
	\bigg)
\end{aligned}
\end{equation}
where 
\[
|E| \ll_{n,m} \Tilde{N}_F \bigg(1 + \frac{(Y_{i,\mf l}^{1/dn} \Tilde{L}_F)^{n(m+1)-1}}{\Nm {(\mf a_i \mf b \cap \mf R)}^{m+1-1/n}}\bigg).\]

\smallskip 
\noindent\textit{Step 4---The main term.}
Note that by \eqref{Y}
we have 
\begin{equation} \label{Y_dn}
	Y_{i,\mf l}^{1/dn} =
	X^{1/d} \Nm \mf a_i^{1/n} \Nm {\langle F \rangle}^{1/dn} \Nm \mf l^{1/dn}.
\end{equation}
A bit of rearranging\footnote{Note that all five sums are finite.} and simplification gives
that the main term of \eqref{last_true_equality} is
\begin{align*}
& 
\frac{1}{w}
\sum_{i=1}^h 
\sum_{\mf l}
\sum_{\mf b} 
\sum_Q
\sum_{\Bar u}
\mu_K(\mf b)
[\mf b \perp \mf R]
\frac{2^{r_2(m+1)} \vol D_{F,K}(1) (Y_{i,\mf l}^{1/dn})^{n(m+1)}}
{\sqrt{d_K}^{m+1} \Nm {(\mf a_i \mf b \cap \mf R)}^{m+1}} 
\\
&= 
\frac{2^{r_2(m+1)}}{w \sqrt{d_K}^{m+1}}
\vol D_{F,K}(1)\Nm {\langle F \rangle}^{(m+1)/d}
\\
&\ \cdot \ 
	\sum_{i=1}^h 
	\sum_{\mf l}
	\sum_{\mf b} 
	\sum_Q 
	\sum_{\Bar u}
	\Nm \mf l^{(m+1)/d}
	\mu_K(\mf b)
	[\mf b \perp \mf R]
	\frac{\Nm \mf a_i^{m+1}}{\Nm {(\mf a_i \mf b \cap \mf R)}^{m+1}} X^{n(m+1)/d}.
\end{align*}
We simplify three groups of terms.
First, by Proposition \ref{volume_DFK},
\[
\vol D_{F,K}(1) \Nm {\langle F \rangle}^{(m+1)/d}
= 
(m+1)^r R \frac{c_{K,\infty}(f)}{H(f)^{n(m+1)/d}}.\]
Second,
by definition of $\delta_f$ 
and by Proposition \ref{J_Dedekind} and 
Corollary \ref{nonarch_vol},
\begin{align*}
\sum_{\mf l} \sum_Q \sum_{\Bar u} \Nm \mf l^{(m+1)/d}
&= 
\sum_{\mf l} \# Z(\mf l) \varphi_K(\mf R) \Nm \mf l^{(m+1)/d}
\\
&= 
\varphi_K(\mf R) \#\P^m(\mathcal{O}_K/\mf R) 
\sum_{\mf l} \Nm \mf l^{(m+1)/d} \delta_f(\mf l) 
\\
&= 
J_{K,m+1}(\mf R) c_{K,0}(f).
\end{align*}
Third, by Lemma \ref{lem:dirichlet_series} applied with 
$s = \sigma = m+1 > 1$ and $\Nm \mf b \le X^{n/d}$,
\[
\sum_{\mf b} \mu_K(\mf b) [\mf b \perp \mf R]
\frac{\Nm \mf a_i^{m+1}} {\Nm{(\mf a_i \mf b \cap \mf R)^{m+1}}}
= 
\frac{1}{\zeta_K(m+1) J_{K,m+1}(\mf R)} + O_{m,K}\Big(\frac{X^{-nm/d}}{\Nm \mf R^{m+1}}\Big).
\]
Putting these three identities together,
and noting that 
\begin{equation} \label{J_bound}
J_{K,m+1}(\mf R) \le \Nm \mf R^{m+1},
\end{equation}
we get that the main term simplifies to
\begin{align*}
& \frac{2^{r_2(m+1)}} {w \sqrt{d_K}^{m+1}}
(m+1)^r R_K \frac{c_{K,\infty}(f)} {H(f)^{n(m+1)/d}}
J_{K,m+1}(\mf R) c_{K,0}(f) 
\\
& \cdot \ \sum_{i=1}^h \bigg( 
\frac{1}{\zeta_K(m+1) J_{K,m+1}(\mf R)} + O_{m,K}\Big(\frac{X^{-nm/d}}{\Nm \mf R^{m+1}} \Big)
\bigg) X^{n(m+1)/d}
\\
&=  
c_K(f) X^{n(m+1)/d} 
+ 
O_{m,K}\big(c_K(f) X^{n/d}\big).
\end{align*}

\smallskip 
\noindent\textit{Step 5---The error term.}
We must estimate
\begin{align*}
\bigg| \frac{1}{w} 
\sum_{i, \mf l, \mf b, Q, \Bar u}
& 
\mu_K(\mf b) 
[\mf b \perp \mf R] 
E
\bigg|
\\
&\ll_{n, m} 
\frac{N_f^n}{w} 
\sum_{i, \mf l, \mf b, Q, \Bar u}
|\mu_K(\mf b)| [\mf b \perp \mf R]
\bigg(1 + \frac{ (Y_{i,\mf l}^{1/dn} \Tilde{L}_F)^{n(m+1)-1} } {\Nm {(\mf a_i \mf b \cap \mf R)}^{m+1-1/n}} \bigg).
\end{align*}
By the trivial estimate $|\mu_K(\mf b)| [\mf b \perp \mf R] \le 1$, the inequality \eqref{J_bound}, and Weber's theorem (cf.~Lemma \ref{lem:weber}), 
\[
\frac{N_f^n}{w} 
\sum_{i,\mf l, \mf b, Q, \Bar u} |\mu_K(\mf b)| [\mf b \perp \mf R]
\ll_K 
N_f^n 
\Nm \mf R^{m+1} X^{n/d}.
\]
For the dominant term, put $\sigma = m + 1 - 1/n$.
Then by \eqref{Y_dn},
\begin{align*}
\frac{ (Y_{i,\mf l}^{1/dn} \Tilde{L}_F)^{n(m+1)-1} } {\Nm {(\mf a_i \mf b \cap \mf R)}^{m+1-1/n}}
&=
\Big( \frac{ Y_{i, \mf l}^{1/d} \Tilde{L}_F^n }{\Nm {(\mf a_i \mf b \cap \mf R)} } \Big)^\sigma
\\
&= \Big( X^{n/d} \Tilde{L}_F^n \Nm {\langle F \rangle}^{1/d} \Nm \mf l^{1/d} \frac{\Nm \mf a_i}{\Nm {(\mf a_i \mf b \cap \mf R)}} \Big)^\sigma.
\end{align*}
Thus
\begin{align*}
\frac{N_f^n}{w} 
& \sum_{i,\mf l, \mf b, Q, \Bar u} 
|\mu_K(\mf b)| [\mf b \perp \mf R] 
\frac{ (Y_{i,\mf l}^{1/dn} \Tilde{L}_F)^{n(m+1)-1} } {\Nm {(\mf a_i \mf b \cap \mf R)}^{m+1-1/n}} \\
&=
\frac{N_f^n \Tilde{L}_F^{n\sigma} \Nm {\langle F \rangle}^{\sigma/d}}{w} 
\sum_{i, \mf l, \mf b, Q, \Bar u} 
\Nm \mf l^{\sigma/d} 
|\mu_K(\mf b)|
[\mf b \perp \mf R]
\frac{\Nm \mf a_i^\sigma}
     {\Nm {(\mf a_i \mf b \cap \mf R)^\sigma}}
X^{n\sigma/d}.
\end{align*}
As before, 
\[
\sum_{\mf l, Q, \Bar u} 
\Nm \mf l^{\sigma/d} 
\le 
\sum_{\mf l, Q, \Bar u}
\Nm \mf l^{(m+1)/d}
= c_{K,0}(f) J_{K,m+1}(\mf R).
\]
Also, by Lemma \ref{lem:dirichlet_series}(ii),
\[
\sum_{\mf b} |\mu_K(\mf b)| [\mf b \perp \mf R] 
\frac{\Nm \mf a_i^\sigma}
{\Nm {(\mf a_i \mf b \cap \mf R)^\sigma}}
\ll_{m, K} \frac{1 + {\log^+} X^{n/d}}{\Nm \mf R^\sigma}
\]
where the ${\log^+}{X^{n/d}}$ disappears for $nm > 1$.
Observe that since powering is monotone and fixes 1,
\[
{\log^+} X^{n/d} = \frac{n}{d} \, {\log^+} X \ll_n \frac{1}{d} \, {\log^+} X = {\log^+} X^{1/d}.
\]
Note also that
\[\frac{J_{K,m+1}(\mf R)}{\Nm \mf R^\sigma}
\le \Nm \mf R^{m+1-\sigma} 
= \Nm \mf R^{1/n}.\]
It follows that 
the dominant term is bounded above by
\[
\ll_{m,K}
N_f^n (\Tilde{L}_F)^{n\sigma} \Nm {\langle F \rangle}^{\sigma/d}
\, c_{K,0}(f) \Nm \mf R^{1/n} X^{n\sigma/d}(1 + {\log^+} X^{1/d}).
\]
\smallskip 
\noindent\textit{Step 6---Wrapping up.}
Combining all three error terms
yields
\begin{align*}
\Big| \Num_{f^* \! H, \P^m(K)}&(X) 
 - c_K(f) X^{n(m+1)/d}\Big|
\\
& \ll_{m, K}
c_K(f) X^{n/d} 
\\
& \quad + 
N_f^n \Nm \mf R^{m+1} X^{n/d} 
\\ 
& \qquad + 
N_f^n 
(\Tilde{L}_F)^{n\sigma} \Nm {\langle F \rangle}^{\sigma/d}
\, c_{K,0}(f) \Nm \mf R^{1/n} X^{n\sigma/d}(1 + {\log^+} X^{1/d}).
\end{align*}
The left-hand side is lift-independent, 
so we may replace the right-hand side 
by its infimum over all lifts $F$ of $f$.
Recalling \eqref{final_Lipschitz_constant}
and using Lemma \ref{inf_lifts} below, 
we obtain
\[
\inf_F {(\Tilde{L}_F)^{n\sigma}} \Nm {\langle F \rangle}^{\sigma/d}
\ll_{m, K}
\frac{(L_f + C_f^\infty)^{n\sigma}}{H(f)^{n\sigma/d}}.
\]
Finally,
we estimate $\Nm \mf R$.
Noting that 
$\Nm \mf p_v = q_v^{n_v}$,
definition \eqref{eq:R}
yields
\begin{align*}
\Nm \mf R
&= 
\prod_{v \nmid \infty} \Nm \mf p_v^{\lVert \varepsilon_{f,v} \rVert}
\\
&=
\prod_{v \nmid \infty} q_v^{n_v \lVert \varepsilon_{f,v} \rVert}.
\end{align*}
If $C_v$ are any constants 
satisfying \eqref{intro_constants_1} and \eqref{intro_constants_2}
then 
by Proposition \ref{propn:kappa},
\[
C_v \ge \sup_{z \ne 0} \frac{|z|_v |F|_v^{1/d}}{|F(z)|_v^{1/d}} 
= q_v^{\lVert \varepsilon_{f,v}\rVert/d}.
\]
Therefore by \eqref{error_constants},
\[\Nm \mf R 
\le \prod_{v \nmid \infty} C_v^{dn_v}
= (C_f^0)^d. \qedhere 
\]
\end{proof}

\begin{lem} \label{inf_lifts}
Let $f : \P^m \to \P^M$ be a morphism defined over a number field $K$. Then
	\[
	\inf_F \big(\! \max_{v \mid \infty} |F|_v^{-n} \Nm {\langle F \rangle}  \big)
	= 
	\frac{1}{H(f)^n}
	\]
	where the infimum ranges over all lifts $F$ of $f$.
\end{lem}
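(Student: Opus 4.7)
The plan is to unpack both sides via the definition of the height. For any lift $F$, pairing complex-conjugate embeddings gives $\prod_{\sigma : K \into \C} |\sigma(F)| = \prod_{v \mid \infty} |F|_v^{n_v}$, so the definition of $H$ from the introduction yields
\[
H(f)^n = \frac{1}{\Nm \langle F \rangle} \prod_{v \mid \infty} |F|_v^{n_v}.
\]
Substituting this into the right-hand side of the lemma, the asserted identity is equivalent to
\[
\inf_F \max_{v \mid \infty} \frac{\prod_{w \mid \infty} |F|_w^{n_w}}{|F|_v^n} = 1,
\]
so it remains to bound the left-hand side above and below by $1$.

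For the lower bound, the local degrees satisfy $\sum_{v \mid \infty} n_v = n$, so the weights $n_v/n$ sum to $1$. The weighted AM--GM inequality applied to the positive quantities $|F|_v^n$ gives
\[
\max_{v \mid \infty} |F|_v^n \ge \prod_{v \mid \infty} (|F|_v^n)^{n_v/n} = \prod_{v \mid \infty} |F|_v^{n_v},
\]
so $\max_v \prod_w |F|_w^{n_w}/|F|_v^n \ge 1$ for every lift $F$; equality holds precisely when all $|F|_v$ coincide.

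For the upper bound, I would exploit the freedom to rescale: lifts are unique up to multiplication by $K^\times$, and if $F_0$ is fixed, then replacing $F_0$ by $\lambda F_0$ transforms $|F_0|_v$ into $|\lambda|_v |F_0|_v$ and $\Nm \langle F_0 \rangle$ into $\prod_w |\lambda|_w^{n_w} \Nm \langle F_0 \rangle$. By weak approximation, $K^\times$ is dense in $\prod_{v \mid \infty} K_v^\times$, so for any $\varepsilon > 0$ one can choose $\lambda \in K^\times$ with $|\lambda|_v$ arbitrarily close to $1/|F_0|_v$ simultaneously at every archimedean place. For the resulting lift all the $|F|_v$ are nearly equal, AM--GM is nearly tight, and the ratio is driven arbitrarily close to $1$. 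The main subtlety is that in general no single lift achieves the infimum (hence ``$\inf$'' and not ``$\min$''); when $r_1 + r_2 = 1$ there is a unique archimedean place and the infimum is trivially attained by any lift.
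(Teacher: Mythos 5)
Your overall route is the paper's own: you use the identity $H(f)^n = \Nm\langle F\rangle^{-1}\prod_{w\mid\infty}|F|_w^{n_w}$ to reduce the claim to showing $\inf_F \max_{v\mid\infty}\prod_{w\mid\infty}|F|_w^{n_w}/|F|_v^{n} = 1$, and your upper bound via rescaling by a $\lambda\in K^\times$ chosen by weak approximation so that all $|\lambda F|_v$ are nearly $1$ is exactly the paper's Artin--Whaples argument. However, your lower-bound step as written points the wrong way. You prove $\max_v |F|_v^{n}\ge \prod_w |F|_w^{n_w}$ (max dominates the weighted geometric mean), but that only shows that the ratio $\prod_w|F|_w^{n_w}/|F|_v^n$ is $\le 1$ at the place where $|F|_v$ is largest, i.e.\ that the \emph{minimum} over $v$ of these ratios is at most $1$; it does not give the needed statement that the \emph{maximum} over $v$ is at least $1$. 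What you actually need is the opposite comparison, geometric mean $\ge$ minimum: $\prod_{w\mid\infty}|F|_w^{n_w}\ge (\min_{v\mid\infty}|F|_v)^{n}$, which is immediate since each $|F|_w\ge\min_v|F|_v$ and $\sum_{w\mid\infty}n_w=n$ (this is precisely the paper's inequality $\max_v|F|_v^{-n}=\prod_w\max_v|F|_v^{-n_w}\ge\prod_w|F|_w^{-n_w}$). With that one-line repair (and the equality remark adjusted accordingly), your argument is correct and coincides with the paper's proof.
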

\begin{proof}
Fix a lift $F$ of $f$.
From the identity
\[
H(f)^n
= \frac{1}{\Nm {\langle F \rangle}} \prod_{w\mid\infty} |F|_w^{n_w}
\]
we have
\[
\max_{v \mid \infty} |F|_v^{-n}
\Nm {\langle F \rangle} 
=
\frac{1}{H(f)^n}
\underbrace{
	\max_{v \mid \infty} |F|_v^{-n} \prod_{w \mid \infty} |F|_w^{n_w}
}_{(*)}.
\]
	Consider the latter factor $(*)$.
	Clearly it's at least 1: 
	\[
	\max_{v \mid \infty} |F|_v^{-n}
	= 
	\prod_{w \mid \infty} \max_{v \mid \infty} |F|_v^{-n_w}
	\ge 
	\prod_{w \mid \infty} |F|_w^{-n_w}
	\]
	because $\sum_{w\mid\infty} n_w = n$.
	Thus it remains to show that as we range over all lifts $\lambda F$ of $f$, the factor $(*)$ comes arbitrarily close to 1.
	To this end,
	for each $v \mid \infty$ let $j_v, \alpha_v$ be such that $|F|_v = |F_{j_v,\alpha_v}|_v$ and note that $F_{j_v,\alpha_v} \ne 0$. 
	Let $\varepsilon > 0$, 
	making sure $\varepsilon < \min_{v \mid \infty} |F|_v^{-1}$,
	and 
	use the Artin--Whaples approximation theorem \cite[Theorem 1]{ArtinWhaples}
	to pick $\lambda$ in $K$ 
	such that 
	\[|\lambda - F_{j_v,\alpha_v}^{-1}|_v < \varepsilon\]
	for all $v \mid \infty$.
	Then $|\lambda F_{j_v, \alpha_v} - 1|_v < \varepsilon |F|_v$
	and $|\lambda F_{j_v,\alpha_v}|_v
	= |\lambda F|_v$
	and it follows (from the reverse triangle inequality) that
	\[1 - \varepsilon|F|_v < |\lambda F|_v < 1 + \varepsilon|F|_v\]
	for all $v \mid \infty$. 
	For this particular lift $\lambda F$ of $f$, 
	we have
	\[
	\max_{v \mid \infty} |\lambda F|_v^{-n}
	\prod_{w \mid \infty} |\lambda F|_w^{n_w} 
	< \max_{v \mid \infty} {(1 - \varepsilon|F|_v)^{-n}}
	\prod_{w \mid \infty} (1 + \varepsilon |F|_w)^{n_w}.
	\]
	Taking the infimum over all $\lambda \ne 0$ 
	and then letting $\varepsilon \to 0$ completes the proof.
\end{proof}

\bibliographystyle{plain}
\nocite{*}
\bibliography{ds_refs.bib}
 
\end{document}